\title{Indecomposable objects in Khovanov--Sazdanovic's generalizations of Deligne's interpolation categories}
\date{\today}
\author{Johannes Flake}
\address{Max Planck Institute for Mathematics,
Vivatsgasse 7, 53111 Bonn, Germany}
\email{flake@mpim-bonn.mpg.de}
\author{Robert Laugwitz}
\address{School of Mathematical Sciences,
University of Nottingham, University Park, Nottingham, NG7 2RD, UK}
\email{robert.laugwitz@nottingham.ac.uk}
\author{Sebastian Posur}
\address{University of Münster,
Fachbereich Mathematik und Informatik,
Einsteinstraße 62,
48149 Münster,
Germany}
\email{sebastian.posur@uni-muenster.de}
\tikzset{
 mytikzlink/.style={
  line width=0.6pt,
  preaction={draw, {}-{}, line width=3pt, white}
 }
}
\newcommand{\superimpose}[2]{%
  {\ooalign{$#1\@firstoftwo#2$\cr\hfil$#1\@secondoftwo#2$\hfil\cr}}}
\newcommand\sfO{\mathsf{O}}
\newcommand\Sp{\mathsf{Sp}}
\newcommand\SL{\mathsf{SL}}
\newcommand{\ov}[1]{\overline{#1}}
\newcommand{\un}[1]{\underline{#1}}
\newcommand{\lmod}[1]{#1\text{-}\mathbf{Mod}}
\newcommand{\rmod}[1]{\mathbf{Mod}\text{-}#1}
\newcommand{\lfmod}[1]{#1\text{-}\mathbf{mod}}
\newcommand{\rfmod}[1]{\mathbf{mod}\text{-}#1}
\newcommand{\lvec}[1]{#1\text{-}\mathbf{vec}}
\newcommand{\lVec}[1]{#1\text{-}\mathbf{Vec}}
\newcommand{\rfproj}[1]{\mathbf{proj}\text{-}#1}
\newcommand{\Aut}{\operatorname{Aut}}
\newcommand{\cha}{\operatorname{char}}
\newcommand{\morker}[1]{\mathcal{I}_{#1}}%
\newcommand{\objker}[1]{\mathrm{ker}({#1})}
\newcommand{\End}{\operatorname{End}}
\newcommand{\Gal}{\operatorname{Gal}}
\newcommand{\gr}{\operatorname{gr}}
\newcommand{\Hom}{\operatorname{Hom}}
\newcommand{\mult}{\operatorname{mult}}
\newcommand{\one}{\mathbf{1}}
\newcommand{\ord}{\operatorname{ord}}
\newcommand{\OSp}{\mathsf{OSp}}
\newcommand{\Rep}{\operatorname{Rep}}
\newcommand{\uRep}{\protect\underline{\mathrm{Re}}\!\operatorname{p}}
\newcommand{\Stab}{\operatorname{Stab}}
\newcommand{\Sym}{\mathrm{Sym}}
\newcommand{\rad}{\operatorname{rad}}
\newcommand{\Xk}[1]{\operatorname{Irrep}_\kk(#1)}
\newcommand{\Indec}{\operatorname{Indec}}
\newcommand{\Mg}{\Omega}
\newcommand{\Mgr}{\Omega'}
\newcommand{\Gr}{\operatorname{K}_0}
\newcommand{\Kar}{\operatorname{Kar}}
\newcommand{\Symu}[1]{\cS_{u_{#1}}}
\newcommand{\g}{\mathfrak{g}}
\newcommand{\mC}{\mathbb{C}}
\newcommand{\mR}{\mathbb{R}}
\newcommand{\mQ}{\mathbb{Q}}
\newcommand{\mZ}{\mathbb{Z}}
\newcommand{\mN}{\mathbb{N}}
\newcommand{\mH}{\mathbb{H}}
\newcommand{\mF}{\mathbb{F}}
\newcommand{\cA}{\mathcal{A}}
\newcommand{\cC}{\mathcal{C}}
\newcommand{\cD}{\mathcal{D}}
\newcommand{\cI}{\mathcal{I}}
\newcommand{\cJ}{\mathcal{J}}
\newcommand{\cN}{\mathcal{N}}
\newcommand{\cS}{\mathcal{S}}
\newcommand{\isomorph}{\stackrel{\sim}{\longrightarrow}}
\newcommand{\DCob}[1]{\mathrm{DCob}_{#1}}
\newcommand{\SCob}[1]{\mathrm{SCob}_{#1}}
\newcommand{\Cob}[1]{\mathrm{Cob}_{#1}}
\newcommand\eps{\varepsilon}
\newcommand\RepSt{\uRep(S_T)}
\newcommand\RepOt{\uRep(\sfO_T)}
\newcommand{\Grel}[1]{[V_{#1}]}
\newcommand\K{K} %
\newcommand\KK{\mathbb{K}} %
\newcommand\kk{\Bbbk} %
\newcommand\id{{\operatorname{id}}}
\newcommand\s\sigma
\newcommand\p\pi
\newcommand\scap{s_{\mathrm{cap}}}
\newcommand\scup{s_{\mathrm{cup}}}
\newcommand\sdel{s_{\Delta}}
\newcommand\sm{s_{m}}
\newcommand\sx{s_{\mathrm{X}}}
\renewcommand\o\otimes
\newcommand\h{V_n}
\newcommand\osp{\mathfrak{osp}}
\newcommand\im{\operatorname{im}}
\renewcommand\l\lambda
\tikzset{
	partition/.style={
      scale=0.5,
      yscale=-1,
      baseline={([yshift=-0.5ex]current bounding box.center)}
    }
}
\tikzset{
    bend/.cd,
    0/.style={},
    1/.style={bend right},
    -1/.style={bend left}
}
\newcommand\makePartPt[1]{({Mod(#1,10)},{(#1-Mod(#1,10))*.1})}
\newcommand\makePartLn[2]{%
\pgfmathtruncatemacro\bend{%
(int(#1/10)==int(#2/10)) ?
(#1<10 ? 1 : -1)*(#1>#2 ? 1 : -1)
: 0%
}
\draw ({mod(#1,10)},{int(#1/10}) to[bend/\bend] ({mod(#2,10)},{int(#2/10)});
}
\newcommand\tp[1] {%
\tikz[partition] {
\draw[white,opacity=0] (1,0)--(1,1); %
\def\j{0}
\foreach \i [remember=\i as \j] in {#1} {
  \ifnum \i>0
    \ifnum \j>0
      \makePartLn{\i}{\j};
    \fi
    \draw[fill] \makePartPt\i circle (2.5pt);
  \fi
} %
}}
\newtheoremstyle{mystyle}%
  {0.5cm}                   %
  {0.5cm}                   %
  {\normalfont}           %
  {}                      %
  {\itfont\bfseries}  %
  {:}                     %
  {0.3cm}              %
  {\thmname{#1}}
\newtheoremstyle{defstyle}%
  {0.5cm}                   %
  {0.5cm}                   %
  {\normalfont}           %
  {}     %
  {\normalfont\bfseries}  %
  {:}                     %
  {0.3cm}              %
  {\thmname{#1}\thmnumber{ #2}\thmnote{ (#3)}}
\numberwithin{equation}{section}
\newtheorem*{rep@theorem}{\rep@title}
\newcommand{\newreptheorem}[2]{%
\newenvironment{rep#1}[1]{%
 \def\rep@title{#2 \ref{##1}}%
 \begin{rep@theorem}}%
 {\end{rep@theorem}}}
\newtheorem{introtheorem}{Theorem}
\newtheorem{theorem}{Theorem}[section]
\newtheorem{proposition}[theorem]{Proposition}
\newtheorem{corollary}[theorem]{Corollary}
\newtheorem{lemma}[theorem]{Lemma}
\newtheorem*{theorem*}{Theorem}
\theoremstyle{definition}
\newtheorem{definition}[theorem]{Definition}
\newtheorem{construction}[theorem]{Construction}
\theoremstyle{remark}
\newtheorem{example}[theorem]{Example}
\newtheorem{remark}[theorem]{Remark}
\renewcommand{\sectionmark}[1]		%
	{
	\markboth{\small\it \thesection{} #1}{}
	}
\subjclass[2020]{18M05, 18M30, 57R56, 05A18, 17B10, 81R05}
\keywords{Tensor Categories, Deligne's Interpolation Categories, Cobordism Categories, Galois Descent, Krull--Schmidt Categories}
\begin{document}

\begin{abstract}
    Khovanov and Sazdanovic recently introduced symmetric monoidal categories parameterized by rational functions and given by quotients of categories of two-dimensional cobordisms. 
    These categories generalize Deligne's interpolation categories of representations of symmetric groups. In this paper, we classify indecomposable objects and identify the associated graded Grothendieck rings of Khovanov--Sazdanovic's categories through sums of representation categories over crossed products of polynomial rings over a general field. To obtain these results, we introduce associated graded categories for Krull--Schmidt categories with filtrations as a categorification of the associated graded Grothendieck ring, and study field extensions and Galois descent for Krull--Schmidt categories.
\end{abstract}

\maketitle 

\section{Introduction}

\begin{wrapfigure}{r}{0.25\textwidth}
  \begin{center}
  \vspace{-10pt}
  \begin{tikzpicture}[scale=.8]

\def\rx{.5}\def\ry{.2}
\coordinate (a) at (0,5);

\begin{scope}[looseness=.7]
\draw[fill=black!25] (a) to[bend right] (0,0)
  arc[start angle=-180, end angle=0, x radius=\rx, y radius=\ry]
  to[bend right] ($(a)+(2*\rx,0)$) -- cycle;
\draw[fill=black!10] (a)
  arc[start angle=-180, end angle=180, x radius=\rx, y radius=\ry];
\draw[ultra thin, dashed] (0,0) 
  arc[start angle=180, end angle=0, x radius=\rx, y radius=\ry];
\end{scope}

\newcommand\drawhole[2]{%
\def\x{#1}\def\y{#2}\def\w{.8}\def\h{0.25}\def\ang{90}
\def\arcA{(\x,\y) to[out=\ang,in=180-\ang] (\x+\w,\y)}
\def\arcB{(\x,\y+\h) to[out=-\ang,in=-180+\ang] (\x+\w,\y+\h) -- (\x+.5*\w,\y+3*\h) -- cycle}
\begin{scope} \clip \arcA; \draw[fill=white] \arcB; \end{scope}
\draw \arcA;
}

\drawhole{.1}{3.5}
\node at (.5,2.5) {$\vdots$};
\drawhole{.1}{1.2}

\end{tikzpicture}
  \end{center}
  \captionsetup{width=.8\linewidth}
  \caption{A cylinder with $n$ handles.}
  \label{fig:genus-g-cylinder}
    \vspace{10pt}
\end{wrapfigure} \par

In \cite{KS}, Khovanov--Sazdanovic introduced a class of symmetric monoidal categories $\DCob{\alpha}$ dependent on a rational series 
$$\alpha = \alpha(t)=p(t)/q(t) =\sum_{i\geq 0}\alpha_i t^i\in \kk[[t]]
\hspace{0.3\textwidth}$$ over a field $\kk$. Briefly, $\DCob{\alpha}$ is the quotient of the $\kk$-linear category of two-dimensional cobordisms by the monoidal ideal generated by requiring that a closed genus $i$ surface is evaluated to $\alpha_i$, the $i$-th coefficient of $\alpha(t)$, and that the polynomial recursion 
$$
u_\alpha(x)=0 \hspace{0.3\textwidth}
$$
holds. Here, $u_\alpha(t)$ is the polynomial $t^kq(t^{-1})$ with $k=\max\{\deg p(t)+1,\deg q(t)\}$ and the powers $x^n$ denote cylinders with $n$ handles (see \Cref{fig:genus-g-cylinder}).
As a special case, Deligne's \emph{interpolation categories} $\uRep(S_T)$ \cite{Del}, for $T\in \kk\setminus \{0\}$,  are recovered as $\DCob{\alpha}$ for $\alpha(t)$ being the series $T/(1-t)$.

\smallskip

 Subsequently, Khovanov--Kononov--Ostrik \cite{KKO} further investigated the categories $\DCob{\alpha}$. They proved that $\DCob{\alpha}$ decomposes as an external tensor product of such tensor categories based on a partial fraction decomposition of $\alpha$ and identified semisimple quotients of these categories. For $\alpha=c\in \kk\setminus \{0\}$ these are given by a category of representations over $\mathfrak{osp}(1|2)$ and for $\alpha=\beta_0+\beta_1t$ by certain categories of representations of orthosymplectic and orthogonal groups.

\smallskip

The monoidal categories $\DCob{\alpha}$ were motivated from universal constructions of \emph{topological theories} in \cite{Kho2}, inspired by \cite{BHMV}. Here, the question is posed whether given a series $\alpha$ as above one can find a two-dimensional topological theory, i.e.~a (lax) symmetric monoidal functor from two-cobordisms to an algebraic tensor category $\cC$. If the functor is strong monoidal, this recovers Atiyah's axiomatization of a \emph{topological quantum field theory}. The categories $\DCob{\alpha}$ provide such categories $\cC$ with finite-dimensional morphism spaces if $\alpha$ is rational. Moreover, they are in a certain sense universal among topological theories which assign $\alpha_i$ to the genus $i$ surface, see \Cref{thm:universal-prop}.

\smallskip

The main goals of the present paper are
\begin{enumerate}
    \item to construct all indecomposable objects of $\DCob{\alpha}$ and
    \item to describe the graded Grothendieck ring of $\DCob{\alpha}$.
\end{enumerate}
Similar to the case of Deligne's interpolation categories,
the additive Grothendieck ring $\Gr(\DCob{\alpha})$ comes with a natural filtration, which in turn gives rise to a graded ring $\gr\Gr(\DCob{\alpha})$.
The main idea how to describe $\gr\Gr( \DCob{\alpha} )$ is as follows: from $\DCob{\alpha}$, we construct a new category $\gr\DCob{\alpha}$, that we call its \emph{associated graded category}. We can think of $\gr\DCob{\alpha}$ as a categorification of $\gr\Gr(\DCob{\alpha})$, since we have
$\Gr(\gr\DCob{\alpha}) \cong \gr\Gr(\DCob{\alpha})$.
The upshot is that $\gr\DCob{\alpha}$ is much easier to analyze than the original category $\DCob{\alpha}$, but nevertheless provides enough information to deduce the structure of $\gr\Gr(\DCob{\alpha})$. 

For the purpose of this introduction, we state the main result in three increasingly general setups.

\begin{introtheorem} Let $\kk$ be a splitting field for $u_\alpha$ of characteristic zero. Then
$$\gr\Gr( \DCob{\alpha} ) \cong \bigotimes_{z \in Z} \Sym,$$
where $\Sym$ is the ring of symmetric functions and $Z$ is the set of distinct zeros of $u_\alpha$.
In particular, indecomposable objects in $ \DCob{\alpha}$ are parametrized by $Z$-indexed tuples of Young diagrams.
\end{introtheorem}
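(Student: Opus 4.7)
The argument splits into three natural stages, using the structural results already available for the categories $\DCob{\alpha}$ together with the associated graded formalism developed in the paper.

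First, I would apply the external tensor product decomposition of Khovanov--Kononov--Ostrik, so that
\[
\DCob{\alpha} \simeq \boxtimes_{z \in Z} \DCob{\alpha_z}
\]
as $\kk$-linear symmetric monoidal categories, where $\alpha_z$ is the partial fraction component of $\alpha$ attached to the zero $z$, so that $u_{\alpha_z}$ has $z$ as its only root. I would then check that the associated graded construction $\gr$ and the Grothendieck ring functor $\Gr$ are both compatible with such external tensor products in the Krull--Schmidt setting, yielding
\[
\gr\Gr\bigl( \DCob{\alpha} \bigr) \cong \bigotimes_{z \in Z} \gr\Gr\bigl( \DCob{\alpha_z} \bigr).
\]
This reduces the theorem to the case where $u_\alpha$ has a single zero.

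Second, in the single-zero case I would translate so that $z = 0$, making the handle generator $x$ nilpotent of nilpotency index the multiplicity of the zero, and then study $\gr\DCob{\alpha}$. The point of passing to the associated graded is that all occurrences of $x$ in positive powers become strictly decreasing in the filtration; concretely, I expect morphisms modulo the filtration to be controlled by cobordisms without handles, so that the graded category (or its Karoubi envelope) decomposes as a direct sum indexed by $n \in \mZ_{\geq 0}$ of representation categories of the symmetric group $S_n$ crossed with the polynomial ring corresponding to the nilpotent action of $x$. In characteristic zero this is semisimple, and an application of the representation theory of such crossed products identifies the indecomposables with partitions and the Grothendieck ring with $\Sym$, giving $\gr\Gr(\DCob{\alpha_z}) \cong \Sym$.

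Assembling the pieces then yields the desired isomorphism $\gr\Gr(\DCob{\alpha}) \cong \bigotimes_{z \in Z} \Sym$, and the claim about indecomposable objects follows from the standard bijection between $\Sym$ and partition-indexed bases, carried across the categorification $\Gr(\gr\DCob{\alpha}) \cong \gr\Gr(\DCob{\alpha})$.

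The principal obstacle is the second stage: verifying that $\gr\DCob{\alpha_z}$ in the single-zero case really does collapse to (a crossed-product version of) Deligne's interpolation category, so that the classical identification of its Grothendieck ring with $\Sym$ applies. This requires careful control of the filtration on morphism spaces, a description of idempotents in the graded category compatible with the Krull--Schmidt property, and explicit matching of indecomposables with partitions. All of this relies crucially on the associated graded Krull--Schmidt machinery introduced in the paper, and it is precisely here that the fact that the multiplicity of the zero disappears in the graded ring — while influencing the concrete crossed product appearing before taking $\Gr$ — must be pinned down.
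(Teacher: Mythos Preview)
Your proposal is essentially the paper's approach: reduce via the Khovanov--Kononov--Ostrik $\boxtimes$-decomposition to the single-root case, verify that $\gr$ and $\Gr$ commute with $\boxtimes$ (this is Lemma~\ref{lem:filtcomp} and Corollary~\ref{cor:tensorfiltrations}), and in the single-root case identify $\gr\DCob{\alpha_z}$ with a category of projectives over the crossed products $P_n\rtimes\kk[S_n]$ (Theorem~\ref{thm:KoDCob}), whose Grothendieck ring is $\Sym$.

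One correction: the crossed product $P_n\rtimes\kk[S_n]$ is \emph{not} semisimple in characteristic zero when the root has multiplicity $>1$, since $P_1=\kk[x]/((x-z)^m)$ has nontrivial radical. What the paper does instead (Corollary~\ref{corollary:family_mod_rad} and Corollary~\ref{cor:removemulti}) is show that $\rad(P_n)\rtimes\kk[S_n]$ is contained in the categorical radical of $\Symu{\alpha}$, so passing to $\Gr$ kills it and one may replace $u_\alpha$ by its squarefree part; only then does the single-root case collapse to $\kk[S_n]$. You correctly flag in your final paragraph that the multiplicity must ``disappear'' at the level of $\Gr$, and this radical argument is exactly the mechanism.
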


In order to generalize the above theorem to arbitrary characteristic $p$, we investigate characteristic $p$-analogues of the ring of symmetric functions in \Cref{sec:Symp}.
We pick a field $\kk$ of characteristic $p$ and define $\Sym^p$ to be the ring 
$$\Sym^p=\bigoplus_{n\geq 0} \Gr( \rfproj{\kk[S_n]} ),$$ 
with the product given by induction, which in fact does not depend on the choice of $\kk$. Here, $\rfproj{\kk[S_n]}$ denotes the category of finite-dimensional projective $\kk[S_n]$-modules. Computing products in $\Sym^p$ uses the embedding $\Sym^p\hookrightarrow \Sym$ which is determined by the decomposition matrices of symmetric groups in characteristic $p$, see \Cref{theorem:symp}, and thus relates to open questions in modular representation theory. A basis for $\Sym^p$ is labeled by $p$-regular Young diagrams and embeds non-trivially into $\Sym$. To encompass all characteristics, we set $\Sym^0=\Sym$.

\begin{introtheorem} Let $\kk$ be a splitting field for $u_\alpha$ of arbitrary characteristic $p$, possibly zero. Then
 $$ \gr\Gr( \DCob{\alpha} ) \cong \bigotimes_{z \in Z} \Sym^p ,
$$
where $Z$ is the set of distinct zeros of $u_\alpha$.
In particular, indecomposable objects in $ \DCob{\alpha}$ are parametrized by $Z$-indexed tuples of ($p$-regular) Young diagrams.
\end{introtheorem}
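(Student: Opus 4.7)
The plan is to bootstrap from Theorem~A in a way that is insensitive to the characteristic, by replacing the ring of symmetric functions $\Sym$ with $\Sym^p$ at each step. The main workhorses will be (i) the external tensor product decomposition of $\DCob{\alpha}$ along the partial fraction decomposition of $\alpha$ due to Khovanov--Kononov--Ostrik, and (ii) the associated graded category construction introduced earlier in this paper.

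First, I would reduce to the case of a single zero. The partial fraction decomposition writes $\alpha = \sum_{z \in Z} \alpha_z$ with each $u_{\alpha_z}$ a power of $(t-z)$, yielding an equivalence $\DCob{\alpha} \simeq \boxtimes_{z \in Z} \DCob{\alpha_z}$ of symmetric monoidal Krull--Schmidt categories. This equivalence respects the natural filtration (which counts the number of ``free'' strands / components), so upon passing to associated graded categories and then to Grothendieck rings it induces an isomorphism
\[
\gr\Gr(\DCob{\alpha}) \;\cong\; \bigotimes_{z \in Z} \gr\Gr(\DCob{\alpha_z}).
\]
Thus it suffices to prove, for each $z\in Z$, that $\gr\Gr(\DCob{\alpha_z}) \cong \Sym^p$, and that its indecomposables are parametrized by $p$-regular Young diagrams.

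Next, I would analyze $\gr\DCob{\alpha_z}$ directly. The idea is that after forming the associated graded, the ``curvature'' coming from genus-$i$ closed surfaces and from the relation $u_{\alpha_z}(x)=0$ collapses into a polynomial/nilpotent algebra attached to the handle generator $x$, while the combinatorics of disjoint components yields a symmetric group action. Concretely, I expect that the endomorphism algebra of the $n$-fold tensor power of the generating object in $\gr\DCob{\alpha_z}$ is isomorphic, up to Morita equivalence, to the crossed product $\kk[x]/(u_{\alpha_z}(x+z))\wr S_n$, i.e.\ a wreath product of a local $\kk$-algebra with $S_n$. Since $\kk$ is a splitting field for $u_\alpha$ (hence for the local factor at $z$), base change identifies this with the algebra $\kk[S_n]$ up to Morita equivalence as far as the graded Grothendieck ring is concerned, because the polynomial generator is nilpotent and contributes only to the filtration.

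Once these identifications are made, one assembles the answer: the indecomposable objects in $\gr\DCob{\alpha_z}$ correspond to the indecomposable projective $\kk[S_n]$-modules for $n \geq 0$, which in characteristic $p$ are labeled by $p$-regular Young diagrams (and by all Young diagrams when $p=0$). The tensor product in $\DCob{\alpha}$ induces, on the associated graded Grothendieck level, exactly the induction product on $\bigoplus_n \Gr(\rfproj{\kk[S_n]})$, which is the defining product of $\Sym^p$. Combining with the external tensor product reduction above gives the desired isomorphism and the parametrization of indecomposables by $Z$-indexed tuples of $p$-regular Young diagrams.

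The hard part will be Step 2: identifying the endomorphism rings in $\gr\DCob{\alpha_z}$ as wreath products and checking that the filtration on $\DCob{\alpha}$ is compatible enough with the Krull--Schmidt structure (and with the Khovanov--Kononov--Ostrik decomposition) to guarantee that $\Gr(\gr\DCob{\alpha})\cong \gr\Gr(\DCob{\alpha})$ in positive characteristic. This is where the Galois descent for Krull--Schmidt categories promised in the abstract should enter, to handle the passage between $\kk$ and its relevant extensions where the splitting hypothesis kicks in.
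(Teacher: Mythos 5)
Your proposal is correct and follows essentially the same route as the paper: the tensor-generator filtration identifies $\gr\DCob{\alpha}$ with $\bigoplus_{n\geq 0}\rfproj{(P_n\rtimes\kk[S_n])}$ where $P_n=(\kk[x]/(u_\alpha(x)))^{\otimes n}$, the ideal generated by the radical of the polynomial part lies in the Jacobson radical of the crossed product and hence can be killed without changing the Grothendieck ring of projectives (this is the precise version of your ``Morita equivalence as far as the graded Grothendieck ring is concerned''---the algebras are not literally Morita equivalent when $u_{\alpha_z}$ has multiplicity), and the Khovanov--Kononov--Ostrik partial-fraction decomposition, checked to be compatible with the filtrations, combined with the splitting-field tensor decomposition of Grothendieck rings assembles the single-root pieces into $\bigotimes_{z\in Z}\Sym^p$ with the induction product. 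The only correction is that Galois descent plays no role in this splitting-field case: since $\kk$ splits $u_\alpha$, the explicit classification of primitive idempotents in $P_n\rtimes\kk[S_n]$ already shows $\kk$ is a splitting field for each $\DCob{\alpha_z}$, which is all that the tensor decomposition of Grothendieck rings requires; descent is only needed for the version of the theorem over a non-splitting field.
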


Finally, in order to generalize this result to the case where $u_\alpha$ does not split over the field $\kk$, we have to take invariants with respect to a Galois action.

\begin{introtheorem}[\Cref{theorem:main_classification}]
Let $\kk$ be any field of characteristic $p$ (possibly zero) and $\KK'$ a splitting field of $u_\alpha$. Then
$$ \gr\Gr( \DCob{\alpha} ) \cong \bigg(\bigotimes_{z \in Z} \Sym^p\bigg)^G,
$$
where $G=\Aut(\KK'|\kk)$ is the automorphism group of $\KK'$ fixing $\kk$ and $Z$ is the $G$-set of (distinct) zeros of $u_{\alpha}$ in $\KK'$.
In particular, indecomposable objects in $ \DCob{\alpha}$ are parametrized by orbit sums of $Z$-indexed tuples of ($p$-regular) Young diagrams.
\end{introtheorem}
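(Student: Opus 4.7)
The plan is to reduce to the split case, Theorem B, by base-changing along the field extension $\kk \subset \KK'$ and then invoking the Galois descent machinery for Krull--Schmidt categories developed earlier in the paper.

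First, I would check that the formation of $\DCob{\alpha}$ is compatible with scalar extension. Since $\alpha \in \kk[[t]]$ and the polynomial $u_\alpha(t)$ lies in $\kk[t]$, all defining relations of $\DCob{\alpha}$ are already defined over $\kk$, so the Karoubian completion of $\DCob{\alpha}^{\kk}\otimes_\kk \KK'$ is equivalent to $\DCob{\alpha}^{\KK'}$. Under this identification, the Galois group $G=\Aut(\KK'|\kk)$ acts on $\DCob{\alpha}^{\KK'}$ by $\kk$-linear symmetric monoidal autoequivalences through its action on scalars, and the induced action on the zero set of $u_\alpha$ coincides with the prescribed $G$-action on $Z$.

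Second, I would apply Theorem B over $\KK'$: its indecomposable objects are parametrized by $Z$-indexed tuples of ($p$-regular) Young diagrams, and
$$\gr\Gr\bigl(\DCob{\alpha}^{\KK'}\bigr) \cong \bigotimes_{z \in Z}\Sym^p.$$
By the first step, the action of $g \in G$ sends the tensor factor indexed by $z$ to the tensor factor indexed by $g\cdot z$, inducing a $G$-action by permutation of $Z$-coordinates on the right-hand side and a compatible action on the parametrising tuples of Young diagrams.

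Finally, I would invoke the Galois descent theorem for Krull--Schmidt categories: the indecomposables of $\DCob{\alpha}$ over $\kk$ correspond to $G$-orbits of indecomposables of $\DCob{\alpha}^{\KK'}$, each orbit sum descending to a $\kk$-linear indecomposable whose endomorphism ring is a division algebra over $\kk$ that becomes a matrix algebra over $\KK'$. Passing to Grothendieck rings and then to associated gradeds produces the desired isomorphism
$$\gr\Gr(\DCob{\alpha}) \cong \Bigl(\bigotimes_{z \in Z}\Sym^p\Bigr)^G.$$
The main obstacle is the descent step: one has to verify that passage to $G$-invariants commutes with the associated graded construction and with formation of the Grothendieck group, and that no Brauer-type obstruction prevents the orbit sums from realising the full $G$-invariant subring in the associated graded. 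This is precisely where the abstract descent machinery developed earlier in the paper, together with the crossed-product description mentioned in the abstract, does the required bookkeeping.
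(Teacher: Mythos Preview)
Your overall strategy matches the paper's: pass to a splitting field, apply the split-case result, and descend using the Galois machinery. Two points deserve attention, one a genuine gap and one a sharpening of where the real work lies.

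The gap: you apply Galois descent directly along $\kk \subseteq \KK'$, but the paper's descent theorem (\Cref{theorem:galois_descent}) requires a \emph{finite Galois} extension. A splitting field of $u_\alpha$ is always normal over $\kk$, but in characteristic $p>0$ it need not be separable (e.g.\ when $u_\alpha$ has an inseparable irreducible factor), so $\kk\subseteq\KK'$ need not be Galois and the descent theorem does not apply as stated. The paper fixes this by inserting the separable closure $\KK$ of $\kk$ inside $\KK'$: then $\kk\subseteq\KK$ is Galois, $\KK\subseteq\KK'$ is purely inseparable, and restriction gives $\Aut(\KK'|\kk)\cong\Gal(\KK|\kk)$. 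Descent is applied only to the Galois piece, while the purely inseparable step is handled by the elementary observation (\Cref{lemma:insep_case}) that the relevant idempotents in $(\KK'[x]/u_\alpha)^{\otimes n}\rtimes S_n$ already lie over $\KK$, yielding $\Gr(\cS_{u_\alpha^{\KK}})\cong\Gr(\cS_{u_\alpha^{\KK'}})$.

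The sharpening: you correctly flag the Brauer-type obstruction---the inclusion $\Gr(\cC)\subseteq\Gr(\cC^{\KK})^G$ can be strict, as the paper's quaternion example (\Cref{example:quaternions}) shows. The paper rules this out not by an abstract argument but by an explicit construction: working in the associated graded $\gr\DCob{\alpha}\simeq\cS_{u_\alpha}$ rather than in $\DCob{\alpha}$ itself, it builds for each $G$-orbit of indecomposables a concrete $G$-equivariant object whose class is exactly the orbit sum (\Cref{lem:FisGequivariant}, \Cref{corollary:constructed_orbit_sum_obj}). This is where the crossed-product description $P_n\rtimes\kk[S_n]$ does real work, and it is why the paper first passes to the associated graded and only then applies descent---the equivariant objects are assembled from the explicit idempotents $E_{\lambda_1,\dots,\lambda_r}$, which live naturally in $\cS_{u_\alpha^{\KK}}$.
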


To derive the main theorem \Cref{theorem:main_classification} we develop more general results of associated graded categories (\Cref{section:categorification_of_gr}) and field extensions and Galois descent  (Sections~\ref{sec::field-extensions}, \ref{sec:galoisdecent}) that can be applied to the much wider class of \emph{Krull--Schmidt categories}---i.e.~categories in which, in particular, direct sum decomposition are unique up to reordering the summands, see e.g.~\cite{Krause}.  

First, in \Cref{section:categorification_of_gr} we study Krull--Schmidt categories $\cC$ with filtrations and define the associated graded category $\gr \cC$. 
The category $\gr \cC$ has the same indecomposable objects as $\cC$ (\Cref{corollary:indec_for_gr}), inherits monoidal structures and braidings from $\cC$, and $\gr(\cC\boxtimes_\kk \cC')\simeq \gr \cC\boxtimes_\kk \gr \cC'$ (\Cref{cor:tensorfiltrations}). Taking the associated graded commutes with taking the additive Grothendieck ring in the following sense, see  \Cref{theorem:iso_graded_rings}:
$$\Gr( \gr(\cC )) \cong \gr( \Gr( \cC ) ).$$
As a special case of interest, we consider the case where $\cC$ has a tensor generator. Then there is a natural filtration on $\cC$ with respect to which $\gr\cC$ is a direct sum of categories of projective modules for a tower of algebras (\Cref{sec::tensor-generator}).

In another general section (\Cref{sec::field-extensions}), we study the scalar extension $\cC^\KK$ of a $\kk$-linear Krull--Schmidt category $\cC$ with respect to a field extension $\kk\subseteq \KK$. If $\cC$ has finite-dimensional morphism spaces, then sending $X\mapsto X^\KK$ induces a monomorphism 
$\Gr (\cC)\hookrightarrow \Gr(\cC^\KK)$
on additive Grothendieck rings. This monomorphism is an isomorphism if $\kk$ is a \emph{splitting field} for the category $\cC$ (see \Cref{def::splitting-field-for-C}). If $\kk$ is a splitting field for either $\cC$ or $\cD$, then we prove that 
$$\Gr (\cC\boxtimes_\kk \cD)\cong \Gr(\cC)\otimes_\mZ \Gr(\cD),$$
see \Cref{corollary:grothendieck_dec}. This follows by proving that indecomposable objects in $\cC\boxtimes_\kk \cD$ are given by $X\boxtimes Y$ for indecomposables $X$ of $\cC$, $Y$ of $\cD$ in this case (see \Cref{theorem:grothendieck_dec}). If $\cC$, $\cD$ are monoidal categories then the above isomorphism is one of rings.

In \Cref{sec:galoisdecent}, we study Krull--Schmidt categories with strict actions of a group $G$ (\Cref{sec:groupactions}) and Galois descent. Given a strict $G$-action on $\cC$, denote by $\cC^G$ the category of $G$-equivariant objects. In particular, the group  $G=\Aut(\KK|\kk)$ of a field extension $\kk\subseteq \KK$ naturally acts on $\cC^\KK$ and Galois descent (see e.g. \cite{Bourbaki}*{Chapter V.62,~Proposition 7}) extends to the following result: If $\kk \subseteq \KK$ is a finite Galois extension with Galois group $G$ and $\cC$ is a $\kk$-linear hom-finite Krull--Schmidt category, then by \Cref{theorem:galois_descent}, we have an equivalence
$$ \cC \xrightarrow{~\simeq~} ( \cC^\KK )^G .$$

With these tools at hand, the main theorem \Cref{theorem:main_classification} is proved by exhibiting an equivalence of monoidal categories
$$\gr \DCob{\alpha}\simeq \bigoplus_{n\geq 0} \rfproj{(P_n\rtimes\kk[S_n])},$$
where $P_n$ are certain commutative rings with $n$ generators. We explicitly construct the primitive idempotents in the crossed product rings on the right-hand side over a suitable splitting field $\KK'$. This implies an explicit classification of the indecomposable objects in $\DCob{\alpha}$ (see \Cref{prop::indecomposables}).
Finally, we upgrade these idempotents to equivariant objects representing orbit sums over $\Aut(\KK'|\kk)$ in order to apply the categorical Galois descent theorem above.

\medskip

Our results specify to particularly interesting rational series $\alpha$ relating to interpolation categories of classical representation categories, see \Cref{sec:grRepSt} and \Cref{sec:examples}. 
\begin{enumerate}
    \item As a special case, we extend the description of the associated graded of the Grothendieck ring of $\uRep_\kk(S_T)$ to arbitrary fields. Namely,
$$
\gr\Gr(\uRep(S_T)) \cong \Sym^p,
$$
see \Cref{cor::gr-RepSt}. This generalizes the result over an algebraically closed field $\kk$ of characteristic zero of \cite{Del}*{Proposition~5.11} for $T\notin\mZ_{\geq 0}$ (and \cite{CO1}*{Proposition~3.12}, \cite{Har2}*{Theorem~3.3} for general $T$).
    \item In the case $\alpha\in \kk[t]$, we derive that $\gr\Gr (\DCob{\alpha})\cong \Sym^p$, see \Cref{sec:polynomialcase}. The case $\alpha=c\in \kk^\times$, see \Cref{sec:constantcase}, leads to a family of categories interpolating the Karoubian tensor subcategory of $\Rep \mathfrak{osp}(1|2)$ generated by the defining representation. Note that in the case $\deg \alpha=1$, $\DCob{\alpha}$ interpolates certain representation categories of orthogonal or orthosymplectic groups by \cite{KKO}*{Section~7.1}.
    \item \Cref{sec:nonsplitting examples} contains  examples involving  irreducible polynomials of degree two in order to demonstrate the main theorem \Cref{theorem:main_classification} in case $u_\alpha$ does not split into linear factors over $\kk$.
    \item In \Cref{sec:inflations} we explain how to inflate the categories $\DCob{\alpha}$ by an additional polynomial factor in the relation $u_\alpha(t)=0$. This way, the families $\DCob{c}$ or $\DCob{T/(1-t)}$ can be extended at the special values $c=0$ or $T=0$ having the same partition bases for morphism spaces. For example, this way, $\uRep_\kk(S_0)$ can be interpreted in the general class of cobordism categories. 
\end{enumerate}

\subsection*{Acknowledgments} 
The authors like to thank M.~Khovanov and V.~Ostrik for helpful exchanges regarding the categories $\DCob{\alpha}$ studied in this paper, T.~Heidersdorf for helpful discussions about the representations of orthosymplectic supergroups, and the referee for helpful comments improving the exposition of the paper.
The research of J.~F. was supported by the Deutsche Forschungsgemeinschaft (DFG, German Research Foundation) -- Project-ID 286237555 -- TRR 195.
The research of R.~L. is supported by a Nottingham Research Fellowship.

\tableofcontents

\section{Preliminaries}

\subsection{Conventions and notations}
We always denote by $\kk$ a field and by $\K$ a commutative ring.
A category $\cC$ is called \emph{$K$-linear} if it is enriched over $\K$-modules.
Note that a \emph{pre-additive} category is simply a $\mZ$-linear category.
A $\K$-linear category $\cC$ is called \emph{additive} if it has all finite direct sums, and we denote by $\cC^\oplus$ the \emph{additive closure} of $\cC$.
If $\cC$ is the full subcategory of an additive $\K$-linear category $\cD$, then $\cC^\oplus$ can be identified with the full subcategory of $\cD$ spanned by all finite direct sums $(\bigoplus_{i=1}^n X_i) \in \cD$ with $X_i \in \cC$ and $n \geq 0$.

We denote by $\Kar(\cC)$ the \emph{Karoubian envelope} of a $\K$-linear category $\cC$, i.e., its idempotent completion.
Its objects are given by pairs $(X,e)$ with $X \in \cC$ and $e \in \End_{\cC}(C)$ an idempotent.
A morphism $(X,e) \rightarrow (X',e')$ between objects in $\Kar(\cC)$ is given by $\alpha \in \Hom_{\cC}( X, X' )$ such that $e' \circ \alpha \circ e = \alpha$. 
If $\cC$ is a full subcategory of an idempotent complete $\K$-linear category $\cD$, then $\Kar( \cC )$ can be identified with the full subcategory of $\cD$ spanned by all summands of objects in $\cC$.
For any $\K$-linear category $\cC$, the category $\Kar( \cC^{\oplus} )$ is always $\K$-linear, additive, and idempotent complete.

Given a family $\{\cC_i\}_{i\in I}$ of $\K$-linear categories for an index set $I$, the \emph{direct sum category} $\bigoplus_{i\in I}\cC_i$ is the full subcategory of the product category $\prod_{i \in I} \cC_i$ spanned by those families of objects $(C_i)_{i \in I}$ such that all but finitely many of the $C_i$ are isomorphic to zero (cf. \cite{EGNO}*{Section~1.3}).
Clearly, the direct sum is again $\K$-linear and inherits properties such as being additive, idempotent complete or Krull--Schmidt (see \Cref{sec::Krull-Schmidt}) from the $\cC_i$.

For a ring $R$, we denote by $\lmod{R}$ ($\rmod{R}$, respectively)
the category of left (right, respectively) $R$-modules.
We denote by $\rfproj{R}$ the category of finitely generated projective right $R$-modules. If $R$ is a finite-dimensional $\kk$-algebra, we denote by
$\lfmod{R}$ ($\rfmod{R}$, respectively)
the category of \emph{finite-dimensional} left (right, respectively) $R$-modules.
We denote the category of $\kk$-vector spaces by
$\lVec{\kk}$, and by $\lvec{\kk}$ the category of finite-dimensional $\kk$-vector spaces.

\begin{lemma}[{\cite{Krause}*{Proposition 2.3}}]\label{remark:KS_and_proj} 
For an object $X$ of an additive and idempotent
complete $\K$-linear category $\cC$, the functor
\[
Y \mapsto \Hom_{\cC}( X, Y )\colon \Kar( \{ X \}^{\oplus} ) \rightarrow \rfproj{(\End_{\cC}(X))} 
\]
is an equivalence of categories.
\end{lemma}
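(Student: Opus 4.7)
The plan is to run the standard Morita-type argument: show the functor, call it $F$, is fully faithful on the full subcategory $\{X\}^{\oplus} \subseteq \cC$ of finite direct sums of $X$, then extend to the idempotent completion, and finally check essential surjectivity. Throughout, write $R := \End_{\cC}(X)$, and regard $\Hom_{\cC}(X,Y)$ as a right $R$-module via precomposition.

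First, I would observe that the biproduct structure in $\cC$ gives a canonical isomorphism of right $R$-modules $\Hom_{\cC}(X, X^{\oplus n}) \cong R^n$, so that $F$ restricted to $\{X\}^{\oplus}$ lands in the full subcategory of finitely generated free right $R$-modules. The same biproduct identifications identify both $\Hom_{\cC}(X^{\oplus n}, X^{\oplus m})$ and $\Hom_{R}(R^n, R^m)$ with the matrix module $\Mat_{m \times n}(R)$, and one checks that $F$ induces precisely this identification. Thus $F$ is fully faithful on $\{X\}^{\oplus}$.

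Next, I would extend to the Karoubian envelopes. Since $\rfproj{R}$ is additive and idempotent complete, and $F$ restricted to $\{X\}^{\oplus}$ is already fully faithful and lands in the full subcategory of finitely generated free right $R$-modules, the universal property of $\Kar$ gives a unique extension of $F$ to $\Kar(\{X\}^{\oplus})$ that remains fully faithful: concretely, an object $(X^{\oplus n}, e)$ is sent to the image $eR^n \subseteq R^n$, a summand of $R^n$ that is well-defined because $e$ is idempotent and both sides are idempotent complete.

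Finally, for essential surjectivity, every $P \in \rfproj{R}$ is by definition a summand of some $R^n$, hence $P \cong e R^n$ for an idempotent $e \in \End_R(R^n)$, which under the identification above corresponds to an idempotent in $\End_{\cC}(X^{\oplus n})$; then $F(X^{\oplus n}, e) \cong P$. I expect no serious obstacle here beyond carefully fixing the left/right-module conventions so that the ring isomorphism $\End_{\cC}(X^{\oplus n}) \cong \End_R(R^n) \cong \Mat_n(R)$ is natural in the argument; once that is pinned down, all three steps are routine.
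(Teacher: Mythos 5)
Your argument is correct, and it is essentially the standard proof: the paper itself gives no proof but cites Krause's Proposition~2.3, whose argument is exactly this Morita-style one (full faithfulness of $\Hom_{\cC}(X,-)$ on $\{X\}^{\oplus}$ via the matrix identification $\Hom_{\cC}(X^{\oplus n},X^{\oplus m})\cong \Mat_{m\times n}(R)\cong \Hom_R(R^n,R^m)$, passage to the idempotent completion, and essential surjectivity because every finitely generated projective right $R$-module is $eR^n$ for an idempotent $e$). Your handling of the right-module conventions and of the extension to $\Kar(\{X\}^{\oplus})$ is sound, so there is nothing to add.
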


Let $\cC$ and $\cD$ be $\K$-linear categories.
We set $\cC \times_\K \cD$ as the  tensor product of $\K$-linear categories in the sense of \cite{Kelly}*{Section 1.4}, i.e., as the $\K$-linear category whose objects consist of pairs $(X,Y)$ with $X \in \cC$, $Y \in \cD$, and morphisms are given by
\[
\Hom_{\cC \times_{\K} \cD}( (X,Y), (X',Y') )
:=
\Hom_{\cC}(X,X') \otimes_{\K} \Hom_{\cD}(Y,Y').
\]
We also write $X \boxtimes Y$ for the object $(X,Y)$.
Note that even if $\cC$ and $\cD$ are additive and idempotent complete, $\cC \times_{\K} \cD$ is neither in general.
We set $\cC\boxtimes_\K \cD := \Kar((\cC \times_{\K} \cD)^{\oplus})$ and note that if $\cC$ and $\cD$ are both additive, then it can be verified that $\cC\boxtimes_\K \cD \simeq \Kar(\cC \times_{\K} \cD)$.
A \emph{monoidal structure} (i.e., a bilinear functor together with an associator and unit that satisfy the usual pentagon equation and triangle identity) given on both $\cC$ and $\cD$ induces a monoidal structure on $\cC\boxtimes_\K \cD$.

A $\kk$-linear category is called \emph{hom-finite} if all of its homomorphism spaces are finite-dimensional over $\kk$.
By a \emph{Karoubian tensor category} we mean an additive, idempotent complete, and hom-finite $\kk$-linear category which comes equipped with a rigid monoidal structure such that the tensor product is a $\kk$-bilinear functor and $\End(\one)=\kk$. A Karoubian tensor category that is, in addition, abelian is a \emph{tensor category} in the sense of \cite{EGNO}.

\subsection{Young tableaux and representations of symmetric groups} \label{sec::Young}

A \emph{Young diagram} $\l=(\l_1\geq \l_2\geq \ldots)$ is a sequence of integers $\l_i$, such that $\l_n=0$ for $n\gg0$. We denote its \emph{length} by $|\l|=\sum_{i\geq 0}\l_i$. The Young diagrams of length $n$ are in bijection with the irreducible representations of the symmetric group $S_n$ over a field $\kk$ of characteristic zero. 

To make this more precise, let $\l$ be a Young diagram of size $|\l|=n$. We label the boxes of the diagram with the numbers $1$ to $n$ (say, in ascending order along the rows of the diagram), and let $P$ and $Q$ be the subgroups of $S_n$ which preserve each row and each column, respectively. Then each element in $\sigma\in S_n$ can be written uniquely as a product $pq$ where $p\in P$ and $q\in Q$, and we consider $p'_\l:=\sum_{pq=\sigma\in S_n}(-1)^q \sigma$ in $\kk[S_n]$, where $(-1)^q$ is the sign of the permutation $q$. It can be shown that this element is a quasi-idempotent, i.e., there is a non-zero scalar multiple $p_\l$ of $p'_\l$ which is an idempotent. In fact, the idempotent is primitive, and the set $(p_\l)_\l$ is a complete list of idempotents in $\kk[S_n]$ up to conjugation, whose associated cyclic left ideals form a complete set of irreducible representations of $S_n$ over $\kk$ up to isomorphism.

To discuss the case of a field of positive characteristic $p$, we say that a Young diagram is \emph{$p$-regular} if no part $\l_i$ is repeated consecutively $p$ or more times. The set of $p$-regular Young diagrams of size $n$ is in bijection with the indecomposable projective modules over $\kk[S_n]$ for $\cha \kk=p$,  see \cite{J78}*{Theorem 11.5}.

Hence, in any case, we have a bijection between ($p$-regular, if $\cha\kk>0$) Young diagrams $\l$ of size $n$ and conjugacy classes of primitive idempotents in the group algebra $\kk[S_n]$. Let us fix a system of representatives of these conjugacy classes for the rest of the paper, so let $(e_\l)_\l$ be a list of primitive idempotents, complete up to conjugation, in the group algebra $\kk[S_n]$ for $n\geq0$ labeled by the ($p$-regular) Young diagrams, and we choose $e_\l$ to be the Young symmetrizer $p_\l$ if $\cha\kk=0$.

Recall that $\kk$ is a \emph{splitting field} for a finite group $G$ if
every irreducible $\kk[G]$-module is absolutely irreducible, i.e., remains irreducible over the algebraic closure of $\kk$.

\begin{remark}\label{remark:sn_splitting_field}
  Every field is a splitting field
  of the symmetric groups $S_n$ for $n \geq 0$,
  see \cite{J78}*{Theorem 11.5}.
  In particular, if $\cha\kk = p$, then the idempotents $e_{\lambda} \in \kk[S_n]$ can be chosen with coefficients in the prime field $\mF_p$.
\end{remark}

For any $\kk$-linear idempotent complete symmetric monoidal category $\cC$ with braiding $c_{-,-}$ and any object $X\in\cC$, we have an algebra embedding $\iota_X\colon\kk[S_n]\hookrightarrow\End(X^{\o n})$ which sends the transposition $(i,i+1)\in S_n$ to the endomorphism $\id_X^{\o(i-1)}\o c_{X,X}\o\id_X^{\o(n-i-1)}$. This embedding produces an idempotent endomorphism $\iota_X(e_\l)$ for any Young diagram $\l$. Let us define a functor $F^\l\colon\cC\to\cC$ by $F^\l(X):=\im\iota_X(p_\l)$, where the image object is given by a universal epi-mono factorization. In characteristic $0$, where the idempotents used in the process are Young symmetrizers, the resulting functors are usually called \emph{Schur functors} (see \cite{FultonHarris}*{Sec.~6.1}, or \cite{BMT} for a more recent categorical construction of Schur functors).

\subsection{Deligne's interpolation categories} \label{sec::Delignes-categories} Let $K$ be a commutative ring in this section. Pierre Deligne constructed a family of Karoubian tensor categories depending on a parameter $T\in K$ which in a certain precise sense ``interpolate'' the representation categories of all symmetric groups. We review this construction, referring to \cites{Del, CO1} for a more detailed account.

Deligne's category $\RepSt=\uRep_K(S_T)$ has objects $[n]$ labeled by non-negative integers $n\geq0$. For $m,n\geq0$, the space of morphisms from $[m]$ to $[n]$ is defined as $K P_{m,n}$, the free $K$-module over the set $P_{m,n}$ of set partitions of the set $\{1,\dots,m,1',\dots,n'\}$ with $m+n$ elements. An element in $P_{m,n}$ can be represented by a string diagram with $m$ upper and $n$ lower points labeled $1,\dots,m$ and $1',\dots,n'$, respectively, with strings connecting some of the points, where the connected components give a partition of the $m+n$ points. Different string diagrams can represent the same partition, and all subsequent definitions and construction only depend on the set partition the string diagrams define. A typical string diagram representing a partition and, hence, a morphism in $\RepSt$ looks as follows:
\[
\tp{1,3,14,0,11,2,12,0,13}
\]

To compute the composition of two partitions viewed as morphisms in $\RepSt$, we pick string diagrams representing them and stack those vertically, identifying the lower points of the first diagram with the upper points of the second diagram. Now we remove all connected components not containing upper or lower points, effectively reducing the total number of connected components by a number $\ell\geq0$. The composition is then given by the partition the remaining string diagram represents, multiplied by the factor $T^\ell\in K$. Similarly, the tensor product of two morphisms is given by horizontal stacking. Now $\RepSt$ is the additive closure of the Karoubian envelope of the category constructed so far; in particular, every object is a direct sum of pairs $([n],e)$, with $n\geq0$ and $e$ an idempotent in $\End([n])=KP_{n,n}$.

If $K=\kk$ is a field of characteristic zero, $\RepSt$ can be viewed as an interpolation category in the following sense: $\RepSt$ is a Karoubian tensor category for all $T\in K$; it is semisimple if and only if $T\in\mZ_{\geq0}$, while for $T=d\in\mZ_{\geq0}$, it has the group-theoretic category $\Rep(S_d)$ as its unique semisimple quotient, the \emph{semisimplification} in the sense of \cite{EO}.

Besides this interpolation property, $\RepSt$ can be understood as the universal Karoubian tensor category with a special commutative Frobenius algebra. Recall that a \emph{Frobenius algebra} in a monoidal category with tensor unit $\one$ is given by a tuple
$$
(A,\quad
\eta\colon\one\to A,\quad
\mu\colon A\o A\to A,\quad
\eps\colon A\to\one,\quad
\delta\colon A\to A\o A) ,
$$
such that $(A,\eta,\mu)$ defines an algebra, $(A,\eps,\delta)$ defines a coalgebra, and 
$$
(\id_A\o\mu)(\delta\o\id_A) = \delta\mu = (\mu\o\id_A)(\id_A\o\delta)
 .
$$
The Frobenius algebra is called \emph{special} if additionally $\mu\delta=\id_A$. In case the monoidal category we are working in has a symmetric braiding, the Frobenius algebra is called \emph{commutative} if the underlying algebra is commutative, that is $\mu c=\mu$, where $c\in\End(A\o A)$ is the symmetric braiding for the object $A$.

In $\RepSt$, whose tensor unit is $[0]$ and which has a unique symmetric braiding that is
$$
\tp{1,12,0,2,11}
$$
on $[1]\o[1]$, a special commutative Frobenius algebra is given by
$$
([1],\quad
\tp{11}\ ,\quad
\tp{1,11,2}\ ,\quad
\tp{1}\ ,\quad
\tp{11,1,12}) .
$$
In fact, $\RepSt$ is generated as a Karoubian tensor category by the object $[1]$ and the five morphisms defining the braiding and the Frobenius algebra structure, so the symmetric monoidal functors from $\RepSt$ to any other symmetric Karoubian tensor category are given exactly by the special commutative Frobenius algebras of dimension $T$ in that category \cite{Del}*{Proposition~8.3}.

As a subcategory of $\RepSt$, we obtain the \emph{Brauer category}, where instead of allowing all partitions $P_{m,n}$ we consider those consisting of blocks of size $2$, or equivalently, instead of using arbitrary string diagrams, we restrict our attention to those in which each point is connected to exactly one other point. The Karoubian tensor category obtained in this way is denoted by $\RepOt$. As a symmetric Karoubian tensor category, $\RepOt$ is generated by the object $[1]$ and the morphisms
$$
\tp{1,2},\quad\tp{11,12},\quad\tp{1,12,0,2,11} ,
$$
and the symmetric monoidal functors from $\RepOt$ to any other symmetric Karoubian tensor category are given exactly by objects $X$ of dimension $T$ with a symmetric self-duality, i.e.~a pair of morphisms $f\colon X\o X\to\one$, $g\colon\one\to X\o X$ satisfying $(g\o\id_X)(\id_X\o f)=\id_X=(\id_X\o g)(f\o\id_X)$ together with $f=f\circ c$ and $g=c\circ g$, where $c$ is the symmetric braiding on $X\o X$ \cite{Del}*{Proposition~9.4}.

\section{The Khovanov--Sazdanovic interpolation categories}

This section contains background material summarizing results from \cites{KS,KKO} and setting up notation for later use.

\subsection{Cobordism categories}\label{sec:cob}

Let $\Cob{2}$ denote the category of oriented two-dimensional cobordisms, bounding disjoint unions of the closed $1$-manifold $S^1$, up to isotopy. That is, the objects of $\Cob{2}$ are denoted by $[n]$ for non-negative integers $n$ representing a disjoint union of $n$ copies of $S^1$. A morphism $c\colon [m]\to [n]$ is given by an oriented two-dimensional cobordism with boundary given by a disjoint union of $n+m$ copies of $S^1$, with $m$ copies as incoming boundary components, and $n$ copies as outgoing boundary components. 

The datum of such a cobordism can be described combinatorially as a triple $(\pi_c,g_c,f_c)$, where $\pi_c$ is a partition of the set $\{1,\ldots, m,1',\ldots,n'\}$ together with a function 
$$g_c\colon \{1,\ldots, m,1',\ldots,n'\}\to \mN_0,$$
which is constant on the parts of $\pi_c$. The function $g_c$ encodes the genus of each connected component with non-empty boundary of the cobordism $c$. Given such a connected component $c'$ of $c$, the intersection $c'\cap \{1,\ldots, m\}$ denotes the incoming boundary circles, and $c'\cap \{1',\ldots, m'\}$ denotes the outgoing boundary circles. The third piece of data $f_c$ is a sequence of non-negative integers, where $(f_c)_i$ denotes the number of closed components of $c$ of genus $i$. Note that $f_c$ is eventually constantly zero. 

For example, a morphism $[5]\to [6]$ in $\Cob{2}$ is given by the cobordism in \Cref{cobordismexpl}.
\begin{figure}
$$
\vcenter{\hbox{\begin{tikzpicture}[
  tqft,
  every outgoing boundary component/.style={draw=black,fill=black!20},
  every incoming boundary component/.style={draw=black, fill=black!20},
  every lower boundary component/.style={draw},
  every upper boundary component/.style={draw},
  cobordism/.style={fill=black!25},
  cobordism edge/.style={draw},
  view from=incoming,
  cobordism height=2.5cm,
]
\begin{scope}[every node/.style={rotate=0}]
\pic[name=a,
  tqft,
  incoming boundary components=3,
  skip incoming boundary components=2,
  outgoing boundary components=0
  ];
\pic[name=b,
  tqft,
  incoming boundary components=3,
  skip incoming boundary components=2,
  outgoing boundary components=3,
  skip outgoing boundary components=2,
  offset=1,
  genus=2,
  anchor=incoming boundary 1,
  at=(a-incoming boundary 2)
];
\pic[name=c,
  tqft,
  incoming boundary components=2,
  outgoing boundary components=0,
  genus=1,
  anchor={(0,0)},
  at=(b-incoming boundary 3)
];
\pic[name=d,
  tqft,
  incoming boundary components=0,
  outgoing boundary components=2,
  at=(a-incoming boundary 1)
];
\pic[name=e,
  tqft,
  incoming boundary components=0,
  outgoing boundary components=1,
  at=(b-incoming boundary 3),
];
\draw node[label=$1'$] at (0,0) {};
\draw node[label=$1$] at (0,-3.5) {};
\draw node[label=$2'$] at (2,0) {};
\draw node[label=$2$] at (2,-3.5) {};
\draw node[label=$3'$] at (4,0) {};
\draw node[label=$3$] at (4,-3.5) {};
\draw node[label=$4'$] at (6,0) {};
\draw node[label=$4$] at (6,-3.5) {};
\draw node[label=$5'$] at (8,0) {};
\draw node[label=$5$] at (8,-3.5) {};
\draw node[label=$6'$] at (10,0) {};
\end{scope}
{
\draw[fill=black!25] (12,-1.5) ellipse (1cm and 0.8cm);
\def\x{11.5}\def\y{-1.6}\def\w{1}\def\h{0.3}\def\ang{80}
\def\arcA{(\x,\y) to[out=\ang,in=180-\ang] (\x+\w,\y)}
\def\arcB{(\x,\y+\h) to[out=-\ang,in=-180+\ang] (\x+\w,\y+\h) -- (\x+-.5\w,\y+3\h) -- cycle}
\begin{scope} \clip \arcA; \draw[fill=white] \arcB; \end{scope}
\draw \arcA;
}
\end{tikzpicture}}}\ .$$
\caption{An morphism $[5]\to [6]$ in $\Cob{2}$.}
\label{cobordismexpl}
\end{figure}
In this example, $\pi_c=\{\{1,2\},\{3,5,2',4'\},\{4\},\{1',3'\},\{5',6'\}\}$, setting
$$g_c(1)=0,\quad g_c(3)=2,\quad g_c(1')=0,\quad g_c(4)=0,\quad g_c(5')=1,$$
determines the function $g_c$, and $f_c=(0,1,0,\ldots)$ is constantly zero apart from the second entry counting the one closed component of genus $1$.

The composition $dc\colon [k]\to [m]$ of two cobordisms $c\colon [k]\to [l]$, $d\colon [l]\to [m]$ is given by connecting, in order, the outgoing circles of $c$ to the corresponding incoming circles of $d$.

\begin{definition}[$C_n^m$, $C_n^m(<k)$]
We denote the set of all cobordisms with $n$ incoming and $m$ outgoing circles by $C_n^m$. The \emph{weight} of such a cobordism is the maximal genus of a connected component with non-empty boundary, i.e., the maximum values of the function $g_c$. The subset of $C_n^m$ consisting of all cobordisms with maximal weight $k-1$ and no closed components, i.e., $f_c=0$, is denoted by $C_n^m(<k)$. For example, if we remove the single closed component from the cobordism in Equation~\eqref{cobordismexpl} we obtain an element of $C_5^6(<3)$.
\end{definition}

The category $\Cob{2}$ is a monoidal category where each object $[n]$ is self-dual. The tensor product is given by $[n]\otimes [m]=[n+m]$ and disjoint union of cobordisms.

We fix notation for some basic morphisms
\begin{gather*}
\sm:=\vcenter{\hbox{
\begin{tikzpicture}[
  tqft,
  every outgoing boundary component/.style={draw=black,fill=black!20},
  every incoming boundary component/.style={draw=black, fill=black!20},
  every lower boundary component/.style={draw},
  every upper boundary component/.style={draw},
  cobordism/.style={fill=black!25},
  cobordism edge/.style={draw},
  view from=incoming,
  cobordism height=2cm,
]
\begin{scope}[every node/.style={rotate=0}]
\pic[tqft/pair of pants,
  at={(0,0)}
  ];
\draw node[label=$1'$](incoming boundary 1) at (0,0) {};
\draw node[label=$1$](outgoing boundary 1) at (-1,-3) {};
\draw node[label=$2$](outgoing boundary 1) at (1,-3) {};
\end{scope}
\end{tikzpicture}}}
, \qquad
\sdel:=\vcenter{\hbox{
\begin{tikzpicture}[
  tqft,
  every outgoing boundary component/.style={draw=black,fill=black!20},
  every incoming boundary component/.style={draw=black, fill=black!20},
  every lower boundary component/.style={draw},
  every upper boundary component/.style={draw},
  cobordism/.style={fill=black!25},
  cobordism edge/.style={draw},
  view from=incoming,
  cobordism height=2cm,
]
\begin{scope}[every node/.style={rotate=0}]
\pic[tqft/reverse pair of pants,
  at={(0,0)}
  ];
\draw node[label=$1'$](incoming boundary 1) at (0,0) {};
\draw node[label=$2'$](incoming boundary 1) at (2,0) {};
\draw node[label=$1$](outgoing boundary 1) at (1,-3) {};
\end{scope}
\end{tikzpicture}}}\, \qquad 
\sx:=\vcenter{\hbox{
\begin{tikzpicture}[
  tqft,
  every outgoing boundary component/.style={draw=black,fill=black!20},
  every incoming boundary component/.style={draw=black, fill=black!20},
  every lower boundary component/.style={draw},
  every upper boundary component/.style={draw},
  cobordism/.style={fill=black!25},
  cobordism edge/.style={draw},
  view from=incoming,
  cobordism height=2cm,
]
\begin{scope}[every node/.style={rotate=0}]
\pic[tqft/cylinder to prior,
  at={(1,0)}
  ];
\draw node[label=$1'$](incoming boundary 1) at (0,0) {};
\draw node[label=$2'$](incoming boundary 2) at (1,0) {};
\draw node[label=$1$](outgoing boundary 1) at (0,-3) {};
\draw node[label=$2$](outgoing boundary 2) at (1,-3) {};
\pic[tqft/cylinder to next,
  at={(0,0)}
  ];
\draw node[label=$1'$](incoming boundary 1) at (0,0) {};
\end{scope}
\end{tikzpicture}}}\, ,\\
\scap:=\vcenter{\hbox{
\begin{tikzpicture}[
  tqft,
  every outgoing boundary component/.style={draw=black,fill=black!20},
  every incoming boundary component/.style={draw=black, fill=black!20},
  every lower boundary component/.style={draw},
  every upper boundary component/.style={draw},
  cobordism/.style={fill=black!25},
  cobordism edge/.style={draw},
  view from=incoming,
  cobordism height=2cm,
]
\begin{scope}[every node/.style={rotate=0}]
\pic[tqft/cap,
  at={(0,0)}
  ];
\draw node[label=$1$](incoming boundary 1) at (0,-3) {};
\end{scope}
\end{tikzpicture}}}\,
, \qquad
\scup:=\vcenter{\hbox{
\begin{tikzpicture}[
  tqft,
  every outgoing boundary component/.style={draw=black,fill=black!20},
  every incoming boundary component/.style={draw=black, fill=black!20},
  every lower boundary component/.style={draw},
  every upper boundary component/.style={draw},
  cobordism/.style={fill=black!25},
  cobordism edge/.style={draw},
  view from=incoming,
  cobordism height=2cm,
]
\begin{scope}[every node/.style={rotate=0}]
\pic[tqft/cup,
  at={(0,0)}
  ];
\draw node[label=$1'$](incoming boundary 1) at (0,0) {};
\end{scope}
\end{tikzpicture}}}\, .
\end{gather*}
The following lemma is well known.

\begin{lemma}\label{generators}
The monoidal category $\Cob{2}$ is generated under composition and tensor product by the morphisms
$\sm,\sdel, \sx,\scup, \scap.$
\end{lemma}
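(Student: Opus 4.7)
The plan is to invoke Morse theory for surfaces with boundary. Given an arbitrary cobordism $c\colon [m]\to [n]$, represented by an oriented surface $\Sigma$ whose boundary is parameterized as $h^{-1}(0)=[m]$ and $h^{-1}(1)=[n]$, the first step is to choose a generic Morse function $h\colon\Sigma\to[0,1]$ with only non-degenerate interior critical points, all at pairwise distinct critical values $0<t_1<\dots<t_r<1$. For any regular value $t$, the preimage $h^{-1}(t)$ is a disjoint union of circles, and over a regular interval $[t,t']$ the restriction $h^{-1}[t,t']$ is (noncanonically) a disjoint union of cylinders.

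Next I would analyze what happens when passing through a single critical value $t_i$. A standard local model shows that an index-$0$ critical point contributes a birth of a circle (the generator $\scup$), an index-$2$ critical point contributes the death of a circle (the generator $\scap$), and an index-$1$ critical point contributes either a merge of two circles ($\sm$) or a split of one circle ($\sdel$), depending on whether the ascending/descending sphere joins two different circles or separates one. Tensoring each such elementary cobordism with identities on the circles that are not involved, the whole cobordism $\Sigma$ is a composition of tensor products of $\scup,\scap,\sm,\sdel$ and identity cylinders.

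The elementary piece associated with a critical point need not sit at the leftmost position among the circles at the corresponding level; hence I would use the braiding $\sx$ to implement the required permutation of circles. Since the symmetric group $S_k$ is generated by adjacent transpositions, any such permutation is a composition of tensor products of $\sx$ with identities. Together with the previous paragraph, this expresses $c$ as a composition of tensor products of the five listed generators.

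The principal obstacle is to ensure that the construction is well-defined on the isotopy classes that constitute the morphisms of $\Cob{2}$: different choices of Morse function, of the labelling of circles at each level, and of the intermediate cylindrical identifications, must all yield the same morphism in $\Cob{2}$. This is where one appeals to Cerf theory and the classification of compact oriented surfaces with boundary; however, for the present generation statement one only needs the \emph{existence} of such a decomposition, so this well-definedness is not actually required, and the result follows. (A clean, more algebraic route, which I would cite rather than reprove, is to combine the classification of surfaces with the fact that composition and tensor product in $\Cob{2}$ suffice to realize every diffeomorphism type of cobordism with a specified parameterization of its boundary.)
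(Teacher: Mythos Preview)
The paper does not actually prove this lemma: it simply introduces it with the phrase ``The following lemma is well known'' and states it without argument. So there is no proof in the paper to compare against.

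Your Morse-theoretic sketch is the standard approach and is essentially correct. The key observation you make at the end is the right one: since the lemma only asserts \emph{generation}, you need only exhibit some decomposition of an arbitrary cobordism into the five generators, and the existence of a Morse function with distinct critical values gives you exactly that. The Cerf-theory relations (handle slides, birth--death cancellations, etc.) would only be needed if you were proving a presentation of $\Cob{2}$ by generators and relations, which is a stronger statement. One small point worth making explicit: the identification of $h^{-1}(t)$ with a standard object $[k]$ at each regular level requires a choice of ordering of the circles, and different choices differ by a permutation; this is precisely what forces $\sx$ into the generating set, as you note.
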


For a field $\kk$, the \emph{linearization} $\kk\Cob{2}$ is the $\kk$-linear category that has the same objects as $\Cob{2}$ and whose morphisms are formal linear combinations of morphisms in $\Cob{2}$. Specifically, $\Hom_{\kk \Cob{2}}([n],[m])$ consists of formal linear combinations of elements from $D_n^m$. By construction, the morphisms from Lemma \ref{generators} generate $\kk \Cob{2}$ under tensor product, composition, and $\kk$-linear combinations.

\subsection{The monoidal categories \texorpdfstring{$\DCob{\alpha}$}{DCob(alpha)}}
\label{sect:DCob}

In \cite{KS}, Khovanov--Sazdanovic define quotients of $\kk\Cob{2}$, based on a sequence $\alpha=(\alpha_0,\alpha_1,\ldots)$ of elements in a field $\kk$ that generalize the interpolation categories introduced by Deligne. We briefly recall the construction here.

Assume from now on that the sequence $\alpha$ is given by a rational function
\begin{align}\label{eq:alphaconventions}
    \alpha(t)&=\sum_{i\geq 0}\alpha_i t^i=p(t)/q(t), & p(t),q(t)\in \kk[t] \text{ coprime}, \text{ and } q(0)=1.
\end{align}
In particular, we use the convention that $0=0/1$.
We write 
\begin{align*}
q(t)=1-q_1 t\pm\ldots +(-1)^m q_mt^m,\qquad q_i\in \kk, q_m\neq 0,
\end{align*}
and for 
$k=\max \{\deg p(t)+1,\deg q(t)\}$ denote
\begin{align}\label{eq:ualpha}
    u_\alpha(t)
    := t^k q(t^{-1})
    = t^k-q_1 t^{k-1}\pm\ldots+(-1)^m q_m t^{k-m}.
\end{align}
We use the convention that $\deg 0=-1$. Hence, $u_0(t)=1$ in the case $\alpha=0$. 

We further denote
\begin{align}
    x&:= \sm\sdel\in \End_{\Cob{2}}([1]),&
    s_i&:=\scap x^i \scup\in \End_{\Cob{2}}([0]), &\text{for $i\geq 0$,}
\end{align}
and observe that $s_i$ is a closed genus $i$ surface.

\begin{definition}\label{def:DCob}
The category $\SCob{\alpha}$ is defined as the quotient category of $\kk\Cob{2}$ by the ideal generated under $\kk$-linear combinations, two-sided composition, and tensor product, by the relations
\begin{align}
s_i&=\alpha_i \id_{[0]},\quad \forall i\geq0, & u_\alpha(x)&=0.
\end{align}
The category $\DCob{\alpha}$ is defined as the Karoubian envelope of $\SCob{\alpha}^\oplus$, i.e., the idempotent completion of the closure of $\SCob{\alpha}$ under finite direct sums.
\end{definition}

Recall that $u_{\alpha}(t)$ is a polynomial of degree $k$, thus, $u_{\alpha}(x) = 0$ implies that any cylinder with $k$ handles or more can be written as a linear combination of cylinders with strictly less than $k$ handles (see \Cref{example::special-DCob} below).

\begin{example}If $\alpha=0$, the object $[1]$ becomes isomorphic to the zero object $0$ of the additive category. Therefore, $\SCob{\alpha}$ (and hence $\DCob{\alpha}$) are equivalent to $\lvec{\kk}$.
\end{example}

\begin{theorem}[Khovanov--Sazdanovic, \cite{KS}*{Theorem~1}]\label{thm:universal-prop}
The category $\DCob{\alpha}$ is a symmetric monoidal category with finite-dimensional morphism spaces. A basis for $\Hom_{\DCob{\alpha}}([n],[m])$ is given by the morphisms corresponding to elements of the set $C_n^m(<k)$.
\end{theorem}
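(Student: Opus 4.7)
The plan is to establish the theorem in three steps: the symmetric monoidal structure, the spanning property of $C_n^m(<k)$, and the linear independence. Since $[n]$ and $[m]$ already live in $\SCob{\alpha}$, passing to the additive closure and the Karoubian envelope does not change the Hom spaces, so it suffices to prove the basis statement in $\SCob{\alpha}$.

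\textbf{Monoidal structure.} The two-sided ideal of $\kk\Cob{2}$ generated by $s_i - \alpha_i \id_{[0]}$ and $u_\alpha(x)$ is by construction closed under composition, tensor product, and $\kk$-linear combinations. Hence the quotient $\SCob{\alpha}$ inherits the symmetric monoidal structure of $\Cob{2}$, and this structure then passes to $\SCob{\alpha}^{\oplus}$ and to its Karoubian envelope $\DCob{\alpha}$.

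\textbf{Spanning.} Given any $c \in C_n^m$, I would reduce it modulo the defining relations to a $\kk$-linear combination of elements of $C_n^m(<k)$. First, every closed genus-$i$ component of $c$ is, by the Frobenius structure on $[1]$, a disjoint-union factor equal to $s_i$, so it can be detached and replaced by the scalar $\alpha_i$. Once all closed components are removed, consider a connected component with boundary whose genus is at least $k$. Using the commutative Frobenius algebra relations on $[1]$ in $\Cob{2}$, handles on such a component can be slid onto a single cylinder factor inside it; on that cylinder, $g$ handles compose as $x^g$. Applying $u_\alpha(x)=0$ expresses $x^g$ for $g\geq k$ as a $\kk$-linear combination of strictly lower powers of $x$, thereby reducing the genus of the component. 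Iterating gives a presentation inside the span of $C_n^m(<k)$; in particular $\Hom_{\DCob{\alpha}}([n],[m])$ is finite-dimensional, since $C_n^m(<k)$ is finite.

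\textbf{Linear independence.} This is the main obstacle. The strategy is to construct a symmetric monoidal functor $F \colon \kk\Cob{2} \to \lvec{\kk}$ that factors through $\SCob{\alpha}$ and detects the basis. Via the classical correspondence between 2D TQFTs and commutative Frobenius algebras, I would define $A := \kk[x]/u_\alpha(x)$, a commutative $\kk$-algebra of dimension $k$, equipped with the trace $\tr \colon A \to \kk$ determined by $\tr(x^i) = \alpha_i$ for $0 \leq i < k$. Two checks are required: first, that the bilinear form $(a,b) \mapsto \tr(ab)$ on $A$ is non-degenerate, so that $A$ is a genuine commutative Frobenius algebra; this follows from the coprimality of $p$ and $q$ in \eqref{eq:alphaconventions} together with the specific shape of $u_\alpha = t^k q(t^{-1})$. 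Second, that the induced TQFT $F$ factors through $\SCob{\alpha}$: the relation $u_\alpha(x) = 0$ holds in $A$ by construction, and $F(s_i) = \alpha_i$ for $i \geq 0$ because for $i < k$ this is the definition of $\tr$, while for $i \geq k$ both sides obey the same recursion coming from $u_\alpha(x)=0$ and the identity $q(t)\alpha(t) = p(t)$. Linear independence of $F(C_n^m(<k))$ would then follow by expanding each $F(c)$ in the basis of $\Hom_{\lvec\kk}(A^{\otimes n}, A^{\otimes m})$ induced from $\{1,x,\dots,x^{k-1}\}$ and reading off a triangular change-of-basis indexed by set partitions decorated by genera in $\{0,\dots,k-1\}$.

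The main difficulty is the last step: a single Frobenius algebra $A$ may fail to separate all basis elements when $u_\alpha$ has repeated roots or does not split over $\kk$. The standard remedy is to pass to a splitting field of $u_\alpha$ (using that $\Hom_{\DCob{\alpha}}([n],[m])$ is defined over $\kk$ and that scalar extension is faithful) and to use a family of Frobenius algebras, one for each linear factor of $u_\alpha$, corresponding to the partial fraction decomposition of $\alpha$ as in \cite{KKO}. This reduces the linear independence question to the constant and linear cases for $\alpha$, where it can be checked by a direct computation.
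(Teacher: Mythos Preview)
The paper does not prove this statement; it is quoted from \cite{KS}. Your monoidal-structure and spanning arguments are correct, but the linear-independence step has a real gap that your proposed remedy does not close.

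A functor $F$ to $\lvec{\kk}$ determined by the Frobenius algebra $A=\kk[x]/u_\alpha(x)$ can never be faithful on Hom spaces, simply for dimension reasons: $\dim_\kk\Hom(A^{\otimes n},A^{\otimes m})=k^{\,n+m}$, whereas $|C_n^m(<k)|=\sum_\pi k^{|\pi|}$, the sum running over set partitions $\pi$ of an $(n+m)$-element set with $|\pi|$ blocks, which is strictly larger as soon as $n+m\ge 2$ and $k\ge 1$. The case $k=1$ already makes the point: here $\DCob{\alpha}\simeq\uRep(S_T)$, the algebra $A$ is one-dimensional, and $F$ sends both $\id_{[1]}$ and $\scup\scap$ to scalars in a one-dimensional space, although these two partitions are linearly independent in $\uRep(S_T)$. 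So no triangular change-of-basis can exist. Passing to a splitting field or to partial fractions does not help: a family of Frobenius algebras in $\lvec{\kk}$ may be replaced by their direct sum, which is again a single Frobenius algebra, and in any event each base case $\DCob{\alpha_i}$ has its own $k_i\ge 1$ and the same dimension obstruction.

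The argument in \cite{KS} goes through the bilinear pairing rather than through a vector-space-valued TQFT. By rigidity one may take $n=0$. The composition pairing $\Hom([0],[N])\times\Hom([N],[0])\to\kk$ sends a pair of cobordisms to the $\alpha$-evaluation of the closed surface obtained by gluing them, and one shows that its Gram matrix on $C_0^N(<k)\times C_N^0(<k)$ is nonsingular. The basic building block of that matrix is the Hankel matrix $H_k=(\alpha_{i+j})_{0\le i,j<k}$, and the coprimality of $p$ and $q$ in \eqref{eq:alphaconventions} together with the minimality of $k=\max\{\deg p+1,\deg q\}$ is exactly Kronecker's classical criterion ensuring $\det H_k\ne 0$. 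So the linear independence is ultimately a Hankel-determinant computation, not a faithfulness statement for a TQFT into vector spaces.
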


\begin{example} \label{example::special-DCob}
The following three examples are worth highlighting:
\begin{enumerate}
    \item[(i)] The case $\alpha=c\in \kk$, $\alpha \neq 0$, gives a category with semisimplification $\Rep^+ \mathfrak{osp}(1|2)$ \cite{KKO}*{Section~5}, where $\Rep^+ \mathfrak{osp}(1|2)$ denotes a certain subcategory of the category of representations of the Lie superalgebra $\mathfrak{osp}(1|2)$ (see \Cref{sect:Liesup}).
    Here, $u_{\alpha}(t) = t$, that is, any cobordism involving handles represents a zero morphism in $\DCob{c}$.
    \item[(ii)] In the case $\alpha=\beta/(1-\gamma t)$ with $\beta,\gamma \neq 0$, it follows that $\DCob{\alpha}\simeq\uRep S_{\beta\gamma}$ \cite{KKO}*{Section~6.1}. Note that as a consequence of this equivalence, we may assume $\gamma=1$.
    Here, $u_{\alpha}(t) = t-1$, so adding or removing handles does not change the morphism represented by any given cobordism in $\DCob{\alpha}$.
    \item[(iii)] The case $\alpha=\beta_0+\beta_1 t$, for $\beta_1\neq 0$, is related to the oriented Brauer category $\uRep \sfO_{\beta_1-2}$ \cite{KKO}*{Section~7.1}.
    Here, $u_{\alpha}(t) = t^2$, i.e., cobordisms featuring two or more handles are $0$.
\end{enumerate}
\end{example}

The categories $\DCob{\alpha}$ have the following universal property, derived from \cite{KKO}*{Section~2.3}. To state this property, for a commutative Frobenius algebra object $(A,\eta,\mu,\varepsilon,\Delta)$ in a symmetric monoidal category $\cS$, we denote $x:=\mu\Delta$ and $s_{n}:= \varepsilon x^n \eta$.

\begin{proposition}\label{prop:universal}
Let $\cS$ be a $\kk$-linear additive idempotent complete symmetric monoidal category. Then there is a bijection between $\kk$-linear symmetric monoidal functors 
$$F\colon \DCob{\alpha}\to \cS$$
and commutative Frobenius algebra objects $A$ in $\cS$ satisfying 
\begin{enumerate}
    \item[(i)] $u_\alpha(x)=0$,
    \item[(ii)] $s_n=\alpha_n$, for $n\leq \max \{\deg p(t)+1,\deg q(t)\}$.
\end{enumerate}
\end{proposition}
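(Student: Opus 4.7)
The plan is to build up the correspondence in three stages: first invoke the classical universal property of $\Cob{2}$, then pass to the $\kk$-linearization and Karoubian additive closure, and finally to $\DCob{\alpha}$ by enforcing its defining relations; the real content will then be a recurrence argument that reduces condition (ii) from infinitely many to finitely many equations.

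For the first two stages, I would use the classical fact that $\Cob{2}$ is the free symmetric monoidal category on a commutative Frobenius algebra object, so that symmetric monoidal functors $\Cob{2} \to \cT$ into any symmetric monoidal $\cT$ correspond bijectively to commutative Frobenius algebras in $\cT$, the correspondence sending $F$ to $F([1])$ equipped with the structure maps $F(\sm)$, $F(\sdel)$, $F(\scap)$, $F(\scup)$ (this fits well with \Cref{generators}). Extending by $\kk$-linearity and using that $\cS$ is additive and idempotent complete, the same data parametrize $\kk$-linear symmetric monoidal functors $\Kar((\kk\Cob{2})^\oplus) \to \cS$ via the universal properties of $(-)^\oplus$ and $\Kar(-)$. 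By \Cref{def:DCob}, the category $\DCob{\alpha}$ is the further quotient obtained by imposing the relations $s_i - \alpha_i \id_{[0]}$ for every $i \geq 0$ and $u_\alpha(x) = 0$, so by the universal property of such a quotient, $\kk$-linear symmetric monoidal functors out of $\DCob{\alpha}$ correspond to commutative Frobenius algebras $A \in \cS$ satisfying $u_\alpha(x) = 0$ and $s_n = \alpha_n \id_{\one}$ for \emph{all} $n \geq 0$.

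It therefore only remains to show that, in the presence of (i), the finitely many conditions $s_n = \alpha_n$ for $n \leq k$ in (ii) already imply the full family $s_n = \alpha_n$ for $n \geq 0$. Multiplying $u_\alpha(x) = 0$ on one side by $x^j$ and pre- and post-composing with $\varepsilon$ and $\eta$ yields the linear recurrence
\[
s_{k+j} = q_1 s_{k+j-1} - q_2 s_{k+j-2} + \ldots + (-1)^{m+1} q_m s_{k+j-m}
\]
for every $j \geq 0$. On the other hand, comparing coefficients of $t^n$ in the identity $p(t) = q(t)\alpha(t)$ and using both $p_n = 0$ for $n > \deg p$ and $k \geq \deg p + 1$, the Taylor coefficients $(\alpha_n)_{n\geq 0}$ satisfy the identical recurrence for $n \geq k$. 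An induction on $j \geq 0$, starting from the agreement $s_n = \alpha_n$ for $n \leq k$ furnished by (ii), then proves $s_n = \alpha_n$ for every $n \geq 0$. The converse direction is immediate, and the main obstacle is precisely this matching of two recurrences: the exact cutoff $k = \max\{\deg p + 1, \deg q\}$ in (ii) is dictated so that both recurrences share the same initial window and can be aligned by induction, any smaller cutoff leaving $\alpha_n$ under-determined.
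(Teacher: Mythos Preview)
Your proposal is correct and follows exactly the approach the paper indicates: the paper does not give a formal proof but simply cites \cite{KKO}*{Section~2.3}, states the correspondence $F \mapsto F([1])$, and remarks that conditions (i)--(ii) imply $s_n = \alpha_n$ for all $n \geq 0$. Your recurrence argument is precisely the content behind that remark, and your reduction via the universal properties of $\Cob{2}$, $\kk$-linearization, additive closure, and Karoubian envelope is the standard route implicit in the citation.
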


If $A$ satisfies (i)--(ii) then it satisfies $s_n=\alpha_n$ for all $n\geq 0$.
The correspondence is given by sending a symmetric monoidal functor $F$ to $A:=F([1])$, and the morphisms $\mu,\Delta,\eta,\varepsilon$ are the images of the morphisms $\sm,\sdel,\scup,\scap$ with the notation of \Cref{generators}.

\section{A categorification of the graded Grothendieck ring}\label{section:categorification_of_gr}

In examples of interest (e.g., for Deligne's interpolation categories of the symmetric groups), the Grothendieck ring of a category comes equipped with a natural filtration, and the associated graded ring turns out to be more manageable. In this section, we lift this idea to a categorical level: we introduce filtrations on Krull--Schmidt categories, define the associated graded category to such a filtration (\Cref{definition:ass_gr_cat}), and prove its compatibility with passing to Grothendieck groups (\Cref{theorem:iso_graded_rings}).

\subsection{Indecomposables in quotients of Krull--Schmidt categories} \label{sec::Krull-Schmidt}
Let $K$ denote a commutative ring.
We recall standard results on Krull--Schmidt categories
that turn out to be useful for the classification problem of indecomposable objects.
An additive $\K$-linear category $\cC$ is a \emph{Krull--Schmidt category} 
if every object $X \in \cC$ has a decomposition $X \cong \bigoplus_{i = 1}^n X_i$
with $\End_{\cC}(X_i)$ a local ring\footnote{
A ring $R$ is local if $1 \neq 0$ in $R$ and the sum of two non-units is a non-unit.} for $n \geq 0$, $i = 1, \dots, n$.
This is equivalent to $\cC$ being idempotent complete and $\End_{\cC}(X)$ being 
semi-perfect\footnote{A ring $R$ is semi-perfect if $R \cong \bigoplus_{i = 1}^n M_i$ as right modules with $\End_R(M_i)$ local rings for $n \geq 0$, $i = 1, \dots, n$.} for all $X \in \cC$ (\cite{Krause}*{Corollary 4.4}).

Let $\cC$ be a (skeletally small\footnote{A category is \emph{skeletally small} if the isomorphism classes of objects form a set. This assumption ensures that the Grothendieck group has an underlying set of elements rather than a proper class. Whenever we will speak about Grothendieck groups, we will tacitly assume the underlying category to be skeletally small.}) Krull--Schmidt category.
Clearly, an object $X \in \cC$ is indecomposable if and only if $\End_{\cC}(X)$ is local.
Moreover, let $\Gr( \cC )$ denote the
\emph{additive Grothendieck group} of $\cC$, i.e., the group spanned by
symbols $[X]$ for $X \in \cC$ subject to the relations
$[X] + [Y] = [Z]$ whenever $Z\cong X\oplus Y$.
We denote the set of isomorphism classes of indecomposable objects in $\cC$ by $\Indec( \cC )$.
Then $\Gr( \cC )$ is freely generated by the objects in $\Indec( \cC )$,
i.e., $\Gr( \cC ) \cong \mZ^{\oplus \Indec{\cC}}$,
which follows from the fact that each object in $\cC$ decomposes into a finite direct sum of indecomposables in an essentially unique way, that is, unique up to permutation of summands (\cite{Krause}*{Theorem 4.2}).

\begin{lemma}\label{lem::Krull-Schmidt} 
Let $\cC$ be a $\kk$-linear hom-finite category for a field $\kk$.
Then $\cC$ is Krull--Schmidt if and only if
it is additive and idempotent complete.
\end{lemma}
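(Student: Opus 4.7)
The plan is to leverage the characterization of Krull--Schmidt categories already recalled in the paper (from \cite{Krause}*{Corollary 4.4}): an additive $\K$-linear category $\cC$ is Krull--Schmidt if and only if it is idempotent complete and $\End_\cC(X)$ is a semi-perfect ring for every $X \in \cC$. Given this characterization, the lemma reduces to a statement about endomorphism rings in the hom-finite setting.

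For the forward implication, I would simply observe that every Krull--Schmidt category is by definition additive, and by the cited characterization it is automatically idempotent complete. No use of hom-finiteness is needed here.

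For the reverse implication, suppose $\cC$ is additive, idempotent complete, $\kk$-linear, and hom-finite. By the characterization, it suffices to show that $\End_\cC(X)$ is semi-perfect for every $X \in \cC$. Since $\cC$ is hom-finite, $\End_\cC(X)$ is a finite-dimensional $\kk$-algebra, hence Artinian. Every Artinian ring is semi-perfect (indeed, the Jacobson radical is nilpotent, idempotents lift modulo it, and the quotient is semisimple Artinian), so the conclusion follows.

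The argument is essentially a direct application of the Krause characterization combined with the standard fact that finite-dimensional algebras over a field are semi-perfect, so I do not expect any genuine obstacle. The only point worth being careful about is citing (or briefly justifying) the semi-perfectness of Artinian rings rather than proving it from scratch, since the paper does not develop this machinery. I would make this reference explicit in the write-up.
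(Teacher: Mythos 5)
Your proposal is correct and matches the paper's argument: the paper's proof is precisely the one-line observation that hom-finiteness makes all endomorphism rings finite-dimensional $\kk$-algebras, hence semi-perfect, so the characterization from \cite{Krause}*{Corollary 4.4} applies. Your additional remarks about the forward direction and Artinian rings being semi-perfect are fine but not needed beyond what the paper states.
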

\begin{proof}
  The endomorphism rings are finite-dimensional $\kk$-algebras and thus semi-perfect.
\end{proof}

Recall that an ideal $\cI$ of a $\K$-linear category $\cC$ is given by a family of $\K$-modules $\cI( X, Y ) \subseteq \Hom_{\cC}( X, Y )$ for $X,Y \in \cC$ which is closed under composition from the left and right.
We denote the corresponding $\K$-linear quotient category by $\cC/\cI$.

\begin{remark}\label{remark:equality_of_ideals}
  There are two useful tests for equality of ideals $\cI, \cJ$ whenever $\cC$ is additive:
  \begin{enumerate}
    \item We have $\cI = \cJ$ if and only if $\cI( X, X ) = \cJ( X, X )$ for all $X \in \cC$.
    \item Let $\cS \subseteq \cC$ be a class of objects such that $\{ \cS \}^{\oplus} = \cC$. Then $\cI = \cJ$ if and only if $\cI( X, Y ) = \cJ( X, Y )$ for all $X, Y \in \cS$.
  \end{enumerate}
  Both tests follow from the isomorphism $\cI( X \oplus Y, Z \oplus W ) \cong \begin{pmatrix}
    \cI(X,Z) & \cI(Y,Z)\\\cI(X,W) & \cI(Y,W)
    \end{pmatrix}$ for all $X,Y,Z,W \in \cC$.
\end{remark}

If $F\colon \cC \rightarrow \cD$ is a $\K$-linear functor between $\K$-linear categories, 
then there are two useful notions of its kernel.
First, we have the \emph{kernel ideal} $\morker{F} \subseteq \cC$ given by those morphisms in $\cC$ which are sent to zero via $F$.
By the homomorphism theorem, $F$ induces a faithful functor $\cC/\morker{F} \rightarrow \cD$.
Second, we have the \emph{kernel subcategory} $\objker{F} \subseteq \cC$ which we define as the full subcategory spanned by
$X \in \cC$ such that $F(X) \cong 0$, i.e., such that $\id_X \in \morker{F}$. Clearly, if $\cC$ is Krull--Schmidt, so is $\objker{F}$.

\begin{lemma}\label{lemma:ks_indec}
  Let $\cI$ be an ideal of a Krull--Schmidt category $\cC$, and let $\cC \xrightarrow{\pi} \cC/\cI$ denote the canonical quotient functor.
  Then:
  \begin{enumerate}
    \item $\cC/\cI$ is a Krull--Schmidt category.
    \item Let $X \in \cC$ be indecomposable such that $\pi(X) \ncong 0$.
    Then $\pi(X)$ is indecomposable and every indecomposable in $\cC/\cI$ arises in this way.

    \item Let $X,Y \in \cC$ be indecomposable such that $\pi( X ) \cong \pi( Y ) \ncong 0$. Then $X \cong Y$.
  \end{enumerate}
  In particular, $\pi$ induces a bijection
  \[
  \Indec({ \cC }) \cong \Indec({ \cC/ \cI }) \sqcup \Indec({ \objker{\pi} }).
    \]
\end{lemma}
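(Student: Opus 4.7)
The plan is to work one object at a time, exploiting the local structure of endomorphism rings in the Krull--Schmidt category $\cC$, and to use the elementary fact that for a local ring $R$ and a proper two-sided ideal $J \subset R$, the quotient $R/J$ is again local.

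First I would prove (1) together with the forward direction of (2). Given any $X \in \cC$, write $X \cong \bigoplus_{i=1}^n X_i$ as a decomposition into indecomposables, so each $\End_\cC(X_i)$ is local. Since $\End_{\cC/\cI}(\pi X_i) = \End_\cC(X_i)/\cI(X_i,X_i)$, each such endomorphism ring is either the zero ring (precisely when $\pi X_i \cong 0$) or a quotient of a local ring by a proper ideal, hence local. Applying $\pi$ to the decomposition yields $\pi X \cong \bigoplus_{\pi X_i \ncong 0} \pi X_i$, which exhibits every object of $\cC/\cI$ as a finite sum of indecomposables with local endomorphism rings. This proves (1), and simultaneously shows that every indecomposable $X \in \cC$ with $\pi X \ncong 0$ maps to an indecomposable of $\cC/\cI$.

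To finish (2), I would take an arbitrary indecomposable $Y \in \cC/\cI$. Because $\cC \to \cC/\cI$ is the identity on objects, $Y = \pi X'$ for some $X' \in \cC$; decomposing $X' \cong \bigoplus_i X_i'$ into indecomposables in $\cC$ gives $Y \cong \bigoplus_i \pi X_i'$, and Krull--Schmidt in $\cC/\cI$ (just proved) forces $Y \cong \pi X_j'$ for some index $j$ with $\pi X_j' \ncong 0$, as required. For (3), suppose $X,Y$ are indecomposable in $\cC$ and $\varphi \colon \pi X \to \pi Y$ is an isomorphism with inverse $\psi$. Lift $\varphi, \psi$ to morphisms $\widetilde{\varphi}\colon X\to Y$ and $\widetilde{\psi}\colon Y\to X$ in $\cC$. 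The composite $\widetilde{\psi}\widetilde{\varphi}$ lies in $\id_X + \cI(X,X)$. The key observation is that $\widetilde{\psi}\widetilde{\varphi}$ must be a unit in the local ring $\End_\cC(X)$: otherwise it lies in the Jacobson radical $J$, whence $\id_X \in J + \cI(X,X)$, forcing $\cI(X,X)$ to contain a unit and so $\cI(X,X) = \End_\cC(X)$, contradicting $\pi X \ncong 0$. Thus $\widetilde{\varphi}$ has a left inverse; the symmetric argument with $\widetilde{\varphi}\widetilde{\psi} \in \End_\cC(Y)$ supplies a right inverse, so $\widetilde{\varphi}$ is an isomorphism and $X \cong Y$.

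The displayed bijection then drops out: by (2) the map $[X] \mapsto [\pi X]$ is well-defined and surjective onto $\Indec(\cC/\cI)$ on the set of $X$ with $\pi X \ncong 0$, by (3) it is injective on that subset, and the remaining indecomposables of $\cC$ are precisely those that lie in $\objker(\pi)$, which contributes the second summand tautologically. The only genuinely subtle point in the whole argument is the one in the proof of (3), namely converting ``unit modulo $\cI(X,X)$'' into ``unit in $\End_\cC(X)$,'' which is exactly where the hypothesis $\pi X \ncong 0$ (equivalently $\cI(X,X)$ proper) and the locality of $\End_\cC(X)$ are used jointly.
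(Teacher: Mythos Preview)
Your proof is correct and follows essentially the same route as the paper's: you reduce (1) and (2) to the observation that $\End_{\cC/\cI}(\pi X) = \End_\cC(X)/\cI(X,X)$ is either zero or local, and for (3) you lift an isomorphism and use locality to promote ``unit modulo $\cI$'' to ``unit in $\End_\cC(X)$.'' The paper compresses the last step into the single remark that $\alpha\beta$ and $\beta\alpha$ are units because the endomorphism rings are local, whereas you spell out the contrapositive; otherwise the arguments coincide.
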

\begin{proof}
  Let $X \in \cC$ be indecomposable.
  Then $\End_{\cC/\cI}(\pi(X)) = \End_{\cC}(X)/\cI(X,X)$ is either zero or local.
  It follows that a decomposition of an arbitrary non-zero object $X \in \cC/\cI$
  into summands having local endomorphism rings
  can be obtained by applying $\pi$ to such a decomposition of $X$ regarded as an object in $\cC$.
  Thus, $\cC/\cI$ is Krull--Schmidt, and all its indecomposables arise from images of indecomposables under $\pi$.
  Last, let $X,Y \in \cC$ be indecomposable such that $\pi( X ) \cong \pi( Y ) \ncong 0$.
  Then we have isomorphisms
  $\pi(\alpha): \pi( X ) \rightarrow \pi( Y )$
  and
  $\pi(\beta): \pi( Y ) \rightarrow \pi( X )$
  such that
  $\pi( \alpha \circ \beta ) \in \End_{\cC}( Y )/\cI( Y, Y )$
  and
  $\pi( \beta \circ \alpha ) \in \End_{\cC}( X )/\cI( X, X )$
  are units.
  Thus, $\alpha \circ \beta$ and $\beta \circ \alpha$ are units in
  $\End_{\cC}(X)$ and $\End_{\cC}(Y)$, respectively, since these endomorphism rings are local.
\end{proof}

Since in a Krull--Schmidt category, every object has an essentially unique decomposition as a direct sum of indecomposable objects, there are well-defined multiplicities for any indecomposable object in any object of such a category. 

\begin{corollary} \label{lem::indec-direct-summand} In the situation of the previous lemma, let $m$ be the multiplicity of an indecomposable object $X\in\cC$ in an object $Y\in\cC$. Then either $\pi(X)\cong 0$ or $\pi(X)$ is an indecomposable object with multiplicity $m$ in $\pi(Y)$.
\end{corollary}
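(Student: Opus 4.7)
The plan is to deduce this corollary directly from Lemma~\ref{lemma:ks_indec} by comparing Krull--Schmidt decompositions in $\cC$ and $\cC/\cI$. Concretely, I would start by fixing an essentially unique decomposition $Y \cong \bigoplus_{i=1}^n Y_i$ into indecomposables in $\cC$, so that, by definition, the multiplicity of $X$ in $Y$ is $m = \#\{ i \mid Y_i \cong X \}$. Applying the additive functor $\pi$ yields $\pi(Y) \cong \bigoplus_{i=1}^n \pi(Y_i)$, which is the object to analyze.

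Next, I would split into the two cases in the statement. If $\pi(X) \cong 0$, there is nothing more to show. Otherwise, Lemma~\ref{lemma:ks_indec}(2) applies, giving that $\pi(X)$ is indecomposable in the Krull--Schmidt category $\cC/\cI$. After discarding the summands $\pi(Y_i) \cong 0$, the remaining terms are indecomposable by the same lemma, so $\bigoplus_{i=1}^n \pi(Y_i)$ is an indecomposable decomposition of $\pi(Y)$; hence the multiplicity of $\pi(X)$ in $\pi(Y)$ is well defined and equals $\#\{ i \mid \pi(Y_i) \cong \pi(X) \}$.

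Finally, I would invoke Lemma~\ref{lemma:ks_indec}(3): since $\pi(X) \ncong 0$, whenever $\pi(Y_i) \cong \pi(X)$ one must have $Y_i \cong X$ in $\cC$, and conversely $Y_i \cong X$ immediately implies $\pi(Y_i) \cong \pi(X)$. Thus the two counts $\#\{ i \mid \pi(Y_i) \cong \pi(X) \}$ and $\#\{ i \mid Y_i \cong X \} = m$ coincide, giving the claim. There is no genuine obstacle here: once one commits to comparing the two Krull--Schmidt decompositions term by term, parts (2) and (3) of Lemma~\ref{lemma:ks_indec} do all of the work, and the only thing to verify is that the zero summands $\pi(Y_i) \cong 0$ are harmless, which is immediate from the definition of multiplicity.
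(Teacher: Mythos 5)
Your argument is correct and is exactly the deduction the paper intends: the corollary is stated without proof as an immediate consequence of Lemma~\ref{lemma:ks_indec}, and your term-by-term comparison of Krull--Schmidt decompositions using parts (1)--(3) of that lemma spells out precisely that reasoning.
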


Let $\cD$ be a full subcategory of a $\K$-linear category $\cC$.
We denote by $\cI_{\cD}$ the ideal spanned by $\id_X$ for $X \in \cD$.
We also set
$\cC/\cD := \cC/\cI_{\cD}$.
On objects $X, Y \in \cC$, the ideal $\cI_{\cD}$ can be described by
\[
  \cI_{\cD}( X, Y ) = \{ \alpha \in \Hom_{\cC}( X, Y ) \mid \text{ $\alpha$ factors through an object in $\cD^{\oplus} \subseteq \cC^{\oplus}$} \}.
\]
We always have $\cD \subseteq \objker{\pi: \cC \rightarrow \cC/{\cD} }$
as subcategories of $\cC$. Equality holds if and only if $\cD$ is closed under taking direct sums and summands within $\cC$.
This proves the following corollary.

\begin{corollary}\label{corollary:indec_obj_ideal}
  Let $\cD \subseteq \cC$ be a full Krull--Schmidt subcategory of a Krull--Schmidt category $\cC$.
  Then \[\Indec( \cC ) \cong \Indec( \cC/{\cD} ) \sqcup \Indec(\cD).\]
\end{corollary}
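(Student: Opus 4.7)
The plan is to apply \Cref{lemma:ks_indec} directly to the canonical quotient functor $\pi\colon \cC \to \cC/\cD = \cC/\cI_\cD$. That lemma already delivers a bijection
$$\Indec(\cC) \cong \Indec(\cC/\cD) \sqcup \Indec(\objker{\pi}),$$
so all that remains is to identify $\objker{\pi}$ with $\cD$ on the level of isomorphism classes of indecomposables. One inclusion is tautological: for $X \in \cD$, the identity $\id_X$ lies in $\cI_\cD(X,X)$ by construction, hence $\pi(X) \cong 0$ and $X \in \objker{\pi}$.

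For the opposite inclusion, suppose $X \in \objker{\pi}$. Using the explicit description of $\cI_\cD(X,X)$ given in the paragraph preceding the corollary, $\id_X$ factors through some object of $\cD^{\oplus} \subseteq \cC^\oplus$. This exhibits $X$ as a direct summand in $\cC$ of a finite direct sum $D = \bigoplus_i D_i$ with $D_i \in \cD$. The core of the argument is then to show that $X$ is isomorphic to an object of $\cD$. Using that $\cD$ is Krull--Schmidt, each $D_i$ decomposes in $\cD$ as a finite direct sum of indecomposables with local endomorphism rings; fullness of the embedding $\cD \hookrightarrow \cC$ identifies these endomorphism rings with those computed in $\cC$, so these summands are also indecomposable in $\cC$. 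Invoking Krull--Schmidt uniqueness in $\cC$, any direct summand $X$ of $D$ is isomorphic to a direct sum of a subcollection of these indecomposables, and is therefore isomorphic to an object of $\cD$ (by additivity of $\cD$). Hence $\objker{\pi}$ and $\cD$ coincide as replete full subcategories of $\cC$, so their sets of indecomposables agree.

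The only step that requires a bit of care is the closure of $\cD$ under summands in $\cC$; the rest is a mechanical invocation of \Cref{lemma:ks_indec} and the defining description of the factorization ideal $\cI_\cD$. Once that closure is established, the decomposition of $\Indec(\cC)$ claimed in the corollary follows immediately.
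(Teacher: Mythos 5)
Your proposal is correct and follows essentially the same route as the paper: invoke \Cref{lemma:ks_indec} for the quotient by $\cI_\cD$ and then identify $\objker{\pi}$ with $\cD$ up to isomorphism, which is exactly the closure of $\cD$ under direct sums and summands within $\cC$ that the paper asserts just before the corollary. You fill in that closure via Krull--Schmidt uniqueness of decompositions (one could equally use fullness plus idempotent completeness of $\cD$ to split the idempotent $i\circ p$ inside $\cD$), but the underlying argument is the same.
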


For any object $X$ in a Krull--Schmidt category $\cC$, let us denote by $\Indec_X(\cC)$ the isomorphism classes of those indecomposable object which are isomorphic to direct summands of $X$. Then with \Cref{lem::indec-direct-summand} we even obtain:
\begin{corollary} \label{cor::indec-x}
  Let $\cD \subseteq \cC$ be a full Krull--Schmidt subcategory of a Krull--Schmidt category $\cC$. Let $\pi\colon\cC\to\cC/\cD$ be the quotient functor and assume $X\in\cC$.
  Then \[\Indec_X( \cC ) \cong \Indec_{\pi(X)}( \cC/{\cD} ) \sqcup (\Indec(\cD)\cap\Indec_X(\cC)).\]
\end{corollary}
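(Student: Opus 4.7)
The plan is to lift \Cref{lem::indec-direct-summand} from a single summand to the full Krull--Schmidt decomposition of $X$, matching the list of indecomposable summands of $X$ in $\cC$ with the list of indecomposable summands of $\pi(X)$ in $\cC/\cD$ together with those summands killed by $\pi$.

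First, I would decompose $X \cong Y_1 \oplus \cdots \oplus Y_n$ into indecomposables using the Krull--Schmidt property of $\cC$. By essential uniqueness of this decomposition, $\Indec_X(\cC) = \{[Y_1], \ldots, [Y_n]\}$ as a set of isomorphism classes. Applying the additive functor $\pi$ yields $\pi(X) \cong \pi(Y_1) \oplus \cdots \oplus \pi(Y_n)$, where by \Cref{lem::indec-direct-summand} each $\pi(Y_i)$ is either zero or indecomposable (with its multiplicity in $\pi(X)$ matching the multiplicity of $Y_i$ in $X$).

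Next, I would partition the indices into $I_0 := \{i : \pi(Y_i) \cong 0\}$ and $I_1 := \{i : \pi(Y_i) \ncong 0\}$. For $i \in I_1$, the relation $\pi(X) \cong \bigoplus_{i \in I_1} \pi(Y_i)$ exhibits a decomposition of $\pi(X)$ into indecomposables in the Krull--Schmidt category $\cC/\cD$, so the classes $\{[\pi(Y_i)] : i \in I_1\}$ are exactly $\Indec_{\pi(X)}(\cC/\cD)$. By \Cref{lemma:ks_indec}(3), the assignment $[Y_i] \mapsto [\pi(Y_i)]$ on $I_1$ is both well-defined and injective, hence a bijection onto $\Indec_{\pi(X)}(\cC/\cD)$. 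For $i \in I_0$, the indecomposable $Y_i$ lies in the kernel subcategory $\objker{\pi}$; under the identification used in the proof of \Cref{corollary:indec_obj_ideal}, this contributes exactly the classes in $\Indec(\cD) \cap \Indec_X(\cC)$.

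Since $I_0 \sqcup I_1 = \{1, \ldots, n\}$, and since the contributions from $I_0$ and $I_1$ to $\Indec_X(\cC)$ are disjoint (those from $I_0$ map to zero under $\pi$, those from $I_1$ do not), combining the two pieces yields the claimed disjoint union. The only point that needs some care is the identification of the $Y_i$ with $i \in I_0$ as representing classes in $\Indec(\cD)$ rather than merely in $\Indec(\objker{\pi})$; this is exactly the hypothesis already used to derive \Cref{corollary:indec_obj_ideal}. Beyond that, the argument is a multiplicity-respecting translation of \Cref{lem::indec-direct-summand} from one summand to the complete multiset of indecomposable summands of $X$ simultaneously.
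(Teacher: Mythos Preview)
Your proof is correct and follows the same approach as the paper, which simply states that the corollary follows from \Cref{lem::indec-direct-summand}. You have spelled out in detail the argument the paper leaves implicit: decompose $X$ into indecomposables, apply $\pi$, and use \Cref{lemma:ks_indec} to match the surviving summands with $\Indec_{\pi(X)}(\cC/\cD)$ and the vanishing ones with $\Indec(\cD)\cap\Indec_X(\cC)$.
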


\subsection{The radical of a \texorpdfstring{$K$}{K}-linear category}

Let $K$ be a commutative ring.
The radical of a $K$-linear category $\cC$ is given by the family of $K$-submodules
\[
  \rad_\cC ( X, Y ) := \{ \alpha \in \Hom_{\cC}( X, Y ) \mid \text{ $\alpha \circ \beta \in \rad( \End_{\cC}( Y ))$ for all $\beta: Y \rightarrow X$} \}
\]
for $X, Y \in \cC$.

\begin{lemma}\label{lemma:characterize_rad}
  Let $\cC$ be a $K$-linear category.
  The family $\rad_\cC$ forms an ideal in $\cC$
  and we have $\rad_{\cC}(X,X) = \rad( \End_{\cC}(X) )$ for all $X \in \cC$.
  Moreover, if $\cC$ is additive, then $\rad_{\cC}$ can be characterized as the ideal spanned by
  the morphisms $\rad( \End_{\cC}( X ) )$ for all $X \in \cC$.
\end{lemma}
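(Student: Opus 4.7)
The plan is to verify the three claims in sequence: first, that $\rad_\cC$ is an ideal; second, the identification $\rad_\cC(X,X) = \rad(\End_\cC(X))$; and third, the characterization in the additive case. I expect the subtle step to be closure under right composition, which will hinge on a classical Jacobson-style invertibility identity.

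For closure under addition in $\rad_\cC(X,Y)$: given $\alpha_1,\alpha_2 \in \rad_\cC(X,Y)$ and any $\beta \colon Y \to X$, the sum $(\alpha_1+\alpha_2)\beta = \alpha_1\beta + \alpha_2\beta$ lies in $\rad(\End_\cC(Y))$ since that is an additive subgroup. For closure under left composition by $\delta \colon W \to X$: for $\alpha \in \rad_\cC(X,Y)$ and any $\beta \colon Y \to W$, one has $(\alpha\delta)\beta = \alpha(\delta\beta) \in \rad(\End_\cC(Y))$ directly from the defining property applied to $\delta\beta \colon Y \to X$.

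Right composition is the delicate step. For $\gamma \colon Y \to Z$ and $\alpha \in \rad_\cC(X,Y)$, I need $\gamma\alpha\beta \in \rad(\End_\cC(Z))$ for every $\beta \colon Z \to X$, equivalently $\id_Z - s\gamma\alpha\beta$ invertible for every $s \in \End_\cC(Z)$. Here I would invoke the standard observation that for any morphisms $f \colon Y \to Z$ and $g \colon Z \to Y$, the endomorphism $\id_Z - fg$ is invertible if and only if $\id_Y - gf$ is invertible, with explicit inverse $\id_Z + f(\id_Y - gf)^{-1} g$. Applied to $f = s\gamma$ and $g = \alpha\beta$, this reduces the required invertibility to that of $\id_Y - \alpha(\beta s\gamma)$, which holds because $\alpha(\beta s \gamma) \in \rad(\End_\cC(Y))$ by assumption on $\alpha$.

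The identification $\rad_\cC(X,X) = \rad(\End_\cC(X))$ then follows: setting $\beta = \id_X$ in the definition yields $\rad_\cC(X,X) \subseteq \rad(\End_\cC(X))$, while the reverse inclusion is immediate from the two-sidedness of $\rad(\End_\cC(X))$. For the final claim, let $\cJ$ denote the ideal of $\cC$ generated by $\rad(\End_\cC(X))$ for all $X$. The inclusion $\cJ \subseteq \rad_\cC$ is automatic from the ideal property just established together with the preceding identification. For the reverse inclusion, I would apply Remark~\ref{remark:equality_of_ideals}(1): since $\cC$ is additive, an ideal is determined by its endomorphism parts, and on each $\End_\cC(X)$ both $\cJ(X,X)$ and $\rad_\cC(X,X)$ coincide with $\rad(\End_\cC(X))$. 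The principal obstacle is the Jacobson-style identity relating $\id - fg$ and $\id - gf$, which is standard but is precisely what makes the categorical radical behave well under composition.
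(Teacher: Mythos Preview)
Your proof is correct. The paper does not supply a direct argument but simply refers to \cite{Krause}*{Proposition 2.9}; your proof is essentially the standard one underlying that reference, and your use of Remark~\ref{remark:equality_of_ideals}(1) for the additive characterization is precisely the step the paper singles out as requiring additivity.
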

\begin{proof}
  This follows from the proof given in \cite{Krause}*{Proposition 2.9}, where only the stated characterization of $\rad_{\cC}$ needs the stronger assumption of $\cC$ being additive rather than being merely $K$-linear.
\end{proof}

In the context of the classification of indecomposables, the radical owes its significance to the following lemma.

\begin{lemma}\label{lemma:radical_KS}
  Let $\cI \subseteq \rad_\cC $ be an ideal of a Krull--Schmidt category $\cC$.
  Then \[\Indec( \cC ) \cong \Indec( \cC/\cI ).\]
\end{lemma}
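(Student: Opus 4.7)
The plan is to combine the preceding \Cref{lemma:ks_indec} with a general fact about the Jacobson radical: the identity element of a nonzero ring never lies in its Jacobson radical.

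More precisely, by \Cref{lemma:ks_indec} the quotient functor $\pi\colon \cC \to \cC/\cI$ induces a bijection
\[
\Indec(\cC) \cong \Indec(\cC/\cI) \sqcup \Indec(\objker{\pi}),
\]
so it suffices to prove $\objker{\pi} \simeq 0$, i.e.\ that every $X \in \cC$ with $\id_X \in \cI(X,X)$ is isomorphic to the zero object. By hypothesis $\cI(X,X) \subseteq \rad_\cC(X,X)$, and by \Cref{lemma:characterize_rad} we have $\rad_\cC(X,X) = \rad(\End_\cC(X))$. Thus the assumption becomes $\id_X \in \rad(\End_\cC(X))$.

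Now the key point is that in any ring $R$, the Jacobson radical is a proper ideal unless $R = 0$: if $1_R \in \rad(R)$, then $1_R - 1_R\cdot 1_R = 0$ would have to be a unit, forcing $R = 0$. Applying this to $R = \End_\cC(X)$, we conclude $\End_\cC(X) = 0$, hence $\id_X = 0$ and $X \cong 0$. Therefore $\objker{\pi}$ contains no nonzero objects, and the decomposition above yields $\Indec(\cC) \cong \Indec(\cC/\cI)$.

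There is no real obstacle here; the argument is essentially a one-line reduction once \Cref{lemma:ks_indec} and the identification $\rad_\cC(X,X) = \rad(\End_\cC(X))$ are in hand. The only subtlety worth highlighting is making sure that the definition of $\rad_\cC$ used here actually restricts to the Jacobson radical on endomorphism rings, which is exactly what \Cref{lemma:characterize_rad} guarantees.
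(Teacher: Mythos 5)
Your proof is correct and follows essentially the same route as the paper: reduce via \Cref{lemma:ks_indec} to showing the kernel subcategory of $\pi$ contributes no indecomposables, using $\rad_\cC(X,X)=\rad(\End_\cC(X))$ from \Cref{lemma:characterize_rad}. The only cosmetic difference is that you show every object of $\objker{\pi}$ is zero (via $1\in\rad(R)\Rightarrow R=0$), while the paper only notes that $\id_X\notin\rad(\End_\cC(X))$ for indecomposable $X$ since its endomorphism ring is local; both observations settle the claim.
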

\begin{proof}
  By \Cref{lemma:ks_indec} it suffices to show that $\objker{ \pi: \cC \rightarrow \cC/\rad_\cC  } = 0$,
  which is clear since $\rad_\cC ( X, X ) = \rad( \End_{\cC}( X ) )$ 
  and thus $\pi( \id_X ) \neq 0$
  for all indecomposables $X \in \cC$.
\end{proof}

The following is a direct consequence.
\begin{corollary}\label{cor:rad-quotient-K0}
Let $\cI\subseteq \rad_{\cC}$ be an ideal of a Krull--Schmidt category. Then the quotient functor 
$\cC\to \cC/\cI$ induces an isomorphism 
$$\Gr (\cC)\stackrel{\cong}{\longrightarrow} \Gr(\cC/\cI).$$
\end{corollary}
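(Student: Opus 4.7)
The plan is to reduce the claim directly to \Cref{lemma:radical_KS} together with the structure of Grothendieck groups of Krull--Schmidt categories described in \Cref{sec::Krull-Schmidt}. Both $\cC$ and $\cC/\cI$ are Krull--Schmidt (the latter by \Cref{lemma:ks_indec}~(1), since $\cI$ is an ideal), so their additive Grothendieck groups are free abelian on the sets of isomorphism classes of indecomposable objects, i.e.\ $\Gr(\cC) \cong \mZ^{\oplus \Indec(\cC)}$ and $\Gr(\cC/\cI) \cong \mZ^{\oplus \Indec(\cC/\cI)}$.

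Next, I would observe that the quotient functor $\pi\colon \cC \to \cC/\cI$ is $K$-linear and in particular additive, so sending $[X] \mapsto [\pi(X)]$ yields a well-defined group homomorphism $\Gr(\pi)\colon \Gr(\cC) \to \Gr(\cC/\cI)$. The crucial input is \Cref{lemma:radical_KS}, which tells us that $\pi$ induces a bijection $\Indec(\cC) \xrightarrow{\sim} \Indec(\cC/\cI)$: concretely, for every indecomposable $X \in \cC$ we have $\pi(X) \ncong 0$ (since $\id_X \notin \rad_\cC \supseteq \cI$), and by \Cref{lemma:ks_indec}~(2,3) the assignment $[X] \mapsto [\pi(X)]$ is a bijection on isomorphism classes of indecomposables.

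Therefore $\Gr(\pi)$ sends a free $\mZ$-basis of $\Gr(\cC)$ bijectively onto a free $\mZ$-basis of $\Gr(\cC/\cI)$, and hence is an isomorphism of abelian groups. I do not foresee any genuine obstacle here; the entire content of the corollary is the combination of the fact that $\Gr$ of a Krull--Schmidt category is freely generated by indecomposables with the bijection on indecomposables furnished by \Cref{lemma:radical_KS}. The only point requiring a small remark is verifying that $\Gr(\pi)$ is well defined, which is immediate because $\pi$ preserves direct sums.
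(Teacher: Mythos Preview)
Your proof is correct and is exactly the argument the paper has in mind: the corollary is stated there as ``a direct consequence'' of \Cref{lemma:radical_KS}, and your proposal spells out precisely that deduction via the free generation of $\Gr$ by indecomposables.
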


Next, we show compatibilities of the radical with additive closures and Karoubian envelopes.

\begin{lemma}[Radicals and additive closures]\label{lemma:radicals_and_additive_closures}
  Let $\cC$ be a $K$-linear category. Then
  \[
    \cC^{\oplus}/\rad_{ \cC^{\oplus} } \simeq (\cC/\rad_{\cC})^{\oplus}.
  \]
\end{lemma}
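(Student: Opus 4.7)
The plan is to construct an equivalence $F\colon (\cC/\rad_{\cC})^{\oplus} \to \cC^{\oplus}/\rad_{\cC^{\oplus}}$ via universal properties, and then reduce fully faithfulness to an ``entrywise'' description of the radical between direct sums.

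Starting from the observation that for $X,Y \in \cC$ viewed as objects of $\cC^{\oplus}$ via the fully faithful inclusion, one has $\rad_{\cC^{\oplus}}(X,Y) = \rad_{\cC}(X,Y)$---this is immediate from the definition since $\Hom_{\cC^{\oplus}}(Y,X) = \Hom_{\cC}(Y,X)$ and $\End_{\cC^{\oplus}}(Y) = \End_{\cC}(Y)$---the composite $\cC \hookrightarrow \cC^{\oplus} \twoheadrightarrow \cC^{\oplus}/\rad_{\cC^{\oplus}}$ annihilates $\rad_{\cC}$ and hence factors through $\cC/\rad_{\cC}$. Since the target is additive, the universal property of the additive closure extends this to a $K$-linear functor $F$ as above. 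Essential surjectivity is then automatic: any object of $\cC^{\oplus}/\rad_{\cC^{\oplus}}$ is the image of some $\bigoplus_i X_i \in \cC^{\oplus}$ with $X_i \in \cC$, which lifts tautologically to $(\cC/\rad_{\cC})^{\oplus}$.

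The main step is to prove fullness and faithfulness, and both follow from the identity
$$
\rad_{\cC^{\oplus}}\!\left(\bigoplus_i X_i,\; \bigoplus_j Y_j\right) = \bigoplus_{i,j} \rad_{\cC}(X_i,Y_j),
$$
viewed as submodules of $\Hom_{\cC^{\oplus}}(\bigoplus_i X_i,\bigoplus_j Y_j) \cong \bigoplus_{i,j}\Hom_{\cC}(X_i,Y_j)$, for $X_i,Y_j \in \cC$. The inclusion ``$\subseteq$'' is a direct consequence of the ideal property from \Cref{lemma:characterize_rad} applied to the structure morphisms $\iota^X_i\colon X_i \to \bigoplus_i X_i$ and $\pi^Y_j\colon \bigoplus_j Y_j \to Y_j$: each entry $\alpha_{ji} = \pi^Y_j\circ\alpha\circ\iota^X_i$ of a radical morphism $\alpha$ lies in $\rad_{\cC^{\oplus}}(X_i,Y_j) = \rad_{\cC}(X_i,Y_j)$. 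Conversely, if every $\alpha_{ji}$ lies in $\rad_{\cC}(X_i,Y_j) = \rad_{\cC^{\oplus}}(X_i,Y_j)$, the standard decomposition $\alpha = \sum_{i,j}\iota^Y_j\circ\alpha_{ji}\circ\pi^X_i$ exhibits $\alpha$ as a finite sum of morphisms each in the two-sided ideal $\rad_{\cC^{\oplus}}$, and hence in the ideal itself since its hom-spaces are additive subgroups. Quotienting the hom-spaces by the identity above recovers exactly the hom-spaces of $(\cC/\rad_{\cC})^{\oplus}$, completing the proof.

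The only point requiring care is the entrywise description of $\rad_{\cC^{\oplus}}$ between direct sums, which I expect to be the main (although mild) obstacle. Once the ideal characterization from \Cref{lemma:characterize_rad} is available, however, the argument is essentially formal, and in particular requires no Krull--Schmidt, idempotent completeness, or hom-finiteness assumption on $\cC$, in line with the generality of the statement.
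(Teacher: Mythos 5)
Your proof is correct and takes essentially the same approach as the paper: both arguments rest on the universal property of the additive closure together with the observation that the radical between finite direct sums is computed entrywise from $\rad_{\cC}$ (which is exactly the content of the paper's equality-of-ideals test). The only cosmetic difference is the direction of construction — the paper extends the quotient functor to $\pi^{\oplus}\colon \cC^{\oplus} \to (\cC/\rad_{\cC})^{\oplus}$ and identifies its kernel ideal with $\rad_{\cC^{\oplus}}$, whereas you build the functor $(\cC/\rad_{\cC})^{\oplus} \to \cC^{\oplus}/\rad_{\cC^{\oplus}}$ and check full faithfulness directly via the matrix description.
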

\begin{proof}
  Let $\pi:\cC \rightarrow \cC/\rad_{ \cC }$ be the canonical quotient functor.
  By the universality of additive closures,
  we obtain a full and essentially surjective functor $\pi^{\oplus}: \cC^{\oplus} \rightarrow (\cC/\rad_{\cC})^{\oplus}$.
  We have $\morker{ \pi^{\oplus} }( X, Y ) = \rad_{\cC}( X, Y ) = \rad_{ \cC^{\oplus} }( X, Y )$ for all $X, Y \in \cC$.
  From \Cref{remark:equality_of_ideals}, we get $\morker{\pi^{\oplus}} = \rad_{\cC^{\oplus}}$.
  Thus, $\pi^{\oplus}$ induces the desired equivalence.
\end{proof}

\begin{lemma}[Radicals and Karoubian envelopes]\label{lemma:radicals_and_karoubi_env}
  Let $\cC$ be a $K$-linear category. Then we have a fully faithful functor
  \[
    \Kar( \cC )/\rad_{\Kar( \cC )} \rightarrow \Kar(\cC/\rad_{\cC} )
  \]
  which is an equivalence if for all $X \in \cC$, idempotents in $\End_{\cC}( X )$ can be lifted modulo $\rad( \End_{\cC}( X ) )$.
\end{lemma}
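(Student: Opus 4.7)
The plan is to consider the functor $\Phi := \Kar(\pi) \colon \Kar(\cC) \to \Kar(\cC/\rad_\cC)$ induced by the quotient $\pi \colon \cC \to \cC/\rad_\cC$, acting as $(X, e) \mapsto (X, \pi(e))$ on objects and $\alpha \mapsto \pi(\alpha)$ on morphisms. The claimed fully faithful functor is then obtained as the unique factorization of $\Phi$ through the quotient $\Kar(\cC) \twoheadrightarrow \Kar(\cC)/\rad_{\Kar(\cC)}$, so the task reduces to establishing $\morker{\Phi} = \rad_{\Kar(\cC)}$ together with fullness of $\Phi$. Fullness of $\Phi$ is easy: given $\bar\alpha \colon (X, \pi(e)) \to (X', \pi(e'))$ in the target, lift $\bar\alpha$ to some $\alpha_0 \in \Hom_\cC(X, X')$ along the full quotient $\pi$; then $\alpha := e' \alpha_0 e$ is a morphism in $\Kar(\cC)$ and $\Phi(\alpha) = \pi(e') \bar\alpha \pi(e) = \bar\alpha$ since $\bar\alpha$ is already a morphism in $\Kar(\cC/\rad_\cC)$.

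The heart of the matter is the equality $\morker{\Phi} = \rad_{\Kar(\cC)}$, for which the key algebraic input is the standard corner-ring identity $\rad(eRe) = e\cdot\rad(R)\cdot e$ for any idempotent $e$ in a ring $R$. Instantiated at $R = \End_\cC(X')$ and $e = e'$, this identifies $\rad(\End_{\Kar(\cC)}((X',e')))$ with $e'\cdot\rad(\End_\cC(X'))\cdot e'$ sitting inside $\End_\cC(X')$. The inclusion $\morker{\Phi} \subseteq \rad_{\Kar(\cC)}$ is then immediate: if $\pi(\alpha) = 0$, then $\alpha \in \rad_\cC(X, X')$, and for any $\beta \colon (X', e') \to (X, e)$ one has $\alpha\beta \in \rad(\End_\cC(X'))$, while $\alpha\beta = e'\alpha\beta e'$ (using $e'\alpha = \alpha$ and $\beta e' = \beta$) places $\alpha\beta$ into $e'\cdot\rad(\End_\cC(X'))\cdot e' = \rad(\End_{\Kar(\cC)}((X',e')))$, yielding $\alpha \in \rad_{\Kar(\cC)}$.

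The reverse inclusion $\rad_{\Kar(\cC)} \subseteq \morker{\Phi}$ is more delicate. Given $\alpha \in \rad_{\Kar(\cC)}((X,e), (X', e'))$, I aim to show that $\id_{X'} - \alpha\gamma$ is a unit in $\End_\cC(X')$ for every $\gamma \colon X' \to X$, whence $\alpha \in \rad_\cC(X, X')$ by the invertibility characterization of the radical. For each such $\gamma$, applying the hypothesis to $\beta := e\gamma e'$ shows that $\alpha\beta = \alpha\gamma e'$ lies in $\rad(\End_{\Kar(\cC)}((X',e')))$, so $e' - \alpha\gamma e'$ is a unit in the corner $e'\End_\cC(X')e'$. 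Using $e'\alpha = \alpha$ one sees that $\alpha\gamma$ has vanishing bottom row in the Peirce decomposition with respect to $\{e', \id - e'\}$, giving the block upper-triangular form
\[
  \id_{X'} - \alpha\gamma \;=\; \begin{pmatrix} e' - \alpha\gamma e' & -\alpha\gamma(\id - e') \\ 0 & \id - e' \end{pmatrix},
\]
with both diagonal entries units in their respective corner rings (the lower one being the identity of its corner). A block upper-triangular matrix with invertible diagonal is invertible in the full ring, so $\id_{X'} - \alpha\gamma$ is a unit in $\End_\cC(X')$, completing the argument.

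Essential surjectivity of $\Phi$ under the lifting hypothesis is immediate: every object $(X, \bar e) \in \Kar(\cC/\rad_\cC)$ has $\bar e$ an idempotent in $\End_\cC(X)/\rad(\End_\cC(X))$, and the hypothesis lifts $\bar e$ to an idempotent $e \in \End_\cC(X)$ with $\Phi(X, e) = (X, \bar e)$. The main obstacle in the plan is the Peirce-decomposition step: correctly identifying the block-triangular form of $\id_{X'} - \alpha\gamma$ and deducing its invertibility in $\End_\cC(X')$ from invertibility of a single corner block is the crucial ingredient bridging the radical of the corner ring with that of the full endomorphism ring, with the corner-ring identity $\rad(eRe) = e\rad(R)e$ serving as the common algebraic backbone of both inclusions.
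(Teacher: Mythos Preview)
Your argument is correct and actually takes a somewhat different route from the paper's proof. Both approaches start with the same functor $\Phi=\Kar(\pi)$, prove fullness in the same way, and handle essential surjectivity identically. The divergence is in establishing $\morker{\Phi}=\rad_{\Kar(\cC)}$.

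The paper only checks this equality on endomorphism rings: for $(X,e)\in\Kar(\cC)$ one computes
\[
\morker{\Kar(\pi)}((X,e),(X,e))=e\,\rad(\End_\cC(X))\,e=\rad(e\End_\cC(X)e)=\rad_{\Kar(\cC)}((X,e),(X,e)),
\]
citing the corner identity $\rad(eRe)=e\rad(R)e$. It then invokes \Cref{remark:equality_of_ideals}(1), which says that two ideals in an \emph{additive} category agree as soon as they agree on all $\End(X)$. Since $\Kar(\cC)$ need not be additive, the paper passes to $\cC^{\oplus}$, applies the argument there, and restricts back.

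You instead verify the equality on all hom-spaces directly. Your inclusion $\morker{\Phi}\subseteq\rad_{\Kar(\cC)}$ uses the same corner identity. For the reverse inclusion you work harder: the Peirce block-triangular computation, showing $\id_{X'}-\alpha\gamma$ is invertible for every $\gamma\colon X'\to X$ and then invoking the invertibility characterization of the categorical radical, is a genuine extra ingredient that the paper sidesteps via the additive-closure manoeuvre. Your approach is self-contained and avoids the detour through $\cC^{\oplus}$; the paper's approach is shorter but leans on the structural test for equality of ideals.
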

\begin{proof}
  Let $\pi:\cC \rightarrow \cC/\rad( \cC )$ be the canonical quotient functor.
  By the universality of Karoubian envelopes,
  we obtain a full functor $\Kar(\pi): \Kar( \cC ) \rightarrow \Kar(\cC/\rad_{\cC})$.
  For $X \in \cC$ and $e \in \End_{\cC}( X )$ an idempotent,
  we have 
  \begin{align*}
    \morker{\Kar(\pi)}( (X,e), (X,e) ) &= \{ e\alpha e \mid \alpha \in \End_{\cC}( X ), e \alpha e \in \rad_{\cC}( X, X )\} \\
    &= e\rad( \End_{\cC}( X ) )e \\
    &= \rad( e\End_{\cC}( X )e ) \\
    &= \rad( \End_{\Kar( \cC )}( X, e ))= \rad_{\Kar( \cC )}( (X,e), (X,e) )
  \end{align*}
  by \cite{Lam}*{Theorem 21.10}.
  If $\cC$ is additive, then so is $\Kar( \cC )$, and we may apply \Cref{remark:equality_of_ideals} from which we get $\morker{\Kar( \pi )} = \rad_{\Kar( \cC )}$.
  Thus, in the case where $\cC$ is additive, $\Kar( \pi )$ induces the desired fully faithful functor.
  If $\cC$ is not additive, then we may apply our above argument to the additive closure $\cC^{\oplus}$ and obtain our desired result by a restriction of the resulting fully faithful functor.
  
  Last, if idempotents can be lifted modulo the radical,
  then any object in $\Kar(\cC/\rad_{\cC})$ is of the form $(\pi(X), \pi(e))$ for $X \in \cC$ and $e \in \End_{\cC}( X )$ an idempotent.
  This yields essential surjectivity.
\end{proof}

We conclude that taking the radical commutes with passing to the additive and idempotent completion in the $\kk$-linear hom-finite case.

\begin{corollary}\label{corollary:radicals_and_KS_homfinite}
  Let $\cC$ be a $\kk$-linear hom-finite category for a field $\kk$. Then
  \[
    \big(\Kar( \cC^{\oplus })/\rad_{\Kar( \cC^{\oplus })}\big) \simeq \Kar( ( \cC/\rad_{\cC})^{\oplus } ).
  \]
\end{corollary}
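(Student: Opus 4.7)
The plan is to deduce the corollary by chaining together the two preceding lemmas, \Cref{lemma:radicals_and_additive_closures} and \Cref{lemma:radicals_and_karoubi_env}, applied in the right order, and then verifying the lifting hypothesis of the second lemma using hom-finiteness.

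More concretely, I would first observe that $\cC^{\oplus}$ is again a $\kk$-linear hom-finite category, since a finite direct sum of finite-dimensional $\kk$-vector spaces is again finite-dimensional. Applying \Cref{lemma:radicals_and_karoubi_env} to $\cC^{\oplus}$ yields a fully faithful functor
\[
\Kar(\cC^{\oplus})/\rad_{\Kar(\cC^{\oplus})} \longrightarrow \Kar\bigl(\cC^{\oplus}/\rad_{\cC^{\oplus}}\bigr).
\]
Next, \Cref{lemma:radicals_and_additive_closures} gives an equivalence $\cC^{\oplus}/\rad_{\cC^{\oplus}} \simeq (\cC/\rad_{\cC})^{\oplus}$, which extends to an equivalence of Karoubian envelopes $\Kar(\cC^{\oplus}/\rad_{\cC^{\oplus}}) \simeq \Kar((\cC/\rad_{\cC})^{\oplus})$. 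Composing, I obtain the desired fully faithful functor from the left-hand side to the right-hand side of the claimed equivalence.

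It remains to establish essential surjectivity, i.e.\ to check the lifting hypothesis in \Cref{lemma:radicals_and_karoubi_env}: for every $X \in \cC^{\oplus}$, idempotents in $\End_{\cC^{\oplus}}(X)$ lift modulo $\rad(\End_{\cC^{\oplus}}(X))$. Since $\cC^{\oplus}$ is hom-finite, $\End_{\cC^{\oplus}}(X)$ is a finite-dimensional $\kk$-algebra, hence artinian, and its Jacobson radical is nilpotent. Idempotent lifting modulo a nilpotent (in fact, modulo any nil) ideal is a standard fact for associative rings (see, e.g., \cite{Lam}*{Proposition~21.25}), which supplies the hypothesis.

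The only step requiring care is the application of \Cref{lemma:radicals_and_karoubi_env} at the level of $\cC^{\oplus}$ (rather than $\cC$ itself), since that lemma's fully faithful functor and its essential surjectivity criterion are formulated for an arbitrary $K$-linear input category, and we need to feed in $\cC^{\oplus}$ in order to match the statement of the corollary. Once this bookkeeping is correct, the only genuine input is finite-dimensionality of endomorphism algebras, which I do not expect to pose any difficulty.
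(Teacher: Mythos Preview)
Your proposal is correct and follows essentially the same approach as the paper: invoke \Cref{lemma:radicals_and_additive_closures} and \Cref{lemma:radicals_and_karoubi_env} (applied to $\cC^{\oplus}$), and supply the idempotent-lifting hypothesis using that endomorphism algebras in a hom-finite $\kk$-linear category are finite-dimensional.
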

\begin{proof}
  Idempotents in finite dimensional $\kk$-algebras can always be lifted modulo the radical.
  Thus, the claim follows from \Cref{lemma:radicals_and_additive_closures} and \Cref{lemma:radicals_and_karoubi_env}.
\end{proof}

\subsection{Filtrations of Krull--Schmidt categories and the associated graded category} \label{sec::filtrations}

Let $\cC$ be a Krull--Schmidt category.
We call an ascending chain $\cD_{\bullet}$ of full Krull--Schmidt subcategories 
\[
  \cD_{-1} := \{0 \} \subseteq \cD_0 \subseteq \cD_1 \subseteq \cD_2 \subseteq \dots \subseteq \cC
\]
with $\cC = \bigcup_{i \geq 0} \cD_i$ an \emph{(exhaustive) filtration} of $\cC$.
Such a filtration induces an ascending filtration on the Grothendieck group
\[
0 \subseteq \Gr(\cD_0) \subseteq \Gr(\cD_1) \subseteq \Gr(\cD_2) \subseteq \dots \subseteq \Gr(\cC).
\]

\begin{lemma}\label{lemma:classify_via_filt}
  Let $\cD_{\bullet}$ be a filtration of a Krull--Schmidt category $\cC$.
  Then
  \[
    \Indec( \cC ) \cong \bigsqcup_{i \geq 0} \Indec( \cD_i/{\cD_{i-1}} ).
  \]
\end{lemma}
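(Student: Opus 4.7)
The plan is to combine \Cref{corollary:indec_obj_ideal} applied to each successive pair $\cD_{i-1} \subseteq \cD_i$ with the exhaustion property $\cC = \bigcup_i \cD_i$. First I would verify that \Cref{corollary:indec_obj_ideal} genuinely applies to each inclusion $\cD_{i-1} \subseteq \cD_i$, i.e.\ that $\cD_{i-1}$ coincides with $\objker{\pi_i}$ for $\pi_i\colon \cD_i \to \cD_i/\cD_{i-1}$. Since $\cD_{i-1}$ is assumed Krull--Schmidt, it is additive; and by uniqueness of Krull--Schmidt decompositions, any summand in $\cD_i$ of an object $X \in \cD_{i-1}$ is itself a sum of indecomposable summands of $X$, all of which already live in $\cD_{i-1}$. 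Thus $\cD_{i-1}$ is closed under direct sums and summands inside $\cD_i$, so the remark preceding \Cref{corollary:indec_obj_ideal} gives $\objker{\pi_i} = \cD_{i-1}$ and the corollary yields
\[
\Indec(\cD_i) \;\cong\; \Indec(\cD_i/\cD_{i-1}) \sqcup \Indec(\cD_{i-1}).
\]

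Next I would iterate this to obtain, by induction on $n$, a bijection $\Indec(\cD_n) \cong \bigsqcup_{i=0}^{n}\Indec(\cD_i/\cD_{i-1})$, with the base case $\cD_{-1} = \{0\}$ trivially giving $\Indec(\cD_{-1}) = \emptyset$. Then I would pass to the limit: since each $\cD_n$ is a full subcategory of $\cC$, the endomorphism rings coincide, and an object of $\cC$ lying in $\cD_n$ is indecomposable in $\cC$ if and only if it is indecomposable in $\cD_n$. Combined with $\cC = \bigcup_n \cD_n$, this gives $\Indec(\cC) = \bigcup_n \Indec(\cD_n)$ (a directed union of sets under the inclusions $\Indec(\cD_n) \subseteq \Indec(\cD_{n+1})$ induced by the subcategory inclusions), yielding the claimed decomposition.

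Alternatively, and more concretely, I would define the bijection $\phi\colon \Indec(\cC) \to \bigsqcup_{i\geq 0}\Indec(\cD_i/\cD_{i-1})$ directly: for an indecomposable $X \in \cC$, set $i(X) := \min\{i \geq 0 : X \in \cD_i\}$, which is finite by exhaustivity, and send $X$ to the class of $\pi_{i(X)}(X)$. Because $X \notin \cD_{i(X)-1} = \objker{\pi_{i(X)}}$, the image $\pi_{i(X)}(X)$ is nonzero and hence indecomposable by \Cref{lemma:ks_indec}. Injectivity follows from \Cref{lemma:ks_indec}(3): if $\pi_i(X) \cong \pi_i(Y) \ncong 0$ for indecomposables $X,Y \in \cD_i$, then $X \cong Y$ in $\cD_i$ and hence in $\cC$. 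Surjectivity onto the $i$th summand follows because every indecomposable of $\cD_i/\cD_{i-1}$ lifts to an indecomposable $X \in \cD_i$ with $\pi_i(X) \ncong 0$, whence $X \notin \cD_{i-1}$ and $i(X) = i$.

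The only real subtlety is showing $\objker{\pi_i} = \cD_{i-1}$ rather than a larger subcategory; once that closure property is established, the rest is a formal combination of \Cref{lemma:ks_indec} and \Cref{corollary:indec_obj_ideal} together with the transparent fact that full subcategories do not change endomorphism rings and hence do not change indecomposability.
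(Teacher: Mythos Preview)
Your proposal is correct and follows essentially the same approach as the paper: both apply \Cref{corollary:indec_obj_ideal} to each inclusion $\cD_{i-1}\subseteq\cD_i$ and then exhaust using $\Indec(\cC)=\bigcup_i\Indec(\cD_i)$. You supply more detail (notably the verification that $\cD_{i-1}$ is closed under summands in $\cD_i$, and the explicit description of the bijection via $i(X)$), but the strategy is the same.
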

\begin{proof}
We have an ascending chain of subsets
\[
  \emptyset = \Indec( \cD_{-1} ) \subseteq \Indec( \cD_0 ) \subseteq \Indec( \cD_1 ) \subseteq \Indec( \cD_2 ) \subseteq \dots
\]
such that
\[
  \Indec( \cC ) = \bigcup_{i \geq 0 } \Indec( \cD_i ) =  \bigsqcup_{i \geq 0 } (\Indec( \cD_i ) \setminus \Indec( \cD_{i-1} )).
\]
Now, the claim follows from \Cref{corollary:indec_obj_ideal}:
\[
  \Indec( \cD_i ) \cong \Indec( \cD_{i-1} ) \sqcup \Indec( \cD_i/{\cD_{i-1}} )
\]
for all $i \geq 0$.
\end{proof}

\begin{definition}\label{definition:ass_gr_cat}
  Let $\cD_{\bullet}$ be a filtration of a Krull--Schmidt category $\cC$.
  Then we define the \emph{associated graded category} of $\cC$ as:
  \[
    \gr( \cC ) := \gr( \cC, \cD_{\bullet} ) := \bigoplus_{i \geq 0} \cD_i/{\cD_{i-1}}.
  \]
\end{definition}

We say that a \emph{grading} $\cC_\bullet=\{\cC_i\}_{i\geq 0}$ for $\cC$ is a sequence of full Krull--Schmidt subcategories $\cC_i\subseteq \cC$ satisfying $\cC=\bigoplus_{i\geq 0}\cC_i$. Given a grading, we have an associated filtration $\cD_\bullet$, with $\cD_i=\bigoplus_{0\leq j\leq i}\cC_i$, for $\cC$. Clearly, given a filtration on $\cC$, then $\cC_i:=\cD_i/\cD_{i-1}$ is a grading for the associated graded category $\gr(\cC)$.

\begin{corollary}\label{corollary:indec_for_gr}
  Let $\cD_{\bullet}$ be a filtration of a Krull--Schmidt category $\cC$. Then
  \[
    \Indec( \cC ) \cong \Indec( \gr( \cC ) ).
  \]
\end{corollary}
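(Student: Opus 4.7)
The plan is to combine \Cref{lemma:classify_via_filt} with a routine identification of indecomposables in a direct sum of Krull--Schmidt categories. Recall that \Cref{lemma:classify_via_filt} already provides the bijection
$$\Indec(\cC) \cong \bigsqcup_{i \geq 0} \Indec(\cD_i/\cD_{i-1}),$$
so it remains to match the right-hand side with $\Indec(\gr(\cC)) = \Indec\bigl(\bigoplus_{i \geq 0} \cD_i/\cD_{i-1}\bigr)$.

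To do so, I would first observe, in the generality of an arbitrary direct sum of $K$-linear Krull--Schmidt categories $\bigoplus_{i \in I} \cC_i$, that an object is a tuple $(X_i)_{i \in I}$ with $X_i \cong 0$ for all but finitely many $i$, which always decomposes as the finite direct sum of its non-zero components (viewed as objects supported in a single factor via the canonical inclusion $\cC_j \hookrightarrow \bigoplus_{i \in I} \cC_i$). Since the endomorphism ring of an object concentrated in the $j$-th factor coincides with its endomorphism ring in $\cC_j$, such an object is indecomposable in the direct sum if and only if it is indecomposable in $\cC_j$. This yields the canonical bijection
$$\Indec\bigl(\bigoplus_{i \in I} \cC_i\bigr) \cong \bigsqcup_{i \in I} \Indec(\cC_i).$$

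Applying this to $I = \mathbb{Z}_{\geq 0}$ and $\cC_i = \cD_i/\cD_{i-1}$ and composing with the bijection of \Cref{lemma:classify_via_filt} gives the desired isomorphism. There is no genuine obstacle here; the only point requiring care is the identification of indecomposables in a direct sum, which is immediate from the definitions.
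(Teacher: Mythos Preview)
Your proposal is correct and matches the paper's approach: the corollary is stated without proof immediately after \Cref{lemma:classify_via_filt} and \Cref{definition:ass_gr_cat}, relying implicitly on exactly the two ingredients you spell out. Your explicit verification that $\Indec\bigl(\bigoplus_{i\in I}\cC_i\bigr)\cong\bigsqcup_{i\in I}\Indec(\cC_i)$ via the decomposition of tuples and the identification of endomorphism rings is the natural way to make this precise.
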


\begin{remark}
We regard $\Gr( \gr(\cC )) = \bigoplus_{i \geq 0} \Gr(\cD_i/{\cD_{i-1}})$
as a graded abelian group.
\end{remark}

Let $\cC$ be a Krull--Schmidt category (over $\K$)
equipped with
a $\K$-bilinear functor $\otimes: \cC \times \cC \rightarrow \cC$.
We call a filtration $\cD_{\bullet}$ of $\cC$
\emph{compatible}
if $\otimes$ restricts to functors
\begin{equation}\label{eq:tensorcomp}
    \cD_m \times \cD_{n} \rightarrow \cD_{m+n}
\end{equation}
for all $m,n \geq 0$.
In that case, we get well-defined bilinear functors
\[
(\cD_m/\cD_{m-1}) \times (\cD_{n}/\cD_{n-1}) \rightarrow (\cD_{m+n}/\cD_{{m+n-1}})
\]
that assemble to a bilinear functor
\[
\otimes_{\gr}: \gr( \cC ) \times \gr( \cC ) \rightarrow \gr( \cC ).
\]
A unit\slash associator\slash braiding on $(\cC, \otimes)$
turns $\Gr( \cC )$ into a unital\slash associative\slash commutative ring.
Moreover, a compatible filtration $\cD_{\bullet}$ turns $\Gr( \cC )$ into a filtered ring, 
whose associated graded ring we denote by
$\gr( \Gr( \cC ) )$.

\begin{remark}
Every unit\slash associator\slash(symmetric) braiding on $(\cC, \otimes)$ induces a unit\slash associator\slash(symmetric) braiding on $(\gr( \cC ), \otimes_{\gr})$ which turns $\gr( \Gr( \cC ) )$
into a graded unital\slash associative\slash commutative ring.
\end{remark}

We note that passing to the associated graded category is functorial in the following sense. Assume given a $\K$-linear functor $F\colon \cC\to \cC'$, where both $\cC$ and $\cC'$ have filtrations $\cD_\bullet$, respectively, $\cD_\bullet'$, which is \emph{compatible with filtrations} in the sense that 
$$F(\cD_i)\subseteq \cD_i', \qquad \text{for all $i\geq 0$}.$$
Similarly, if $\cC=\bigoplus_{i\geq 0}\cC_i$ and $\cC'=\bigoplus_{i\geq 0}\cC_i'$ are gradings, we say $F$ is \emph{compatible with gradings} if $F(\cC_i)\subseteq \cC_i'$ for all $i$.
If $F$ is compatible with filtrations, we obtain an induced $\K$-linear functor 
\begin{equation}\label{eq:functorialitygr}
    \gr F\colon \gr (\cC)\longrightarrow \gr (\cC').
\end{equation}
which is compatible with gradings.
This assignment is (strictly) functorial in the sense that $\gr GF=\gr G\gr F$ and, in fact, $2$-functorial with respect to natural transformations. We say that an equivalence of filtered categories $\cC$ and $\cC'$ is \emph{compatible with filtrations} if both functors defining the equivalence are compatible with filtrations. We will later apply the following straightforward lemma.

\begin{lemma}\label{lem:filt-comp-equiv} Let $\cC=\cD_\bullet$, $\cC'=\cD'_\bullet$ be Krull--Schmidt categories with filtrations  and $F\colon \cC\to \cC'$ be a functor.
\begin{enumerate}
    \item  The functor $F$ is part of an equivalence that is compatible with filtrations if and only if $F$ is part of an equivalence and for any object $X$ of $\cC$, 
    $$X\in \cD_i \quad \Longleftrightarrow \quad FX\in \cD'_i.$$
    \item     
    If $F$ is part of an equivalence compatible with filtrations, then $\gr F\colon \gr (\cC)\to \gr (\cC')$ is part of an equivalence (compatible with gradings).
\end{enumerate}
\end{lemma}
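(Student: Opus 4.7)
The plan is to prove part (1) by leveraging a quasi-inverse of $F$ to swap the direction of the biconditional, and then to deduce part (2) as a direct application of the $2$-functoriality of $\gr$ stated immediately after equation \eqref{eq:functorialitygr}. The only real content is in part (1); part (2) is essentially formal once we know both $F$ and some quasi-inverse $G$ preserve the filtration pieces.

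For the \emph{only if} direction of part (1), I would choose a quasi-inverse $G$ of $F$ that is also compatible with filtrations (which exists by hypothesis). Compatibility of $F$ immediately gives $X \in \cD_i \Rightarrow FX \in \cD'_i$; conversely, if $FX \in \cD'_i$, then compatibility of $G$ gives $GFX \in \cD_i$, and since $\cD_i$ is a full subcategory (hence closed under isomorphism) and $X \cong GFX$, we conclude $X \in \cD_i$. For the \emph{if} direction, I would pick any quasi-inverse $G$ of $F$ and verify that $G$ is automatically compatible with filtrations: for $Y \in \cD'_i$, the isomorphism $FGY \cong Y$ places $FGY$ in $\cD'_i$, and the assumed biconditional applied to $X = GY$ then yields $GY \in \cD_i$. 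Compatibility of $F$ itself is just the forward direction of the biconditional.

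For part (2), fix a quasi-inverse $G$ compatible with filtrations (as produced by part (1)) together with natural isomorphisms $\eta\colon GF \isomorph \id_\cC$ and $\varepsilon\colon FG \isomorph \id_{\cC'}$. Strict functoriality of $\gr$ gives $\gr G \circ \gr F = \gr(GF)$ and $\gr F \circ \gr G = \gr(FG)$, while $2$-functoriality converts $\eta$ and $\varepsilon$ into natural isomorphisms $\gr(\eta)\colon \gr G \circ \gr F \isomorph \id_{\gr \cC}$ and $\gr(\varepsilon)\colon \gr F \circ \gr G \isomorph \id_{\gr \cC'}$. Thus $\gr F$ is an equivalence with quasi-inverse $\gr G$, and it is compatible with gradings because, by construction, it is assembled from the restrictions $F\colon \cD_i \to \cD'_i$ passed to the quotients $\cD_i/\cD_{i-1} \to \cD'_i/\cD'_{i-1}$. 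The mildest obstacle is ensuring the components of $\eta$ and $\varepsilon$ really do descend to natural transformations of the graded functors, but this is precisely the $2$-functoriality invoked above.
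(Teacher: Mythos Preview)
Your argument is correct and matches the spirit of what the paper intends; note that the paper does not actually supply a proof for this lemma, merely calling it ``straightforward'' and moving on. Your write-up is exactly the kind of verification one would expect.

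One small slip: you write that $\cD_i$ is ``a full subcategory (hence closed under isomorphism)''. Fullness alone does not imply closure under isomorphism; you want \emph{replete} (strictly full). In the paper's setting this is harmless, since every filtration piece that actually appears is of the form $\Kar(\{X_0,\dots,X_n\}^{\oplus})$ and is therefore automatically closed under isomorphism in $\cC$. But the implication as stated is false in general, so either replace ``full'' with ``replete'' or simply note that the $\cD_i$ are closed under isomorphism by construction (or by tacit convention in the paper).

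Everything else is fine: the ``if'' direction of (1) is handled cleanly by pulling back through $FGY\cong Y$, and part (2) is exactly the intended use of the strict and $2$-functoriality of $\gr$ recorded just before the lemma.
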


The next theorem reveals the main idea behind the associated graded category: it provides a categorification of the graded Grothendieck ring.

\begin{theorem}\label{theorem:iso_graded_rings}
We have an isomorphism of graded rings
$\Gr( \gr(\cC )) \cong \gr( \Gr( \cC ) )$.
\end{theorem}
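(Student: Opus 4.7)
The plan is to produce the isomorphism degree by degree using the free structure of the Grothendieck group of a Krull--Schmidt category, and then verify compatibility with products.

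First I would unpack both sides. On the left, since $\gr(\cC) = \bigoplus_{i \geq 0} \cD_i/\cD_{i-1}$ is a direct sum of categories and the Grothendieck group of a direct sum is the direct sum of Grothendieck groups, we have a natural identification
$$\Gr(\gr(\cC)) = \bigoplus_{i \geq 0} \Gr(\cD_i/\cD_{i-1}),$$
which is already graded by construction. On the right, the filtration $\Gr(\cD_\bullet)$ of $\Gr(\cC)$ yields $\gr(\Gr(\cC)) = \bigoplus_{i \geq 0} \Gr(\cD_i)/\Gr(\cD_{i-1})$.

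Next, for each $i \geq 0$ I would construct an isomorphism $\varphi_i\colon \Gr(\cD_i)/\Gr(\cD_{i-1}) \xrightarrow{\sim} \Gr(\cD_i/\cD_{i-1})$. The quotient functor $\pi_i\colon \cD_i \to \cD_i/\cD_{i-1}$ induces a group homomorphism $\Gr(\pi_i)\colon \Gr(\cD_i) \to \Gr(\cD_i/\cD_{i-1})$ which vanishes on $\Gr(\cD_{i-1})$ (objects of $\cD_{i-1}$ are sent to zero objects in the quotient). Hence $\Gr(\pi_i)$ factors through $\varphi_i$. To see that $\varphi_i$ is an isomorphism, recall that $\Gr$ of a Krull--Schmidt category is free abelian on the set of indecomposables. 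By \Cref{corollary:indec_obj_ideal} applied to the full Krull--Schmidt subcategory $\cD_{i-1} \subseteq \cD_i$, we have $\Indec(\cD_i) \cong \Indec(\cD_i/\cD_{i-1}) \sqcup \Indec(\cD_{i-1})$, and by \Cref{lemma:ks_indec} the nonzero images of indecomposables of $\cD_i$ under $\pi_i$ form a complete set of representatives of $\Indec(\cD_i/\cD_{i-1})$ with the pairwise identifications being exactly those between isomorphic indecomposables in $\cD_i$. Thus $\varphi_i$ sends a free $\mZ$-basis to a free $\mZ$-basis, hence is an isomorphism of abelian groups.

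Assembling the $\varphi_i$ yields an isomorphism $\varphi\colon \gr(\Gr(\cC)) \xrightarrow{\sim} \Gr(\gr(\cC))$ of graded abelian groups. The remaining task — which I expect to be the easiest of the three but the one requiring the most care — is to check that $\varphi$ is a ring map. For classes $[X] \in \Gr(\cD_m)$ and $[Y] \in \Gr(\cD_n)$, the product in $\gr(\Gr(\cC))$ is $[X \otimes Y] \bmod \Gr(\cD_{m+n-1})$, where $X \otimes Y \in \cD_{m+n}$ by the compatibility \eqref{eq:tensorcomp}. By the definition of $\otimes_{\gr}$, the image $\pi_m(X) \otimes_{\gr} \pi_n(Y)$ is obtained by applying $\pi_{m+n}$ to $X \otimes Y$, so $\varphi([X]\cdot[Y]) = [\pi_{m+n}(X\otimes Y)] = [\pi_m(X)] \cdot [\pi_n(Y)] = \varphi([X])\cdot\varphi([Y])$; the unit is preserved for the same reason. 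The main subtlety I anticipate is simply keeping track of the identifications between the direct sum decomposition of $\gr(\cC)$ and the filtered quotients $\cD_i/\cD_{i-1}$, but once \Cref{corollary:indec_obj_ideal} is in hand this reduces to matching free bases.
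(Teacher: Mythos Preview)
Your proof is correct and follows essentially the same approach as the paper: both arguments rest on the bijection $\Indec(\cD_i) \cong \Indec(\cD_i/\cD_{i-1}) \sqcup \Indec(\cD_{i-1})$ from \Cref{corollary:indec_obj_ideal} (the paper invokes it via \Cref{corollary:indec_for_gr}) to identify free bases degree by degree. Your verification of multiplicativity via the functoriality of the quotient maps $\pi_i$ is slightly cleaner than the paper's explicit decomposition of $X \otimes X'$ into indecomposables of top and lower degree, but the content is the same.
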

\begin{proof}
The bijection provided in \Cref{corollary:indec_for_gr}
induces a group isomorphism
\[
\Gr( \gr(\cC )) \cong \mZ^{\Indec(\gr(\cC))}
\cong \mZ^{\Indec(\cC)} \cong \gr( \Gr( \cC ) ).
\]
To compare multiplications,
let $I_m := \Indec({\cD_m}) \setminus \Indec({\cD_{m-1}})$
denote the set of indecomposables in $\cC$ of degree $m$.
Let $X \in I_m$ and $X' \in I_n$.
Then
\[
X \otimes X'
\cong
\bigg(\bigoplus_{Y \in I_{m+n}} Y^{n_Y}\bigg) \oplus Z
\]
where $Z$ is a direct sum of indecomposables
of degree $< m + n$, and $n_Y \geq 0$.
It follows that
$[X] \cdot [X'] = (\sum_{Y \in I_{m+n}} n_Y[Y])$
in $\gr( \Gr( \cC ) )$.
It also follows that
\[
X \otimes_{\gr} X'
\cong
\bigoplus_{Y \in I_{m+n}} Y^{n_Y}
\]
and thus the multiplication in $\Gr( \gr(\cC ))$
coincides with the one in $\gr( \Gr( \cC ) )$.
\end{proof}

\subsection{Krull--Schmidt categories with a tensor generator} \label{sec::tensor-generator}

Let $\K$ be a commutative ring.
Any $\K$-bilinear functor $\otimes: \cC \times \cC \rightarrow \cC$ for $\cC$ a Krull--Schmidt category
can be interpreted in terms of external tensor products and induction of projective modules.

\begin{lemma}\label{lemma:tp_in_terms_of_proj}
Let $X$, $Y$
be objects of a $\K$-linear Krull--Schmidt category $\cC$
equipped with a bilinear functor $\otimes: \cC \times \cC \rightarrow \cC$.
We set 
$R := \End_{\cC}( X )$,
$S := \End_{\cC}( Y )$,
and $T := \End_{\cC}( X \otimes Y )$.
Then the diagram
\begin{center}
        \begin{tikzpicture}
            \coordinate (r) at (9,0);
            \coordinate (d) at (0,-1.5);
            \node (A) {$\cC \times \cC$};
            \node (B) at ($(A)+(r)$) {$\cC$};
            \node (C) at ($(A)+(d)$) {$\rfproj{R} \times \rfproj{S}$};
            \node (D) at ($(C)+(r)$) {$\rfproj{T}$};
            \draw[->,thick] (A) to node[above]{$\otimes$} (B);
            \draw[->,thick] (C) to node[below]{$(P,Q) \mapsto (P \boxtimes Q) \otimes_{(R \otimes_{\K} S)} T$} (D);
            \draw[->,thick] (C) to (A);
            \draw[->,thick] (D) to (B);
        \end{tikzpicture}
    \end{center}
commutes up to natural isomorphism,
where the vertical functors are given as described in \Cref{remark:KS_and_proj},
$P \boxtimes Q \in \rfproj{(R \otimes_{\K} S)}$ is the external tensor product of $P$ and $Q$,
and $T$ is considered as a left $(R \otimes_{\K} S)$-module via the map $R \otimes_{\K} S \rightarrow T$ that comes from the functor $\otimes: \cC \times \cC \rightarrow \cC$.
\end{lemma}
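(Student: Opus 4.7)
The plan is to prove the claimed commutativity by exhibiting, for every $P \in \rfproj{R}$ and $Q \in \rfproj{S}$ with corresponding objects $A \in \Kar(\{X\}^{\oplus})$ and $B \in \Kar(\{Y\}^{\oplus})$ under the equivalences of \Cref{remark:KS_and_proj}, a natural isomorphism of right $T$-modules
\begin{equation*}
\Hom_{\cC}(X \otimes Y,\; A \otimes B) \;\cong\; (P \boxtimes Q) \otimes_{R \otimes_{\K} S} T.
\end{equation*}
A preliminary observation is that, by bilinearity of $\otimes$, one has $X^a \otimes Y^b \cong (X \otimes Y)^{ab}$, so the restriction of $\otimes$ to $\Kar(\{X\}^{\oplus}) \times \Kar(\{Y\}^{\oplus})$ lands in $\Kar(\{X \otimes Y\}^{\oplus})$; in particular the left-hand side above is a well-defined right $T$-module.

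Next, I would introduce the comparison map by sending a pure tensor $(\alpha \boxtimes \beta) \otimes t$, with $\alpha \in \Hom(X,A)$, $\beta \in \Hom(Y,B)$ and $t \in T$, to the composition $(\alpha \otimes \beta) \circ t \colon X \otimes Y \to A \otimes B$. The balanced condition reduces to $(\alpha r \otimes \beta s) \circ t = (\alpha \otimes \beta) \circ (r \otimes s) \circ t$ for $r \in R$, $s \in S$, which is immediate from the functoriality of $\otimes$ and the very definition of the ring map $R \otimes_{\K} S \to T$ induced by $\otimes$; right $T$-equivariance follows from associativity of composition. To verify this map is an isomorphism, I would argue by reduction to the tautological base case $(P,Q) = (R,S)$, i.e. $(A,B) = (X,Y)$, where both sides reduce canonically to $T$ and the candidate map is the identity.

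The extension from the base case to arbitrary finitely generated projectives is a routine additive/idempotent-splitting argument: both sides are bifunctors in $(P,Q)$ that preserve finite direct sums in each variable — the left-hand side by additivity of $\Hom(X \otimes Y, -)$ combined with the bilinearity of $\otimes$, and the right-hand side because $\boxtimes$ and the relative tensor product $- \otimes_{R \otimes_{\K} S} T$ both commute with finite direct sums — so the isomorphism extends to all free modules $(R^a, S^b)$. Since both functors send idempotents in $R^a$ and $S^b$ to idempotents in the relevant endomorphism rings and preserve their image decompositions, the comparison isomorphism extends to arbitrary summands.

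The main obstacle I foresee is purely bookkeeping: one has to juggle several module structures simultaneously (right $R$- and $S$-module structures on $P$ and $Q$, the left $(R \otimes_{\K} S)$-action on $T$ through the ring map $R \otimes_{\K} S \to T$, and the right $T$-actions on both sides) when verifying naturality in $(A,B)$, and one must keep straight the explicit identification $X^a \otimes Y^b \cong (X \otimes Y)^{ab}$ used to exhibit $A \otimes B$ inside $\Kar(\{X \otimes Y\}^{\oplus})$. Once these identifications are pinned down, the proof is essentially an additive and idempotent-splitting extension of the tautology $\Hom(X \otimes Y, X \otimes Y) = T$.
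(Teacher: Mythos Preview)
Your proposal is correct and follows essentially the same strategy as the paper: reduce to the tautological base case $(P,Q)=(R,S)$ via the universality of additive closures and Karoubian envelopes (which is precisely your ``additive/idempotent-splitting argument''), where both sides are canonically $T$. The paper's proof is simply a terser version of what you wrote, omitting the explicit construction of the comparison map and the bookkeeping you flag.
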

\begin{proof}
By the universality of additive closures and Karoubian envelopes, it suffices to show that the diagram commutes (up to natural isomorphism) for the pair $(R,S) \in \rfproj{R} \times \rfproj{S}$.
But this is clear since
\[
(R \boxtimes S) \otimes_{(R \otimes_{\K} S)}
T \cong T. \qedhere
\]
\end{proof}

\begin{definition} \label{def:tower} A \emph{tower} of $K$-algebras is a family of $K$-algebras $(R_n)_{n\geq0}$ together with algebra maps $(\phi_{m,n}\colon R_m\o_K R_n\to R_{m+n})_{m,n\geq0}$ such that $R_0=K$ and
\begin{equation} \label{eq::associativity}
\phi_{m_1+m_2,m_3}(\phi_{m_1,m_2}\o_K\id_{R_{m_3}})=
\phi_{m_1,m_2+m_3}(\id_{R_{m_1}}\o_K\phi_{m_2,m_3})
\end{equation}
for all $m_1,m_2,m_3\geq0$.
\end{definition}
Towers of algebras involving more conditions in the definition were studied in \cite{SY}*{Section~3.1}.

\begin{construction}\label{construction:graded_cat}
Suppose given a tower of $K$-algebras $(R_m)_m$ with maps $(\phi_{m,n})_{m,n}$. 
Regarding the $R_m$ as $\K$-linear categories with a single object whose endomorphism ring is $R_m$,
we can form the category $\cC := \bigoplus_{m \geq 0} {R_m}$, 
and the $\phi_{m,n}$ encode a bilinear functor $\otimes_{\cC}: \cC \times \cC \rightarrow \cC$.
By the universality of additive closures and Karoubian envelopes, we may extend $\otimes_{\cC}$
to a bilinear functor $\o_\cD$ on $\cD := (\bigoplus_{m \geq 0} \rfproj{R_m})$.
\Cref{lemma:tp_in_terms_of_proj} shows that this bifunctor on $\cD$ is given by 
\[
  (P,Q) \mapsto (P \boxtimes Q) \otimes_{(R_m \otimes_{\K} R_n)} R_{m+n}
\]
for $P \in \rfproj{R_i}$, $Q \in \rfproj{R_j}$. We set $\one_\cD:=K\in\rfproj{R_0}\subset\cD$.
\end{construction}

\begin{lemma} \label{lem::tensor-product-graded-cat} $(\cD,\o_\cD,\one_\cD)$ as in \Cref{construction:graded_cat} is a $K$-linear monoidal Krull--Schmidt category.
\end{lemma}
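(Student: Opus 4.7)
The plan is to verify the three properties—$K$-linearity, the Krull--Schmidt property, and the monoidal structure—separately, with the monoidal structure carrying the bulk of the work.

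First, I would observe that $\cD$ is a direct sum of additive, idempotent complete, $K$-linear categories $\rfproj{R_m}$, and inherits all of these properties from its summands by the discussion following \Cref{remark:KS_and_proj}. The Krull--Schmidt property for each $\rfproj{R_m}$ follows whenever the endomorphism rings of finitely generated projective $R_m$-modules are semi-perfect; in the applications of interest in later sections (where $R_m$ is a finite-dimensional $\kk$-algebra), this is automatic by \Cref{lem::Krull-Schmidt}, and the Krull--Schmidt property then passes to the direct sum as noted in the same discussion.

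For the monoidal structure, I would build the unit and associator using the explicit formula from \Cref{lemma:tp_in_terms_of_proj}. Since $R_0 = K$, for $P \in \rfproj{R_n}$ there is a natural isomorphism $\one_\cD \otimes_\cD P \cong (K \boxtimes P) \otimes_{R_n} R_n \cong P$ giving the left unitor, and symmetrically for the right unitor. For the associator, given $P_i \in \rfproj{R_{m_i}}$, $i=1,2,3$, both $(P_1 \otimes_\cD P_2) \otimes_\cD P_3$ and $P_1 \otimes_\cD (P_2 \otimes_\cD P_3)$ unfold to the triple coend
\[
(P_1 \boxtimes P_2 \boxtimes P_3) \otimes_{R_{m_1} \otimes_K R_{m_2} \otimes_K R_{m_3}} R_{m_1+m_2+m_3},
\]
where the two a priori distinct $(R_{m_1} \otimes_K R_{m_2} \otimes_K R_{m_3})$-module structures on $R_{m_1+m_2+m_3}$, coming from $\phi_{m_1+m_2, m_3}(\phi_{m_1,m_2} \otimes \id)$ and $\phi_{m_1, m_2+m_3}(\id \otimes \phi_{m_2,m_3})$, are made equal by the tower associativity relation \eqref{eq::associativity}. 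This yields a canonical natural associator.

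The coherence axioms (pentagon and triangle) reduce by the same mechanism: arbitrary iterated tensor products $P_1 \otimes_\cD \cdots \otimes_\cD P_k$ collapse to a single multi-coend whose module structure on $R_{m_1 + \cdots + m_k}$ is independent of the parenthesization by iterated use of \eqref{eq::associativity}, so coherence follows from the classical coherence of tensor products of bimodules over rings. The main obstacle I anticipate is not conceptual but organizational: carefully tracking naturality through the additive and Karoubian completions when extending the monoidal data from the representing objects $R_m$ to arbitrary objects of $\cD$. The universal properties of $(-)^{\oplus}$ and $\Kar(-)$ invoked in \Cref{construction:graded_cat}, together with \Cref{lemma:tp_in_terms_of_proj}, reduce this verification to the free modules $R_m \in \rfproj{R_m}$, where the associativity and unit constraints are dictated directly by the tower structure maps $\phi_{m,n}$.
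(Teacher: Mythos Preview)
Your proposal is correct, but the paper's proof takes a shorter route. Rather than constructing the unitors and associator explicitly on $\cD$ via the induction formula and then reducing coherence back to the free modules, the paper works directly with the small category $\cC = \bigoplus_{m\geq 0} R_m$ from \Cref{construction:graded_cat} (one object per degree, endomorphism ring $R_m$). On $\cC$, the tensor product is given on objects by addition of degrees and on morphisms by the maps $\phi_{m,n}$; the tower associativity \eqref{eq::associativity} together with $R_0=K$ makes $(\cC,\otimes_\cC,\one_\cC)$ a \emph{strict} monoidal category, so there are no coherence axioms to check. The monoidal structure on $\cD=\Kar(\cC^\oplus)$ is then obtained by the universal properties of additive closure and Karoubian envelope. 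Your approach reaches the same endpoint (your final paragraph is essentially this reduction), but only after a detour through explicit associators on $\cD$ that the paper avoids entirely.

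One point in your favor: you correctly note that the Krull--Schmidt claim requires the $R_m$ to have semi-perfect endomorphism rings on finitely generated projectives, which the paper's proof does not address. In the paper's applications this is automatic since the $R_m$ are finite-dimensional over a field, but as stated for a general commutative ring $K$ the Krull--Schmidt part of the lemma needs exactly the hypothesis you identify.
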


\begin{proof} As $\cD$ is constructed as an additive closure and Karoubian envelope, it suffices to verify that $(\cC,\o_\cC,\one_\cC)$ is a strict monoidal category, with $\cC=\bigoplus_{m\geq0} R_m$ and $\o_\cC$ as in \Cref{construction:graded_cat}, and with $\one_\cC$ the object with identity morphism $1\in R_0$. But this follows from \Cref{eq::associativity}.
\end{proof}

Now let us consider a $K$-linear monoidal Krull--Schmidt category $\cC$ whose tensor unit has endomorphism ring $K$. Then an object $X\in\cC$ is called a \emph{tensor generator} if $\cC=\Kar(\{X^{\o m}\}_{m\geq0}^\oplus)$. Note that if $\cC$ is generated by any finite number of objects under tensor products, direct sums, and direct summands, then their direct sum is a tensor generator in this sense.

The choice of a generator can be used to define the following filtration of $\cC$. We set
$$
\cD_m := \Kar(\{X^{\o i}\}_{0\leq i\leq m}^\oplus) ,
$$
which is automatically compatible with the monoidal structure in the sense of \eqref{eq:tensorcomp} in \Cref{sec::filtrations}, i.e., $\o(\cD_m\times\cD_n)\subset\cD_{m+n}$. Hence, the associated graded Krull--Schmidt category $\gr(\cC)$ is naturally monoidal with a tensor product preserving direct sums.

To describe its structure, let us define $K$-algebras
\begin{align*}
R_m &:= \End_{\gr(\cC)}(X^{\o m}) 
= \End_{\cD_m/\cD_{m-1}}(X^{\o m}) ,
\end{align*}
in particular, $R_0=K$. Now for any $m,n\geq0$, the tensor product induces an algebra homomorphism $\phi_{m,n}:R_m\o_K R_n\to R_{m+n}$. If we assume $\cC$ to be strictly monoidal, then associativity of the tensor product in $\cC$ ensures that $(R_m)_{m\geq0}$ together with $(\phi_{m,n})_{m,n}$ form a tower of algebras in the sense of \Cref{def:tower}. Thus, by \Cref{construction:graded_cat} and \Cref{lem::tensor-product-graded-cat}, the category
$$
\cD := \bigoplus_{m\geq0} \rfproj{R_m} 
$$
can be equipped with a tensor product given by induction along the algebra maps.

\begin{proposition} \label{prop:tensor-generator} For any $K$-linear (strict) monoidal Krull--Schmidt category $\cC$ with a tensor generator whose tensor unit has endomorphism ring $K$, we have an equivalence $\gr(\cC)\simeq\cD$ of monoidal Krull--Schmidt categories which is compatible with the gradings, and in particular, an isomorphism $\gr K_0(\cC)\cong K_0(\cD)$ of graded rings.
\end{proposition}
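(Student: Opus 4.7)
My plan is to build the equivalence $\gr(\cC) \simeq \cD$ one graded piece at a time using Lemma \ref{remark:KS_and_proj}, and then use Lemma \ref{lemma:tp_in_terms_of_proj} to transport the tensor product of $\gr(\cC)$ to the induction product on $\cD$. The statement about Grothendieck rings will then follow from Theorem \ref{theorem:iso_graded_rings}.

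For the equivalence itself, first I would observe that in $\cD_m/\cD_{m-1}$ every object is isomorphic to a summand of a direct sum of copies of the image of $X^{\otimes m}$. Indeed, by the definition $\cD_m = \Kar(\{X^{\otimes i}\}_{0 \leq i \leq m}^{\oplus})$, any object of $\cD_m$ is a summand of $\bigoplus_{i = 0}^{m} (X^{\otimes i})^{\oplus k_i}$, and in the quotient the summands with $i < m$ become isomorphic to zero (since their identities factor through $\cD_{m-1}$). By Lemma \ref{lemma:ks_indec}, $\cD_m/\cD_{m-1}$ is Krull--Schmidt, hence additive and idempotent complete, so Lemma \ref{remark:KS_and_proj} applied to $X^{\otimes m}$ yields an equivalence
\[
\cD_m/\cD_{m-1} \xrightarrow{\simeq} \rfproj{R_m}, \qquad Y \longmapsto \Hom_{\cD_m/\cD_{m-1}}(X^{\otimes m}, Y).
\]
Taking the direct sum over $m \geq 0$ gives the desired $\K$-linear equivalence $\gr(\cC) \simeq \cD$, which is manifestly compatible with the gradings.

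To upgrade this to a monoidal equivalence, I would note that the filtration is compatible with $\otimes$ in the sense of \eqref{eq:tensorcomp} because $X^{\otimes m} \otimes X^{\otimes n} = X^{\otimes (m+n)}$ (using strictness) and $\otimes$ preserves direct sums and summands. The induced bilinear functor $\otimes_{\gr}$ on $\gr(\cC)$ sends $(X^{\otimes m}, X^{\otimes n})$ to $X^{\otimes (m+n)}$, and its action on endomorphism rings defines precisely the algebra map $\phi_{m,n}\colon R_m \otimes_{\K} R_n \to R_{m+n}$ used both in the tower and in the tensor product on $\cD$ from Construction \ref{construction:graded_cat}. Applying Lemma \ref{lemma:tp_in_terms_of_proj} to $X^{\otimes m}, X^{\otimes n} \in \gr(\cC)$ identifies $\otimes_{\gr}$, under the above equivalence, with the bifunctor $(P, Q) \mapsto (P \boxtimes Q) \otimes_{R_m \otimes_{\K} R_n} R_{m+n}$. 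The tensor unit $\one = X^{\otimes 0}$ corresponds to $\K \in \rfproj{R_0}$, and the strict monoidal structure of $\cC$, carried through the quotients, matches the strict monoidal structure on $\cD$ from Lemma \ref{lem::tensor-product-graded-cat}.

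For the final claim, applying $\Gr$ to the grading-compatible monoidal equivalence $\gr(\cC) \simeq \cD$ gives $\Gr(\gr(\cC)) \cong \Gr(\cD)$ as graded rings, and Theorem \ref{theorem:iso_graded_rings} identifies the left-hand side with $\gr \Gr(\cC)$. The main technical hurdle is the careful identification of $\phi_{m,n}$ on both sides: one must check that the algebra map on $R_m \otimes_{\K} R_n$ induced by $\otimes_{\gr}$ on endomorphisms of $X^{\otimes (m+n)}$ in the quotient $\cD_{m+n}/\cD_{m+n-1}$ coincides with the abstract $\phi_{m,n}$ of the tower. Both come tautologically from the tensor product in $\cC$, so this is essentially bookkeeping, but it is the unique place where the strict associativity assumed in the hypothesis is used in an essential way.
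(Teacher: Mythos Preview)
Your proposal is correct and follows essentially the same approach as the paper: the paper's proof invokes Lemma~\ref{remark:KS_and_proj} for the graded equivalence, Lemma~\ref{lemma:tp_in_terms_of_proj} for monoidality, and Theorem~\ref{theorem:iso_graded_rings} for the Grothendieck ring statement, exactly as you do. Your version is more detailed (spelling out why every object of $\cD_m/\cD_{m-1}$ is a summand of copies of $X^{\otimes m}$ and how the $\phi_{m,n}$ match), but the structure is identical.
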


\begin{proof} The categories $\gr(\cC)$ and $\cD$ are equivalent Krull--Schmidt categories with gradings, since $\cD_m/\cD_{m-1}\simeq \rfproj{R_m}$ by \Cref{remark:KS_and_proj}. Then the equivalence $\gr(\cC)\simeq \cD$ is monoidal by \Cref{lemma:tp_in_terms_of_proj}. This implies the claimed isomorphism of Grothendieck rings with \Cref{theorem:iso_graded_rings}.
\end{proof}

In case the monoidal structure on $\cC$ is not strict, $\cC$ is equivalent to its strictification $\cC'$, which induces an equivalence $\gr(\cC)\simeq\gr(\cC')$, and we may apply \Cref{prop:tensor-generator} to $\cC'$. In particular, the tower of algebras relevant for the description of the associated graded is obtained from the strictified category.

\section{Field extensions for Krull--Schmidt categories}
\label{sec::field-extensions}

\subsection{Field extensions for categories}

Let $\kk \subseteq \KK$ be a field extension.
Scalar extension defines a monoidal functor
\[
(\KK \otimes_\kk -) : (\lVec{\kk},\otimes_\kk) \rightarrow (\lVec{\KK},\otimes_{\KK}).
\]
In particular, any $\kk$-linear category $\cC$
can be transformed via $(\KK \otimes_\kk -)$
into a $\KK$-linear category,
which we denote by $\KK \otimes_\kk \cC$.
Concretely, the objects in $\KK \otimes_\kk \cC$ are the same as in $\cC$,
and $\Hom_{ \KK \otimes_\kk \cC}( X, Y ) = \KK \otimes_{\kk}\Hom_{\cC}( X, Y )$
for objects $X, Y$. 

If $\cC$ is additive, so is $\KK \otimes_\kk \cC$.
But if $\cC$ is idempotent complete, the same
does not necessarily hold for $\KK \otimes_\kk \cC$.
Thus, we define $\cC^\KK := \Kar( \KK \otimes_\kk \cC )$.
We observe that $\cC^\KK \simeq \lvec{\KK}\boxtimes_{\kk}\cC$, where the right-hand side can be viewed as a $\KK$-linear category.
We denote the image of an object $X \in \cC$ under the canonical functor
$\cC \rightarrow \cC^\KK$ by $X^{\KK}$. 

\begin{remark}
  If $\cC$ is a $\kk$-linear hom-finite Krull--Schmidt category,
  then $\cC^\KK$
  is a $\KK$-linear hom-finite Krull--Schmidt category.
  If such a category $\cC$ comes equipped with
  a monoidal structure $\otimes: \cC \times \cC \rightarrow \cC$,
  then this naturally extends to a monoidal structure $\otimes^{\KK}: \cC^{\KK} \times \cC^{\KK} \rightarrow \cC^{\KK}$,
  and the canonical functor $\cC \rightarrow \cC^\KK$ becomes monoidal.
  In particular, we get a ring homomorphism
  \[
    \Gr( (-)^{\KK} ):\Gr( \cC ) \rightarrow \Gr( \cC^\KK ): [X] \mapsto [X^\KK].
  \]
  \end{remark}

\begin{example}\label{example:field_extension_sn}
  For any $\kk$-algebra $R$, we have $(\rfproj{R})^{\KK} \simeq \rfproj{(R \otimes_{\kk} \KK)}$.
\end{example}

We need the following standard lemma.

\begin{lemma}\label{lemma:tensor_prod_of_locals}
Let $A$, $B$ be finite-dimensional algebras over a field $\kk$. Then 
\[
\rad( A \otimes_\kk B ) \supseteq \rad(A) \otimes_{\kk} B + A \otimes_{\kk} \rad(B)
\]
with equality if $\kk$ is algebraically closed or $\dim A/\rad(A)=1$ or $\dim B/\rad(B)=1$.
In particular, if $A$ and $B$ are local and $\kk$ is algebraically closed, then
$A \otimes_\kk B$ is local. Or if $\dim A/\rad(A)=\dim B/\rad(B)=1$, then $A\o_\kk B$ is local and $\dim A\o_\kk B/\rad(A\o_\kk B)=1$.
\end{lemma}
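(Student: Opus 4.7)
The plan is to first show the containment, then identify the quotient, and finally use semisimplicity of the quotient to upgrade containment to equality under the stated hypotheses.

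For the containment, the key fact is that the Jacobson radical of a finite-dimensional $\kk$-algebra is nilpotent. Set $I := \rad(A) \otimes_{\kk} B$ and $J := A \otimes_{\kk} \rad(B)$. These are two-sided ideals of $A \otimes_\kk B$, and since tensoring over a field is exact, $I^n = \rad(A)^n \otimes_\kk B$ and $J^n = A \otimes_\kk \rad(B)^n$, which vanish for $n$ large. Because $I$ and $J$ commute (any element of $I$ times any element of $J$ lies in $\rad(A) \otimes_\kk \rad(B)$, independent of the order), their sum $I+J$ is also nilpotent. A nilpotent two-sided ideal is contained in the Jacobson radical, giving the asserted containment.

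For equality, identify the quotient via the natural isomorphism
\[
(A \otimes_\kk B)/(I+J) \;\cong\; (A/\rad(A)) \otimes_\kk (B/\rad(B)),
\]
which follows from right-exactness of the tensor product over a field. By Artin--Wedderburn, $A/\rad(A)$ and $B/\rad(B)$ are finite products of matrix algebras over finite-dimensional division algebras $D_i$ over $\kk$. If $\kk$ is algebraically closed, each such $D_i$ is $\kk$ itself, so $A/\rad(A)$ and $B/\rad(B)$ are products of matrix algebras over $\kk$, and their tensor product is again a product of matrix algebras over $\kk$, hence semisimple. If instead $\dim_{\kk} A/\rad(A) = 1$, then $A/\rad(A) \cong \kk$ and the quotient reduces to $B/\rad(B)$, again semisimple; similarly if $\dim_\kk B/\rad(B)=1$. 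In each case $(A\otimes_\kk B)/(I+J)$ is semisimple, so its radical is zero, which combined with the containment forces $\rad(A \otimes_\kk B) = I + J$.

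The ``In particular'' statements are direct consequences. If $A$ and $B$ are local, then $A/\rad(A)$ and $B/\rad(B)$ are finite-dimensional division algebras over $\kk$; if $\kk$ is algebraically closed, both are equal to $\kk$, so the quotient $(A \otimes_\kk B)/\rad(A\otimes_\kk B) \cong \kk$ is a field, making $A \otimes_\kk B$ local. If $\dim_\kk A/\rad(A) = \dim_\kk B/\rad(B) = 1$, then by the equality case already established the quotient modulo the radical is $\kk \otimes_\kk \kk = \kk$, which is one-dimensional and in particular local. The only subtle point is checking that $I$ and $J$ commute elementwise modulo higher powers so that $I+J$ is nilpotent, but this is immediate from the tensor product structure; no further obstacle arises.
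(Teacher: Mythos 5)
Your proof is correct and follows essentially the same route as the paper: the containment comes from nilpotency of $\rad(A)\otimes_\kk B + A\otimes_\kk\rad(B)$ (via the expansion into terms $\rad(A)^i\otimes_\kk\rad(B)^j$), equality comes from semisimplicity of the quotient $(A/\rad(A))\otimes_\kk(B/\rad(B))$ under the stated hypotheses, and the local statements follow since that quotient is then $\kk$. The only cosmetic difference is that you identify the kernel of the quotient map by right-exactness of the tensor product where the paper compares dimensions; both are fine.
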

\begin{proof}
Let $I$ denote the kernel of the canonical surjective morphism
\[\epsilon: A \otimes_\kk B \rightarrow {A}/{\rad(A)} \otimes_{\kk} {B}/{\rad(B)}.\]
Then clearly $I \supseteq \rad(A) \otimes_{\kk} B + A \otimes_{\kk} \rad(B)$ and we get equality by comparing dimensions.
Since $\rad(A)$ and $\rad(B)$
are nilpotent ideals, so is $I$:
\[
I^l = (\rad(A) \otimes_{\kk} B + A \otimes_{\kk} \rad(B))^l
=
\sum_{i+j = l} \rad(A)^i \otimes_{\kk} \rad(B)^j = 0
\]
for $l$ big enough, which proves the inclusion
$I \subseteq \rad( A \otimes_\kk B )$.
If $\kk$ is algebraically closed or if $\dim A/\rad(A)=1$ or $\dim B/\rad(B)=1$, then
${A}/{\rad(A)} \otimes_{\kk} {B}/{\rad(B)}$ is semisimple and hence $\rad( A \otimes_\kk B ) \subseteq I$,
proving equality.
It follows that
\[
(A \otimes_\kk B)/\rad( A \otimes_\kk B)
\cong {A}/{\rad(A)} \otimes_{\kk} {B}/{\rad(B)}
\cong \kk
\]
for both cases of the premise.
\end{proof}

\begin{lemma}\label{lemma:mono_Gro}
Let $\cC$ be a $\kk$-linear hom-finite Krull--Schmidt category.
If $X, Y \in \cC$ are non-isomorphic indecomposables, then every common (isomorphic) summand of $X^{\KK}, Y^{\KK} \in \cC^{\KK}$ is zero.
In particular, we have a monomorphism between Grothendieck groups:
\[
  \Gr( (-)^{\KK} )\colon \Gr( \cC ) \rightarrow \Gr( \cC^\KK ), \quad [X] \mapsto [X^\KK].
\]
\end{lemma}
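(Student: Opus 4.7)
The plan is to prove the statement about summands first and then deduce the injectivity of the Grothendieck group map as a formal consequence. The key tool throughout is the interaction between the radical of the endomorphism ring and scalar extension.

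For the summand claim, I would proceed as follows. Given non-isomorphic indecomposables $X, Y \in \cC$, I first observe that any composition $u \circ v$ with $v \colon Y \to X$ and $u \colon X \to Y$ must lie in $\rad(\End_\cC(Y))$: otherwise $uv$ would be a unit in the local ring $\End_\cC(Y)$, so $u$ would be a split epimorphism, making $Y$ isomorphic to a summand of the indecomposable $X$ and forcing $X \cong Y$. Thus the image of the $\kk$-bilinear composition map $\Hom_\cC(X,Y) \otimes_\kk \Hom_\cC(Y,X) \to \End_\cC(Y)$ lies in $I := \rad(\End_\cC(Y))$. Because $\cC$ is hom-finite, $\End_\cC(Y)$ is finite-dimensional and hence $I$ is nilpotent; consequently $\KK \otimes_\kk I$ is a nilpotent ideal in $\KK \otimes_\kk \End_\cC(Y)$ and is therefore contained in the Jacobson radical of that ring. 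Under the natural identifications $\End_{\cC^\KK}(Y^\KK) \cong \KK \otimes_\kk \End_\cC(Y)$ and $\Hom_{\cC^\KK}(X^\KK, Y^\KK) \cong \KK \otimes_\kk \Hom_\cC(X,Y)$ (which hold because passing to the Karoubian envelope does not alter hom-spaces between existing objects), the image of composition over $\KK$ lands in this radical. Now if $Z$ were a non-zero common summand of $X^\KK$ and $Y^\KK$, then maps $u \colon X^\KK \to Y^\KK$ and $v \colon Y^\KK \to X^\KK$ realizing the inclusion and projection of $Z$ into and out of $Y^\KK$ would give an idempotent $uv \in \End_{\cC^\KK}(Y^\KK)$ cutting out $Z$; by the preceding observation, $uv$ lies in the Jacobson radical, and since any idempotent in the Jacobson radical vanishes, we would conclude $Z = 0$.

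For the Grothendieck group statement, this summand claim implies that the sets $\Indec_{X^\KK}(\cC^\KK)$ are pairwise disjoint as $X$ ranges over $\Indec(\cC)$. Expanding each $[X^\KK] = \sum_Z m_{X,Z} [Z]$ with $m_{X,Z} \geq 1$ in the free basis of $\Gr(\cC^\KK)$ indexed by $\Indec(\cC^\KK)$, an identity $\sum_X n_X [X^\KK] = 0$ then splits into disjoint blocks, each of which forces the corresponding $n_X$ to vanish.

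The main technical subtlety to watch for is that the field extension $\KK/\kk$ may be infinite-dimensional, so \Cref{lemma:tensor_prod_of_locals} does not apply directly; hom-finiteness is used precisely to circumvent this, by ensuring that $\rad(\End_\cC(Y))$ is nilpotent and invoking the general fact that nilpotent ideals in any ring are contained in its Jacobson radical. A second point worth verifying carefully is the compatibility of $\Hom$ and $\End$ spaces with the passage to $\cC^\KK = \Kar(\KK \otimes_\kk \cC)$, which is the mechanism allowing the radical argument made in $\cC$ to be transported to $\cC^\KK$.
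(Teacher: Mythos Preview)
Your proof is correct and follows essentially the same route as the paper: both arguments show that morphisms between non-isomorphic indecomposables compose into the Jacobson radical, that this property survives scalar extension because the radical of a finite-dimensional endomorphism ring is nilpotent, and that a common summand would then yield an idempotent in the radical, which must vanish. Your handling of the infinite-extension issue is in fact slightly more explicit than the paper's, which cites \Cref{lemma:tensor_prod_of_locals} (stated for finite-dimensional $A,B$) for the inclusion $\KK \otimes_\kk \rad(\End_\cC(Y)) \subseteq \rad(\KK \otimes_\kk \End_\cC(Y))$; the underlying reason---nilpotency---is the same in both cases.
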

\begin{proof}
  We remark that for two objects $X, Y$ in any Krull--Schmidt category,
  $\Hom( X, Y ) \subseteq \rad(X, Y)$ implies that
  every common summand of $X$ and $Y$ is zero.
  Since suppose given such a summand $S$, then we have
  morphisms $X \xrightarrow{\epsilon_X} S \xrightarrow{\iota_X} X$
  and $Y \xrightarrow{\epsilon_Y} S \xrightarrow{\iota_Y} Y$
  such that $\id_S = \epsilon_X \circ \iota_X = \epsilon_Y \circ \iota_Y$.
  But since $\iota_Y \circ \epsilon_X \in \rad( X, Y )$,
  we have that the idempotent 
  $\iota_X \circ \epsilon_X = (\iota_X \circ \epsilon_Y) \circ (\iota_Y \circ \epsilon_X)$ lies in $\rad( X, X )$.
  Since any idempotent in the radical must be $0$,
  it follows that $\iota_X \circ \epsilon_X = 0$ and hence $S = 0$.
  
  Now, let $X,Y \in \cC$ be non-isomorphic indecomposables.
  Then $\Hom_{\cC}( X, Y ) = \rad_{\cC}( X, Y )$, and
  \begin{align*}
    \Hom_{\cC^\KK}( X^\KK, Y^\KK ) = \KK \otimes_\kk \Hom_{\cC}( X, Y )
    = \KK \otimes_\kk \rad_{\cC}( X, Y ) \subseteq  \rad_{\cC^\KK}( X, Y )
  \end{align*}
  where the last inclusion follows from the first part of \Cref{lemma:tensor_prod_of_locals}.
\end{proof}

\begin{definition} \label{def::splitting-field-for-C} Let $\cC$ be a $\kk$-linear hom-finite Krull--Schmidt category, let $\kk\subseteq\KK$ be a field extension. We say $\KK$ is a \emph{splitting field} of $\cC$ if $\dim_{\KK} \End(X)/\rad(\End(X))=1$ for every indecomposable object $X\in\cC^{\KK}$.
\end{definition}

\begin{remark}\label{remark:alternative_char_of_splitting_field}
  In other words, $\kk \subseteq \KK$ is a splitting field if each indecomposable in $\cC^{\KK}/\rad_{\cC^{\KK}}$ has endomorphism ring equal to $\KK$.
\end{remark}

\begin{lemma} The algebraic closure $\KK$ of $\kk$ is a splitting field for any $\kk$-linear hom-finite Krull--Schmidt category.
\end{lemma}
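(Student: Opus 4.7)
The plan is to apply the standard fact that a finite-dimensional division algebra over an algebraically closed field is the field itself, after reducing the statement to this situation via the Krull--Schmidt property.

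First I would note that, since $\cC$ is $\kk$-linear hom-finite, $\cC^\KK$ is a $\KK$-linear hom-finite Krull--Schmidt category (as remarked just after the definition of $\cC^\KK$). In particular, for every object $X \in \cC^\KK$, the endomorphism algebra $\End_{\cC^\KK}(X)$ is a finite-dimensional $\KK$-algebra. Moreover, by the Krull--Schmidt property, if $X$ is indecomposable, then $\End_{\cC^\KK}(X)$ is local, and hence the quotient $D := \End_{\cC^\KK}(X)/\rad(\End_{\cC^\KK}(X))$ is a division algebra.

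Next I would invoke that $D$ is then a finite-dimensional division algebra over the algebraically closed field $\KK$. Any such division algebra must be equal to $\KK$: for any $d \in D$, the subalgebra $\KK[d] \subseteq D$ is a commutative finite-dimensional integral domain over $\KK$, hence a finite field extension of $\KK$, hence equal to $\KK$ by algebraic closedness. Therefore $\dim_\KK D = 1$, which is precisely the condition in \Cref{def::splitting-field-for-C} for $\KK$ to be a splitting field of $\cC$.

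No substantial obstacle is expected here; the only subtlety is making sure one uses hom-finiteness of $\cC^\KK$ over $\KK$ (not just over $\kk$), which is immediate from the construction $\cC^\KK = \Kar(\KK \otimes_\kk \cC)$ and from $\cC$ being hom-finite over $\kk$.
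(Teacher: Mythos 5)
Your proof is correct and follows essentially the same route as the paper, which simply observes that for an indecomposable $X \in \cC^\KK$ the quotient $\End(X)/\rad(\End(X))$ is a finite-dimensional division algebra (the paper says field extension) over the algebraically closed field $\KK$, hence equals $\KK$. Your write-up just spells out the intermediate steps (hom-finiteness of $\cC^\KK$ over $\KK$, locality of endomorphism rings of indecomposables) that the paper leaves implicit.
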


\begin{proof} In such a category, $\End(X)/\rad(\End(X))$ is a finite field extension of $\KK$, for every indecomposable object $X$.
\end{proof}

Recall that a field $\KK$ is called \emph{splitting field} of a finite-dimensional $\kk$-algebra $A$ if $A^\KK/\rad(A^\KK)$ is a direct product of matrix algebras over $\KK$, with $A^\KK:=A\o_\kk \KK$ \cite{Lam}*{Chapter~3, Theorem~7.7}.

\begin{lemma} \label{lem::splitting-field-algebra} If $\KK$ is a splitting field for a finite-dimensional $\kk$-algebra $A$, then it is a splitting field for $\rfproj A$.
\end{lemma}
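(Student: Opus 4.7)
The plan is to reduce the claim to standard facts about primitive idempotents in finite-dimensional algebras over a splitting field. By \Cref{example:field_extension_sn}, we have an equivalence $(\rfproj{A})^{\KK} \simeq \rfproj{A^{\KK}}$, which is $\KK$-linear and preserves endomorphism rings. So it suffices to verify that for every indecomposable object $P$ of $\rfproj{A^{\KK}}$, we have $\End_{A^{\KK}}(P)/\rad(\End_{A^{\KK}}(P)) \cong \KK$.

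Next I would recall that since $A^{\KK}$ is a finite-dimensional $\KK$-algebra, the indecomposable objects of $\rfproj{A^{\KK}}$ are (up to isomorphism) exactly the modules $P = eA^{\KK}$ for primitive idempotents $e \in A^{\KK}$, and that $\End_{A^{\KK}}(eA^{\KK}) \cong eA^{\KK}e$ via left multiplication. Under this isomorphism, $\rad(eA^{\KK}e) = e\rad(A^{\KK})e$, so
\[
\End_{A^{\KK}}(eA^{\KK})/\rad(\End_{A^{\KK}}(eA^{\KK})) \;\cong\; \bar{e}\,\big(A^{\KK}/\rad(A^{\KK})\big)\,\bar{e},
\]
where $\bar{e}$ denotes the image of $e$ in $A^{\KK}/\rad(A^{\KK})$, which is a primitive idempotent there (idempotents lift uniquely modulo the radical in a local ring).

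The splitting hypothesis now enters: by definition of $\KK$ being a splitting field for $A$, the semisimple algebra $A^{\KK}/\rad(A^{\KK})$ is isomorphic to a finite product $\prod_j \Mat_{n_j}(\KK)$. A primitive idempotent in such a product corresponds to a rank-one projection in a single matrix factor $\Mat_{n_j}(\KK)$, and for any such idempotent $\bar{e}$ one computes $\bar{e}\,\Mat_{n_j}(\KK)\,\bar{e} \cong \KK$. Combining this with the isomorphism above yields $\End_{A^{\KK}}(P)/\rad(\End_{A^{\KK}}(P)) \cong \KK$ for every indecomposable $P$, which is precisely the condition in \Cref{def::splitting-field-for-C}.

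There is no real obstacle here: the argument is a matter of matching the category-theoretic definition of splitting field with the classical one, once the reduction via $(\rfproj A)^{\KK} \simeq \rfproj{A^{\KK}}$ is made. The only point requiring mild care is verifying that the radical of the endomorphism ring $eA^{\KK}e$ agrees with $e\rad(A^{\KK})e$, which is a standard consequence of the fact that the radical of a corner $eRe$ of an artinian ring $R$ equals $e\rad(R)e$ (cf.\ \cite{Lam}*{Theorem 21.10}, already invoked in the proof of \Cref{lemma:radicals_and_karoubi_env}).
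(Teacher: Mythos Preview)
Your proof is correct and follows essentially the same line as the paper's: both reduce via $(\rfproj A)^{\KK}\simeq\rfproj{A^{\KK}}$ and then use that $A^{\KK}/\rad(A^{\KK})$ is a product of matrix algebras over $\KK$. The only difference is presentational---you work directly with primitive idempotents and the corner identity $\rad(eRe)=e\,\rad(R)\,e$, whereas the paper passes through the categorical equivalence $\rfproj R/\rad_{\rfproj R}\simeq\rfproj(R/\rad R)$ furnished by \Cref{corollary:radicals_and_KS_homfinite}.
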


\begin{proof}  Set $R = A^\KK$.
  By \Cref{example:field_extension_sn}, we have $(\rfproj A)^{\KK} \simeq \rfproj R$.
  By \Cref{remark:alternative_char_of_splitting_field}, we need to prove that each indecomposable
  in $(\rfproj R/\rad_{\rfproj R})$ has $\KK$ as its endomorphism ring.
  By \Cref{remark:KS_and_proj} and \Cref{corollary:radicals_and_KS_homfinite},
  we have an equivalence of categories
\[
  (\rfproj R/\rad_{\rfproj R}) \simeq \rfproj (R/\rad( R )).
\]
As $R/\rad( R )$ is a direct product of matrix algebras over $\KK$, the endomorphism rings of indecomposable objects in $\rfproj (R/\rad( R ))$ are given by $\KK$.
\end{proof}

\begin{lemma} \label{prop::Grothendieck-ring-splitting-field}
Assume $\kk$ is a splitting field for a $\kk$-linear hom-finite Krull--Schmidt category $\cC$ and $\kk\subseteq \KK$ a field extension. Then the monomorphism $\Gr((-)^{\KK})\colon \Gr( \cC ) \rightarrow \Gr( \cC^\KK )$ of \Cref{lemma:mono_Gro} is an isomorphism.
\end{lemma}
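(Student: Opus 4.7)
The plan is to upgrade the monomorphism $\Gr((-)^{\KK})\colon\Gr(\cC)\to\Gr(\cC^{\KK})$ of \Cref{lemma:mono_Gro} to an isomorphism by establishing a bijection $\Indec(\cC)\simeq\Indec(\cC^{\KK})$. Since $\Gr(\cC)$ and $\Gr(\cC^{\KK})$ are free abelian groups on their respective sets of isomorphism classes of indecomposables, this is all that is needed. Injectivity of this map on indecomposables is already contained in \Cref{lemma:mono_Gro} (non-isomorphic indecomposables in $\cC$ cannot share a nonzero common summand in $\cC^{\KK}$, and in particular cannot become isomorphic). So the content is the surjectivity.

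First I would record that since $\cC$ is additive and Krull--Schmidt it is idempotent complete, so $\cC^{\kk}\simeq\cC$; the splitting field hypothesis thus amounts to $\End_{\cC}(X)/\rad(\End_{\cC}(X))\cong\kk$ for every indecomposable $X\in\cC$. The crucial step is to show that $X^{\KK}$ remains indecomposable in $\cC^{\KK}$, equivalently, that $\End_{\cC^{\KK}}(X^{\KK})\cong\KK\otimes_{\kk}\End_{\cC}(X)$ is local.

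One cannot invoke \Cref{lemma:tensor_prod_of_locals} directly here, because that lemma is stated for finite-dimensional $\kk$-algebras $A,B$ and $\KK$ need not be finite-dimensional over $\kk$. The detour is a direct argument: set $A:=\End_{\cC}(X)$, $J:=\rad(A)$, and use the $\kk$-linear splitting $A\cong\kk\oplus J$ coming from $A/J\cong\kk$ to get a $\KK$-module decomposition
\[
A\otimes_{\kk}\KK\cong\KK\oplus(J\otimes_{\kk}\KK).
\]
The ideal $J\otimes_{\kk}\KK$ is nilpotent (if $J^{n}=0$ then $(J\otimes_{\kk}\KK)^{n}\subseteq J^{n}\otimes_{\kk}\KK=0$), and the quotient $(A\otimes_{\kk}\KK)/(J\otimes_{\kk}\KK)\cong\KK$ is a field. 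Hence $J\otimes_{\kk}\KK$ equals $\rad(A\otimes_{\kk}\KK)$, so the endomorphism ring of $X^{\KK}$ is local with residue field $\KK$, and $X^{\KK}$ is indecomposable.

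For surjectivity, take any indecomposable $Y\in\cC^{\KK}=\Kar(\KK\otimes_{\kk}\cC)$. Then $Y$ is a summand of some $X^{\KK}$ with $X\in\cC$. Decomposing $X\cong\bigoplus_{i=1}^{n}X_{i}$ into indecomposables in $\cC$ gives $X^{\KK}\cong\bigoplus_{i=1}^{n}X_{i}^{\KK}$, which is a decomposition into indecomposables by the previous paragraph. Krull--Schmidt uniqueness in $\cC^{\KK}$ forces $Y\cong X_{i}^{\KK}$ for some $i$. Combined with injectivity from \Cref{lemma:mono_Gro}, this produces the desired bijection of indecomposables and hence the isomorphism of Grothendieck groups. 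The only real obstacle is the one noted above, namely handling arbitrary field extensions $\kk\subseteq\KK$, which is resolved by the direct nilpotency/residue argument rather than by appealing to \Cref{lemma:tensor_prod_of_locals}.
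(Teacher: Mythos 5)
Your proposal is correct and follows essentially the same route as the paper: show that for each indecomposable $X\in\cC$ the splitting hypothesis forces $\End(X^{\KK})/\rad(\End(X^{\KK}))\cong\KK$, hence $X^{\KK}$ is indecomposable, and then deduce surjectivity because every indecomposable of $\cC^{\KK}$ is a summand of some $X^{\KK}$. The only difference is that you spell out, via the nilpotency/residue-field argument, the step "$\End(X)/\rad(\End(X))\cong\kk$ implies $\End(X^{\KK})/\rad(\End(X^{\KK}))\cong\KK$", which the paper states without detail; this is a welcome (and correct) elaboration, not a different method.
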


\begin{proof} We prove surjectivity. 
Since any object in $\KK \otimes_\kk \cC$ is of the form $X^\KK$ for some object $X\in\cC$, every indecomposable object in $\cC^\KK = \Kar( \KK \otimes_\kk \cC )$ occurs as a direct summand of $X^\KK$ for some indecomposable $X\in\cC$.
Let $X$ be an indecomposable object in $\cC$. Then, as $\kk$ is a splitting field for $\cC$, the quotient $\End(X)/\rad(\End(X))$ is one-dimensional over $\kk$. So $\End(X^\KK)/\rad(\End(X^\KK))$ is one-dimensional over $\KK$, and in particular, $X^\KK$ is indecomposable in $\cC^\KK$.  This proves surjectivity.
\end{proof}

\subsection{Tensor product decomposition and the Grothendieck ring}

Let $\kk$ be a field.

\begin{remark}
If $\cC$ and $\cD$ are $\kk$-linear Krull--Schmidt categories, then $\cC \boxtimes_{\kk} \cD$ is not necessarily a Krull--Schmidt category since the tensor product of semi-perfect $\kk$-algebras is not necessarily semi-perfect
(e.g., $R \otimes_{\mQ} \mC$ for $R$ the localization of $\mQ[x]$ at the prime ideal spanned by $x^2 + 1 $).
However, if both $\cC$ and $\cD$ are in addition hom-finite, then so is $\cC \boxtimes_{\kk} \cD$ and thus it is a Krull--Schmidt category by \Cref{lem::Krull-Schmidt}.
\end{remark}

\begin{theorem}\label{theorem:grothendieck_dec}
Let $\cC$ and $\cD$ be
$\kk$-linear hom-finite Krull--Schmidt categories, and assume $\kk$ is a splitting field for $\cC$ or $\cD$.
Then the following holds:
\begin{enumerate}
    \item We have $\cC \boxtimes_\kk \cD \simeq (\cC \times_{\kk} \cD)^{\oplus}$, i.e.,
the additive closure of
$\cC \times_{\kk} \cD$ is already idempotent complete. Thus, it
is a hom-finite Krull--Schmidt category.
\item All indecomposables in $\cC \boxtimes_\kk \cD$
are given by
$X\boxtimes Y$ with $X \in \cC$, $Y \in \cD$
indecomposables.
\item If $X, X' \in \cC$ and $Y, Y' \in \cD$ are indecomposables, then $X\boxtimes Y \cong X'\boxtimes Y'$ in $\cC\boxtimes_\kk \cD$ implies $X \cong X'$ and $Y \cong Y'$.
\item If $\kk$ is a splitting field for both $\cC$ and $\cD$, then $\kk$ is a splitting field for $\cC\boxtimes_\kk \cD$.
\end{enumerate}
\end{theorem}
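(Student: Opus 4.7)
The plan is to leverage \Cref{lemma:tensor_prod_of_locals}: whenever $\dim A/\rad(A)=1$ or $\dim B/\rad(B)=1$, the radical of $A\otimes_\kk B$ equals $\rad(A)\otimes B+A\otimes\rad(B)$, and if both residue fields are $\kk$ the tensor product is itself local with residue field $\kk$. I would apply this to endomorphism rings. Assuming without loss of generality that $\kk$ splits $\cC$, for indecomposable $X\in\cC$ and $Y\in\cD$ the algebra $\End(X\boxtimes Y)=\End_\cC(X)\otimes_\kk\End_\cD(Y)$ becomes local, since $\End_\cC(X)/\rad=\kk$ and $\End_\cD(Y)$ is local. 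This is the central computation, and it shows that $X\boxtimes Y$ is indecomposable already in $\cC\times_\kk\cD$, hence in every larger category.

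For (1) and (2), the next step is to observe that every object of $\cC\times_\kk\cD$ is already of the form $X\boxtimes Y$, so using the Krull--Schmidt decompositions $X\cong\bigoplus_i X_i$ and $Y\cong\bigoplus_j Y_j$ in $\cC$ and $\cD$, bilinearity of $\boxtimes$ gives $X\boxtimes Y\cong\bigoplus_{i,j}X_i\boxtimes Y_j$ in $(\cC\times_\kk\cD)^\oplus$. Each summand has local endomorphism ring by the previous step, so every object of $(\cC\times_\kk\cD)^\oplus$ admits a Krull--Schmidt decomposition, making the category Krull--Schmidt and in particular idempotent complete. This forces $\cC\boxtimes_\kk\cD=\Kar((\cC\times_\kk\cD)^\oplus)\simeq(\cC\times_\kk\cD)^\oplus$, giving (1), and identifies all indecomposables as the pure boxes $X\boxtimes Y$ of indecomposables, giving (2).

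For (3), I would suppose $X\boxtimes Y\cong X'\boxtimes Y'$ with all four factors indecomposable and derive a contradiction from $X\ncong X'$. Writing any $\alpha\colon X\boxtimes Y\to X'\boxtimes Y'$ as $\alpha=\sum_i f_i\otimes g_i$ and any $\beta\colon X'\boxtimes Y'\to X\boxtimes Y$ as $\beta=\sum_j h_j\otimes k_j$ gives $\alpha\circ\beta=\sum_{i,j}(f_ih_j)\otimes(g_ik_j)$. If some $f_ih_j\in\End(X')$ were a unit, then $(f_ih_j)^{-1}f_i$ would be a retraction of $h_j$, exhibiting $X'$ as a summand of the indecomposable $X$ and so forcing $X\cong X'$, contradicting the assumption. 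Therefore every $f_ih_j\in\rad(\End(X'))$, and so $\alpha\circ\beta\in\rad(\End(X'))\otimes\End(Y')\subseteq\rad(\End(X'\boxtimes Y'))$ by the always-valid inclusion of \Cref{lemma:tensor_prod_of_locals}. Since $\id$ is not in the radical, no such $\alpha$ can be an isomorphism, yielding the desired contradiction; the case $Y\ncong Y'$ will be symmetric.

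Finally for (4), the argument should be immediate: if $\kk$ splits both $\cC$ and $\cD$, then for indecomposables $X\in\cC$ and $Y\in\cD$ one has $\End_\cC(X)/\rad=\End_\cD(Y)/\rad=\kk$, so the final clause of \Cref{lemma:tensor_prod_of_locals} yields $\End(X\boxtimes Y)/\rad=\kk$; by (2) these are all the indecomposables of $\cC\boxtimes_\kk\cD$. The main subtlety I expect to navigate is the asymmetric use of the splitting hypothesis in the central indecomposability computation: keeping track of which factor carries the residue-field condition is essential to invoking the equality clause of \Cref{lemma:tensor_prod_of_locals}, whereas the uniqueness statement (3) needs only the always-valid inclusion together with the Krull--Schmidt structure of $\cC$ and $\cD$ themselves.
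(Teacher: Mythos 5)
Your proposal is correct and follows essentially the same route as the paper's proof: locality of $\End(X\boxtimes Y)$ via the equality clause of \Cref{lemma:tensor_prod_of_locals} using the splitting hypothesis on one factor, decomposition of arbitrary objects of $(\cC\times_\kk\cD)^\oplus$ into boxes of indecomposables for (1)--(2), the always-valid radical inclusion for (3), and the final clause of that lemma for (4). The only cosmetic difference is that in (3) you re-derive $\Hom_{\cC}(X,X')=\rad_{\cC}(X,X')$ for non-isomorphic indecomposables from locality of $\End_{\cC}(X')$, where the paper simply invokes this standard fact before applying the same inclusion.
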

\begin{proof}
Assume $\kk$ is a splitting field for $\cC$ (the other case is handled similarly).
Let $X \in \cC$ and $Y \in \cD$
be indecomposable objects.
Since the categories $\cC$ and $\cD$
are Krull--Schmidt categories,
the endomorphism algebras of $X$ and $Y$
are local.
Since $\kk$ is a splitting field for $\cC$,
we have
$\End_{\cC}(X)/\rad(\End_{\cC}(X)) \cong \kk$.
By \Cref{lemma:tensor_prod_of_locals},
it follows that for the object
$X\boxtimes Y \in \cC \times_{\kk} \cD$, we have
$\End_{\cC \times_{\kk} \cD}(X\boxtimes Y)/\rad(\End_{\cC \times_{\kk} \cD}(X\boxtimes Y)) \cong \kk \otimes_{\kk} \End_{\cD}(Y)/\rad(\End_{\cD}(Y))$, and thus $X\boxtimes Y$ is indecomposable since its endomorphism algebra is local.
Moreover, every object in 
$(\cC \times_{\kk} \cD)^{\oplus}$
can be written as a direct sum of objects $X\boxtimes Y$, for pairs $(X,Y)$
whose entries are indecomposables.
Thus, $(\cC \times_{\kk} \cD)^{\oplus} = \Kar( (\cC \times_{\kk} \cD)^{\oplus} )$
is a hom-finite Krull--Schmidt category. This proves (1) and (2).

For (3), let $X' \in \cC$ and $Y' \in \cD$
be indecomposables such that $X \not\cong X'$. Then $\Hom_{\cC}( X, X') = \rad_{\cC}( X, X' )$, and it follows that
\begin{align*}
    \Hom_{\cC\boxtimes_\kk\cD}( X \boxtimes Y, X' \boxtimes Y' ) &= \Hom_{\cC}( X, X' ) \otimes \Hom_{\cD}( Y, Y' )  \\
    &= \rad_\cC( X, X' ) \otimes \Hom_{\cD}( Y, Y' ) \subseteq \rad_{\cC\boxtimes_\kk\cD}( X \boxtimes Y, X' \boxtimes Y' ),
\end{align*}
which implies $X \boxtimes Y \not\cong X' \boxtimes Y'$ in $\cC\boxtimes_\kk \cD$ (and similarly for $Y\not\cong Y'$).

Last, if $\kk$ is a splitting field for both $\cC$ and $\cD$,
then $\End_{\cC\boxtimes_\kk\cD}(X\boxtimes Y)/\rad(\End_{\cC\boxtimes_\kk\cD}(X\boxtimes Y)) \simeq \kk$ for all indecomposables $X \in \cC$, $Y \in \cD$. Thus, $\kk$ is a splitting field for $\cC\boxtimes_\kk \cD$.
\end{proof}

\begin{corollary}\label{corollary:grothendieck_dec}
Let $\cC, \cD$ be as in \Cref{theorem:grothendieck_dec}.
Then we have a group isomorphism
\[
\Gr( \cC \boxtimes_\kk \cD )
\cong
\Gr ( \cC ) \otimes_{\mZ} \Gr ( \cD ).
\]
which is a ring isomorphism if
$\cC$ and $\cD$ come equipped with monoidal structures.
\end{corollary}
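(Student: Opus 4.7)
The plan is to leverage \Cref{theorem:grothendieck_dec} directly to read off the additive Grothendieck group and then check the multiplication matches on generators. I will work under the assumption that $\kk$ is a splitting field for $\cC$ (the other case is symmetric).

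First I would construct the map. Since $(-\boxtimes-) \colon \cC \times_{\kk} \cD \to \cC \boxtimes_{\kk} \cD$ is $\kk$-bilinear and preserves direct sums in each argument, the assignment $(X, Y) \mapsto [X \boxtimes Y]$ is biadditive on isomorphism classes. Hence it descends to a well-defined group homomorphism
\[
\phi \colon \Gr(\cC) \otimes_{\mZ} \Gr(\cD) \longrightarrow \Gr(\cC \boxtimes_{\kk} \cD), \qquad [X] \otimes [Y] \mapsto [X \boxtimes Y].
\]
Next I would verify bijectivity by comparing bases. Since $\cC$ and $\cD$ are Krull--Schmidt, we have $\Gr(\cC) \cong \mZ^{\oplus \Indec(\cC)}$ and $\Gr(\cD) \cong \mZ^{\oplus \Indec(\cD)}$, and therefore
\[
\Gr(\cC) \otimes_{\mZ} \Gr(\cD) \cong \mZ^{\oplus (\Indec(\cC) \times \Indec(\cD))}
\]
is free on the symbols $[X] \otimes [Y]$ with $X \in \Indec(\cC)$, $Y \in \Indec(\cD)$. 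On the other hand, parts (1)--(3) of \Cref{theorem:grothendieck_dec} imply that $\cC \boxtimes_{\kk} \cD$ is Krull--Schmidt and that its indecomposables are parametrized, without repetitions, by the same set $\Indec(\cC) \times \Indec(\cD)$ via $(X, Y) \mapsto X \boxtimes Y$. Thus $\Gr(\cC \boxtimes_{\kk} \cD)$ is free on the classes $[X \boxtimes Y]$, and $\phi$ sends basis to basis bijectively, so it is an isomorphism of abelian groups.

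For the ring assertion, assume $\cC$ and $\cD$ carry monoidal structures. Recall from the preliminaries that the monoidal structure on $\cC \boxtimes_{\kk} \cD$ is induced componentwise, so there is a natural isomorphism
\[
(X \boxtimes Y) \otimes (X' \boxtimes Y') \;\cong\; (X \otimes X') \boxtimes (Y \otimes Y')
\]
in $\cC \boxtimes_{\kk} \cD$ for all objects $X, X' \in \cC$ and $Y, Y' \in \cD$. Consequently
\[
\phi\bigl(([X] \otimes [Y]) \cdot ([X'] \otimes [Y'])\bigr) = [(X \otimes X') \boxtimes (Y \otimes Y')] = [X \boxtimes Y] \cdot [X' \boxtimes Y'],
\]
and $\phi$ sends the unit $[\one_{\cC}] \otimes [\one_{\cD}]$ to $[\one_{\cC} \boxtimes \one_{\cD}] = [\one_{\cC \boxtimes_{\kk} \cD}]$, so $\phi$ is a ring isomorphism.

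The only mildly delicate point is the well-definedness of $\phi$: one must confirm that additivity of $(-\boxtimes-)$ in each argument genuinely gives bilinearity on Grothendieck classes, which is immediate once it is observed that isomorphisms $(X \oplus X') \boxtimes Y \cong (X \boxtimes Y) \oplus (X' \boxtimes Y)$ hold in $(\cC \times_{\kk} \cD)^{\oplus}$ by construction of the additive closure. Beyond that, the statement is a direct bookkeeping consequence of \Cref{theorem:grothendieck_dec}, so I do not anticipate a genuine obstacle.
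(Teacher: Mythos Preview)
Your proposal is correct and follows exactly the approach the paper implicitly intends: the corollary is stated without proof because it is an immediate bookkeeping consequence of \Cref{theorem:grothendieck_dec}, and you have spelled out precisely those details (basis comparison via parts (1)--(3), then multiplicativity from the componentwise monoidal structure).
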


\begin{remark}\label{rem::natural-filtration}
Given two hom-finite $\kk$-linear Krull--Schmidt categories $\cC=\bigcup_{i\geq 0}\cC_i$ and $\cD=\bigcup_{i\geq 0}\cD_i$ with filtrations, we obtain a filtration on $\cC\boxtimes_\kk \cD$ with the $i$-th filtration piece given by the full Krull--Schmidt subcategory 
\begin{align*}
    (\cC\boxtimes \cD)_i=\Kar(\{\cC_j\boxtimes_\kk \cD_k \mid j +k = i\}^{\oplus}).
\end{align*}
\end{remark}

\begin{corollary}\label{cor:tensorfiltrations}
 Let $\cC, \cD$ be hom-finite $\kk$-linear Krull--Schmidt categories. 
 \begin{enumerate}
     \item The above filtration on $\cC\boxtimes_\kk\cD$ induces an equivalence of categories with gradings 
     $$\gr(\cC\boxtimes_\kk\cD)\simeq\gr \cC\boxtimes_\kk \gr \cD.$$
     \item  In particular, if $\kk$ is a splitting field for $\cC$ or $\cD$, then the group isomorphism from \Cref{corollary:grothendieck_dec} induces a group isomorphism
 $$\gr\Gr(\cC\boxtimes_\kk \cD)\cong \gr\Gr(\cC)\otimes_\mZ \gr\Gr(\cD).$$
     \item If both $\cC$ and $\cD$ have monoidal structures compatible with the filtration as in \eqref{eq:tensorcomp} in \Cref{sec::filtrations}, then the equivalence in \emph{(1)} is one of monoidal categories and the group isomorphism in \emph{(2)} is a ring isomorphism.
 \end{enumerate}
\end{corollary}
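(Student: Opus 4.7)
The plan is to prove (1) by constructing an explicit equivalence of graded categories; items (2) and (3) then follow readily from \Cref{theorem:iso_graded_rings} and \Cref{corollary:grothendieck_dec}. The bilinear functor $\boxtimes: \cC \times_\kk \cD \to \cC \boxtimes_\kk \cD$ carries $\cC_j \times_\kk \cD_k$ into the filtration piece $(\cC \boxtimes \cD)_{j+k}$ by the construction of the filtration in \Cref{rem::natural-filtration}. Composing with the canonical projection to $(\cC \boxtimes \cD)_{j+k}/(\cC \boxtimes \cD)_{j+k-1}$ annihilates morphisms factoring through $\cC_{j-1} \times_\kk \cD_k$ or $\cC_j \times_\kk \cD_{k-1}$, so we obtain a well-defined bilinear functor on $(\cC_j/\cC_{j-1}) \times_\kk (\cD_k/\cD_{k-1})$. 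Summing over $j, k$ and extending through additive closure and Karoubi envelope via the relevant universal properties yields a functor
$$\Phi\colon \gr\cC \boxtimes_\kk \gr\cD \longrightarrow \gr(\cC \boxtimes_\kk \cD)$$
that is manifestly compatible with gradings.

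For essential surjectivity, every object of the $i$-th filtration piece $(\cC \boxtimes \cD)_i$ is a summand of a direct sum of objects $X_l \boxtimes Y_l$ with $X_l \in \cC_{j_l}$, $Y_l \in \cD_{k_l}$, $j_l + k_l \leq i$; those with $j_l + k_l < i$ become isomorphic to zero in the quotient by $(\cC \boxtimes \cD)_{i-1}$, so the image of $\Phi$ generates the degree $i$ part under direct sums and summands. For fully faithfulness, it suffices to verify that for $X, X' \in \cC_j$, $Y, Y' \in \cD_k$ with $j + k = i$, the ideal $\cI_{(\cC \boxtimes \cD)_{i-1}}(X \boxtimes Y, X' \boxtimes Y')$ equals
$$\cI_{\cC_{j-1}}(X,X') \otimes_\kk \Hom_\cD(Y,Y') \; + \; \Hom_\cC(X,X') \otimes_\kk \cI_{\cD_{k-1}}(Y,Y');$$
the inclusion $\supseteq$ is immediate, while the reverse inclusion reduces, after unwinding the Karoubian envelope and additive closure, to the statement that any morphism factoring through an object $X'' \boxtimes Y''$ with $j'' + k'' < i$ can be rewritten as a sum of morphisms factoring through a $\cC_{j-1}$- or $\cD_{k-1}$-piece. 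Exactness of $\otimes_\kk$ then identifies the quotient of $\Hom_\cC(X,X') \otimes_\kk \Hom_\cD(Y,Y')$ by this ideal with $\Hom_{\cC_j/\cC_{j-1}}(X,X') \otimes_\kk \Hom_{\cD_k/\cD_{k-1}}(Y,Y')$, matching the Hom space on the source side of $\Phi$.

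For (2), one applies $\Gr(-)$ to the equivalence of (1) and invokes \Cref{theorem:iso_graded_rings} on each side, then uses the splitting field hypothesis together with \Cref{corollary:grothendieck_dec}; note that a splitting field for $\cC$ remains a splitting field for $\gr \cC$ since the indecomposables correspond bijectively by \Cref{corollary:indec_for_gr} and the endomorphism ring in $\gr\cC$ is a quotient of that in $\cC$ by an ideal contained in the radical. For (3), compatibility of the monoidal structure with the filtrations on $\cC$ and $\cD$ induces a compatible monoidal structure on $\cC \boxtimes_\kk \cD$, and the construction of $\Phi$ through the bilinear functor $\boxtimes$ shows it intertwines the induced tensor products on both sides, upgrading the equivalence to a monoidal one and the group isomorphism in (2) to one of rings.

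The main obstacle I anticipate is the reverse inclusion in the ideal identification at the heart of the fully faithfulness step: tracking how a morphism factoring through a Karoubi summand of an arbitrary direct sum of pure tensor objects in $(\cC \boxtimes \cD)_{i-1}$ decomposes into factors each of which has descended within either the $\cC$- or the $\cD$-filtration requires careful bookkeeping, especially because such factorizations are not unique and the additive/Karoubian closures intertwine the two filtration directions nontrivially.
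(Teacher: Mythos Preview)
Your approach matches the paper's, and the obstacle you anticipate is exactly where one idea is needed. The paper resolves it (and a related issue you omit) with the factorization
\[
\alpha \otimes \beta = (\alpha \otimes \id_W) \circ (\id_X \otimes \beta) = (\id_Z \otimes \beta) \circ (\alpha \otimes \id_Y),
\]
which routes $\alpha \otimes \beta \colon X \boxtimes Y \to Z \boxtimes W$ through either $X \boxtimes W$ or $Z \boxtimes Y$.

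\emph{Cross-bidegree vanishing.} You only treat $X,X' \in \cC_j$, $Y,Y' \in \cD_k$ with the same $(j,k)$. For $\Phi$ to be fully faithful you must also show that for $X \in \cC_j$, $Y \in \cD_k$, $Z \in \cC_l$, $W \in \cD_m$ with $j+k=l+m=i$ but $(j,k)\neq(l,m)$, the Hom in $(\cC\boxtimes\cD)_i/(\cC\boxtimes\cD)_{i-1}$ vanishes (since it vanishes on the source of $\Phi$ by the direct sum decomposition). If $j<l$ then $m<k$, so $X \boxtimes W \in (\cC\boxtimes\cD)_{j+m}$ with $j+m<i$, and the factorization above shows every $\alpha\otimes\beta$ lies in $\cI_{(\cC\boxtimes\cD)_{i-1}}$.

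\emph{The reverse inclusion.} For the same-bidegree case, a factorization through a summand of $\bigoplus_t X''_t \boxtimes Y''_t$ with each $j''_t+k''_t \leq i-1$ extends (via the split inclusion) to one through the direct sum itself, hence decomposes as a sum of factorizations through individual $X'' \boxtimes Y''$. Since $j''+k'' < j+k$ forces $j''<j$ or $k''<k$, writing the two legs as sums of simple tensors shows each term of the composite is of the form $(\epsilon\gamma)\otimes(\zeta\delta)$ with $\epsilon\gamma$ factoring through $X'' \in \cC_{j-1}$, or $\zeta\delta$ factoring through $Y'' \in \cD_{k-1}$. This gives exactly the reverse inclusion you flagged; the Karoubi closure causes no additional difficulty.
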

\begin{proof}
It suffices to prove the equivalence for each graded part, i.e., we need to show
\[
\cA_i := (\cC\boxtimes_\kk\cD)_i/(\cC\boxtimes_\kk\cD)_{i-1} \simeq  \bigoplus_{j+k=i} (\cC_j/\cC_{j-1}) \boxtimes_\kk (\cD_k/\cD_{k-1})
\]
for all $i \geq 0$.
Let $j,l,k,m \geq 0$ be indices such that $j+k = i = l + m$, and let $X \in \cC_j$, $Y \in \cD_k$, $Z \in \cC_l$, $W \in \cD_m$.
Let $\alpha \in \Hom_{\cC}( X, Z )$ and $\beta \in \Hom_{\cD}( Y, W )$.
Then $\alpha \otimes \beta = (\alpha \otimes \id) \circ (\id \otimes \beta) = (\id \otimes \beta) \circ (\alpha \otimes \id)$ in $(\cC\boxtimes_\kk\cD)_i$, which implies that $\alpha \otimes \beta$ factors over an object in $(\cC\boxtimes_\kk\cD)_{i-1}$ provided $j \neq l$, and thus we have $\Hom_{\cA_i}( X \boxtimes Y, Z \boxtimes W ) = 0$ in that case.
It follows that we may assume $j = l$ and consequently $k = m$.
Then we have
\begin{align*}
    \Hom_{\cA_i}( X \boxtimes Y, Z \boxtimes W ) &= \big(\Hom_{\cC}( X, Z ) \otimes_{\kk} \Hom_{\cD}( Y, W )\big)/\langle \alpha \otimes \beta \mid \text{$\alpha \in \cI_{\cC_{j-1}}$ or $\beta \in \cI_{\cD_{k-1}}$}\rangle_{\kk} \\
    &\cong \big( \Hom_{\cC}( X, Z )/\cI_{\cC_{j-1}}(X,Z) \big) \otimes_{\kk} \big( \Hom_{\cD}( Y, W )/\cI_{\cD_{k-1}}(Y,W) \big) \\
    &\cong \Hom_{\cC_j/\cC_{j-1}}( X, Z ) \otimes_{\kk} \Hom_{\cD_k/\cD_{k-1}}( Y, W ). 
\end{align*}
It follows that 
\[
\cA_i \supseteq \Kar( \{ X \boxtimes Y \mid X \in \cC_j, Y \in \cD_k \}^{\oplus} ) \simeq (\cC_j/\cC_{j-1}) \boxtimes_\kk (\cD_k/\cD_{k-1})
\]
which yields the claim. This equivalence is the identity assignment on objects. Hence, we note that if $\cC$ and $\cD$ have monoidal structures compatible with filtrations then
the above equivalence is compatible with the monoidal structure. This implies the first claim in Part (3).
To derive Part (2) and the Grothendieck ring statement in Part (3) from \Cref{corollary:grothendieck_dec} we use \Cref{theorem:iso_graded_rings}.
\end{proof}

\section{The graded Grothendieck ring of \texorpdfstring{$\RepSt$}{Rep(St)} over an arbitrary field}\label{sec:grRepSt}

As an application of the general theory presented in \Cref{section:categorification_of_gr} and \Cref{sec::field-extensions},
we discuss indecomposables in $\RepSt$
and its associated graded Grothendieck ring
over an arbitrary field $\kk$. We show that the associated graded Grothendieck ring is a ring of symmetric functions depending only on the characteristic of $\kk$.

\subsection{The associated graded category of \texorpdfstring{$\uRep (S_T)$}{Rep(S\_T)}}

Let $\kk$ be a field and $T \in \kk$. Deligne's interpolation category $\uRep_\kk(S_T)$ is a $\kk$-linear hom-finite Krull--Schmidt category with a tensor generator in the sense of \Cref{sec::tensor-generator} given by the object $[1]$. Hence, it  admits a filtration $\cD_{\bullet}$ defined by
\[
\cD_{n} := \Kar(  \big\{ [0],[1], \dots, [n] \big\}^{\oplus} ) \subseteq \uRep_{\kk}(S_T)
\]
that is compatible with the tensor product $\otimes$ on $\uRep_{\kk}(S_T)$.

Consider the tower of algebras (in the sense of \Cref{def:tower}) given by the group algebras $(\kk[S_n])_{n\geq0}$ with the natural embeddings
\begin{equation} \label{eq::embedding-Sn}
    \kk[S_n]\otimes \kk[S_m]\hookrightarrow \kk[S_{n+m}],
\end{equation}
where $S_n$ acts on the first $n$ elements, and $S_m$ acts on the last $m$ elements of $\{1,\ldots, n+m\}$. Using \Cref{construction:graded_cat} we associate to it the category $\bigoplus_{n \geq 0} \rfproj{\kk[S_n]}$, a $\kk$-linear symmetric monoidal category with a grading, whose tensor product is given by induction.

\begin{corollary} \label{cor::gr-RepSt} There is an equivalence of $\kk$-linear symmetric monoidal categories compatible with gradings
\[
\gr( \uRep_{\kk}(S_T) )
\simeq
\bigoplus_{n \geq 0} \rfproj{\kk[S_n]} ,
\]
and hence an isomorphism of graded rings
\[
\gr \Gr( \uRep_{\kk}(S_T) ) 
\cong
\bigoplus_{n \geq 0} \Gr( \rfproj{\kk[S_n]} ),
\]
\end{corollary}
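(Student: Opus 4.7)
The plan is to apply the tensor generator result, \Cref{prop:tensor-generator}, with $\cC = \uRep_{\kk}(S_T)$ and the tensor generator $X = [1]$. By construction, the filtration $\cD_\bullet$ defined in the statement coincides with the filtration $\cD_m = \Kar(\{[1]^{\otimes i}\}_{0 \leq i \leq m}^\oplus)$ appearing in \Cref{sec::tensor-generator}, since $[n] \cong [1]^{\otimes n}$ in $\uRep_{\kk}(S_T)$. Applying \Cref{prop:tensor-generator} (after strictifying the monoidal structure if needed) yields an equivalence of monoidal Krull--Schmidt categories compatible with gradings
\[
\gr(\uRep_{\kk}(S_T)) \simeq \bigoplus_{m \geq 0} \rfproj{R_m},
\]
where $R_m := \End_{\gr(\uRep_{\kk}(S_T))}([1]^{\otimes m})$ together with the algebra maps $\phi_{m,n}\colon R_m \otimes_\kk R_n \to R_{m+n}$ induced by $\otimes$ forms a tower of algebras. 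It therefore remains to identify this tower with $(\kk[S_m])_{m \geq 0}$ equipped with the natural embeddings \eqref{eq::embedding-Sn}.

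The main step is to establish an algebra isomorphism $R_m \cong \kk[S_m]$. Recall that $\End_{\uRep_{\kk}(S_T)}([1]^{\otimes m}) = \kk P_{m,m}$ has a basis indexed by set partitions of $\{1,\dots,m,1',\dots,m'\}$, and that the subgroup of permutation-type partitions (those whose parts each consist of exactly one upper point and one lower point) gives an embedded copy of the group algebra $\kk[S_m]$. I claim that these permutation-type partitions form a complete set of representatives for $R_m = \End_{\uRep_{\kk}(S_T)}([1]^{\otimes m})/\cI_{\cD_{m-1}}([1]^{\otimes m},[1]^{\otimes m})$. Indeed, any non-permutation-type partition $\pi \in P_{m,m}$ has either a part containing two points on the same side or a part of cardinality different from two; in either case, $\pi$ admits a factorization through some $[k]$ with $k < m$ by cutting $\pi$ horizontally along a generic level, which shows the corresponding morphism lies in $\cI_{\cD_{m-1}}$. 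Conversely, composing two permutation-type partitions produces another permutation-type partition with no closed loops, so no factor of $T$ appears; hence multiplication in $R_m$ on the permutation-type basis agrees with multiplication in $\kk[S_m]$, yielding the algebra isomorphism $R_m \cong \kk[S_m]$.

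Next, I must identify the tower maps $\phi_{m,n}$ with the natural embeddings \eqref{eq::embedding-Sn}. The tensor product of two permutation-type partitions in $\uRep_{\kk}(S_T)$ is given by horizontal juxtaposition, which on the subgroups $S_m$ and $S_n$ realizes exactly the standard block embedding $S_m \times S_n \hookrightarrow S_{m+n}$ acting on the first $m$ and last $n$ indices, respectively. It is straightforward to verify compatibility with the associativity constraint \eqref{eq::associativity}, so we obtain an isomorphism of towers of algebras. Via \Cref{construction:graded_cat}, this upgrades the equivalence above to the asserted monoidal equivalence $\gr(\uRep_{\kk}(S_T)) \simeq \bigoplus_{n \geq 0} \rfproj{\kk[S_n]}$. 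Passing to Grothendieck rings using \Cref{theorem:iso_graded_rings} gives the claimed isomorphism of graded rings.

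The main obstacle is the combinatorial verification that non-permutation-type partitions factor through $\cD_{m-1}$; everything else is a direct application of the general machinery developed in \Cref{section:categorification_of_gr}. This factorization argument is essentially classical in the theory of the partition category, but must be applied carefully to ensure the induced basis of the quotient really consists of the images of the permutation-type partitions.
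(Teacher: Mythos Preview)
Your proof is correct and follows essentially the same approach as the paper: apply \Cref{prop:tensor-generator} to the tensor generator $[1]$, identify the resulting tower of algebras with $(\kk[S_n])_{n\geq 0}$ via the permutation-type partitions, and check that the tensor product induces the block embeddings of \eqref{eq::embedding-Sn}. The paper defers the combinatorial identification $\End_{\cD_n/\cD_{n-1}}([n])\cong\kk[S_n]$ to the more general \Cref{prop::quotient-is-crossed-product} (where $P_n\cong\kk$ in this case), whereas you give the direct argument; the one point you flag yourself---that the ideal $\cI_{\cD_{m-1}}([m],[m])$ is \emph{exactly} the span of non-permutation-type partitions, so the permutation-type ones remain linearly independent in the quotient---is easily completed by noting that any composite $[m]\to[k]\to[m]$ with $k<m$ has at most $k$ propagating blocks.
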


    \begin{proof} It is shown in greater generality in \Cref{prop::quotient-is-crossed-product} below that the embedding of $\kk S_n$ into $\End_{\uRep_\kk S_T}([n])$ which sends a permutation $\sigma$ to the partition with parts $\{i,\sigma(i)'\}$ for $1\leq i\leq n$ induces an algebra isomorphism $\End_{\cD_n/\cD_{n-1}}([n])\cong\kk S_n$ for each $n\geq0$. Using these isomorphisms, the tensor product in $\uRep_\kk S_T$ induces exactly the embeddings in \Cref{eq::embedding-Sn}. Hence, the assertion follows from \Cref{prop:tensor-generator}.
\end{proof}

Assume that $\kk\subseteq \KK$ is a field extension. Using \Cref{example:field_extension_sn}, we obtain an equivalence of monoidal categories
  \[
    \Big(\bigoplus_{n \geq 0} \rfproj{\kk[S_n]}\Big)^{\KK} \simeq \bigoplus_{n \geq 0} \rfproj{\KK[S_n]}.
  \]
\begin{corollary}
\label{theorem:sym_only_dep_on_p}
  The graded ring $\bigoplus_{n \geq 0} \Gr( \rfproj{\kk[S_n]} )$ %
  only depends on the characteristic of~$\kk$.
\end{corollary}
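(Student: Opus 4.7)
The plan is to reduce the claim to the prime field and invoke the splitting field results from \Cref{sec::field-extensions}. Let $\mathbb{F}$ denote the prime field of characteristic $p$, i.e., $\mathbb{F} = \mQ$ when $p = 0$ and $\mathbb{F} = \mF_p$ when $p > 0$. Then $\mathbb{F}\subseteq \kk$ for every field $\kk$ of characteristic $p$, and it suffices to produce a ring isomorphism
\[
\bigoplus_{n \geq 0} \Gr( \rfproj{\kk[S_n]} )
\;\cong\;
\bigoplus_{n \geq 0} \Gr( \rfproj{\mathbb{F}[S_n]} )
\]
whose existence only uses the characteristic of $\kk$.

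For each $n\geq 0$, \Cref{remark:sn_splitting_field} tells us that every field is a splitting field for the finite-dimensional algebra $\mathbb{F}[S_n]$ (in the classical sense recalled before \Cref{lem::splitting-field-algebra}). Applying \Cref{lem::splitting-field-algebra}, we conclude that $\mathbb{F}$ is a splitting field for the $\mathbb{F}$-linear hom-finite Krull--Schmidt category $\rfproj{\mathbb{F}[S_n]}$ in the sense of \Cref{def::splitting-field-for-C}. Combining \Cref{prop::Grothendieck-ring-splitting-field} with the field-extension identification $(\rfproj{\mathbb{F}[S_n]})^{\kk} \simeq \rfproj{(\mathbb{F}[S_n] \otimes_{\mathbb{F}} \kk)} \simeq \rfproj{\kk[S_n]}$ from \Cref{example:field_extension_sn} yields a group isomorphism $\Gr(\rfproj{\mathbb{F}[S_n]}) \cong \Gr(\rfproj{\kk[S_n]})$ induced by the canonical scalar extension functor.

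To promote this to an isomorphism of graded rings, I would assemble the per-$n$ isomorphisms into a single morphism of graded groups and then verify compatibility with the induction product. The graded ring structure on $\bigoplus_{n\geq 0} \Gr(\rfproj{\kk[S_n]})$ comes from the bilinear functor
\[
(P,Q)\mapsto (P\boxtimes Q)\otimes_{\kk[S_m]\otimes_{\kk}\kk[S_n]}\kk[S_{m+n}]
\]
of \Cref{construction:graded_cat}, and scalar extension from $\mathbb{F}$ to $\kk$ commutes with this induction functor up to the natural isomorphism $(\mathbb{F}[S_{m+n}])\otimes_{\mathbb{F}}\kk \cong \kk[S_{m+n}]$. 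This means that the canonical functor $\bigoplus_{n\geq 0}\rfproj{\mathbb{F}[S_n]} \to \bigoplus_{n\geq 0}\rfproj{\kk[S_n]}$ is monoidal, so the isomorphism it induces on Grothendieck rings is one of graded rings.

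The main thing to get right is precisely this monoidal compatibility at the categorical level, as opposed to naively comparing multiplication tables after the fact. Everything else is a direct concatenation of previously established results: \Cref{remark:sn_splitting_field}, \Cref{lem::splitting-field-algebra}, \Cref{prop::Grothendieck-ring-splitting-field}, and \Cref{example:field_extension_sn}. Since the chosen common subfield $\mathbb{F}$ depends only on the characteristic of $\kk$, the graded ring $\bigoplus_{n\geq 0}\Gr(\rfproj{\kk[S_n]})$ is, up to canonical isomorphism, an invariant of $\cha\kk$ alone.
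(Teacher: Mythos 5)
Your proposal is correct and follows essentially the same route as the paper: reduce to the prime field, use \Cref{remark:sn_splitting_field} together with \Cref{lem::splitting-field-algebra} to see it is a splitting field for $\rfproj{\kk[S_n]}$, and conclude via \Cref{prop::Grothendieck-ring-splitting-field} and the monoidal equivalence $(\bigoplus_{n}\rfproj{\kk[S_n]})^{\KK}\simeq\bigoplus_{n}\rfproj{\KK[S_n]}$ from \Cref{example:field_extension_sn}. Your extra care about compatibility of scalar extension with the induction product just makes explicit what the paper records in the displayed monoidal equivalence preceding the corollary.
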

\begin{proof} This follows from \Cref{prop::Grothendieck-ring-splitting-field}, as any field is a splitting field for $\rfproj{\kk[S_n]}$ by \Cref{remark:sn_splitting_field} and \Cref{lem::splitting-field-algebra}. 
\end{proof}

\subsection{The ring of symmetric functions in the modular case}\label{sec:Symp}

\begin{definition}\label{def:Sym}
Let $\kk$ be a field of characteristic $p$
(possibly zero).
We denote the graded ring of \Cref{cor::gr-RepSt} by
\[
\Sym_{\bullet}^p :=\Sym^p := \bigoplus_{n \geq 0} \Gr( \rfproj{\kk[S_n]} ),
\]
since it only depends on $p$ by \Cref{theorem:sym_only_dep_on_p}.
\end{definition}

The ring $\Sym^p$ has a basis labeled by the symbols $[V]$ of indecomposable projective $\kk[S_n]$-modules $V$, $n\geq 0$. This basis is in bijection with $p$-regular Young diagrams $\l$, see \Cref{sec::Young}, while if $\cha \kk=0$ $p$-regularity is not required.

If $\cha \kk=0$, given $\l$ with $|\l|=n$, the  symbol $[V_\l]$ of the associated irreducible $\kk[S_n]$-module $V_\l$ can be identified with the Schur function $s_\l$ and 
$$[V_\l\otimes V_\mu]=s_\l s_\mu=\sum_\nu c_{\l,\mu}^\nu s_\nu=\sum_\nu c_{\l,\mu}^\nu [V_\nu],$$
where $ c_{\l,\mu}^\nu$ is the Littlewood--Richardson coefficient \cite{Mac}*{Sections~I.7,~I.9}.

We set $\Sym := \Sym^0$. Our goal of this subsection is to prove the following description of $\Sym^p$, for $p$ a prime, as a subring of $\Sym$.

\begin{theorem}\label{theorem:symp}
  We have an injective homomorphism of graded rings
  \[
    \Sym^p \hookrightarrow \Sym
  \]
  that splits as a homomorphism of abelian groups and
  whose degree $n \geq 0$ part is given by
  \[
    \Sym^p_n \hookrightarrow \Sym_n: [V_{\l}] \mapsto \sum_{\mu}e_{\l, \mu} \cdot [V_{\mu}]
  \]
  where $\l$ ($\mu$, respectively) ranges over $p$-regular (all, respectively) Young diagrams of length $n$,
  and $(e_{\l, \mu})_{\l, \mu}$ is the transposed 
  decomposition matrix\footnote{For the definition of the decomposition matrix of a group $G$ (with respect to a $p$-modular system), see \cite{Serre}*{Chapter 15.2}} of $S_n$.
\end{theorem}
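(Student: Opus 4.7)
The plan is to realize the claimed map via a $p$-modular system and then identify its matrix coefficients with the transposed decomposition matrix. Using \Cref{theorem:sym_only_dep_on_p} the field $\kk$ may be chosen freely, so I pick a complete discrete valuation ring $\cO$ with residue field $\kk$ of characteristic $p$ and fraction field $\KK$ of characteristic zero (for instance a sufficiently large finite extension of the $p$-adic integers). Since $\cO$ is complete, idempotents lift uniquely up to conjugation from $\kk[S_n]$ to $\cO[S_n]$, producing an exact \emph{lift-and-extend} functor
$$L_n\colon \rfproj{\kk[S_n]} \xrightarrow{~\mathrm{lift}~} \rfproj{\cO[S_n]} \xrightarrow{~-\otimes_\cO\KK~} \rfproj{\KK[S_n]}.$$
Applying $\Gr$ and summing over $n\geq 0$ yields a homomorphism of graded abelian groups $\Sym^p\to\Sym$.

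Next I would upgrade this to a ring homomorphism with respect to the induction products. This reduces to observing that $\Ind_{S_m\times S_n}^{S_{m+n}}(-)=R[S_{m+n}]\otimes_{R[S_m\times S_n]}(-)$ is defined uniformly for $R\in\{\cO,\kk,\KK\}$, and commutes with base change along both $\cO\to\kk$ and $\cO\to\KK$ since $\cO[S_{m+n}]$ is a free right $\cO[S_m\times S_n]$-module (so no derived corrections appear). Hence $L_{m+n}\circ\Ind\cong\Ind\circ(L_m\boxtimes L_n)$ naturally, and $\bigoplus_n\Gr(L_n)$ becomes a graded ring map.

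The explicit formula then drops out of the definition of the decomposition matrix. For a $p$-regular Young diagram $\l$ of length $n$, the indecomposable projective $P_\l\in\rfproj{\kk[S_n]}$ lifts to a projective $\widetilde P_\l\in\rfproj{\cO[S_n]}$, and the decomposition number $d_{\mu,\l}$ is, by definition, the multiplicity of the irreducible $\KK[S_n]$-module $V_\mu$ as a composition factor of $\widetilde P_\l\otimes_\cO\KK$ (equivalently, of the reduction $\overline{S^\mu}$ of the Specht module), cf.~\cite{Serre}*{Chapter~15.2}. Therefore $[\widetilde P_\l\otimes_\cO\KK]=\sum_\mu d_{\mu,\l}[V_\mu]$, which under the identification $[V_\l]=[P_\l]$ from \Cref{sec::Young} is exactly the stated formula with $e_{\l,\mu}:=d_{\mu,\l}$.

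Finally, I would derive injectivity and the splitting as abelian groups simultaneously from the classical theorem of James that the decomposition matrix of $S_n$ is \emph{unitriangular} in the dominance order: $d_{\l,\l}=1$ for every $p$-regular $\l$, and $d_{\mu,\l}=0$ unless $\mu\trianglerighteq\l$. Consequently the square submatrix of $E_n=(e_{\l,\mu})$ indexed by $p$-regular $\l$ and $p$-regular $\mu$ is unitriangular, hence invertible over $\mZ$; composing $\Sym^p_n\xrightarrow{E_n}\Sym_n$ with the projection onto the $p$-regular coordinates and then with the inverse of this block produces a retraction, which simultaneously shows injectivity of $E_n$. The main obstacle I expect is the careful verification that lifting commutes with induction up to natural isomorphism (and is independent of chosen lifts up to isomorphism), but this is standard Brauer--Nesbitt-style reasoning once the complete $p$-modular framework is set up; everything else reduces to classical modular representation theory of the symmetric groups.
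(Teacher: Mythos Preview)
Your approach is essentially the same as the paper's: both set up a $p$-modular system $(\KK,\cO,\kk)$, define the map as the composite $\Gr(\KK\otimes_{\cO}-)\circ\Gr(\kk\otimes_{\cO}-)^{-1}$, and verify compatibility with the induction product by showing that scalar extension commutes with $(-\boxtimes-)\otimes_{R[S_m\times S_n]}R[S_{m+n}]$ (the paper does this in \Cref{lemma:scalar_ext_box}; your freeness argument is the same content).

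The one genuine difference is in the proof of injectivity and splitting. The paper simply invokes Serre's general result for arbitrary finite groups (\cite{Serre}*{Chapter~16, Theorem~34}) that the Cartan map $\Gr(\rfproj{R[G]})\to\Gr(\rfmod{\KK[G]})$ is a split monomorphism of abelian groups. You instead invoke James' unitriangularity of the decomposition matrix of $S_n$, which is specific to symmetric groups but more explicit: it exhibits a concrete retraction and hence gives a slightly sharper statement. Both routes are valid; the paper's is shorter and self-contained via a single reference, while yours gives more structural information about the embedding.

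One small point of language: your ``lift'' step is not literally a functor $\rfproj{\kk[S_n]}\to\rfproj{\cO[S_n]}$, only a bijection on isomorphism classes (hence on $\Gr$). The paper avoids this by working directly with the genuine functors $(\kk\otimes_{\cO}-)$ and $(\KK\otimes_{\cO}-)$ and inverting the first on Grothendieck groups via \cite{Serre}*{Chapter~14, Corollary~3}. This does not affect correctness, but it is cleaner to phrase it that way.
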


For the proof, we collect some useful facts from modular representation theory.
Let $G$ be a finite group and let $(\KK, R, \kk)$ be a $p$-modular system, i.e., $\kk$ a field of characteristic $p$,
$R$ a complete discrete valuation ring of characteristic $0$ such that $R/\rad( R ) \cong \kk$, and $\KK := \mathrm{Quot}( R )$ 
the field of fractions of $R$.

\begin{lemma}\label{lemma:modular_rep}
  The canonical ring maps $\kk \leftarrow R \rightarrow \KK$ induce functors
  \[
    \rfproj{\kk[G]} \xleftarrow{(\kk \otimes_R -)} \rfproj{R[G]} \xrightarrow{(\KK \otimes_R -)} \rfmod{\KK[G]}
  \]
  with the following properties:
  \begin{enumerate}
    \item The induced map on Grothendieck groups
    \[ \Gr( \rfproj{\kk[G]} ) \xleftarrow{\Gr(\kk \otimes_R -)} \Gr( \rfproj{R[G]} ) \]
    is an isomorphism.
    \item The induced map on Grothendieck groups
    \[ \Gr( \rfproj{R[G]} ) ) \xrightarrow{\Gr(\KK \otimes_R -)} \Gr( \rfmod{\KK[G]} ) \]
    is a split monomorphism.
    \item If $\KK$ and $\kk$ are splitting fields of $G$, then the map
    \[
      \Gr( \rfproj{\kk[G]} ) \xrightarrow{\Gr(\KK \otimes_R -) \circ \Gr(\kk \otimes_R -)^{-1}} \Gr( \rfmod{\KK[G]} )
    \]
    written as a matrix with respect to a basis of indecomposable objects in $\rfproj{\kk[G]}$ and with respect to simple objects in $\rfmod{K[G]}$
    is given by the transpose of the decomposition matrix of $G$.
  \end{enumerate}
\end{lemma}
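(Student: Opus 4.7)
The plan is to verify the three assertions using standard tools of modular representation theory: lifting of idempotents, the semisimplicity of $\KK[G]$, and the definition of the decomposition matrix.

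For Part (1), I would argue as follows. Since $R$ is a complete discrete valuation ring, $R[G]$ is a complete (hence semi-perfect) $R$-algebra whose Jacobson radical contains $\rad(R) \cdot R[G]$, and the quotient $R[G]/(\rad(R)\cdot R[G])$ is $\kk[G]$. Idempotents therefore lift uniquely up to conjugation from $\kk[G]$ to $R[G]$, so every finitely generated projective $\kk[G]$-module $P$ lifts to a finitely generated projective $R[G]$-module $\tilde{P}$, unique up to isomorphism, with $\kk\otimes_R \tilde{P}\cong P$. This yields an inverse to $\Gr(\kk\otimes_R -)$ on the level of isomorphism classes, hence on Grothendieck groups. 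The assignment respects direct sums because lifting is additive (it is governed by block decompositions of $R[G]$ compatible with reduction), so this is a group isomorphism.

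For Part (2), note that $\KK\otimes_R -$ sends a finitely generated projective $R[G]$-module to a finitely generated $\KK[G]$-module; since $\KK$ has characteristic zero, $\KK[G]$ is semisimple, so every such module is automatically projective as well. To see that the induced map on Grothendieck groups is split injective, I would combine it with (1) and consider the composite $c\colon \Gr(\rfproj{\kk[G]})\to \Gr(\rfmod{\KK[G]})$. Using the Brauer pairing between projective and arbitrary $\kk[G]$-modules (equivalently, counting multiplicities of simple modular summands in reductions of lifted ordinary simples), one shows that the Cartan matrix $C$ factors as $C = D^T D$ where $D$ is the decomposition matrix of Part (3). Since $C$ is known to be non-singular over $\mathbb{Q}$ (cf.\ Serre's book), it follows that $D$, and hence the composite $c$, is split injective as a homomorphism of free abelian groups. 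Composing with the inverse from (1) transfers this splitting back to $\Gr(\rfproj{R[G]})\to \Gr(\rfmod{\KK[G]})$.

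For Part (3), the identification with the (transposed) decomposition matrix is essentially the definition. Fix a $p$-regular partition $\lambda$ (under the splitting hypothesis on $\kk$), let $P_\lambda$ be the projective cover in $\rfproj{\kk[G]}$ of the simple $D_\lambda$, and let $\tilde{P}_\lambda$ be its lift provided by Part (1). Since $\KK$ is a splitting field of $G$ and $\KK[G]$ is semisimple, $\KK\otimes_R \tilde{P}_\lambda$ decomposes as a direct sum of absolutely simple $\KK[G]$-modules $V_\mu$; the multiplicities $d_{\mu,\lambda}$ appearing here are, by definition, the entries of the decomposition matrix of $G$ when rows are indexed by ordinary simples and columns by modular simples. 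Reading off the matrix of the composite with respect to the indecomposable basis on the left and the simple basis on the right therefore yields $D^T$, with the transpose coming from the convention that the matrix of a linear map has columns indexed by the source basis.

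The only conceptually non-trivial step is the injectivity claim of Part (2); once the lifting in Part (1) is in place, this reduces to the classical non-singularity of the Cartan matrix, which I would import from Serre's reference. The rest is essentially bookkeeping compatible with the Brauer--Cartan triangle.
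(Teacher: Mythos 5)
There is a genuine gap in your argument for Part (2). Non-singularity of the Cartan matrix $C=D^{T}D$ over $\mQ$ only shows that $D$, and hence the map $\Gr(\rfproj{\kk[G]})\to\Gr(\rfmod{\KK[G]})$, is \emph{injective}; it does not make it a \emph{split} monomorphism. An injective homomorphism of finitely generated free abelian groups splits only when its cokernel is torsion-free (equivalently, when the gcd of the maximal minors of the matrix is $1$), and this cannot be read off from $\det C\neq 0$: in fact $\det C$ is a power of $p$, so the Cartan-matrix route can at best produce a splitting after inverting $p$. The splitness asserted in the lemma is genuinely deeper. In the $cde$-triangle it comes from the adjointness of $e$ and $d$ with respect to the perfect Brauer pairing between projectives and simples, together with the surjectivity of the decomposition map $d\colon \Gr(\rfmod{\KK[G]})\to\Gr(\rfmod{\kk[G]})$, which rests on Brauer's induction theorem (and the passage to sufficiently large fields). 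This is precisely what the paper imports wholesale: its proof of the lemma is by citation, namely \cite{Serre}*{Chapter 14, Corollary 3} for (1), \cite{Serre}*{Chapter 16, Theorem 34} for (2), and \cite{Serre}*{Chapter 15.4.(c)} combined with the invariance statement of \cite{Serre}*{Chapter 14.6} for (3), whereas you attempt a self-contained argument whose key step (2) is not established.

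Two smaller points. In Part (3) you declare the multiplicities of the simple $\KK[G]$-modules in $\KK\otimes_R\tilde P_{\lambda}$ to be the decomposition numbers ``by definition''; with the definition used in the paper (reduction mod $p$ of the ordinary irreducibles, \cite{Serre}*{Chapter 15.2}) this identification is Brauer reciprocity, a theorem that must be proved or cited, and the paper additionally needs the observation that the matrix is unchanged under enlarging $\kk$ and $\KK$ because the lemma only assumes both are splitting fields rather than sufficiently large. Also, indexing the simple $\kk[G]$-modules by $p$-regular partitions makes sense only for symmetric groups, while the lemma concerns an arbitrary finite group $G$. Your Part (1), via lifting of idempotents over the complete discrete valuation ring $R$, is fine and matches the content of the result the paper cites.
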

\begin{proof}
  The first statement is \cite{Serre}*{Chapter 14, Corollary 3}.
  The second statement is \cite{Serre}*{Chapter 16, Theorem 34}.
  The third statement is given in \cite{Serre}*{Chapter 15.4.(c)}
  in the context of $\KK$ sufficiently large\footnote{This means that $\KK$ contains all $m$-th roots of unity for $m$ the l.c.m.\ of orders of elements in $G$.}.
  But note that in the case where $\kk$ and $\KK$ are both splitting fields of $G$,
  the matrix in question does not change for any extensions $\kk' \supseteq \kk$ and $\KK' \supseteq \KK$
  by \cite{Serre}*{Chapter 14.6}. Thus, the statement of \cite{Serre}*{Chapter 15.4.(c)}
  also holds in this case. 
\end{proof}

Note that \Cref{remark:sn_splitting_field} ensures that we can apply \Cref{lemma:modular_rep} in the context of symmetric groups.
Moreover, \Cref{lemma:modular_rep} gives us a split group monomorphism $\Sym^p \hookrightarrow \Sym$.
The next lemma clarifies its compatibility with the multiplication.

\begin{lemma}\label{lemma:scalar_ext_box}
  Let $S \rightarrow T$ be a morphism of commutative rings,
  and let $A \otimes_S B \rightarrow C$ be an $S$-algebra morphism for $S$-algebras
  $A, B, C$. Let $(-)^T := (T \otimes_S -)$ denote the scalar extension functor.
  Then we have an isomorphism of right $C^T$-modules
  \[
    \left( (M \boxtimes_S N){\otimes}_{(A \otimes_S B)} C \right)^T \cong
    (M^T \boxtimes_T N^T){\otimes}_{(A^T \otimes_T B^T)} C^T
  \]
  for all $M \in \rmod{A}$ and $N \in \rmod{B}$.
\end{lemma}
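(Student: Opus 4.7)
The plan is to prove the isomorphism by a short sequence of natural base change identities for tensor products of modules and algebras, without any real input beyond associativity of the tensor product. Writing $R := A \otimes_S B$, the left-hand side is $((M \otimes_S N) \otimes_R C) \otimes_S T$, so the strategy is simply to commute the outer scalar extension $\otimes_S T$ past the inner tensor products and then regroup the resulting factors into the form displayed on the right-hand side.

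The first step I would carry out is to establish two canonical $T$-linear identifications. For algebras, there is a natural $T$-algebra isomorphism $(A \otimes_S B)^T \cong A^T \otimes_T B^T$, sending $(a \otimes b) \otimes t$ to $(a \otimes 1) \otimes_T (b \otimes t)$; this is a standard fact that follows directly from the universal property of the tensor product. By the same construction applied to modules, there is an isomorphism $(M \otimes_S N)^T \cong M^T \otimes_T N^T$, and this is compatible with the algebra isomorphism above, so it is an isomorphism of right $(A^T \otimes_T B^T)$-modules.

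The second step is the main calculation. Using $C \otimes_S T = C^T$ and associativity of the tensor product (distributing $\otimes_S T$ through $\otimes_R$), one obtains
\[
((M \otimes_S N) \otimes_R C) \otimes_S T \cong (M \otimes_S N) \otimes_R (C \otimes_S T) = (M \otimes_S N) \otimes_R C^T .
\]
Then, viewing $C^T$ as an $R^T$-module and inserting $C^T \cong R^T \otimes_{R^T} C^T$, one gets
\[
(M \otimes_S N) \otimes_R C^T \cong \bigl((M \otimes_S N) \otimes_R R^T\bigr) \otimes_{R^T} C^T \cong (M \otimes_S N)^T \otimes_{R^T} C^T ,
\]
where the last step uses the elementary fact $P \otimes_R R^T \cong P^T$ for any right $R$-module $P$. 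Applying the identifications from the first step to the right-hand side produces $(M^T \otimes_T N^T) \otimes_{A^T \otimes_T B^T} C^T$, as desired.

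The only obstacle to a fully written proof is bookkeeping: one must check that each intermediate isomorphism respects the right $C^T$-module structure, not just the underlying $T$-module structure. This follows automatically, however, from the naturality of all the maps involved (they are each built from the universal properties of the relevant tensor products), so the verification amounts to tracing elementary tensors through the isomorphisms. No nontrivial commutative algebra is required, and the argument applies uniformly to the pair $(M, N)$ for any choice of $A$-module $M$ and $B$-module $N$.
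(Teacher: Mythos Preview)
Your argument is correct. Both your proof and the paper's proof rest on the same two facts---that scalar extension $(-)^T$ is monoidal (so $(X \otimes_S Y)^T \cong X^T \otimes_T Y^T$) and that it commutes with forming the balanced tensor product $\otimes_R$---but you package them differently. The paper presents $(M \boxtimes_S N) \otimes_{A \otimes_S B} C$ explicitly as a cokernel of a map $\delta$ between iterated $S$-tensor products, and then applies $(-)^T$ using that it is monoidal and right exact; the resulting cokernel is recognized as the right-hand side. Your route avoids the explicit presentation and instead invokes bimodule tensor associativity $(P \otimes_R C) \otimes_S T \cong P \otimes_R (C \otimes_S T)$ together with the base-change identity $P \otimes_R R^T \cong P^T$. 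Your approach is slightly more direct and makes the role of associativity transparent; the paper's approach has the minor advantage that it makes the preservation of the right $C^T$-module structure automatic (both sides are visibly cokernels of the same map in $\rmod{C^T}$), whereas in your argument this is the bookkeeping you flag at the end.
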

\begin{proof}
  We have an exact sequence
  \[
  M \otimes_S N \otimes_S A \otimes_S B \otimes_S C
  \xrightarrow{
  \delta
  }
  M \otimes_S N \otimes_S C
  \rightarrow
  (M \boxtimes_S N){\otimes}_{(A \otimes_S B)} C
  \rightarrow
  0
  \]
  where
  \[
  \delta( m \otimes n \otimes a \otimes b \otimes c)
  :=
  m \otimes n \otimes ( (a \otimes b)\cdot c )
  -
  (m\cdot a) \otimes (n\cdot b) \otimes c
  \]
  for $m \in M$, $n \in N$, $a \in A$, $b \in B$, $c \in C$.
  Now, the statement follows from the fact that
  extension by scalars
  $(-)^T: (\lmod{S}, \otimes_S) \rightarrow (\lmod{T},\otimes_T)$ is a monoidal functor
  (see, e.g., \cite{BourbakiI}*{Chapter II.5.1, Proposition 3}) and right exact.
\end{proof}

\begin{proof}[Proof of \Cref{theorem:symp}]
  By \Cref{lemma:modular_rep} the map $\Sym^p\hookrightarrow \Sym$ is a split group monomorphism.
  Moreover, if we equip
  \[ \Sym^R := \bigoplus_{n \geq 0} \Gr( \rfproj{R[S_n]} ) \]
  with the ring multiplication
\[
[P]\cdot[Q] := [(P \boxtimes Q) \otimes_{R[S_{n} \times S_{m}]} R[S_{n+m}] ]
\]
for $P \in \rfproj{R[S_n]}$ and $Q \in \rfproj{R[S_m]}$, then \Cref{lemma:modular_rep} and \Cref{lemma:scalar_ext_box}
imply that scalar extension $(R \otimes_{\kk} -)$ defines a graded ring isomorphism
\[
  \Sym^R \cong \Sym^p
\]
and scalar extension $(\KK \otimes_{R} -)$ defines an injective ring homomorphism
\[
  \Sym^R \hookrightarrow \Sym
\]
as desired.
\end{proof}

\begin{example}
  The determination of decomposition matrices of $S_n$ is a hard and (to the knowledge of the authors)
  open problem.
  From \cite{J78}*{Appendix}, we can write down the embedding $\Sym^2 \hookrightarrow \Sym$
  for low degrees (up to $4$)
  \begin{align*}
    \Grel{(0)} &\mapsto \Grel{(0)} &\Grel{(2,1)} &\mapsto \Grel{(2,1)} \\
    \Grel{(1)} &\mapsto \Grel{(1)} & \Grel{(4)} &\mapsto \Grel{(4)} + \Grel{(3,1)} + \Grel{(2,1^2)} + \Grel{(1^4)}\\
    \Grel{(2)} &\mapsto \Grel{(2)} + \Grel{(1^2)}  & \Grel{(3,1)} &\mapsto \Grel{(3,1)} + \Grel{(2^2)} + \Grel{(2,1^2)}\\
    \Grel{(3)} &\mapsto \Grel{(3)} + \Grel{(1^3)}
  \end{align*}
  and the embedding $\Sym^3 \hookrightarrow \Sym$
  for low degrees (up to $4$)
  \begin{align*}
    \Grel{(0)} &\mapsto \Grel{(0)} & \Grel{(4)} &\mapsto \Grel{(4)} + \Grel{(2^2)}  \\
    \Grel{(1)} &\mapsto \Grel{(1)} & \Grel{(3,1)} &\mapsto \Grel{(3,1)} \\
    \Grel{(2)} &\mapsto \Grel{(2)} & \Grel{(2^2)} &\mapsto \Grel{(2^2)} + \Grel{(1^4)}  \\
    \Grel{(1^2)} &\mapsto \Grel{(1^2)} &  \Grel{(2,1^2)} &\mapsto \Grel{(2,1^2)}\\
    \Grel{(3)} &\mapsto \Grel{(3)} + \Grel{(2,1)}.
  \end{align*}
  
These embeddings allow us to compute products in the modular rings of symmetric functions.\footnote{We used the online calculator  \cite{Gib} for the Littlewood--Richardson coefficients in $\cha \kk=0$.} In general $\Grel{(0)}=1$. For example, in $\Sym^2$, we compute all remaining products in $\Sym^2_i\cdot \Sym^2_j$, for $i+j\leq 4$, as
\begin{align*}
    \Grel{(1)}\cdot \Grel{(1)}&= \Grel{(2)} \\
    \Grel{(1)}\cdot \Grel{(2)}&= \Grel{(3)}+\Grel{(2,1)} \\
    \Grel{(2)}\cdot \Grel{(2)}&= \Grel{(4)}+2\Grel{(3,1)} \\
    \Grel{(1)}\cdot \Grel{(3)}&= \Grel{(4)} \\
    \Grel{(1)}\cdot \Grel{(2,1)}&= \Grel{(3,1)}.
\end{align*}
Similar, in $\Sym^3$, we compute
\begin{align*}
    \Grel{(1)}\cdot  \Grel{(1)}&=  \Grel{(2)}+ \Grel{(1,1)}\\
    \Grel{(1)}\cdot  \Grel{(2)}&=  \Grel{(3)}\\
    \Grel{(1)}\cdot  \Grel{(1,1)}&=  \Grel{(2,1)}\\
    \Grel{(2)}\cdot  \Grel{(2)}&=  \Grel{(4)}+ \Grel{(3,1)}\\
    \Grel{(2)}\cdot  \Grel{(1,1)}&=  \Grel{(3)}+ \Grel{(2,1^2)}\\
    \Grel{(1)}\cdot  \Grel{(3)}&=  \Grel{(4)}+ 2\Grel{(3,1)}+ \Grel{(2,1^2)}\\
    \Grel{(1)}\cdot  \Grel{(2,1)}&=  \Grel{(3,1)}+ \Grel{(2,2)}+ 2\Grel{(2,1^2)}.
\end{align*}
These products are different from the products in $\Sym$, where, for example,
\begin{align*}
    \Grel{(1)}\cdot \Grel{(1)}&= \Grel{(2)}+\Grel{(1,1)} \\
    \Grel{(1)}\cdot \Grel{(2)}&= \Grel{(3)}+\Grel{(2,1)} \\
    \Grel{(2)}\cdot \Grel{(2)}&= \Grel{(4)}+\Grel{(3,1)}+\Grel{(2,2)} \\
    \Grel{(1)}\cdot \Grel{(3)}&= \Grel{(4)}+\Grel{(3,1)} \\
    \Grel{(1)}\cdot \Grel{(2,1)}&= \Grel{(3,1)}+\Grel{(2,2)}+\Grel{(2,1,1)}.
\end{align*}
  
\end{example}

\section{Classification of indecomposable objects in \texorpdfstring{$\DCob{\alpha}$}{DCob(alpha)}}\label{sec:classification}

\subsection{The endomorphism rings of \texorpdfstring{$[n]$}{[n]}}
\label{sec::endo-of-n}

In order to classify indecomposable objects in the Karoubian tensor category $\DCob{\alpha}$, for $\alpha$ a rational function, introduced in \Cref{sect:DCob}, we study the endomorphism rings of the objects $[n]=[1]^{\otimes n}$ modulo the categorical ideal $\cI_{<n}$ of morphisms generated by $\id_{[m]}$ for objects $[m]$ with $m<n$. To this end, let $\kk$ be an arbitrary field.

Recall the generating morphisms $\sm,\sdel,\scap,\scup, \sx$ of $\DCob{\alpha}$ from Lemma \ref{generators}. We denote $x:=\sm\sdel$ and
\begin{align}
x_i:=\id_{[i-1]}\otimes x \otimes \id_{[n-i]}\colon [n]\longrightarrow[n], &&i=1, \ldots, n.
\end{align}
The $x_i$ generate a subalgebra of $\End_{\DCob{\alpha}}([n])$. Since they commute, satisfy $u_\alpha(x_i)=0$, and using \Cref{thm:universal-prop}, this subalgebra can be described by the following  quotient polynomial ring:
$$P_n := \kk[x_1,\ldots, x_{n}]/(u_\alpha(x_1), \ldots, u_\alpha(x_{n}))\cong P_1^{\otimes n}.$$
Here, we use the convention that $P_0=\kk$ for all $u_\alpha$.

Further, given a permutation $\s$ of $n$ points, we denote by $s_{\s}$ the isomorphism of $[n]$ that permutes $n$ strands according to $\s$. That is, $s_\s$ corresponds to the partition
$$\big\{\{1,\s(1)'\},\ldots, \{n,\s(n)'\}\big\} ,$$
where each connected component has genus zero (i.e., $g_{s_\s}=0$ and $f_{s_\s}=0$ in the combinatorial description of cobordisms from \Cref{sec:cob}). The assignment $\s\mapsto s_\s$ defines an embedding of $\kk$-algebras $\kk[S_n]\hookrightarrow \End_{\DCob{\alpha}}([n])$.

\begin{lemma}\label{lem:actiononPn}
The group algebra $\kk[S_{n}]$ admits a left action on $P_n$ by $\kk$-algebra automorphisms. The action is defined by
\begin{align*}
\s\cdot x_i &:= x_{\s(i)}, & \s\cdot(x y)&=(\s\cdot x)(\s \cdot y), &\text{for }x,y\in P_n.
\end{align*}
\end{lemma}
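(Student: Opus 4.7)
The plan is to obtain the action on $P_n$ by descent from the obvious permutation action on the free polynomial ring. Concretely, I would start from $R := \kk[x_1, \ldots, x_n]$, which carries a standard left $S_n$-action by $\kk$-algebra automorphisms determined by $\sigma \cdot x_i := x_{\sigma(i)}$ and extended multiplicatively to all of $R$. The multiplicativity formula $\sigma \cdot (xy) = (\sigma \cdot x)(\sigma \cdot y)$ and the group axioms (i.e. $\tau \cdot (\sigma \cdot x_i) = x_{\tau\sigma(i)} = (\tau\sigma)\cdot x_i$) hold on $R$ by construction.

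The one point requiring verification is that the ideal $I := (u_\alpha(x_1), \ldots, u_\alpha(x_n))$ defining $P_n = R/I$ is $S_n$-stable. Since $u_\alpha \in \kk[t]$ is a fixed polynomial with coefficients in $\kk$ and each $\sigma$ acts as a $\kk$-algebra homomorphism, we have
\[
\sigma \cdot u_\alpha(x_i) = u_\alpha(\sigma \cdot x_i) = u_\alpha(x_{\sigma(i)}),
\]
so $\sigma$ simply permutes the generators of $I$ among themselves; in particular $\sigma(I) = I$. Hence the $S_n$-action on $R$ descends to an action on the quotient $P_n$ by $\kk$-algebra automorphisms, satisfying the asserted formulas on the generators $x_i$. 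Linear extension then yields the desired $\kk[S_n]$-action.

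There is no genuine obstacle here: the content of the lemma is the symmetry of the defining relations of $P_n$ under permutation of the variables $x_i$, which is built into the construction of $P_n$ as $P_1^{\otimes n}$ together with the fact that the relation $u_\alpha(x) = 0$ in $P_1$ is imposed uniformly in each tensor factor.
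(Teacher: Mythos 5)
Your proof is correct, and it supplies exactly the routine verification that the paper omits (the lemma is stated without proof, being regarded as immediate from the permutation symmetry of the relations $u_\alpha(x_1),\dots,u_\alpha(x_n)$, equivalently from the description $P_n\cong P_1^{\otimes n}$ with $S_n$ permuting tensor factors). Your route via descent from the free polynomial ring, checking $\sigma\cdot u_\alpha(x_i)=u_\alpha(x_{\sigma(i)})$ so that the defining ideal is $S_n$-stable, is the standard argument and matches the paper's implicit reasoning.
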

In other words, $P_n$ is  an $S_n$-equivariant $\kk$-algebra, cf. \Cref{sec::idempotents-crossed-product}.

\begin{definition}\label{def:crossedproduct}
Assume given a group $G$ and a left $G$-action on a $\kk$-algebra $A$ by $\kk$-algebra automorphisms.
Define the \emph{crossed product} $A \rtimes G$ (also denoted $A\rtimes \kk G$) to be the $\kk$-algebra defined on the $\kk$-vector space $A \otimes_\kk \kk[G]$ with the product
$(a \otimes g) \cdot (b \otimes h) = ag(b) \otimes gh$ for $a,b \in A$, $g,h \in G$.
\end{definition}
A left $A \rtimes G$-module is the same as a $G$-equivariant $A$-module,
i.e., an $A$-module $M$ with a left $\kk$-linear $G$-action such that
$g(am) = g(a)g(m)$ for all $a \in A$, $g \in G$.

\begin{lemma}\label{lem:subalg_crossed_prod}
    The subalgebra of $\End_{\DCob{\alpha}}([n])$ generated by the subalgebras $\kk[S_n]$ and $P_n$ is isomorphic to the crossed product algebra $P_n\rtimes \kk[S_n]$ with respect to the action of \Cref{lem:actiononPn}.
\end{lemma}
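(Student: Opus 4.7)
The plan is to construct an algebra homomorphism
\[
\varphi\colon P_n\rtimes \kk[S_n]\longrightarrow \End_{\DCob{\alpha}}([n]),\qquad a\otimes \sigma\longmapsto a\cdot s_\sigma,
\]
verify it is well-defined, observe its image is exactly the subalgebra in the statement, and establish injectivity using the basis of \Cref{thm:universal-prop}.

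First, I would check that $\varphi$ is a well-defined $\kk$-algebra homomorphism. The only nontrivial point is compatibility with the crossed-product multiplication, and by the defining product in \Cref{def:crossedproduct} this reduces to the commutation relation $s_\sigma\cdot x_i = x_{\sigma(i)}\cdot s_\sigma$ holding in $\End_{\DCob{\alpha}}([n])$. This is a direct topological check: stacking the permutation cobordism $s_\sigma$ on top of a handle on strand $i$ produces the same cobordism as first permuting and then adding a handle on strand $\sigma(i)$, since in each case one obtains a permutation cobordism whose connected component running from incoming point $i$ to outgoing point $\sigma(i)'$ carries one handle. This is precisely the crossed-product rewriting rule corresponding to the $S_n$-action $\sigma\cdot x_i = x_{\sigma(i)}$ from \Cref{lem:actiononPn}, so $\varphi$ extends to an algebra homomorphism. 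Because $\varphi$ sends $x^I\otimes 1$ to $x^I$ and $1\otimes \sigma$ to $s_\sigma$, its image contains both subalgebras $P_n$ and $\kk[S_n]$ and is contained in the subalgebra they generate; hence the image is exactly that subalgebra, and surjectivity onto it is automatic.

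The main obstacle is injectivity. As a $\kk$-vector space, $P_n\rtimes \kk[S_n]$ has the basis
\[
\{\, x^I\otimes \sigma \mid I=(I_1,\dots,I_n)\in \{0,1,\dots,k-1\}^n,\ \sigma\in S_n\,\},
\]
where $k=\deg u_\alpha$, using $P_1\cong \kk[x]/(u_\alpha(x))$ and $P_n\cong P_1^{\otimes n}$. For each such basis element I would analyse the cobordism $\varphi(x^I\otimes \sigma)=x^I\cdot s_\sigma$ directly: it is a disjoint union of $n$ cylinders, the $j$-th one connecting the incoming point $j$ to the outgoing point $\sigma(j)'$ and carrying a prescribed number of handles (bounded by $k-1$) determined by $I$ and $\sigma$, with no closed components. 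Thus $\varphi(x^I\otimes \sigma)$ lies in $C_n^n({<}k)$, and distinct pairs $(I,\sigma)$ give distinct elements of $C_n^n({<}k)$. Since $C_n^n({<}k)$ forms a $\kk$-basis of $\End_{\DCob{\alpha}}([n])$ by \Cref{thm:universal-prop}, the images $\varphi(x^I\otimes \sigma)$ are linearly independent, so $\varphi$ is injective. Combined with the identification of the image, this yields the claimed isomorphism.
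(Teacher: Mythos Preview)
Your proof is correct and follows the same approach as the paper: both establish the commutation relation $s_\sigma x_i = x_{\sigma(i)} s_\sigma$ (a topological check in $\Cob{2}$) to get a surjection from the crossed product onto the subalgebra. Your argument is actually more complete than the paper's, which does not spell out injectivity; your use of \Cref{thm:universal-prop} to see that the images $x^I s_\sigma$ are distinct elements of the basis $C_n^n({<}k)$ fills that gap explicitly.
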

\begin{proof}
Given a permutation $\s\in S_{n}$, by definition of $\DCob{\alpha}$, the relation 
$$x_i s_\s=s_\s x_{\s (i)}$$
holds in $\End_{\DCob{\alpha}}([n])$. In fact, this relation comes from the construction of the cobordism category $\Cob2$. This is precisely the relation of the crossed product $P_n\rtimes \kk[S_{n}]$. This implies that the subalgebra of $\End_{\DCob{\alpha}}([n])$ generated by $\kk[S_n]$ and $P_n$ is isomorphic to the crossed product as claimed.
\end{proof}

We are now able to describe the endomorphism rings of $[n]$ in the quotient category by the ideal $\cI_{<n}$. %

\begin{proposition} \label{prop::quotient-is-crossed-product} For $\alpha = \frac{p}{q}$ a rational function as in \Cref{eq:alphaconventions} and $n\geq 0$ we have isomorphisms of $\kk$-algebras
\begin{align*}
\End_{\DCob{\alpha}/\cI_{<n}}([n])\cong P_n \rtimes \kk[S_{n}].
\end{align*}
\end{proposition}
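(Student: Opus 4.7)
The plan is to exhibit explicit bases on both sides that are matched by a natural algebra homomorphism. By \Cref{lem:subalg_crossed_prod}, there is an injective algebra morphism $P_n \rtimes \kk[S_n] \hookrightarrow \End_{\DCob{\alpha}}([n])$; post-composing with the canonical projection yields an algebra map $\phi\colon P_n \rtimes \kk[S_n] \to \End_{\DCob{\alpha}/\cI_{<n}}([n])$, and I aim to show $\phi$ is an isomorphism. Let $\cB_n \subseteq C_n^n(<k)$ be the subset of cobordisms whose every connected component has exactly one incoming and one outgoing boundary circle, i.e., is a cylinder with fewer than $k$ handles. An element of $\cB_n$ is specified by a pair consisting of $\sigma \in S_n$ (matching incoming circles to outgoing ones) and a handle multi-index $(g_1,\dots,g_n) \in \{0,\dots,k-1\}^n$, so $|\cB_n| = n!\,k^n = \dim_\kk(P_n \rtimes \kk[S_n])$; these cobordisms are precisely the images under the subalgebra embedding of the standard monomial basis $s_\sigma \cdot x_1^{g_1}\cdots x_n^{g_n}$ of $P_n \rtimes \kk[S_n]$.

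First, I would prove surjectivity by showing every basis cobordism $c \in C_n^n(<k) \setminus \cB_n$ already lies in $\cI_{<n}$. Decompose $c$ into its connected components $C_1,\dots,C_\ell$, where $C_i$ has $a_i \geq 0$ incoming and $b_i \geq 0$ outgoing boundary circles (with $\sum a_i = n = \sum b_i$ and $a_i + b_i \geq 1$). Topologically, I can factor $c = g \circ f$ through $[m']$ where $m' = |\{i : a_i, b_i \geq 1\}|$ by cutting each $C_i$ with $a_i, b_i \geq 1$ along a single separating simple closed curve (with all incoming boundary on one side and all outgoing on the other), and placing each $C_i$ with $a_i = 0$ (respectively $b_i = 0$) entirely into the outgoing (respectively incoming) piece. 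A short counting argument using $\sum a_i = \sum b_i = n$ shows $m' < n$ precisely when $c \notin \cB_n$: either some $a_i \geq 2$ or $b_i \geq 2$ (forcing $m' = \ell \leq n-1$) or some $a_i = 0$ or $b_i = 0$ (forcing $m' \leq n - 1$ by a direct check), so $c$ factors through a smaller object and is hence in $\cI_{<n}$.

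For injectivity, I would show the reverse inclusion $\cI_{<n} \cap \End_{\DCob{\alpha}}([n]) \subseteq \Span(C_n^n(<k) \setminus \cB_n)$, which combined with the previous step yields equality and hence $\Span(\cB_n) \cap \cI_{<n} = 0$. Given basis cobordisms $f\colon [n] \to [m]$ and $g\colon [m] \to [n]$ with $m < n$, the geometric composite $g \circ f \in C_n^n$ has the property that any connected component touching both the top and bottom boundary must pass through at least one of the $m$ middle circles, hence there are at most $m$ such components. If every component of $g \circ f$ were of cylinder-with-handles type, this would force $n \leq m$, a contradiction; so $g \circ f$ has a component with $a_j \neq 1$ or $b_j \neq 1$. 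Reducing $g \circ f$ to the basis $C_n^n(<k)$ uses the relations $s_i = \alpha_i$ (which removes closed components, contributing only scalar factors) and $u_\alpha(x) = 0$ (which reduces handle counts on cylindrical sub-cobordisms, cf.\ \Cref{thm:universal-prop}). Both relations leave the boundary-shape data $(a_j, b_j)$ of every remaining component invariant, so the basis expansion of any element of $\cI_{<n} \cap \End_{\DCob{\alpha}}([n])$ lies in $\Span(C_n^n(<k) \setminus \cB_n)$.

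The main obstacle is this last invariance claim: that reducing an arbitrary composite $g \circ f$ to the basis $C_n^n(<k)$ via $s_i = \alpha_i$ and $u_\alpha(x) = 0$ does not alter the partition of boundary circles into connected components. The former relation is straightforward, since removing closed components does not affect any component with boundary. For the latter, I would carefully unpack the reduction procedure from \cite{KS} (underlying \Cref{thm:universal-prop}) which applies $u_\alpha(x) = 0$ locally on a cylindrical sub-cobordism of a higher-genus component, changing only its handle count. Once this invariance is settled, $\phi$ restricts to a bijection between the standard monomial basis of $P_n \rtimes \kk[S_n]$ and the classes of $\cB_n$ in the quotient, establishing the claimed $\kk$-algebra isomorphism.
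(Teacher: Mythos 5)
Your proposal is correct and follows essentially the same route as the paper's proof: identify the basis $C_n^n(<k)$, observe that the cylinder-type cobordisms realize $P_n\rtimes\kk[S_n]$, and show that every other basis cobordism factors through some $[r]$ with $r<n$ while the ideal $\cI_{<n}([n],[n])$ is spanned by exactly those non-cylinder-type cobordisms. The extra care you take with injectivity (composites through $[m]$, $m<n$, and the invariance of the boundary-partition data under the reduction relations) is precisely the step the paper's proof asserts tersely with ``as a consequence,'' so your write-up is a more detailed version of the same argument.
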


\begin{proof} We recall that a basis for $\End_{\DCob{\alpha}}([n])$ is given by the cobordisms in $C_n^n(<k)$, with $k=\max\{\deg p+1,\deg q\}$, where $C_n^n(<k)$ is the set of those cobordisms $c$ with $n$ incoming and $n$ outgoing boundary circles, with no closed components, and of genus strictly less than $k$. Consider such a cobordism $c$ and let $r$ be the number of its components which have both incoming and outgoing boundary circles. Then $c$ factors through $[r]$, so $c$ lies in $\cI_{<n}$, unless $r=n$. As a consequence, $\cI_{<n}$ is spanned as a vector space by all cobordisms $c$ with $r<n$, while a complement is spanned by the cobordisms $c$ with $r=n$, which are exactly those cobordisms whose components connect exactly one incoming boundary circle with exactly one outgoing boundary circle. Hence, the composition 
$$P_n\rtimes \kk[S_n] \hookrightarrow \End_{\DCob{\alpha}}([n])\twoheadrightarrow \End_{\DCob{\alpha}/\cI_{<n}}([n])$$
is an isomorphism, using \Cref{lem:subalg_crossed_prod}.
\end{proof}

We remark that in the case when $\alpha=0$, $\End_{\DCob{0}}([n])=0$ for all $n\geq 1$. In this case, we have $P_1=\kk[x_1]/(1)=0$ and hence $P_n=0$ for all $n\geq1$, while always $P_0=\kk$.

We conclude this section with the additional observation that for all $n\geq 1$, $[n]$ is a direct summand of $[n+1]$ and if $\alpha\neq 0$, then $[0]$ is a direct summand of $[1]$.
To this end, we consider the morphisms 
\begin{align}
\iota_n\colon [n]\to [n+1],\quad  \iota_n&=\id_{[n-1]}\otimes \sdel,&&
\pi_n\colon [n+1]\to [n], \quad \pi_n=\id_{[n]}\otimes \scap,
\end{align}
defined for $n\geq 1$, in the category $\DCob{\alpha}$. Independent of choice of $\alpha$, we see that
\begin{align*}
\pi_n\iota_n&=\id_{[n]}.
\end{align*}
This proves the following observation.

\begin{lemma}\label{lem:en}
The morphism 
\begin{align}
e_n:=\iota_n\pi_n
\end{align}
is an idempotent endomorphism of $[n+1]$, which corresponds to the cobordism
\begin{align}
e_n=\vcenter{\hbox{
\begin{tikzpicture}[
  tqft,
  every outgoing boundary component/.style={draw=black,fill=black!20},
  outgoing boundary component 3/.style={fill=none,draw=black},
  every incoming boundary component/.style={draw=black, fill=black!20},
  every lower boundary component/.style={draw},
  every upper boundary component/.style={draw},
  cobordism/.style={fill=black!25},
  cobordism edge/.style={draw},
  view from=incoming,
  cobordism height=2cm,
]
\begin{scope}[every node/.style={rotate=0}]
\pic[name=a,
  tqft,
  incoming boundary components=1,
  outgoing boundary components=1,
  at={(0,0)}
  ];
\pic[name=b,
  tqft,
  incoming boundary components=1,
  outgoing boundary components=1,
  at={(2,0)}
  ];
\pic[name=c,
  tqft,
  incoming boundary components=1,
  outgoing boundary components=1,
  at={(4,0)}
  ];
\draw node[label=$1'$](incoming boundary 1) at (0,0) {};
\draw node[label=$1$](outgoing boundary 1) at (0,-3) {};
\draw node[label=$2'$](incoming boundary 2) at (2,0) {};
\draw node[label=$2$](outgoing boundary 2) at (2,-3) {};
\draw node[label=$3'$](incoming boundary 3) at (4,0) {};
\draw node[label=$3$](outgoing boundary 3) at (4,-3) {};
\end{scope}
\end{tikzpicture}}}
\quad\,\ldots
\vcenter{\hbox{
\begin{tikzpicture}[
  tqft,
  every outgoing boundary component/.style={draw=black,fill=black!20},
  outgoing boundary component 3/.style={fill=none,draw=black},
  every incoming boundary component/.style={draw=black, fill=black!20},
  every lower boundary component/.style={draw},
  every upper boundary component/.style={draw},
  cobordism/.style={fill=black!25},
  cobordism edge/.style={draw},
  view from=incoming,
  cobordism height=2cm,
]
\begin{scope}[every node/.style={rotate=0}]
\pic[name=b,
  tqft,
  incoming boundary components=1,
  outgoing boundary components=1,
  at={(4,0)}
  ];
\pic[name=e,
  tqft,
  incoming boundary components=2,
  outgoing boundary components=1,
  at={(6,0)}
];
\pic[name=f,
  tqft,
  incoming boundary components=0,
  outgoing boundary components=1,
  at={(8,0)}
];
\draw node[label=$(n-1)'$](incoming boundary 3) at (4,0) {};
\draw node[label=$n-1$](outgoing boundary 3) at (4,-3) {};
\draw node[label=$n'$](incoming boundary 4) at (6,0) {};
\draw node[label=$n$](outgoing boundary 4) at (6,-3) {};
\draw node[label=$(n+1)'$](incoming boundary 5) at (8,0) {};
\draw node[label=$n+1$](outgoing boundary 5) at (8,-3) {};
\end{scope}
\end{tikzpicture}}},
\end{align}
Hence, for $n\geq 1$, $[n]=([n+1],e_n)$ is a direct summand of $[n+1]$ in $\DCob{\alpha}$.
\end{lemma}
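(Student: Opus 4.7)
The plan is to verify the three assertions of the lemma in order: the idempotency of $e_n$, the identification of $e_n$ with the displayed cobordism, and the resulting direct summand decomposition. The single nontrivial input is the Frobenius counit relation $(\id_{[1]} \otimes \scap) \circ \sdel = \id_{[1]}$ which holds in $\DCob{\alpha}$ because $[1]$ carries a special commutative Frobenius algebra structure by the universal property \Cref{prop:universal}.

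First I would compute, using the interchange law in the monoidal category $\DCob{\alpha}$,
\[
\pi_n \iota_n = (\id_{[n]} \otimes \scap) \circ (\id_{[n-1]} \otimes \sdel) = \id_{[n-1]} \otimes \bigl((\id_{[1]} \otimes \scap) \circ \sdel\bigr) = \id_{[n-1]} \otimes \id_{[1]} = \id_{[n]},
\]
where the second equality uses the interchange of tensor and composition and the third is the Frobenius counit identity recalled above. Idempotency of $e_n$ then follows from a one-line rewriting:
\[
e_n^2 = \iota_n\,(\pi_n\iota_n)\,\pi_n = \iota_n\,\pi_n = e_n.
\]

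Next, to identify $e_n$ as the drawn cobordism, I would apply the vertical stacking rule of \Cref{sec:cob} to compose the cobordism representing $\pi_n$ (which consists of $n$ cylinders together with a cap on the $(n{+}1)$-st incoming circle) with the cobordism representing $\iota_n$ (which consists of $n-1$ cylinders together with an $\sdel$ on the $n$-th incoming circle). The first $n-1$ strands remain cylinders, the $(n{+}1)$-st incoming circle is capped, and the $n$-th incoming circle passes through the cylinder in $\pi_n$ and then splits via $\sdel$ into the $n$-th and $(n{+}1)$-st outgoing circles, giving exactly the pictured cobordism.

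Finally, the direct summand statement is immediate from the Karoubian structure of $\DCob{\alpha}$. The relations $e_n \iota_n = \iota_n(\pi_n\iota_n) = \iota_n$ and $\pi_n e_n = (\pi_n\iota_n)\pi_n = \pi_n$ exhibit $\iota_n$ and $\pi_n$ as morphisms to and from $([n+1], e_n)$ in the Karoubian envelope, and together with $\pi_n\iota_n = \id_{[n]}$ and $\iota_n\pi_n = e_n$ they constitute mutually inverse isomorphisms $[n] \cong ([n+1], e_n)$. Complementing by the idempotent $\id_{[n+1]} - e_n$ yields the splitting $[n+1] \cong [n] \oplus ([n+1], \id_{[n+1]} - e_n)$. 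There is no substantive obstacle in this proof: everything reduces to the Frobenius counit axiom and the definition of the Karoubian envelope, the only mild bookkeeping being the explicit identification of the composite cobordism.
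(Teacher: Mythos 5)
Your proof is correct and follows essentially the same route as the paper, which simply records the counit identity $\pi_n\iota_n=\id_{[n]}$ (the gluing of a disk onto one leg of the pair of pants, valid already in $\Cob{2}$ by isotopy) and deduces idempotency of $e_n$ and the splitting from it. One small correction to your justification: the Frobenius algebra $[1]$ in $\DCob{\alpha}$ is not special in general ($\sm\sdel=\id_{[1]}$ holds only when $u_\alpha(t)=t-1$), and the counit relation you use is inherited from the cobordism category itself rather than obtained from \Cref{prop:universal}; since you only ever invoke the counit axiom, this does not affect the validity of the argument.
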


The following lemma completes this picture by viewing $[0]$ as a direct summand of $[1]$ for $\alpha\neq 0$.

\begin{lemma}\label{lemma:summands_of_one} If $\alpha\neq 0$, take $k$ minimal such that $\alpha_k\neq 0$ and consider the morphisms 
\begin{align}
\iota_0&:=\alpha_k^{-1}x^k\scup, & \pi_0&:= \scap.
\end{align}
Then $\pi_0\iota_0=\id_{[0]}$ and hence $e_0:=\iota_0\pi_0$ is an idempotent endomorphism of $[1]$. In particular, $[0]$ is a direct summand of $[1]$ in $\DCob{\alpha}$.
\end{lemma}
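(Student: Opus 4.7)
My plan is a direct computation using the defining relations of $\DCob{\alpha}$. The first (and essentially only nontrivial) step I would carry out is to verify that $\pi_0 \iota_0 = \id_{[0]}$. To do this, I would expand
\[
\pi_0 \iota_0 = \scap \circ (\alpha_k^{-1} x^k \scup) = \alpha_k^{-1}\, \scap\, x^k\, \scup = \alpha_k^{-1} s_k,
\]
where $s_k = \scap\, x^k\, \scup$ is the closed genus-$k$ surface introduced just before \Cref{def:DCob}. The defining relations of $\SCob{\alpha}$ (and hence of $\DCob{\alpha}$) in \Cref{def:DCob} include $s_k = \alpha_k\, \id_{[0]}$; since $\alpha_k \neq 0$ by the choice of $k$, the scalar $\alpha_k^{-1}$ exists in $\kk$, and we conclude $\pi_0 \iota_0 = \alpha_k^{-1} \alpha_k\, \id_{[0]} = \id_{[0]}$.

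From this, the remaining assertions are formal. For the idempotency of $e_0 = \iota_0 \pi_0$ I would simply compute $e_0^2 = \iota_0 (\pi_0 \iota_0) \pi_0 = \iota_0 \pi_0 = e_0$. For $[0]$ being a direct summand of $[1]$ in $\DCob{\alpha}$, the section--retraction pair $(\iota_0, \pi_0)$ realizes $[0]$ as $([1], e_0)$ in any additive and idempotent-complete category; this applies because $\DCob{\alpha}$ is defined as the Karoubian envelope of $\SCob{\alpha}^\oplus$.

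No real obstacle is expected: the entire argument relies only on the evaluation $s_k = \alpha_k\, \id_{[0]}$ and the invertibility of $\alpha_k$ in $\kk$. The minimality of $k$ plays no role beyond providing a canonical choice; the hypothesis $\alpha \neq 0$ is used only to guarantee that some $k$ with $\alpha_k \neq 0$ exists, making $\iota_0$ well-defined.
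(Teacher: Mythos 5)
Your proof is correct and is exactly the intended argument: the paper states this lemma without proof, and the only substantive point is the evaluation $\pi_0\iota_0=\alpha_k^{-1}\scap x^k\scup=\alpha_k^{-1}s_k=\id_{[0]}$ via the defining relation $s_k=\alpha_k\id_{[0]}$, with the idempotency of $e_0$ and the splitting $[0]\cong([1],e_0)$ following formally in the Karoubian category $\DCob{\alpha}$. Your remark that minimality of $k$ is inessential (any $k$ with $\alpha_k\neq 0$ works) is also accurate.
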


\subsection{The associated graded category of \texorpdfstring{$\DCob{\alpha}$}{DCob(alpha)}}
\label{sec:K0DCob}

In this subsection, $\kk$ is an arbitrary field.
We want to study the category $\DCob{\alpha}$ with the tools developed in \Cref{sec::Krull-Schmidt} and \Cref{sec::filtrations}. We first note that $\DCob{\alpha}$ is a Krull--Schmidt category by \Cref{lem::Krull-Schmidt}. By definition, the object $[1]\in\DCob{\alpha}$ is a tensor generator in the sense of \Cref{sec::tensor-generator}, so we can study $\DCob{\alpha}$ using a suitable filtration.

\begin{definition}\label{def::filtration-DCob} Let $\cD_n$ be the additive idempotent complete subcategory of $\DCob{\alpha}$ generated by the objects $[0],\dots,[n]$ for every $n\geq0$.
\end{definition}

Then $\cD_\bullet$ defines a filtration of $\DCob{\alpha}$ in the sense of \Cref{sec::filtrations}. In fact, since the object $[n-1]$ always occurs as a direct summand in the object $[n]$ as long as $\alpha\neq0$, as described in \Cref{sec::endo-of-n}, we can characterize $\cD_n$ alternatively as the additive idempotent complete subcategory generated by the object $[n]$ in this case.
The filtration $\cD_\bullet$ allows us to describe the graded Grothendieck ring of $\DCob{\alpha}$.
To this end, recall that $P_n = \kk[x_1,\ldots, x_{n}]/(u_\alpha(x_1), \ldots, u_\alpha(x_{n}))$.

\begin{definition}\label{definition:natural_alg_homs}
We define a family of $\kk$-algebra homomorphisms
\[
\mu_{ij}\colon
(P_i \rtimes \kk[S_{i}])
\otimes_{\kk}
(P_j \rtimes \kk[S_{j}])
\rightarrow
(P_{i+j} \rtimes \kk[S_{i+j}])
\]
using the natural embeddings 
\begin{align*}
\mathrm{emb}^P_l\colon P_i \hookrightarrow P_{i+j}:&~ p(x_1, \dots, x_i) \mapsto p(x_1, \dots, x_i),
\\
\mathrm{emb}^P_r\colon P_j \hookrightarrow P_{i+j}:&~ p(x_1, \dots, x_j) \mapsto p(x_{i + 1}, \dots, x_{i + j})
\end{align*}
as follows: let $p \in P_i$, $\sigma \in S_{i}$, $p'\in P_j$,
$\sigma' \in S_j$. Then
\begin{align*}
\mu_{ij}( p \otimes \sigma, p' \otimes \sigma' ) :=&
\big(\mathrm{emb}^P_l(p) \cdot \mathrm{emb}^P_r(p')\big) \otimes (\s,\s')
\\
=&
p(x_1,\dots,x_i)p'(x_{i+1},\dots,x_{i+j})\o (\s,\s')
,
\end{align*}
with the natural embedding $S_i\times S_j\to S_{i+j}$.
\end{definition}

\begin{definition}\label{definition:symu}
We define the $\kk$-linear Krull--Schmidt category (with a grading)
\[
\Symu{\alpha} := \bigoplus_{i \geq 0} \rfproj{(P_i \rtimes \kk[S_{i}])}.
\]
Moreover, we equip $\Symu{\alpha}$ with a tensor product
by applying \Cref{construction:graded_cat} to the natural $\kk$-algebra homomorphisms of \Cref{definition:natural_alg_homs}.
\end{definition}

\begin{theorem}\label{thm:KoDCob}
  We have an equivalence of monoidal categories with gradings
  \[
  \gr( \DCob{\alpha } ) \simeq \Symu{{\alpha}}.
  \]
  In particular, 
  $
  \gr( \Gr( \DCob{\alpha } ) ) \cong \Gr( \Symu{{\alpha}} )
  $
  as graded rings.
\end{theorem}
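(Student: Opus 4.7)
The plan is to derive this as a direct application of Proposition~\ref{prop:tensor-generator} to $\cC = \DCob{\alpha}$ with tensor generator $[1]$. By definition $\DCob{\alpha}$ is obtained from $\SCob{\alpha}$ by taking an additive closure followed by a Karoubian envelope, and every object $[n]$ equals $[1]^{\otimes n}$; hence $[1]$ is a tensor generator and the filtration $\cD_\bullet$ of Definition~\ref{def::filtration-DCob} is precisely the natural filtration associated to $[1]$ in Section~\ref{sec::tensor-generator}. The tensor unit $[0]$ has endomorphism ring $\kk$ because every closed-surface endomorphism of $[0]$ is collapsed to a scalar by the defining relations $s_i = \alpha_i \id_{[0]}$. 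I will pass silently to the strictification of $\DCob{\alpha}$, as permitted by the remark following Proposition~\ref{prop:tensor-generator}.

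To apply that proposition I must identify the tower of algebras $R_n := \End_{\cD_n/\cD_{n-1}}([n])$ together with the natural maps $\phi_{m,n}\colon R_m \otimes_\kk R_n \to R_{m+n}$ coming from the tensor product, and match them with Definition~\ref{definition:symu}. For the identification of $R_n$, observe that $\cI_{\cD_{n-1}}$ coincides with the ideal $\cI_{<n}$ of morphisms factoring through some $[m]$ with $m < n$: objects of $\cD_{n-1}$ are summands of direct sums of $[0],\dots,[n-1]$, so factoring through such an object is equivalent to factoring through a single $[m]$ with $m<n$. Consequently, $R_n \cong \End_{\DCob{\alpha}/\cI_{<n}}([n]) \cong P_n \rtimes \kk[S_n]$ by Proposition~\ref{prop::quotient-is-crossed-product}. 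To check that $\phi_{m,n}$ agrees with $\mu_{m,n}$ from Definition~\ref{definition:natural_alg_homs}, I trace through the disjoint-union interpretation of the monoidal product on cobordisms: the embedding $f \mapsto f \otimes \id_{[n]}$ sends the handle generator $x_i \in P_m$ to $x_i \in P_{m+n}$ and $\sigma \in S_m$ to $\sigma$ acting on the first $m$ strands, whereas $g \mapsto \id_{[m]} \otimes g$ sends $x_i \in P_n$ to $x_{m+i}$ and $\sigma' \in S_n$ to $\sigma'$ acting on the last $n$ strands. The images commute in $R_{m+n}$ and the resulting composition law on pure tensors reproduces exactly $\mu_{m,n}$.

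With both ingredients in place, Proposition~\ref{prop:tensor-generator} yields the equivalence
\[
\gr(\DCob{\alpha}) \;\simeq\; \bigoplus_{n \geq 0} \rfproj{(P_n \rtimes \kk[S_n])} \;=\; \Symu{\alpha}
\]
of monoidal Krull--Schmidt categories compatible with the gradings. The graded-ring isomorphism $\gr(\Gr(\DCob{\alpha})) \cong \Gr(\Symu{\alpha})$ then follows by combining this equivalence with Theorem~\ref{theorem:iso_graded_rings}. The main point to verify carefully is the compatibility of the tower multiplication with $\mu_{m,n}$, in particular the claim that the two families of handle operators $x_i$ coming from the left and right tensor factors really correspond to the disjoint sets of variables $x_1,\dots,x_m$ and $x_{m+1},\dots,x_{m+n}$ in $P_{m+n}$; this is a direct but slightly tedious translation between the topological and algebraic descriptions of the morphisms in $\DCob{\alpha}$.
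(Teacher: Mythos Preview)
Your proposal is correct and follows essentially the same approach as the paper: apply Proposition~\ref{prop:tensor-generator} to $\DCob{\alpha}$ with tensor generator $[1]$, after identifying the tower of algebras via Proposition~\ref{prop::quotient-is-crossed-product} and checking that the tensor-product-induced maps coincide with the $\mu_{m,n}$ of Definition~\ref{definition:natural_alg_homs}. Your version spells out more of the verifications (that $\End([0])=\kk$, that $\cI_{\cD_{n-1}}=\cI_{<n}$, and the tracking of handle operators and permutations under $\otimes$), but these are exactly the points the paper's proof leaves implicit; note also that the graded-ring isomorphism is already packaged into Proposition~\ref{prop:tensor-generator}, so invoking Theorem~\ref{theorem:iso_graded_rings} separately is not strictly necessary.
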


\begin{proof} By \Cref{prop::quotient-is-crossed-product},
  \begin{align*}
    \End_{\cD_n/\cD_{n-1}}([n]) \cong P_n\rtimes S_n
  \end{align*}
  for all $n \geq 0$, and these algebras form a tower with the algebra embeddings from \Cref{definition:natural_alg_homs} induced by the tensor product in $\DCob{\alpha}$. Hence, the assertion follows from  \Cref{prop:tensor-generator}.
\end{proof}

\begin{remark}
The idempotents $e_{n-1}$ constructed in \Cref{lem:en} for $n> 1$, and in \Cref{lemma:summands_of_one} for $\alpha\neq 0$ and $n=1$, imply that 
$$\End_{\DCob{\alpha}}([n])\cong e_{n-1} \End_{\DCob{\alpha}}([n]) e_{n-1} \oplus \End_{\DCob{\alpha}}([n])/(e_{n-1})\cong \End_{\DCob{\alpha}}([n-1])\oplus P_n\rtimes S_n.$$
\end{remark}

\subsection{Radicals of crossed products}
\label{sec::idempotents-crossed-product}

Let $A$ be a finite-dimensional $G$-equivariant
$\kk$-algebra for a finite group $G$ and a field $\kk$, i.e., $A$ is a $\kk$-algebra 
equipped with a left $G$-action
via $\kk$-algebra automorphisms.
Since its Jacobson radical $\rad(A)$ is preserved by all automorphisms, $G$ also acts on $B := \frac{A}{\rad(A)}$
via $\kk$-algebra automorphisms. Hence, we can form the crossed products
$R:=A\rtimes G$ and $B\rtimes G$, cf. \Cref{def:crossedproduct}.

\begin{lemma}\label{lemma:compute_rad}
Let $M$ be a finite-dimensional $R$-module.
Then $\rad(_{R}M)  \supseteq \rad(_{A}M)$, i.e.,
the radical of $M$ as an $A$-module
is contained in the radical of $M$
as an $R$-module.
\end{lemma}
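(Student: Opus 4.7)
The plan is to reduce the module-level containment $\rad(_{A}M) \subseteq \rad(_{R}M)$ to the ring-level containment $\rad(A) \subseteq \rad(R)$ inside $R = A \rtimes G$. Since $G$ is finite and $A$ is finite-dimensional, $R$ is a finite-dimensional $\kk$-algebra as well. Hence for any finite-dimensional $R$-module $M$ (which is in particular finite-dimensional as an $A$-module) we have the standard identities $\rad(_{A}M) = \rad(A) \cdot M$ and $\rad(_{R}M) = \rad(R) \cdot M$. Once $\rad(A) \subseteq \rad(R)$ is established, we obtain immediately
\[
\rad(_{A}M) = \rad(A) \cdot M \subseteq \rad(R) \cdot M = \rad(_{R}M),
\]
as desired.

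To prove $\rad(A) \subseteq \rad(R)$, I would consider the subset $I := \rad(A) \cdot R \subseteq R$. Clearly $I$ is a left ideal. The key ingredient is that $\rad(A)$ is $G$-stable, because the Jacobson radical of a finite-dimensional algebra is preserved by every $\kk$-algebra automorphism. Combining this with the defining relation $g \cdot b = g(b) \cdot g$ of the crossed product (for $g \in G$, $b \in A$), one obtains the inclusion $R \cdot \rad(A) \subseteq \rad(A) \cdot R$, so that $I$ is in fact a two-sided ideal of $R$.

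Next, the same commutation relation, applied repeatedly, gives $(\rad(A) \cdot R)^n \subseteq \rad(A)^n \cdot R$ for every $n \geq 1$. Since $\rad(A)$ is a nilpotent ideal of the finite-dimensional algebra $A$, the right-hand side vanishes for $n$ sufficiently large. Thus $I$ is a nilpotent two-sided ideal of $R$, and every such ideal lies in the Jacobson radical. Therefore $\rad(A) \subseteq I \subseteq \rad(R)$, which closes the argument.

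The proof is essentially bookkeeping; the only step that requires a moment of care is the two-sidedness of $I$, where one must interchange elements of $G$ with elements of $\rad(A)$ using the crossed-product relation and simultaneously invoke the $G$-stability of $\rad(A)$. Everything else follows from standard facts about radicals of finite-dimensional algebras.
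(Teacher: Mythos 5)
Your proof is correct, but it takes a genuinely different route from the paper. The paper argues directly at the level of maximal submodules: it lets $G$ act on the lattice of $A$-submodules of $M$, shows that any maximal $R$-submodule $U$ equals the $G$-stable intersection $\bigcap_{g\in G}(1\otimes g)T$ of maximal $A$-submodules, and concludes that every maximal $R$-submodule contains $\rad(_A M)$; notably this uses only finite-dimensionality of $M$ and finiteness of $G$. You instead prove the ring-level inclusion $\rad(A)\cdot R\subseteq\rad(R)$ first, by observing that $\rad(A)$ is $G$-stable, that the crossed-product relation gives $R\cdot\rad(A)\subseteq\rad(A)\cdot R$, and that $(\rad(A)\cdot R)^n\subseteq\rad(A)^n\cdot R=0$ for large $n$, so $\rad(A)\cdot R$ is a nilpotent two-sided ideal; then you descend to modules via $\rad(_A M)=\rad(A)M\subseteq\rad(R)M\subseteq\rad(_R M)$. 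This is exactly the content of the paper's \Cref{corollary:j_inclusion}, which the paper deduces \emph{from} the lemma; you reverse the logical order, proving the ideal inclusion independently (by nilpotency rather than by additivity of the module radical) and deriving the lemma from it. The trade-off is that your argument leans on $A$ being finite-dimensional (nilpotency of $\rad(A)$ and semisimplicity of $A/\rad(A)$ for the identity $\rad(M)=\rad(A)M$), which is harmless here since the paper assumes it, while the paper's lattice argument would survive without that hypothesis. One cosmetic slip: $I=\rad(A)\cdot R$ is \emph{clearly} a right ideal, not a left ideal; but since you go on to prove $R\cdot\rad(A)\subseteq\rad(A)\cdot R$ and hence two-sidedness, nothing is lost.
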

\begin{proof}
First, we observe that $G$ acts on the set of $A$-submodules of $M$:
if $N \leq_{A} M$, then
$(1 \otimes g)N$ is an $A$-module,
since $(a \otimes 1)(1 \otimes g)N = (1 \otimes g)(g^{-1}(a) \otimes 1)N \subseteq (1 \otimes g)N$ for all $g \in G$, $a \in A$.
Moreover, this action respects inclusions, and thus it respects maximal submodules.

Let $U \lneq_{R} M$ be a maximal $R$-submodule.
Since $M$ is a finite-dimensional $\kk$-vector space,
there exists a maximal $A$-submodule $T$ such that
$U \leq_{A} T \lneq_{A} M$.
But then, as $(1\otimes g)U=U$ for all $g\in G$, we have
$U \leq_{A} \bigcap_{g \in G} (1 \otimes g)T \lneq_{A} M$,
and $\bigcap_{g \in G} (1 \otimes g)T$ is an $A$-submodule stable under the action of $G$. Thus, $U = \bigcap_{g \in G} (1 \otimes g)T$.
We compute
\[
\rad(_{R}M) = \bigcap_{\substack{U \lneq_R M\\U \text{max.}}} U
= \bigcap_{\substack{U \lneq_R M\\U \text{max.}}} 
\bigcap_{\substack{U \leq T \lneq_A M\\T \text{max.}}} T \supseteq \rad(_{A}M). \qedhere
\]
\end{proof}

The canonical surjective morphism of $\kk$-algebras
$A \rightarrow B$
induces a surjective morphism between the crossed product rings
$\epsilon:R \rightarrow (B \rtimes G)$.

\begin{corollary}\label{corollary:j_inclusion}
We have $\rad(R) \supseteq \ker( \epsilon )$.
In particular, primitive idempotents (up to conjugation) in $R$
are in bijection with
primitive idempotents (up to conjugation) in $B \rtimes G$.
\end{corollary}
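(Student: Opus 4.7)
The plan is to first establish $\ker(\epsilon) \subseteq \rad(R)$ and then extract the bijection of primitive idempotent conjugacy classes from standard idempotent lifting theory. Note that $R$ is a finite-dimensional $\kk$-algebra (since $A$ is finite-dimensional and $G$ is finite), so every simple $R$-module $M$ is finite-dimensional over $\kk$ and hence over $A$. Since $M$ is simple, $\rad({}_R M) = 0$, so Lemma \ref{lemma:compute_rad} forces $\rad({}_A M) = 0$. By the standard theory of finite-dimensional algebras $\rad({}_A M) = \rad(A)\cdot M$, hence $\rad(A)\otimes 1 \subseteq \mathrm{Ann}_R(M)$.

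The next step is to recognise $\ker(\epsilon)$ as the two-sided ideal of $R$ generated by $\rad(A)\otimes 1$. This uses the $G$-invariance of $\rad(A)$: since $g$ acts on $A$ by $\kk$-algebra automorphisms we have $g(\rad(A)) = \rad(A)$, and the crossed product relation $(1\otimes g)(a\otimes 1) = g(a)\otimes g$ shows that the two-sided $R$-ideal generated by $\rad(A)\otimes 1$ is exactly $\rad(A)\otimes \kk[G] = \ker(\epsilon)$. Since $\mathrm{Ann}_R(M)$ is a two-sided ideal, the previous paragraph's inclusion upgrades to $\ker(\epsilon) \subseteq \mathrm{Ann}_R(M)$ for every simple $R$-module $M$. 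Intersecting over all such $M$ gives $\ker(\epsilon) \subseteq \rad(R)$, which is the first assertion.

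For the second assertion, $R$ is Artinian so $\rad(R)$ is nilpotent and hence so is $\ker(\epsilon)$. Standard idempotent lifting modulo a nilpotent two-sided ideal (see \cite{Lam}*{Theorem 21.28} and \cite{Lam}*{Proposition 21.25}) then yields a bijection between conjugacy classes of idempotents in $R$ and in $B \rtimes G$, preserving and reflecting primitivity. This gives the claimed bijection. No serious obstacle is expected: the only subtle point is identifying $\ker(\epsilon)$ as the two-sided $R$-ideal generated by $\rad(A)\otimes 1$, which is precisely where the $G$-equivariance of $\rad(A)$ inside $A$ enters the argument.
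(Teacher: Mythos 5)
Your argument is correct, and it hinges on the same key ingredient as the paper, namely \Cref{lemma:compute_rad}, but you deploy it differently. The paper applies that lemma once, to the single module $M = R$ itself: viewing $R$ as a left $A$-module one has $R \cong \bigoplus_{g \in G} A$, so $\rad(R) = \rad({}_R R) \supseteq \rad({}_A R) \cong \bigoplus_{g\in G}\rad(A) = \langle a \otimes g \mid a \in \rad(A),\, g \in G\rangle_\kk = \ker(\epsilon)$, and the inclusion is immediate. You instead apply the lemma to every simple left $R$-module $M$, deduce $\rad(A)\cdot M = 0$ from $\rad({}_A M) = \rad(A)M$, identify $\ker(\epsilon)$ as the two-sided ideal generated by $\rad(A)\otimes 1$ (which is where you correctly invoke the $G$-stability of $\rad(A)$), and then intersect annihilators. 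Both routes are sound; the paper's is shorter because taking $M=R$ makes the identification of $\rad({}_A R)$ with $\ker(\epsilon)$ a matter of inspection, whereas your route needs the extra standard facts $\rad({}_A M)=\rad(A)M$ and $\rad(R)=\bigcap_M \operatorname{Ann}_R(M)$. For the second assertion the paper is silent, implicitly using exactly what you spell out: $\ker(\epsilon)$ sits inside the nilpotent radical of the finite-dimensional algebra $R$, so idempotent lifting modulo a nilpotent ideal gives the bijection of conjugacy classes of primitive idempotents between $R$ and $R/\ker(\epsilon) = B \rtimes G$; making this explicit is a small improvement in completeness.
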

\begin{proof}
If we consider $R$ merely as a left $A$-module, we have $R \cong \bigoplus_{g \in G} A$.
Using \Cref{lemma:compute_rad} and the additivity of
the radical, we compute
\[
\rad(R) \supseteq \rad(_{A}R) \cong \bigoplus_{g \in G} \rad(A)
\]
and thus obtain the description
\[
\rad(_{A}R) = \langle a \otimes g \mid a \in \rad(A), g \in G \rangle_\kk
\]
which equals $\ker( \epsilon )$.
\end{proof}

\begin{corollary}\label{corollary:family_mod_rad}
    The family $\mu_{ij}$ of $\kk$-algebra homomorphisms of \Cref{definition:natural_alg_homs} gives rise to a well-defined family of $\kk$-algebra homomorphisms
    \[
    \overline{\mu_{ij}}\colon
    \left( \frac{P_i}{\rad( P_i )} \rtimes \kk[S_{i}] \right)
    \otimes_{\kk}
    \left( \frac{P_j}{\rad( P_j )} \rtimes \kk[S_{j}] \right)
    \rightarrow
    \left( \frac{P_{i+j}}{\rad( P_{i+j} )} \rtimes \kk[S_{{i+j}}] \right)
    \]
    for $i,j \geq 0$.
    In particular, the ideal $\mathcal{I} \subseteq \Symu{\alpha}$ spanned by the endomorphisms $\rad( P_i ) \subseteq P_i \rtimes \kk[S_i] = \End_{\Symu{\alpha}}([i])$ for all $i \geq 0$ is a tensor ideal. Moreover, we have an inclusion $\mathcal{I} \subseteq \rad( \Symu{\alpha} )$.
\end{corollary}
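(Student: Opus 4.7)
The proof plan is to reduce both claims to a single observation: the algebra embeddings $\mathrm{emb}^P_l$ and $\mathrm{emb}^P_r$ of \Cref{definition:natural_alg_homs} send Jacobson radicals into $\rad(P_{i+j})$. Since $P_n \cong P_1^{\otimes n}$ by \Cref{sec::endo-of-n}, the combined embedding realizes an isomorphism of commutative finite-dimensional $\kk$-algebras $P_i \otimes_\kk P_j \xrightarrow{\sim} P_{i+j}$, so \Cref{lemma:tensor_prod_of_locals} applies and gives $\rad(P_i) \otimes_\kk P_j + P_i \otimes_\kk \rad(P_j) \subseteq \rad(P_i \otimes_\kk P_j)$. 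Transported through the isomorphism, this yields $\mathrm{emb}^P_l(\rad(P_i)) \cup \mathrm{emb}^P_r(\rad(P_j)) \subseteq \rad(P_{i+j})$.

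From here, well-definedness of $\overline{\mu_{ij}}$ is immediate: the kernel of the surjection $(P_i \rtimes \kk[S_i]) \twoheadrightarrow ((P_i/\rad(P_i)) \rtimes \kk[S_i])$ is the $\kk$-span of $\{ p \otimes \sigma \mid p \in \rad(P_i), \sigma \in S_i\}$, and for a generator $(p \otimes \sigma) \otimes (p' \otimes \sigma')$ of the kernel of the source of $\overline{\mu_{ij}}$ with $p \in \rad(P_i)$ (or symmetrically $p' \in \rad(P_j)$), its image under $\mu_{ij}$ has first tensor factor $\mathrm{emb}^P_l(p) \cdot \mathrm{emb}^P_r(p') \in \rad(P_{i+j})$, hence vanishes in the target quotient. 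To see that $\mathcal{I}$ is a tensor ideal, it suffices to verify closure under tensoring a generating endomorphism $p \in \rad(P_i) \subseteq \End_{\Symu{\alpha}}([i])$ by $\id_{[j]}$ on either side; by \Cref{lemma:tp_in_terms_of_proj} and \Cref{construction:graded_cat} the resulting endomorphism of $[i+j]$ is $\mu_{ij}(p \otimes 1, 1 \otimes 1) = \mathrm{emb}^P_l(p) \otimes 1$ (respectively $\mathrm{emb}^P_r(p) \otimes 1$), which lies in $\rad(P_{i+j}) \subseteq \End_{\Symu{\alpha}}([i+j])$ and therefore generates an element of $\mathcal{I}$.

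Finally, the inclusion $\mathcal{I} \subseteq \rad(\Symu{\alpha})$ combines two earlier results: \Cref{lemma:characterize_rad} identifies $\rad_{\Symu{\alpha}}([i],[i])$ with $\rad(P_i \rtimes \kk[S_i])$, and \Cref{corollary:j_inclusion} places $\rad(P_i) \otimes_\kk \kk[S_i] = \ker(\epsilon_i) \subseteq \rad(P_i \rtimes \kk[S_i])$, so in particular the generators $\rad(P_i)$ of $\mathcal{I}$ all sit inside $\rad_{\Symu{\alpha}}$; since $\rad_{\Symu{\alpha}}$ is a categorical ideal, it contains $\mathcal{I}$. The only substantive input is the tensor-product radical inclusion of \Cref{lemma:tensor_prod_of_locals}; the main conceptual hurdle—and the cleanest presentation of everything else—is keeping the identification $P_{i+j} \cong P_i \otimes_\kk P_j$ persistently in view so that both $\overline{\mu_{ij}}$ and the action of $\mu_{ij}$ on radical endomorphisms can be read off transparently from that tensor factorization.
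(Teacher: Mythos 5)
Your proof is correct and follows essentially the same route as the paper: everything is reduced to the inclusions $\mathrm{emb}^P_l(\rad(P_i)),\,\mathrm{emb}^P_r(\rad(P_j))\subseteq\rad(P_{i+j})$, from which well-definedness of $\overline{\mu_{ij}}$ and the tensor-ideal property follow, and the final containment $\mathcal{I}\subseteq\rad(\Symu{\alpha})$ is obtained exactly as in the paper via \Cref{corollary:j_inclusion}. The only (harmless) difference is how that radical inclusion is justified: you transport it through the isomorphism $P_i\otimes_\kk P_j\cong P_{i+j}$ using \Cref{lemma:tensor_prod_of_locals}, whereas the paper simply observes that homomorphisms between finite-dimensional commutative $\kk$-algebras respect radicals.
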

\begin{proof}
In the notation of \Cref{definition:natural_alg_homs},
we have $\mathrm{emb}^P_l( \rad( P_i ) ) \subseteq \rad( P_{i+j} )$
and $\mathrm{emb}^P_r( \rad( P_j ) ) \subseteq \rad( P_{i+j} )$
for all $i,j \geq 0$ since homomorphisms between finite dimensional commutative $\kk$-algebras respect radicals.
Thus, we have 
\begin{align*}
    \mu_{ij}( \rad( P_i ) \otimes 1, 1 ) &= \mathrm{emb}^P_l( \rad( P_i ) ) \otimes 1 \\
    &\subseteq \rad( P_{i+j} ) \rtimes \kk[S_{i+j}]
\end{align*}
which gives the well-definedness of the family $\overline{\mu_{ij}}$ and the fact that $\mathcal{I}$ is a tensor ideal.
Last, by \Cref{corollary:j_inclusion}, we have
$\rad( P_{i+j} ) \rtimes \kk[S_{i+j}] \subseteq \rad( P_{i+j} \rtimes \kk[S_{i+j}])$ and thus the desired inclusion $\mathcal{I} \subseteq \rad( \Symu{\alpha} )$.
\end{proof}

\begin{corollary}\label{cor:removemulti}
Let $u_{\alpha}'$ be the product of all distinct irreducible factors of $u_{\alpha}$ (in particular, $u_{\alpha}'$ has no multiplicities). Then we have an isomorphism of graded rings
\[
\Gr( \Symu{\alpha} ) \cong \Gr( \cS_{u_{\alpha}'} )
\]
\end{corollary}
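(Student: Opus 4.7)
The plan is to reduce both sides of the claimed isomorphism modulo suitable nilpotent ideals and then match the resulting Grothendieck rings. For the left-hand side, Corollary \ref{corollary:family_mod_rad} provides a tensor ideal $\mathcal{I} \subseteq \Symu{\alpha}$, spanned by $\rad(P_i) \subseteq P_i \rtimes \kk[S_i]$, that lies inside $\rad(\Symu{\alpha})$; hence Corollary \ref{cor:rad-quotient-K0} yields a graded ring isomorphism $\Gr(\Symu{\alpha}) \cong \Gr(\Symu{\alpha}/\mathcal{I})$. Because $\Symu{\alpha} = \bigoplus_{i\geq 0} \rfproj{(P_i \rtimes \kk[S_i])}$ has no nonzero morphisms between components with distinct indices, and because $\rad(P_n)$ is $S_n$-invariant, the categorical ideal $\mathcal{I}$ at the component of index $n$ equals $\rad(P_n) \rtimes \kk[S_n]$, so the quotient has endomorphism algebra $(P_n/\rad(P_n)) \rtimes \kk[S_n]$. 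Applying the analogous construction with $u_\alpha$ replaced by $u_\alpha'$ gives $\Gr(\cS_{u_\alpha'}) \cong \Gr(\cS_{u_\alpha'}/\mathcal{I}')$ with corresponding endomorphism algebras $(P'_n/\rad(P'_n)) \rtimes \kk[S_n]$, where $P'_n := \kk[x_1,\ldots,x_n]/(u_\alpha'(x_1),\ldots,u_\alpha'(x_n))$.

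The heart of the argument is the identification $P_n/\rad(P_n) \cong P'_n/\rad(P'_n)$. Writing $u_\alpha = \prod p_i^{k_i}$ with distinct irreducibles $p_i$, so that $u_\alpha' = \prod p_i$, the polynomial $u_\alpha$ divides $(u_\alpha')^K$ for $K = \max_i k_i$. In particular $u_\alpha'(x_j)^K = 0$ in $P_n$ for each $j$, so the commuting generators of the kernel of the canonical surjection $P_n \twoheadrightarrow P'_n$ are nilpotent, making this kernel a nilpotent ideal. A surjection of finite-dimensional commutative $\kk$-algebras with nilpotent kernel induces an isomorphism on semisimple quotients; hence $P_n/\rad(P_n) \cong P'_n/\rad(P'_n)$, compatibly with the $S_n$-action.

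To finish, I will upgrade these level-wise isomorphisms to an equivalence of graded $\kk$-linear monoidal categories $\Symu{\alpha}/\mathcal{I} \simeq \cS_{u_\alpha'}/\mathcal{I}'$: the induction products on both sides are built from the maps $\overline{\mu_{ij}}$ of Corollary \ref{corollary:family_mod_rad} (and their primed analogues), which in turn are induced from the embeddings $\mathrm{emb}^P_l$, $\mathrm{emb}^P_r$ by the same formula in the two settings. These embeddings commute with the quotient maps $P_n \twoheadrightarrow P'_n$ and $P_n \twoheadrightarrow P_n/\rad(P_n)$, so passing to semisimple quotients identifies the two towers of algebras. Combining this equivalence with the two reductions above and \Cref{theorem:iso_graded_rings} (in the form of \Cref{prop:tensor-generator}) yields the desired isomorphism $\Gr(\Symu{\alpha}) \cong \Gr(\cS_{u_\alpha'})$ of graded rings.

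The main obstacle I expect is book-keeping rather than mathematics: ensuring that the two radical reductions and the identification of semisimple quotients respect the tensor product structure at each step. The abelian-group isomorphism follows almost directly from \Cref{cor:rad-quotient-K0} together with the semisimple-quotient isomorphism, but verifying multiplicativity requires tracing the induction maps $\mu_{ij}$ through the quotients and checking the relevant commutative diagrams.
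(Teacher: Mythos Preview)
Your approach is essentially the same as the paper's: use the tensor ideal $\mathcal{I}\subseteq\rad(\Symu{\alpha})$ from \Cref{corollary:family_mod_rad}, apply \Cref{cor:rad-quotient-K0}, and identify the quotient with $\cS_{u_\alpha'}$ via $P_i/\rad(P_i)\cong P_i'$. The paper is slightly more direct in that it asserts $P_i/\rad(P_i)\cong P_i'$ (rather than $P_i'/\rad(P_i')$) and hence needs no second reduction on the $\cS_{u_\alpha'}$ side; your symmetric version with $\mathcal{I}'$ is a harmless extra step (and is in fact the cleaner formulation when $u_\alpha'$ has inseparable factors, since then $P_i'$ itself need not be reduced).
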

\begin{proof}
If we set $P_i = \kk[x_1,\ldots, x_{i}]/(u_\alpha(x_1), \ldots u_\alpha(x_{i}))$ and $P'_i = \kk[x_1,\ldots, x_{i}]/(u_\alpha'(x_1), \ldots u_\alpha'(x_{i}))$,
then 
\[
P_i/\rad( P_i ) \cong P_i'.
\]
Thus, if $\mathcal{I}$ denotes the monoidal ideal of \Cref{corollary:family_mod_rad}, then
\[
\Symu{\alpha}/\mathcal{I} \simeq \cS_{u_{\alpha}'}
\]
as monoidal categories, which gives an isomorphism of graded rings
\[
\Gr( \Symu{\alpha}/\mathcal{I} ) \simeq \Gr( \cS_{u_{\alpha}'} ).
\]
Since $\mathcal{I} \subseteq \rad( \Symu{\alpha} )$ by \Cref{corollary:family_mod_rad}, the claim now follows from \Cref{cor:rad-quotient-K0}.
\end{proof}

\subsection{Idempotents of crossed products}

Let $\Mg$ be a finite left $G$-set.
Then $G$ acts from the left on the $\kk$-algebra
$\kk[\Mg] \cong \bigoplus_{u \in \Mg} \kk$
whose multiplication is given componentwise.
We write $T_G(\Mg)$ for the translation groupoid of the left $G$-set $\Mg$,
i.e., it denotes the following category:
objects in $T_G(\Mg)$ are given by elements in $\Mg$, and $\Hom_{T_G(\Mg)}(u,v) = \{ g \in G \mid gu = v \}$ for $u,v \in \Mg$.
We denote the category of $\kk$-linear functors from $T_G(\Mg)$ to the category $\lvec{\kk}$ of finitely dimensional $\kk$-vector spaces by $\lfmod{T_G(\Mg)}$.

\begin{lemma}\label{lemma:equiv_of_cats}
There is an equivalence of $\kk$-linear categories
\[
\lfmod{(\kk[\Mg] \rtimes G)} \simeq \lfmod{T_G(\Mg)}
\]
where $\lfmod{(\kk[\Mg] \rtimes G)}$ denotes
the category of finite-dimensional $\kk[\Mg] \rtimes G$-modules.
\end{lemma}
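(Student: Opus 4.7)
The plan is to exploit the fact that $\kk[\Mg]$ is a commutative semisimple $\kk$-algebra with canonical primitive orthogonal idempotents $e_u$ (one for each $u \in \Mg$), which are permuted by $G$ according to $g \cdot e_u = e_{gu}$. This allows one to decompose any $\kk[\Mg] \rtimes G$-module along $\Mg$, and the $G$-action on the summands then organizes itself into a functor on the translation groupoid.

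More precisely, given $M \in \lfmod{(\kk[\Mg] \rtimes G)}$, first I would set $M_u := e_u M$, so that $M = \bigoplus_{u \in \Mg} M_u$ as $\kk[\Mg]$-modules. The key observation is that for $g \in G$ the operator $(1 \otimes g)$ satisfies $(1 \otimes g)(e_u \otimes 1) = (e_{gu} \otimes 1)(1 \otimes g)$ inside $\kk[\Mg] \rtimes G$, so $(1 \otimes g)$ restricts to a $\kk$-linear isomorphism $M_u \to M_{gu}$. Define the functor
\[
\Phi(M)\colon T_G(\Mg) \longrightarrow \lvec{\kk}, \qquad u \longmapsto M_u, \qquad (g\colon u \to gu) \longmapsto (1 \otimes g)|_{M_u}.
\]
Functoriality is immediate from $(1 \otimes gh) = (1 \otimes g)(1 \otimes h)$ in the crossed product. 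A morphism $\phi\colon M \to N$ of $\kk[\Mg] \rtimes G$-modules, being $\kk[\Mg]$-linear, decomposes into components $\phi_u\colon M_u \to N_u$, and being $\kk[G]$-linear says exactly that these components intertwine the maps induced by $g\colon u\to gu$; this is the data of a natural transformation $\Phi(M) \to \Phi(N)$.

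For the inverse construction, given $F\in \lfmod{T_G(\Mg)}$, I would set $\Psi(F) := \bigoplus_{u \in \Mg} F(u)$ with $e_u$ acting as the projection onto $F(u)$ and with $(1 \otimes g)$ acting as $F(g\colon u \to gu)$ on the $u$-summand. The relation $(e_u \otimes 1)(1 \otimes g) = (1 \otimes g)(e_{g^{-1}u} \otimes 1)$ needed in $\kk[\Mg]\rtimes G$ is immediate from this definition, and associativity of the $G$-action follows from functoriality of $F$, so $\Psi(F)$ is well-defined as a module. A routine check confirms $\Psi \circ \Phi \cong \id$ and $\Phi \circ \Psi \cong \id$, giving the claimed equivalence.

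There is no real obstacle here; the only point that requires care is verifying the compatibility $(1 \otimes g)(e_u \otimes 1) = (e_{gu} \otimes 1)(1 \otimes g)$ in the crossed product, which is exactly the defining relation of $\kk[\Mg] \rtimes G$ using $g(e_u) = e_{gu}$. Everything else is bookkeeping that could alternatively be phrased as an algebra isomorphism $\kk[\Mg] \rtimes G \cong \kk[T_G(\Mg)]$ with the path algebra of the translation groupoid, identifying $e_u \otimes g$ with the morphism $g\colon g^{-1}u \to u$, after which the equivalence is the standard one between modules over a finite groupoid algebra and $\kk$-linear representations of the groupoid.
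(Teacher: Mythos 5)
Your proposal is correct and follows essentially the same route as the paper: decompose a module along the idempotents $e_u$ (the elements $u\in\Mg$ in $\kk[\Mg]$), use the crossed-product relation $(1\otimes g)(e_u\otimes 1)=(e_{gu}\otimes 1)(1\otimes g)$ to turn the $G$-action into a functor on $T_G(\Mg)$, and invert by assembling $\bigoplus_{u\in\Mg}F(u)$ with the evident actions. The paper's proof is exactly this pair of mutually inverse constructions, so there is nothing to add.
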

\begin{proof}
Let $F: T_G(\Mg) \rightarrow \lvec{\kk}$ be a functor in $\lfmod{T_G(\Mg)}$.
Set $M := \bigoplus_{u \in \Mg}F(u)$.
Then $M$ can be regarded as a left $(\kk[\Mg] \rtimes G)$-module where $u \in \Mg$
acts as the identity on $F(u)$ and as zero on $F(v)$ for $v \neq u$,
and $g \in G$ acts via $F(g): F(u) \rightarrow F(gu)$.

Conversely, let $M$ be a finite-dimensional left $(\kk[\Mg] \rtimes G)$-module.
Then we define a functor $F: T_G(\Mg) \rightarrow \lvec{\kk}$
by setting $F(u) := (u \otimes 1)M$ for $u \in \Mg$,
and for $g: u \rightarrow v$ a morphism in $T_G(\Mg)$,
we set $F(g): (u \otimes 1)M \rightarrow (v \otimes 1)M: x \mapsto (1 \otimes g)x$.
This is well-defined since for $m \in M$,
we have $(1 \otimes g)(u \otimes 1)m = (v \otimes 1)(1 \otimes g)m$
for all $m \in M$.
These two constructions define mutually inverse $\kk$-linear functors.
\end{proof}

\begin{corollary}\label{corollary:equiv_of_cats} Let $\Mgr$ be a set of representatives
of the orbits of the $G$-set $\Mg$.
Then 
$$
\lfmod{(\kk[\Mg] \rtimes G)} \simeq \bigoplus_{v\in \Mgr} \Rep_\kk(G_v)
$$
where $G_v\subset G$ denotes the stabilizer of $v$.
\end{corollary}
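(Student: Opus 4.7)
The plan is to reduce to \Cref{lemma:equiv_of_cats} and then exploit the groupoid structure of $T_G(\Mg)$. By that lemma, it suffices to establish an equivalence of $\kk$-linear categories
\[
\lfmod{T_G(\Mg)} \;\simeq\; \bigoplus_{v \in \Mgr} \Rep_\kk(G_v).
\]

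First I would observe that $T_G(\Mg)$ is a groupoid (every arrow $g\colon u \to v$ has inverse $g^{-1}\colon v \to u$) whose connected components are precisely the $G$-orbits in $\Mg$: two objects $u, v$ are connected by a morphism if and only if they lie in the same orbit. Hence $T_G(\Mg)$ decomposes as a disjoint union of full subgroupoids
\[
T_G(\Mg) \;=\; \bigsqcup_{v \in \Mgr} T_G(Gv),
\]
where $T_G(Gv)$ is the translation groupoid of the orbit through $v$. Since a $\kk$-linear functor out of a disjoint union of categories is determined by its restrictions to each component and these assemble into a direct sum in the target, we obtain
\[
\lfmod{T_G(\Mg)} \;\simeq\; \bigoplus_{v \in \Mgr} \lfmod{T_G(Gv)}.
\]

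Next, for each fixed $v \in \Mgr$ I would exhibit an equivalence $T_G(Gv) \simeq BG_v$, where $BG_v$ denotes the one-object groupoid with endomorphism group $G_v$. Indeed, the inclusion $BG_v \hookrightarrow T_G(Gv)$ sending the unique object to $v$ and $g \in G_v$ to itself is fully faithful (by the definition of the stabilizer) and essentially surjective (every object of $T_G(Gv)$ is isomorphic to $v$ within $T_G(Gv)$, via any group element moving it to $v$). Hence it is an equivalence of groupoids, inducing an equivalence
\[
\lfmod{T_G(Gv)} \;\simeq\; \lfmod{BG_v} \;=\; \Rep_\kk(G_v).
\]

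Combining these two equivalences yields the desired decomposition. The only subtle point is verifying that the assignment $u \mapsto F(u)$ for $u$ ranging over a single orbit really is determined, up to canonical isomorphism, by its value at the chosen representative $v$ together with the $G_v$-action; this is exactly the content of the equivalence $T_G(Gv) \simeq BG_v$ and requires no hard calculation beyond checking the definitions. I do not anticipate any serious obstacle: all steps are formal consequences of the groupoid structure.
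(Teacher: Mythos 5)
Your argument is correct and is exactly the route the paper intends: the corollary is stated without proof as an immediate consequence of \Cref{lemma:equiv_of_cats}, the decomposition of the translation groupoid $T_G(\Mg)$ into its connected components (the orbits), and the equivalence of each orbit component with the one-object groupoid on the stabilizer $G_v$ (finiteness of $\Mg$ ensures the direct sum decomposition is unproblematic). No gaps.
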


We denote the set of isomorphism classes of
irreducible objects in $\Rep_{\kk}(G)$ by $\Xk{G}$.

\begin{corollary}\label{corollary:abstract_classification}
Let $A$ be a finite-dimensional and commutative $G$-equivariant $\kk$-algebra
for a field $\kk$.
Let $\Mg$ be the left $G$-set of 
central primitive idempotents
of $B := \frac{A}{\rad(A)}$,
and let $\Mgr$ be a set of representatives
of its orbits.
If $B = \kk[\Mg]$,
then the primitive idempotents in $R := A \rtimes G$
up to conjugation
are in (constructive) bijection with the set
\[
\bigsqcup_{v \in \Mgr} \Xk{G_v}
\]
where $G_v$ denotes the stabilizer of $v$.
\end{corollary}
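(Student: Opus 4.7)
The plan is to obtain the bijection by composing three (constructive) bijections provided by the preceding results. First, let $\epsilon\colon R = A \rtimes G \twoheadrightarrow \kk[\Mg] \rtimes G = B \rtimes G$ be the surjection of $\kk$-algebras induced by the canonical projection $A \twoheadrightarrow B$. By \Cref{corollary:j_inclusion}, the kernel of $\epsilon$ lies in $\rad(R)$ and is in particular nil, so $\epsilon$ induces a bijection between conjugacy classes of primitive idempotents in $R$ and in $\kk[\Mg] \rtimes G$ via the standard idempotent-lifting theorem modulo a nil ideal.

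Next, I would use that for any finite-dimensional $\kk$-algebra $S$, the assignments $e \mapsto Se$ and $P \mapsto P/\rad(P)$ yield bijections from conjugacy classes of primitive idempotents in $S$, to isomorphism classes of indecomposable projective $S$-modules, and from there to isomorphism classes of simple $S$-modules. Applying this to $S = \kk[\Mg] \rtimes G$ identifies conjugacy classes of primitive idempotents of $\kk[\Mg] \rtimes G$ with isomorphism classes of simple $(\kk[\Mg] \rtimes G)$-modules.

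Finally, I would invoke \Cref{corollary:equiv_of_cats}, which provides an equivalence
$$\lfmod{(\kk[\Mg] \rtimes G)} \simeq \bigoplus_{v \in \Mgr} \Rep_\kk(G_v).$$
This equivalence restricts to a bijection on isomorphism classes of simple objects; a simple object on the right lies in a unique summand $\Rep_\kk(G_v)$ and is represented by an element of $\Xk{G_v}$. Composing the three bijections yields the claimed parametrization.

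The work is essentially bookkeeping once the preceding lemmas are in place; the main conceptual point to keep in mind is that although $\kk[\Mg] \rtimes G$ need not be semisimple in modular characteristic, the bijection between conjugacy classes of primitive idempotents and isomorphism classes of simple modules still holds in the finite-dimensional setting via projective covers. Constructivity is preserved throughout: given $[V] \in \Xk{G_v}$, one forms the corresponding simple $(\kk[\Mg] \rtimes G)$-module via the inverse of the equivalence in \Cref{lemma:equiv_of_cats}, extracts a primitive idempotent from its projective cover in $\kk[\Mg] \rtimes G$, and lifts it through $\epsilon$ to $A \rtimes G$ by the standard algorithm for lifting modulo a nil ideal.
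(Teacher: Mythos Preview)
Your proof is correct and follows essentially the same route as the paper: first reduce from $A \rtimes G$ to $\kk[\Mg] \rtimes G$ via \Cref{corollary:j_inclusion}, then pass from primitive idempotents to simples (through indecomposable projectives), and finally invoke \Cref{corollary:equiv_of_cats} to identify these with $\bigsqcup_{v \in \Mgr} \Xk{G_v}$. The paper's version is terser, but the added justification you give (nilpotence of $\ker\epsilon$, the projective-cover bijection in possibly modular characteristic, and the explicit recipe for constructivity) only makes the argument more self-contained.
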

\begin{proof}
By \Cref{corollary:j_inclusion},
the primitive idempotents (up to conjugation) in $R$
are in bijection with
the primitive idempotents (up to conjugation)
in $\kk[\Mg] \rtimes G$,
which in turn are in bijection with the indecomposable projectives in 
$\bigoplus_{v\in \Mgr} \Rep_\kk(G_v)$
(and thus with the irreducibles)
by \Cref{corollary:equiv_of_cats}.
\end{proof}

To make the construction of the idempotents in \Cref{corollary:abstract_classification} concrete, we first note that the orbit decomposition $\Omega=\bigsqcup_{v\in\Omega'} Gv$ yields a direct sum decomposition $\kk[\Omega]\rtimes G\cong \bigoplus_{v\in\Omega'} \kk[Gv]\rtimes G$, and that $\kk[G_v]\to v\o \kk[G_v]\subset \kk[Gv]\rtimes G, h\mapsto v\otimes h,$ is an algebra embedding. So we have
$$
\bigoplus_{v\in\Omega'} \kk[G_v]
\hookrightarrow \bigoplus_{v\in\Omega'} \kk[Gv]\rtimes G
\cong \kk[\Omega]\rtimes G
.
$$

\begin{lemma} For any $v\in\Omega'$, $v\o 1\in\kk[Gv]\rtimes G$ is a full idempotent realizing the Morita equivalence of $\kk[Gv]\rtimes G$ and its subalgebra $v\o\kk[G_v]\cong \kk[G_v]$.
\end{lemma}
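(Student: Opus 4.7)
The plan is to verify the three defining properties of a full idempotent realizing a Morita equivalence: (i) $v \otimes 1$ is an idempotent, (ii) it is full, i.e.\ the two-sided ideal it generates is all of $\kk[Gv] \rtimes G$, and (iii) the corner algebra $(v \otimes 1)(\kk[Gv] \rtimes G)(v \otimes 1)$ is $v \otimes \kk[G_v]$. Once these are in place, the Morita equivalence of $\kk[Gv] \rtimes G$ with $v \otimes \kk[G_v] \cong \kk[G_v]$ follows from the standard fact that a full idempotent $e$ in a ring $R$ induces a Morita equivalence between $R$ and $eRe$ via the progenerator bimodule $Re$ (see, e.g., \cite{Lam}*{Chapter~18}).

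For (i), using the multiplication of the crossed product (\Cref{def:crossedproduct}) and the fact that $1 \in G$ acts trivially, $(v \otimes 1)(v \otimes 1) = v \cdot 1(v) \otimes 1 = v^2 \otimes 1 = v \otimes 1$ since $v$ is an idempotent of $\kk[Gv] = \bigoplus_{w \in Gv} \kk w$. For (iii), a general element of $\kk[Gv] \rtimes G$ is a $\kk$-linear combination of $w \otimes g$ with $w \in Gv$ and $g \in G$, and
\[
(v \otimes 1)(w \otimes g)(v \otimes 1) = (v \cdot w) \otimes g \cdot (v \otimes 1) = \delta_{v,w} \, v \cdot g(v) \otimes g .
\]
Since $v \cdot g(v) = \delta_{v, g(v)} v$ is nonzero precisely when $g \in G_v$, one reads off $(v \otimes 1)(\kk[Gv] \rtimes G)(v \otimes 1) = v \otimes \kk[G_v]$, which is isomorphic to $\kk[G_v]$ as a $\kk$-algebra.

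For (ii), the key observation is the conjugation identity $(1 \otimes h)(v \otimes 1)(1 \otimes h^{-1}) = h(v) \otimes 1 = (hv) \otimes 1$ for any $h \in G$. Letting $h$ range over a set of coset representatives of $G_v$ in $G$, the images $hv$ run bijectively through the orbit $Gv$, so summing yields $\sum_{w \in Gv} w \otimes 1 = 1 \otimes 1$ inside the two-sided ideal generated by $v \otimes 1$. Hence that ideal contains the unit, proving fullness.

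I do not expect a genuine obstacle here: the only subtlety is keeping the crossed-product multiplication straight (the $G$-action twists one of the factors), and ensuring that the Morita criterion is invoked in a form that yields the specific Morita equivalence claimed, namely via the bimodule $(\kk[Gv] \rtimes G)(v \otimes 1)$ regarded as a $(\kk[Gv] \rtimes G, \kk[G_v])$-bimodule. Both points are handled by direct computation above.
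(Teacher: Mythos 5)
Your proposal is correct and follows essentially the same route as the paper, which likewise reduces the claim to checking that $e=v\o 1$ is idempotent, that $ReR=R$, and that $eRe=v\o\kk[G_v]$ using the componentwise multiplication in $\kk[\Omega]$; your explicit conjugation argument $\left(1\o h\right)(v\o 1)\left(1\o h^{-1}\right)=h(v)\o 1$ summed over coset representatives is a clean way of verifying the fullness the paper leaves to the reader.
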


\begin{proof} It can be checked (using the componentwise multiplication in $\kk[\Omega]$) that with $e:=v\o 1$ and $R:=\kk[Gv]\rtimes G$, $e$ is idempotent, $ReR=R$, and $eRe = v\o\kk[G_v]$.
\end{proof}

Note that for $v\in \Omega'$ and $v'\in Gv$, the idempotents $v\o1$ and $v' \o 1$, and the subalgebras $v\o\kk[G_v]$ and $v'\o\kk[G_{v'}]$ from the lemma are conjugate.

\begin{corollary}\label{cor::form-of-idempotents} In particular, the primitive idempotents in $\kk[\Omega]\rtimes G$ up to conjugacy are given exactly by the elements $v\o e$, for $v\in\Omega'$ and $e$ ranging over a complete set of primitive idempotents (up to conjugation) in $\kk[G_v]$.
\end{corollary}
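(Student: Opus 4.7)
The plan is to combine the orbit decomposition of $\Omega$ with the Morita equivalence established in the preceding lemma, thereby transporting the classification of primitive idempotents from the small algebras $\kk[G_v]$ to the full crossed product $\kk[\Omega]\rtimes G$.

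First, I will use the orbit decomposition $\Omega=\bigsqcup_{v\in\Omega'}Gv$, which yields a direct product decomposition
\[
\kk[\Omega]\rtimes G \;\cong\; \bigoplus_{v\in\Omega'}\,\kk[Gv]\rtimes G
\]
as already noted in the excerpt. Because the unit decomposes as a sum of orthogonal central idempotents $\sum_{v\in\Omega'}\bigl(\sum_{w\in Gv}w\bigr)\otimes 1$, any primitive idempotent of the direct product must lie entirely in a single summand, and conjugation preserves these central summands. Hence the task reduces to classifying primitive idempotents in each $\kk[Gv]\rtimes G$ up to conjugation, and checking at the end that idempotents coming from distinct orbit summands are never conjugate.

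Next, I will fix an orbit $Gv$ and apply the preceding lemma, which asserts that $e:=v\otimes 1$ is a full idempotent in $R:=\kk[Gv]\rtimes G$ with $eRe = v\otimes \kk[G_v]\cong \kk[G_v]$. Since $ReR=R$, the functor $M\mapsto eM$ is a Morita equivalence between $R\text{-mod}$ and $\kk[G_v]\text{-mod}$. Under this equivalence, indecomposable projectives correspond to indecomposable projectives, hence primitive idempotents of $R$ (up to conjugation in $R$) correspond bijectively to primitive idempotents of $eRe$ (up to conjugation in $eRe$), with the correspondence given by the inclusion $eRe\hookrightarrow R$, $f\mapsto f$. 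This shows that a complete system of primitive idempotents of $\kk[Gv]\rtimes G$ up to conjugation is given precisely by the elements $v\otimes e'$, with $e'$ ranging over a complete system of primitive idempotents of $\kk[G_v]$ up to conjugation.

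Reassembling across orbit representatives $v\in\Omega'$ then gives the corollary, provided we verify the closing remark that $v\otimes e'$ and $w\otimes f'$ are not conjugate when $v\neq w$: this is immediate because they lie in distinct orthogonal central summands of $\kk[\Omega]\rtimes G$, which are preserved by conjugation. The main technical point to justify is the standard fact that a full idempotent $e\in R$ induces a bijection between conjugacy classes of primitive idempotents in $eRe$ and in $R$; everything else is bookkeeping with the orbit decomposition. No serious obstacle is expected, since this is the concrete unpacking of \Cref{corollary:abstract_classification} together with the Morita lemma just proved.
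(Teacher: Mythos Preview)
Your proposal is correct and follows exactly the paper's intended approach: the corollary is stated as an immediate consequence of the orbit decomposition $\kk[\Omega]\rtimes G\cong\bigoplus_{v\in\Omega'}\kk[Gv]\rtimes G$ together with the preceding Morita lemma identifying $\kk[Gv]\rtimes G$ with $\kk[G_v]$ via the full idempotent $v\otimes 1$. Your write-up simply unpacks this deduction carefully, including the standard fact that a full idempotent induces a bijection on conjugacy classes of primitive idempotents; nothing is missing.
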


\subsection{Idempotents of \texorpdfstring{$P_n\rtimes \kk[S_n]$}{Pn rtimes Sn}} \label{sec::idempotents-Pn-Sn}

We now turn to our main example.
Recall that a Young diagram $\l$
is called \emph{$p$-regular} if 
it does not have $p$ (or more) rows of the same size, see \Cref{sec::Young}.

\begin{corollary} \label{cor::combinatorics-idempotents-Sn}
Let $\kk$ be a field and $u(x) \in \kk[x]$
a polynomial such that each irreducible factor over $\kk$
is linear.
We set
$A := ({\kk[x]}/{ u(x) })^{\otimes n}$ for $n \in \mZ_{\geq 0}$ and turn $A$ into an $S_n$-equivariant algebra by permuting the $n$ tensor factors.
Set $R := A \rtimes S_n$,
and let $r$ be the number of different zeros of the polynomial $u(x)$.
Then the primitive idempotents in $R$
up to conjugation
are in bijection with the set
\[
\bigsqcup_{\substack{n = (f_1 + \dots + f_r) \\ f_i \geq 0} } \prod_{i=1}^r \Xk{S_{f_i}}.
\]
In particular, if $\cha( \kk ) = 0$, then
this set is in bijection with
\[
\bigsqcup_{\substack{n = (f_1 + \dots + f_r) \\ f_i \geq 0} } \prod_{i=1}^r \{ \text{Young diagrams $\l$ with $|\l| = f_i$} \}
\]
and if $\cha( \kk ) = p$ for $p$ a prime, then
this set is in bijection with
\[
\bigsqcup_{\substack{n = (f_1 + \dots + f_r) \\ f_i \geq 0} } \prod_{i=1}^r \{ \text{$p$-regular Young diagrams $\l$ with $|\l| = f_i$} \}.
\]
\end{corollary}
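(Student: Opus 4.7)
The plan is to apply \Cref{corollary:abstract_classification} directly, after unpacking the structure of $A/\rad(A)$ and the $S_n$-action on its central idempotents. Since each irreducible factor of $u(x)$ is linear, we may write $u(x) = \prod_{i=1}^{r}(x-\alpha_i)^{m_i}$ for distinct $\alpha_i\in\kk$ and $m_i\geq 1$. By the Chinese remainder theorem, $\kk[x]/u(x) \cong \prod_{i=1}^r \kk[x]/(x-\alpha_i)^{m_i}$, whose Jacobson radical modulo itself yields $\kk^r$. Taking $n$-fold tensor products preserves this structure, since $A$ is a finite direct product of local $\kk$-algebras with residue field $\kk$, and so $B := A/\rad(A) \cong \kk^{r^n}$; in particular the hypothesis $B = \kk[\Omega]$ of \Cref{corollary:abstract_classification} is satisfied.

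Next I would identify the set $\Omega$ of central primitive idempotents of $B$ with functions $f\colon \{1,\ldots,n\}\to\{1,\ldots,r\}$, the $S_n$-action being by precomposition with $\sigma^{-1}$. Two such functions lie in the same orbit if and only if they have the same fiber sizes, so the $S_n$-orbits are parameterized by tuples $(f_1,\ldots,f_r)$ of non-negative integers with $\sum_i f_i = n$, where $f_i$ counts the preimage of $i$. For the standard orbit representative $v=(1,\ldots,1,2,\ldots,2,\ldots,r,\ldots,r)$ with $f_i$ copies of $i$, the stabilizer $G_v$ is the Young subgroup $S_{f_1}\times\cdots\times S_{f_r}\subseteq S_n$.

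Applying \Cref{corollary:abstract_classification} then produces a bijection between primitive idempotents of $R$ up to conjugation and
$$\bigsqcup_{\substack{n=f_1+\cdots+f_r\\ f_i\geq 0}} \Xk{S_{f_1}\times\cdots\times S_{f_r}}.$$
Since every field is a splitting field for each symmetric group by \Cref{remark:sn_splitting_field}, the external tensor product of absolutely irreducible representations remains absolutely irreducible (the endomorphism algebra is $\kk\otimes_{\kk}\kk=\kk$), so $\Xk{S_{f_1}\times\cdots\times S_{f_r}}\cong \prod_{i=1}^r\Xk{S_{f_i}}$. Finally, invoking the classification recalled in \Cref{sec::Young}, irreducible $\kk[S_f]$-modules are indexed by all Young diagrams of size $f$ when $\cha\kk=0$, and by $p$-regular Young diagrams of size $f$ when $\cha\kk=p$, which yields the two explicit reformulations.

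The main obstacle is really just step one: verifying that $B$ has the split form $\kk[\Omega]$ required by \Cref{corollary:abstract_classification}, which is precisely where the hypothesis on $u(x)$ splitting into linear factors enters; without it, the residue fields of the local factors $\kk[x]/(x-\alpha_i)^{m_i}$ would be replaced by proper finite extensions of $\kk$, and the subsequent orbit-and-stabilizer bookkeeping would have to incorporate Galois-type data rather than reducing cleanly to Young subgroups. Once the split hypothesis is in place, the remaining arguments are essentially a direct application of the machinery of \Cref{corollary:abstract_classification} together with standard facts about irreducibles of $S_n$.
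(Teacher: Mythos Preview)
Your proposal is correct and follows essentially the same approach as the paper: apply \Cref{corollary:abstract_classification} after identifying $B=A/\rad(A)$ with $\kk[\Omega]$ where $\Omega$ is the set of functions $\{1,\dots,n\}\to Z$ (with $Z$ the distinct zeros of $u$), observe that $S_n$-orbits are classified by fiber cardinalities with Young subgroups as stabilizers, and then invoke the classification of irreducible $S_n$-representations. Your write-up is in fact slightly more detailed than the paper's, as you explicitly justify the decomposition $\Xk{S_{f_1}\times\cdots\times S_{f_r}}\cong\prod_i\Xk{S_{f_i}}$ via the splitting-field property of symmetric groups, which the paper leaves implicit.
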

\begin{proof}
Let $\Mg$ denote the set of 
primitive idempotents 
of $B := \frac{A}{\rad(A)}$.
Our assumption on $u(x)$
implies $B = \kk[\Mg]$.
The action of $S_n$ on $\Mg$
can be identified
with the action of
$S_n$ on the set of functions $f: \{1, \dots, n\} \rightarrow Z$
by precomposition, where
$Z$ denotes the set of distinct zeros of $u(x)$.
The orbits are classified by the fiber cardinalities $(|f^{-1}(z)|)_{z \in Z}$,
and the stabilizer of $f$
is given by
$\prod_{z \in Z} S_{|f^{-1}(z)|} \leq S_n$.
We apply \Cref{corollary:abstract_classification}
and use the fact that irreducible representations of symmetric groups are classified by
Young diagrams
if $\cha(\kk) = 0$,
and by $p$-regular Young diagrams
if $\cha(\kk) = p$ \cite{J78}*{Theorem 11.5}.
\end{proof}

In the following, we construct distinguished idempotents in the rings $A\rtimes S_n$.
Given Young diagrams $\l_1,\dots,\l_r$ such that $\sum_{i=1}^r|\l_i|=n$, we have a natural embedding $\prod_i \kk[S_{|\l_i|}]\hookrightarrow \kk[S_n]$, and for each $i$, we have fixed an associated primitive idempotent $e_{\l_i}$ in $\kk[S_{|\l_i|}]$ in \Cref{sec::Young}. This gives us an idempotent $(e_{\l_1},\dots,e_{\l_r})$ in $\kk[S_n]$.

We decompose $\kk[x]/(u(x))$ as a direct sum of local algebras $L_1\oplus\dots\oplus L_r$ where we fix the order of the summands. Correspondingly, we have an ordered list of idempotents $E_1,\dots,E_r$ where $E_i$ is the projection onto $L_i$. This way, we associate to the list of partitions $\l_1,\ldots, \l_r$ an idempotent $E_1^{\o|\l_1|}\o\dots\o E_r^{\o|\l_r|}$ in $A=(\kk[x]/(u(x)))^{\o n}$.

\begin{definition}\label{def:Elambda} We set
$E_{\l_1,\dots,\l_r}:=(E_1^{\o|\l_1|}\o\dots\o E_r^{\o|\l_r|})\o (e_{\l_1},\dots,e_{\l_r})
$ in $A \rtimes S_n$.
\end{definition}

\begin{lemma} \label{lem::Elambda} Then $E_{\l_1,\dots,\l_r}=(1\o (e_{\l_1},\dots,e_{\l_r}))
((E_1^{\o|\l_1|}\o\dots\o E_r^{\o|\l_r|})\o 1)
$, and $E_{\l_1,\dots,\l_r}$ is an idempotent.
\end{lemma}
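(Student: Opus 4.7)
The plan is to derive both assertions from a single observation together with the crossed product multiplication rule $(a\otimes g)(b\otimes h) = a\cdot g(b)\otimes gh$. Let $H := S_{|\l_1|}\times\cdots\times S_{|\l_r|}\subseteq S_n$; the element $e := (e_{\l_1},\dots,e_{\l_r})$ is by construction supported on $H$. The key observation is that every $\sigma\in H$ fixes $E := E_1^{\otimes|\l_1|}\otimes\cdots\otimes E_r^{\otimes|\l_r|}\in A$, because $\sigma$ only permutes tensor positions within blocks of $|\l_i|$ identical copies of the single element $E_i$. Consequently $\sigma(E) = E$ for every $\sigma$ in the support of $e$.

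For the first identity, I expand $e = \sum_\sigma c_\sigma\sigma$ linearly and apply the multiplication rule term by term. Using $\sigma(E)=E$, each summand of $(1\otimes e)(E\otimes 1)$ collapses to $c_\sigma\,E\otimes\sigma$, and the sum reassembles to $E\otimes e$, which is $E_{\l_1,\dots,\l_r}$ by \Cref{def:Elambda}.

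For idempotency, I compute $(E\otimes e)^2$ by expanding only the left factor $e = \sum_\sigma c_\sigma\sigma$ and invoking the multiplication rule. The fixing property together with $E^2 = E$ (since each $E_i$ is idempotent in $\kk[x]/(u(x))$) gives $E\cdot\sigma(E) = E$ term by term, which separates the $A$-component from the group-algebra component and leaves $E\otimes e\cdot e = E\otimes e^2$. Finally $e^2 = e$ holds in $\kk[S_n]$ because each $e_{\l_i}$ is idempotent in $\kk[S_{|\l_i|}]$ and the factors $S_{|\l_i|}$ commute pairwise inside the internal direct product $H$. No step is a genuine obstacle; the only subtle point worth flagging is that one cannot naively factor $(E\otimes e)(E\otimes e)$ as $E\cdot e(E)\otimes e^2$ for an arbitrary element $e\in\kk[S_n]$, and the role of the fixing property is exactly to make this kind of reduction legitimate in the present situation.
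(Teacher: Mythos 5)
Your proof is correct and follows essentially the same route as the paper: the whole argument rests on the observation that every permutation in the support of $(e_{\l_1},\dots,e_{\l_r})$ fixes $E_1^{\o|\l_1|}\o\dots\o E_r^{\o|\l_r|}$ (equivalently, that each block factor commutes with the image of $\kk[S_{|\l_i|}]$ in the crossed product), which is exactly the fact the paper invokes; you simply spell out the crossed-product multiplication term by term.
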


\begin{proof} The formula follows from the fact that $1\o\dots\o 1\o E_i^{\o|\l_i|}\o 1\o\dots\o 1$ commutes with the image of $\kk[S_{|\l_i|}]$ in $A\rtimes G$. It directly implies idempotence.
\end{proof}

\begin{corollary} \label{cor::idempotents-concrete} In the situation of \Cref{cor::form-of-idempotents}, a complete list of primitive idempotents in $A\rtimes S_n$ up to conjugation is given by the elements $E_{\l_1,\dots,\l_r}$ for partitions $\l_1,\dots,\l_r$ satisfying $|\l_1|+\dots+|\l_r|=n$.
\end{corollary}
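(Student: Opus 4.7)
The plan is to apply Corollary \ref{cor::form-of-idempotents} directly, with a small amount of work to identify the ingredients concretely, and then lift the resulting primitive idempotents from $B \rtimes S_n$ to $A \rtimes S_n$ via Corollary \ref{corollary:j_inclusion}.

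First, I would identify the $S_n$-set $\Omega$ of primitive idempotents of $B = A/\rad(A)$. Since each irreducible factor of $u(x)$ is linear, $\kk[x]/(u(x)) = L_1 \oplus \dots \oplus L_r$ splits as a direct sum of local $\kk$-algebras, with $E_i$ the projection to $L_i$. Hence the primitive idempotents of $A = (\kk[x]/(u(x)))^{\otimes n}$ are precisely the simple tensors $E_{i_1} \otimes \dots \otimes E_{i_n}$ for $(i_1,\dots,i_n) \in \{1,\dots,r\}^n$, and the same set parametrizes $\Omega$ after passage to $B$. The $S_n$-action permutes tensor factors, so $\Omega$ is equivariantly identified with the set of functions $\{1,\dots,n\} \to \{1,\dots,r\}$ with $S_n$ acting by precomposition. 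Orbits are classified by fiber cardinality tuples $(f_1,\dots,f_r)$ with $\sum f_i = n$; for each such tuple I take as representative the idempotent $v_{(f_1,\dots,f_r)} := E_1^{\otimes f_1} \otimes \dots \otimes E_r^{\otimes f_r}$, whose $S_n$-stabilizer is the Young subgroup $S_{f_1} \times \dots \times S_{f_r}$ under the standard embedding.

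Next I need to describe primitive idempotents in $\kk[S_{f_1} \times \dots \times S_{f_r}] \cong \kk[S_{f_1}] \otimes_\kk \dots \otimes_\kk \kk[S_{f_r}]$ up to conjugation. By Remark \ref{remark:sn_splitting_field}, every field is a splitting field for each $S_{f_i}$, so $\kk[S_{f_i}]/\rad$ is a product of matrix algebras over $\kk$. Iterating Lemma \ref{lemma:tensor_prod_of_locals} (and using that semisimple algebras whose simple summands have endomorphism ring $\kk$ behave well under tensor products), one sees that $\bigotimes_i \kk[S_{f_i}]/\rad$ is again such a product and that conjugacy classes of primitive idempotents are exactly represented by the products $(e_{\lambda_1},\dots,e_{\lambda_r})$ where $\lambda_i$ runs over the fixed system of representatives of ($p$-regular if $\cha\kk = p$) Young diagrams with $|\lambda_i| = f_i$.

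Combining these two inputs, Corollary \ref{cor::form-of-idempotents} yields that the primitive idempotents in $B \rtimes S_n$ up to conjugation are exactly
\[
v_{(|\lambda_1|,\dots,|\lambda_r|)} \otimes (e_{\lambda_1},\dots,e_{\lambda_r})
\]
as the tuples $(\lambda_1,\dots,\lambda_r)$ vary with $\sum |\lambda_i| = n$. By Corollary \ref{corollary:j_inclusion}, the canonical surjection $A \rtimes S_n \twoheadrightarrow B \rtimes S_n$ has kernel contained in the Jacobson radical, so primitive idempotents (up to conjugation) on the two sides are in bijection via lifting. But Lemma \ref{lem::Elambda} shows that $E_{\lambda_1,\dots,\lambda_r} \in A \rtimes S_n$ is an idempotent, and by construction its image in $B \rtimes S_n$ is exactly the primitive idempotent above. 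Hence $E_{\lambda_1,\dots,\lambda_r}$ is primitive in $A \rtimes S_n$, and this list exhausts the primitive idempotents up to conjugation.

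The only non-routine step is the middle paragraph on primitive idempotents in the tensor product of group algebras; there the crucial input is the splitting field property from Remark \ref{remark:sn_splitting_field}, which ensures that the tensor product retains a sufficiently rigid structure modulo its radical for the naive product of idempotents to remain primitive and to exhaust all conjugacy classes.
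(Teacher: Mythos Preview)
Your proof is correct and follows essentially the same approach as the paper's: identify $\Omega$ and its $S_n$-orbits via fiber cardinalities, take $E_1^{\otimes f_1}\otimes\dots\otimes E_r^{\otimes f_r}$ as orbit representatives with stabilizer $S_{f_1}\times\dots\times S_{f_r}$, apply Corollary~\ref{cor::form-of-idempotents} to classify primitive idempotents in $B\rtimes S_n$, and lift via Corollary~\ref{corollary:j_inclusion}. You are slightly more careful than the paper in justifying why the products $(e_{\lambda_1},\dots,e_{\lambda_r})$ exhaust the primitive idempotents of $\kk[S_{f_1}\times\dots\times S_{f_r}]$ up to conjugation, explicitly invoking the splitting field property (Remark~\ref{remark:sn_splitting_field}); the paper simply asserts this.
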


\begin{proof} Reducing modulo the Jacobson radical, it is enough to show that the images of the described elements in $B\rtimes S_n=\kk[\Omega]\rtimes S_n$ form a complete list of primitive idempotents up to conjugation. Now by \Cref{cor::form-of-idempotents}, such a list is given by the elements $v\otimes e$, where $v$ ranges over the set $\Omega'$ and $e$ ranges over a complete list of primitive idempotents in the group algebra of the stabilizer of $v$. In our situation, $\Omega'$ is described by tuples $f_1,\dots,f_r\geq 0$ satisfying $f_1+\dots+f_r=n$ and corresponding to the idempotents $E_1^{\o f_1}\o\dots\o E_r^{\o f_r}$, the stabilizer group is $S_{f_1}\times\dots\times S_{f_r}$, and a complete list of its primitive idempotents of this stabilizer group are given by the elements $(e_{\l_1},\dots,e_{\l_r})$. This proves the assertion.
\end{proof}

We record the following observation for later use.

\begin{lemma} \label{lem::radical-of-endo-DCob} In the situation of the previous corollary, the Jacobson radical of $R:=E_{\l_1,\dots,\l_r} (A\rtimes S_n) E_{\l_1,\dots,\l_r}$ satisfies $\dim R/\rad(R)=1$.
\end{lemma}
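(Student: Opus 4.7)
The plan is to reduce the computation of $R/\rad(R)$ to a semisimple quotient via the canonical map $\epsilon\colon A\rtimes S_n\twoheadrightarrow B\rtimes S_n$ with $B := A/\rad(A)$, and then identify the image as a tensor product of local group algebras for which the splitting field property of symmetric groups (\Cref{remark:sn_splitting_field}) is available. Set $E := E_{\l_1,\dots,\l_r}$, $\bar E := \epsilon(E)$, and $\bar R := \bar E(B\rtimes S_n)\bar E$. By \Cref{corollary:j_inclusion} we have $\ker(\epsilon) \subseteq \rad(A\rtimes S_n)$, and combining this with the identity $\rad(eSe) = e\,\rad(S)\,e$ for any idempotent $e$ in a finite-dimensional algebra $S$ (as used in the proof of \Cref{lemma:radicals_and_karoubi_env}, citing \cite{Lam}*{Theorem 21.10}) gives $E\cdot\ker(\epsilon)\cdot E \subseteq \rad(R)$. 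Thus the restriction of $\epsilon$ is a surjection $R\twoheadrightarrow \bar R$ whose kernel lies in $\rad(R)$, which forces $R/\rad(R)\cong \bar R/\rad(\bar R)$, reducing the statement to the claim $\dim_\kk \bar R/\rad(\bar R) = 1$.

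For the second step, I would use the orbit decomposition $\Omega = \bigsqcup_{v\in\Omega'} S_n v$ to split
\[
B\rtimes S_n \;=\; \bigoplus_{v\in\Omega'}\big(\kk[S_n v]\rtimes S_n\big),
\]
together with the factorization $\bar E = (v_0\otimes 1)\cdot\bigl(1\otimes(e_{\l_1},\dots,e_{\l_r})\bigr)$ provided by \Cref{lem::Elambda}, where $v_0\in\Omega'$ is the orbit representative determined by the fiber sizes $(|\l_1|,\dots,|\l_r|)$ and has stabilizer $G_{v_0} = S_{|\l_1|}\times\cdots\times S_{|\l_r|}$. Since $(e_{\l_1},\dots,e_{\l_r})$ lies in the image of the stabilizer subalgebra $\kk[G_{v_0}]\hookrightarrow\kk[S_n v_0]\rtimes S_n$, the Morita reduction realized by the full idempotent $v_0\otimes 1$ (recorded just before \Cref{cor::form-of-idempotents}) identifies
\[
\bar R \;\cong\; (e_{\l_1},\dots,e_{\l_r})\,\kk[G_{v_0}]\,(e_{\l_1},\dots,e_{\l_r}) \;\cong\; \bigotimes_{i=1}^r e_{\l_i}\kk[S_{|\l_i|}]e_{\l_i}.
\]

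To finish, I would invoke \Cref{remark:sn_splitting_field}: every field is a splitting field for every symmetric group, so each tensor factor $e_{\l_i}\kk[S_{|\l_i|}]e_{\l_i}$ is a local $\kk$-algebra with $\dim_\kk e_{\l_i}\kk[S_{|\l_i|}]e_{\l_i}/\rad = 1$. An iterated application of the second clause of \Cref{lemma:tensor_prod_of_locals} (the case where both factors satisfy $\dim/\rad = 1$) then gives $\dim_\kk \bar R/\rad(\bar R) = 1$, completing the argument. I do not anticipate any substantive obstacle; the only point requiring care is the bookkeeping through the Morita equivalence—specifically, verifying that the second factor of $\bar E$ sits inside $\kk[G_{v_0}]$ so that the reduction applies cleanly—which is precisely what \Cref{lem::Elambda} supplies.
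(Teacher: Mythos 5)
Your proposal is correct, and it reaches the conclusion by a route that is organized differently from the paper's. The paper computes the corner algebra directly inside $A\rtimes S_n$: writing $E_{\l_1,\dots,\l_r}=Ee$ with $E=E_1^{\o|\l_1|}\o\dots\o E_r^{\o|\l_r|}$ and $e=(e_{\l_1},\dots,e_{\l_r})$, it checks that $e$ commutes not only with $E$ but with all of $EA=L_1^{\o|\l_1|}\o\dots\o L_r^{\o|\l_r|}$ (the tensor factors permuted by the stabilizer are identical), obtains $R\cong L_1^{\o|\l_1|}\o\dots\o L_r^{\o|\l_r|}\o A'$ with $A'=e\,\kk[S_{|\l_1|}\times\dots\times S_{|\l_r|}]\,e$, and then exhibits the Jacobson radical of this tensor product explicitly, using that each $L_i/\rad(L_i)$ and $A'/\rad(A')$ is one-dimensional. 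You instead kill the coefficient radical first, via \Cref{corollary:j_inclusion} combined with the corner identity $\rad(ESE)=E\rad(S)E$ from \cite{Lam}*{Theorem 21.10} (legitimately, since a surjection with kernel inside the radical induces an isomorphism on semisimple quotients), and then reuse the orbit decomposition and the corner computation $(v\o 1)(\kk[Gv]\rtimes G)(v\o 1)=v\o\kk[G_v]$ recorded before \Cref{cor::form-of-idempotents} to land directly on $\bigotimes_i e_{\l_i}\kk[S_{|\l_i|}]e_{\l_i}$; the splitting-field property of symmetric groups and the second clause of \Cref{lemma:tensor_prod_of_locals} finish exactly as in the paper. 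Your route trades the paper's explicit commutation computation for already-established reductions (note you only need the identity $eRe=v\o\kk[G_v]$, not the fullness of $v\o 1$), while the paper's route yields the finer description of $R$ itself before passing to the radical; both are sound. The only loose point is the phrase that $(e_{\l_1},\dots,e_{\l_r})$ ``lies in the image of $\kk[G_{v_0}]$'': as written it is $1\o e'$, not an element of $v_0\o\kk[G_{v_0}]$; what you actually need, and what holds because $e'$ is supported on $G_{v_0}$ and $G_{v_0}$ fixes $v_0$, is that $\bar E=v_0\o e'$ is a subidempotent of $v_0\o 1$ lying in $v_0\o\kk[G_{v_0}]$, so that $\bar E(B\rtimes S_n)\bar E=v_0\o e'\kk[G_{v_0}]e'$.
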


\begin{proof}  Let us introduce shorthand notations for the two factors of the idempotents,
$$
E := (E_1^{\o|\l_1|}\o\dots\o E_r^{\o|\l_r|}) \quad\text{ and }\quad
e := (e_{\l_1},\dots,e_{\l_r})
$$ for fixed $\l_1,\dots,\l_r$, so $E_{\l_1,\dots,\l_r}=Ee=eE$ (see \Cref{lem::Elambda}). Then 
$$
 E A 
 = L_1^{\o|\l_1|}\o\dots\o L_r^{\o|\l_r|}
\quad\text{ and }\quad
 E (\kk[S_n]) E
 = E (\kk[S_{|\l_1|}\times\dots\times S_{|\l_r|}]) 
$$
as subspaces of $A\rtimes G$. We now set $A' := e ( \kk[S_{|\l_1|}\times\dots\times S_{|\l_r|}] ) e$.

We note that $e$ commutes with $E$ (by \Cref{lem::Elambda}) and even with $E A 
 = L_1^{\o|\l_1|}\o\dots\o L_r^{\o|\l_r|}$, as those tensor factors $L_i$ of the latter which are permuted by conjugation with elements from $S_{|\l_1|}\times\dots\times S_{|\l_r|}$ are identical. Hence,
$$ E_{\l_1,\dots,\l_r} (A\rtimes S_n) E_{\l_1,\dots,\l_r}
 = eE (A\rtimes S_n) Ee
 = L_1^{\o|\l_1|}\o\dots\o L_r^{\o|\l_r|}\o A'
$$
as subspaces of $A\rtimes G$ and also as $\kk$-algebras.

$A'$ is the endomorphism algebra of the indecomposable projective $S_{|\l_1|}\times\dots\times S_{|\l_r|}$-module $\kk[S_{|\l_1|}\times\dots\times S_{|\l_r|}] (e_{\l_1},\dots,e_{\l_r})$. Hence, $A'/\rad(A')$ is isomorphic to the endomorphism algebra of an irreducible $S_{|\l_1|}\times\dots\times S_{|\l_r|}$-module, and as $\kk$ is a splitting field for each symmetric group (see \Cref{remark:sn_splitting_field}), this algebra is one-dimensional. As also $\kk$ is a splitting field of $u$, $L_i/\rad(L_i)$ is one-dimensional for each $1\leq i\leq r$. But then
$$
\rad(L_1)\o L_2\o\dots\ L_r\o A'
+ \dots + L_1\o\dots L_{r-1}\o\rad(L_r)\o A'
+ L_1\o\dots\o L_r\o\rad(A')
$$
is an ideal in $E_{\l_1,\dots,\l_r} (A\rtimes S_n) E_{\l_1,\dots,\l_r}$ yielding a one-dimensional quotient; hence, it is the Jacobson radical. 
\end{proof}

\begin{lemma}[The purely inseparable case] \label{lemma:insep_case}
Let $\kk$ be a field of characteristic $p > 0$ and $u(x) \in \kk[x]$
such that the splitting field $\KK$ of $u$ is purely inseparable.
Then the idempotents
$E_{\l_1,\dots,\l_r} \in (\KK[x]/(u(x)))^{\o n} \rtimes S_n$ of \Cref{cor::idempotents-concrete}
already lie in $(\kk[x]/(u(x)))^{\o n} \rtimes S_n$.
In particular, the classification of \Cref{cor::combinatorics-idempotents-Sn}
also holds in this case.
\end{lemma}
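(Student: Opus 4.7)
The plan has two independent pieces: showing that the explicit formula defining $E_{\l_1,\ldots,\l_r}$ produces an element already in $(\kk[x]/(u(x)))^{\otimes n} \rtimes S_n$, and then verifying that these elements form a complete list of primitive idempotents up to conjugation matching the count of \Cref{cor::combinatorics-idempotents-Sn}.

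For the descent, the group-algebra factor $(e_{\l_1},\ldots,e_{\l_r})$ already lies in $\mF_p[S_n] \subseteq \kk[S_n]$ by \Cref{remark:sn_splitting_field}. For the $A$-factor, I would write $u(x) = \prod_{i=1}^r f_i(x)^{k_i}$ as a product of distinct monic irreducible factors over $\kk$. Since $\KK/\kk$ is purely inseparable, each $f_i$ has a unique root $\alpha_i \in \KK$ and factors over $\KK$ as $(x-\alpha_i)^{d_i}$ with $d_i = [\kk(\alpha_i):\kk] = p^{n_i}$. The key observation is that the local decomposition $\kk[x]/(u(x)) \cong \prod_i \kk[x]/(f_i^{k_i})$ survives scalar extension, because each $\KK[x]/(f_i^{k_i}) = \KK[x]/((x-\alpha_i)^{d_i k_i})$ is again local. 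Hence the primitive idempotents $E_i$ of $\KK[x]/(u(x))$ from \Cref{cor::idempotents-concrete} are precisely the images of the primitive idempotents of $\kk[x]/(u(x))$, so they already lie over $\kk$. It follows that $E_{\l_1,\ldots,\l_r} \in A \rtimes S_n$ with $A := (\kk[x]/(u(x)))^{\otimes n}$.

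For the completeness of the classification, the elements $E_{\l_1,\ldots,\l_r}$ are pairwise non-conjugate primitive idempotents of $A \rtimes S_n$: both properties are inherited from the $\KK$-side along the injection $A\rtimes S_n \hookrightarrow A^\KK \rtimes S_n$, where they are established in \Cref{cor::idempotents-concrete}. To show that no further primitive idempotents arise up to conjugation, I would apply \Cref{corollary:j_inclusion} and count those of $(A/\rad A) \rtimes S_n$. The decisive structural input, the only place the purely inseparable hypothesis is genuinely used, is that a tensor product $\kk(\alpha_{j_1}) \otimes_\kk \cdots \otimes_\kk \kk(\alpha_{j_n})$ of purely inseparable extensions is a local ring, since such an algebra admits a unique $\kk$-algebra map to $\overline{\kk}$ and hence a unique minimal prime. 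Decomposing $A = \bigoplus_{(j_1,\ldots,j_n)} L_{j_1} \otimes \cdots \otimes L_{j_n}$ with $L_i := \kk[x]/(f_i^{k_i})$, this locality forces each tensor summand to be local, so $A/\rad A$ is a direct sum of fields indexed by tuples $\omega = (j_1,\ldots,j_n)$.

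To finish, I would run the orbit-stabilizer bookkeeping. The $S_n$-orbits are indexed by $(f_1,\ldots,f_r)$ with $\sum_i f_i = n$, with stabilizer $G = \prod_i S_{f_i}$ acting on $K_\omega := \bigotimes_i \kk(\alpha_i)^{\otimes f_i}$ by permuting tensor factors; a Morita reduction analogous to \Cref{corollary:equiv_of_cats} restricts the count in each orbit block to primitive idempotents of $K_\omega \rtimes G$ up to conjugation. Since the $G$-action descends trivially to the residue field $F := K_\omega/\rad K_\omega$ (multiplication onto the compositum is symmetric in each block of identical tensor factors), one more application of \Cref{corollary:j_inclusion} identifies these with primitive idempotents of $F[G]$ up to conjugation. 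By \Cref{remark:sn_splitting_field}, $F$ is a splitting field for each symmetric group factor, and the argument of \Cref{lem::radical-of-endo-DCob} then guarantees that the idempotents $(e_{\l_1},\ldots,e_{\l_r})$ of \Cref{sec::Young} remain primitive in $F[G]$ and exhaust the conjugacy classes, so these are indexed by $r$-tuples of $p$-regular Young diagrams of the prescribed sizes. Summing over orbits recovers precisely the count of \Cref{cor::combinatorics-idempotents-Sn}. The main obstacle will be the locality claim for tensor products of purely inseparable extensions; the subsequent bookkeeping through the two successive radical quotients is then straightforward but needs to be coordinated carefully.
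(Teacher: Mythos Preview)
Your argument is correct, but the paper takes a much shorter route for the descent part. Rather than explicitly matching the local decompositions of $\kk[x]/(u(x))$ and $\KK[x]/(u(x))$ via CRT, the paper observes that for a finite purely inseparable extension $\KK/\kk$, every element $a\in\KK$ satisfies $a^{q}\in\kk$ for some $p$-power $q$; hence for any \emph{commutative} $\kk$-algebra $A$ and any $r\in \KK\otimes_\kk A$ one has $r^{q}\in A$ by the Frobenius identity $(x+y)^p = x^p+y^p$. Applied to an idempotent $e$, this gives $e=e^{q}\in A$. Thus every idempotent of $(\KK[x]/(u(x)))^{\otimes n}$ already lies in $(\kk[x]/(u(x)))^{\otimes n}$, and since the $e_{\lambda_i}$ lie in $\mF_p[S_n]$, the $E_{\lambda_1,\dots,\lambda_r}$ descend immediately. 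This bypasses your CRT bookkeeping entirely.

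For the ``in particular'' clause you do considerably more work than the paper, which leaves this step essentially implicit. Your route---reducing via \Cref{corollary:j_inclusion} to $(A/\rad A)\rtimes S_n$, using locality of tensor products of purely inseparable extensions, and rerunning the orbit--stabilizer count over $\kk$---is sound and self-contained. A quicker alternative, in the spirit of the paper's trick, is: for any primitive idempotent $e\in A\rtimes S_n$, the residue field $F$ of the local ring $e(A\rtimes S_n)e$ satisfies that $\KK\otimes_\kk F$ has no nontrivial idempotents (same trick, now applied to the commutative algebra $\KK\otimes_\kk F$), hence is local; so $e$ stays primitive after scalar extension. Combined with \Cref{lemma:mono_Gro} and the descent of the $E_{\lambda_1,\dots,\lambda_r}$, this gives the bijection on indecomposables directly. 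Either way works; yours is more explicit, the paper's is terser. One minor point: the reference to \Cref{lem::radical-of-endo-DCob} in your final step is slightly misplaced---what you actually need there is just that $F$ is a splitting field for each $S_{f_i}$ (\Cref{remark:sn_splitting_field}) together with the tensor decomposition of $F[G]$, not the radical computation of that lemma.
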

\begin{proof}
Since $\KK$ is finite and purely inseparable over $\kk$,
every $x \in \KK$ satisfies $x^q \in \kk$ where $q$ is some power of $p$.
In particular, if $A$ is a commutative $\kk$-algebra
and $R := \KK \otimes_{\kk} A$,
then any $r \in R$ satisfies $r^q \in A$ where $q$ is some power of $p$. Thus, any idempotent in $R$ already lies in $A$.
It follows that the idempotents
$\bigotimes_i E_i^{\o f_i}$ in $(\KK[x]/(u(x)))^{\o n}$
lie in $(\kk[x]/(u(x)))^{\o n}$ and consequently the
$E_{\l_1,\dots,\l_r}$ lie in $(\kk[x]/(u(x)))^{\o n}\rtimes S_n$, since the idempotents $e_{\lambda}$ are defined over $\mF_p \subseteq \kk$ by \Cref{remark:sn_splitting_field}.
\end{proof}

\subsection{Classification of Indecomposables of \texorpdfstring{$\DCob{\alpha}$}{DCob(alpha)}}
\label{sec::classification-splitting-case}

Let $u=u_\alpha$ be the polynomial associated to $\alpha$ as in \eqref{eq:ualpha}. In this section, we will assume that $\kk$ is a splitting field for $u=u_\alpha\in\kk[t]$.

As before, set $P_n:=(\kk[x]/(u(x)))^{\o n}$ for each $n\geq0$. In \Cref{sec::idempotents-Pn-Sn} we have obtained an explicit description of the primitive idempotents in $P_n\rtimes \kk[S_n]$ as follows: we fix a decomposition of $P_1=\kk[x]/(u(x))$ as a direct sum of local subalgebras,
$$
P_1 = L_1\oplus\dots\oplus L_r
$$
(where $r$ is the number of distinct roots of $u$ in $\kk$), and set
$$
\Lambda := \{(\l_1,\dots,\l_r): \l_i \text{ a $p$-regular Young diagram}, \cha\kk=p \} .
$$
Then by \Cref{cor::idempotents-concrete}, the elements of $\Lambda$ specify primitive idempotents $E_{\l_1,\dots,\l_r}$ in $P_n\rtimes\kk[S_n]$, where $n=|\l_1|+\dots+|\l_r|$, and all primitive idempotents can be obtained uniquely in this way (up to conjugation).

Recall that we have a filtration $\cD_\bullet$ on $\DCob{\alpha}$ (\Cref{def::filtration-DCob}), where $\cD_n$ can be characterized as the additive idempotent complete subcategory generated by the object $[n]$ in $\DCob{\alpha}$. The filtration $\cD_\bullet$ will allow us to describe the indecomposable objects in $\DCob{\alpha}$. 

\begin{lemma} \label{lem::indecomposables} For each $n\geq0$, $\End_{\cD_n/\cD_{n-1}}(\pi([n]))\cong P_n\rtimes S_n$, and a complete list of indecomposable objects in $\cD_n/\cD_{n-1}$ up to isomorphisms is given by the images of the primitive idempotents $E_{\l_1,\dots,\l_r}$ with $|\l_1|+\dots+|\l_r|=n$ viewed as endomorphisms of $\pi([n])$.
\end{lemma}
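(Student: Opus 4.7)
The plan is to match $\cD_n/\cD_{n-1}$ against the more manageable quotient category $\DCob{\alpha}/\cI_{<n}$ already analyzed in \Cref{prop::quotient-is-crossed-product}, then pass from the endomorphism algebra to the full subcategory via the standard equivalence with finitely generated projective modules.

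First, I would show that $\End_{\cD_n/\cD_{n-1}}(\pi([n]))$ is the same quotient as the one appearing in \Cref{prop::quotient-is-crossed-product}. Since the inclusion $\cD_n\hookrightarrow \DCob{\alpha}$ is fully faithful, $\End_{\cD_n}([n]) = \End_{\DCob{\alpha}}([n])$. The ideal factor is $\cI_{\cD_{n-1}}([n],[n])$, which consists of endomorphisms factoring through an object of $\cD_{n-1}^{\oplus}$. But every object of $\cD_{n-1}$ is a summand of a direct sum of $[0],\ldots,[n-1]$, so this ideal coincides with $\cI_{<n}([n],[n])$ in the notation of \Cref{sec::endo-of-n}. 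Combining this with \Cref{prop::quotient-is-crossed-product} gives the algebra identification $\End_{\cD_n/\cD_{n-1}}(\pi([n]))\cong P_n\rtimes \kk[S_n]$.

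Next, I would identify $\cD_n/\cD_{n-1}$ with $\Kar(\{\pi([n])\}^{\oplus})$. By definition, $\cD_n=\Kar(\{[0],\ldots,[n]\}^{\oplus})$, and under the quotient functor $\pi\colon\cD_n\to\cD_n/\cD_{n-1}$ the images $\pi([0]),\ldots,\pi([n-1])$ are zero, so every object of $\cD_n/\cD_{n-1}$ is a summand of a direct sum of copies of $\pi([n])$. Applying \Cref{remark:KS_and_proj} to $\pi([n])$ in the idempotent complete additive category $\cD_n/\cD_{n-1}$ then yields an equivalence
\[
\cD_n/\cD_{n-1}\;\simeq\;\rfproj{(P_n\rtimes \kk[S_n])}.
\]

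Finally, under this equivalence, the indecomposable objects on the right-hand side correspond (up to isomorphism) exactly to the primitive idempotents of $P_n\rtimes \kk[S_n]$ up to conjugation. Since $\kk$ is assumed to be a splitting field for $u_\alpha$, every irreducible factor of $u_\alpha$ is linear, so \Cref{cor::idempotents-concrete} applies and tells us that a complete list of such primitive idempotents (up to conjugation) is given by $\{E_{\l_1,\dots,\l_r}\mid |\l_1|+\cdots+|\l_r|=n\}$. Transporting these back through the equivalence gives the indecomposables of $\cD_n/\cD_{n-1}$ as the images of these idempotents acting on $\pi([n])$. The main point where care is needed is the identification of $\cI_{\cD_{n-1}}([n],[n])$ with $\cI_{<n}([n],[n])$; the remaining steps are direct applications of results already established.
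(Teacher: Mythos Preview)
Your proposal is correct and follows essentially the same route as the paper: the paper's proof simply cites \Cref{thm:KoDCob} (whose proof already established $\End_{\cD_n/\cD_{n-1}}([n])\cong P_n\rtimes S_n$ via \Cref{prop::quotient-is-crossed-product} and the equivalence $\cD_n/\cD_{n-1}\simeq\rfproj{(P_n\rtimes S_n)}$ via \Cref{prop:tensor-generator}, which in turn rests on \Cref{remark:KS_and_proj}), and then invokes the classification of primitive idempotents. You have unpacked these references directly, including the identification $\cI_{\cD_{n-1}}([n],[n])=\cI_{<n}([n],[n])$, which is exactly the content behind the paper's citations.
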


\begin{proof}
  By \Cref{thm:KoDCob} and its proof, $\cD_n/\cD_{n-1} \simeq \rfproj{\End_{\cD_n/\cD_{n-1}}([n])}$, and the desired classification of primitive idempotents in $P_n\rtimes S_n$ was provided in \Cref{cor::combinatorics-idempotents-Sn}.
\end{proof}

\begin{proposition} \label{prop::indecomposables} The indecomposable objects in $\DCob{\alpha}$ are in bijection with the set $\Lambda$ in the following way: Every indecomposable object corresponds uniquely to an indecomposable object in a quotient category $\cD_n/\cD_{n-1}$ for some $n\geq0$ which is the image of the primitive idempotent $E_{\l_1,\dots,\l_r}$ in $\End_{\cD_n/\cD_{n-1}}(\pi([n]))\cong P_n\rtimes \kk[S_n]$ for some partitions $\l_1,\dots,\l_r$ satisfying $|\l_1|+\dots+|\l_r|=n$.
\end{proposition}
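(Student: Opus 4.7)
The plan is to assemble the proposition from three results already established in the excerpt: the filtration-based classification of indecomposables (\Cref{lemma:classify_via_filt}), the description of each graded piece $\cD_n/\cD_{n-1}$ via projectives over the crossed product $P_n \rtimes \kk[S_n]$ (contained in \Cref{lem::indecomposables} and ultimately in \Cref{thm:KoDCob}), and the explicit enumeration of primitive idempotents of $P_n \rtimes \kk[S_n]$ furnished by \Cref{cor::idempotents-concrete}.

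First I would invoke \Cref{lemma:classify_via_filt} applied to the filtration $\cD_\bullet$ of \Cref{def::filtration-DCob}, which gives a bijection
\[
\Indec(\DCob{\alpha}) \cong \bigsqcup_{n \geq 0} \Indec(\cD_n/\cD_{n-1}).
\]
Each summand on the right is then identified via \Cref{lem::indecomposables}: the indecomposable objects of $\cD_n/\cD_{n-1}$ correspond to primitive idempotents, up to conjugation, in $\End_{\cD_n/\cD_{n-1}}(\pi([n])) \cong P_n \rtimes \kk[S_n]$, the latter isomorphism being \Cref{prop::quotient-is-crossed-product}.

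Next I would apply \Cref{cor::idempotents-concrete} to the algebra $A = P_n = (\kk[x]/(u_\alpha(x)))^{\otimes n}$, using the standing assumption that $\kk$ is a splitting field for $u_\alpha$ so that every irreducible factor of $u_\alpha$ is linear, which is exactly the hypothesis of \Cref{cor::combinatorics-idempotents-Sn}. This yields that a complete list of primitive idempotents in $P_n \rtimes \kk[S_n]$ up to conjugation is given by the elements $E_{\l_1, \dots, \l_r}$ of \Cref{def:Elambda}, indexed by tuples $(\l_1, \dots, \l_r)$ of ($p$-regular if $\cha\kk = p > 0$) Young diagrams satisfying $|\l_1| + \dots + |\l_r| = n$, where $r$ is the number of distinct roots of $u_\alpha$ in $\kk$.

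Combining these identifications, the disjoint union over $n \geq 0$ of the primitive idempotent classes matches exactly the set $\Lambda$ of tuples $(\l_1, \dots, \l_r)$ with no constraint on the total size. This gives the required bijection, with each indecomposable of $\DCob{\alpha}$ realized as the image of $E_{\l_1, \dots, \l_r}$ in $\cD_n/\cD_{n-1}$ for $n = |\l_1| + \dots + |\l_r|$. There is no serious obstacle here since all the hard work—the Krull--Schmidt filtration formalism, the crossed product description of the graded pieces, and the explicit construction of primitive idempotents in $P_n \rtimes \kk[S_n]$—has already been carried out; the proof is a direct assembly of these ingredients.
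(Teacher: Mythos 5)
Your proposal is correct and follows essentially the same route as the paper: the paper's proof simply combines \Cref{lemma:classify_via_filt} (reducing to the quotients $\cD_n/\cD_{n-1}$) with \Cref{lem::indecomposables}, whose proof in turn rests on exactly the ingredients you cite (\Cref{thm:KoDCob}, \Cref{prop::quotient-is-crossed-product}, and the idempotent classification of \Cref{cor::combinatorics-idempotents-Sn}/\Cref{cor::idempotents-concrete} under the standing splitting-field assumption). You have merely unpacked the intermediate lemma a bit more explicitly.
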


\begin{proof}  
By \Cref{lemma:classify_via_filt}, the indecomposable objects in $\DCob{\alpha}$ correspond exactly to those in the quotients $\cD_n/\cD_{n-1}$. Hence, the statement follows from \Cref{lem::indecomposables}.
\end{proof}

\begin{corollary} \label{cor::splitting-field-DCob} If $\kk$ is a splitting field for $u=u_\alpha$, then $\kk$ is a splitting field for $\DCob{\alpha}$ in the sense of \Cref{def::splitting-field-for-C}.
\end{corollary}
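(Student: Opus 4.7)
The plan is to combine the explicit classification of indecomposables from \Cref{prop::indecomposables} with the local-ring computation carried out in \Cref{lem::radical-of-endo-DCob}. Pick any indecomposable $X \in \DCob{\alpha}$. By \Cref{prop::indecomposables} there is a unique smallest $n \geq 0$ with $X \in \cD_n$, and up to isomorphism $X = ([n], E)$ where $E := E_{\lambda_1,\dots,\lambda_r}$ is a primitive idempotent in $P_n \rtimes \kk[S_n]$ as constructed in \Cref{cor::idempotents-concrete}; in particular $X \notin \cD_{n-1}$. Setting $R := \End_{\DCob{\alpha}}([n])$ we have $\End(X) = ERE$, which is a local $\kk$-algebra since $X$ is indecomposable in a hom-finite Krull--Schmidt category.

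By \Cref{prop::quotient-is-crossed-product}, the quotient functor $\cD_n \to \cD_n/\cD_{n-1}$ induces a surjective $\kk$-algebra map $R \twoheadrightarrow P_n \rtimes \kk[S_n]$ whose kernel is the ideal $\cI_{<n}([n],[n])$ of morphisms $[n] \to [n]$ that factor through an object of $\cD_{n-1}$. Restricting this surjection to the corner ring $ERE$ yields a surjection
$$\End(X) \twoheadrightarrow E(P_n \rtimes \kk[S_n])E$$
whose kernel $I$ consists precisely of those endomorphisms of $X$ that factor through an object of $\cD_{n-1}$. The core claim is $I \subseteq \rad(\End(X))$: in the local ring $\End(X)$ the radical is the set of non-units, so one only needs to check that no element of $I$ is invertible. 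But an invertible $\alpha \in I$ would be an automorphism of $X$ factoring through some $Y \in \cD_{n-1}$, exhibiting $X$ as a direct summand of $Y$ and so placing $X$ in $\cD_{n-1}$, contradicting the minimality of $n$.

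Granted the inclusion $I \subseteq \rad(\End(X))$, the displayed surjection identifies $\End(X)/\rad(\End(X))$ with $E(P_n \rtimes \kk[S_n])E/\rad(E(P_n \rtimes \kk[S_n])E)$, and by \Cref{lem::radical-of-endo-DCob} (whose hypotheses are met because $\kk$ splits $u_\alpha$) this last quotient is one-dimensional over $\kk$. This is exactly what \Cref{def::splitting-field-for-C} demands. I do not anticipate a serious obstacle: the only subtle point is the minimality argument ruling out units in $I$, and this is immediate from the classification of indecomposables via the filtration $\cD_\bullet$ provided by \Cref{corollary:indec_for_gr} and \Cref{prop::indecomposables}.
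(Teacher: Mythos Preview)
Your proof is correct and follows the same approach as the paper, which simply cites \Cref{lem::radical-of-endo-DCob} in one line. You have spelled out the step the paper leaves implicit: that the kernel $I$ of the surjection $\End_{\DCob{\alpha}}(X) \twoheadrightarrow E(P_n\rtimes\kk[S_n])E$ lies in the radical, so that the two local rings have the same residue field, which is then one-dimensional by \Cref{lem::radical-of-endo-DCob}.
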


\begin{proof} The defining criterion, the property that the quotient of the endomorphism algebra of an indecomposable object by its Jacobson radical is one-dimensional, is verified by \Cref{lem::radical-of-endo-DCob}.
\end{proof}

\begin{remark} If $\alpha=0$, then $u(t)=1$ and there are no zeros. Hence, $r=0$ and $P_n=0$ for all $n\geq 1$ , so there is exactly one indecomposable object, the tensor unit $[0]$.

If $\alpha=c\in\kk\setminus\{0\}$, then $r=1$ and the indecomposable objects are indexed by partitions $\l$, see \Cref{sec:constantcase}.
\end{remark}

\begin{example}\label{expl:RepStasDCob} For $T\in \kk$, $T\neq 0$, we recover $\uRep (S_T)$ as $\DCob{\alpha}$, where 
$$\alpha=(T,T,\ldots) = \frac{T}{1-t},$$ 
where $t$ is the formal variable, see
 \cite{KS}*{Section~4}. In this case, \Cref{prop::indecomposables} recovers the classification of indecomposable objects already obtained in \Cref{cor::gr-RepSt}. The case $\uRep (S_0)$ can be obtained using the more general setup of \Cref{sec:inflations}.
\end{example}

\begin{definition} \label{def::XY} Let $X_{\l_1,\dots,\l_r}$ be the indecomposable object (defined up to isomorphism) in $\DCob{\alpha}$ corresponding to the $(\cha\kk)$-regular partitions $\l_1,\dots,\l_r$ according to \Cref{prop::indecomposables} and denote
$$
Y_1:=X_{(1),\emptyset,\dots,\emptyset},\quad
\dots,\quad 
Y_r:=X_{\emptyset,\dots,\emptyset,(1)} .
$$
\end{definition}

\Cref{prop::indecomposables} implies that the indecomposable object $X_{\l_1,\dots,\l_r}$ occurs as a direct summand in $[n]$, where $n=|\l_1|+\dots+|\l_r|$, but not in $[n-1]$. In particular, the indecomposable objects occurring in the object $[1]$ in $\DCob{\alpha}$ are $Y_1,\dots,Y_r$. We can make this more precise.

\begin{lemma}\label{lem:Y1} There is a decomposition $[1] \cong Y_1\oplus\dots\oplus Y_r\oplus Y$ for an object $Y$ in $\cD_0$.
\end{lemma}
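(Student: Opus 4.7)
The plan is to write $\id_{[1]}$ as a sum of orthogonal idempotents that are already visible inside $P_1 \subseteq \End_{\DCob{\alpha}}([1])$, and then identify the resulting direct summands of $[1]$ with the $Y_i$ up to a correction in $\cD_0$. The core of the argument is the interplay between decompositions in $\DCob{\alpha}$ and the more transparent ones in $\cD_1/\cD_0$.

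First, I would recall from \Cref{sec::endo-of-n} that $P_1 = \kk[x]/(u(x))$ sits inside $\End_{\DCob{\alpha}}([1])$ with $1_{P_1} = \id_{[1]}$, and that the fixed local decomposition $P_1 = L_1 \oplus \cdots \oplus L_r$ from \Cref{sec::idempotents-Pn-Sn} yields mutually orthogonal idempotents $E_1, \ldots, E_r$ satisfying $\sum_{i=1}^r E_i = \id_{[1]}$. Since $\DCob{\alpha}$ is idempotent complete, this gives the decomposition $[1] \cong \bigoplus_{i=1}^r ([1], E_i)$.

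Second, I would analyze each summand $([1], E_i)$ modulo $\cD_0$. By \Cref{prop::quotient-is-crossed-product}, $\End_{\cD_1/\cD_0}(\pi([1])) \cong P_1 \rtimes \kk[S_1] = P_1$, and under this isomorphism $\pi(E_i)$ remains a primitive idempotent. Tracing through \Cref{def:Elambda} and \Cref{def::XY}, this is precisely the primitive idempotent defining $Y_i$ in the classification of \Cref{prop::indecomposables}, so $\pi([1], E_i) \cong \pi(Y_i)$ is indecomposable in $\cD_1/\cD_0$.

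Finally, I would lift back via \Cref{cor::indec-x} applied to $\cD_0 \subseteq \cD_1$: every indecomposable summand of $([1], E_i) \in \cD_1$ is either an indecomposable in $\cD_0$ (hence a copy of $[0]$) or maps under $\pi$ to an indecomposable summand of $\pi(Y_i)$, and by \Cref{lemma:ks_indec}(3) any such summand is isomorphic to $Y_i$ itself. Since $\pi(Y_i)$ is indecomposable, \Cref{lem::indec-direct-summand} forces the multiplicity of $Y_i$ in $([1], E_i)$ to be exactly $1$, yielding $([1], E_i) \cong Y_i \oplus [0]^{n_i}$ for some $n_i \geq 0$. Summing over $i$ gives $[1] \cong Y_1 \oplus \cdots \oplus Y_r \oplus Y$ with $Y := [0]^{\sum_i n_i} \in \cD_0$; the case $\alpha = 0$ is vacuous since then $[1] \cong 0$ and $r = 0$. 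No substantial obstacle arises beyond this routine Krull--Schmidt bookkeeping, once the identification $\pi(E_i) = \pi(Y_i)$ is in place.
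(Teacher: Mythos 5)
Your proof is correct and follows essentially the same route as the paper: both rest on the identification $\End_{\cD_1/\cD_0}(\pi([1]))\cong P_1=L_1\oplus\dots\oplus L_r$ together with the Krull--Schmidt bookkeeping across the quotient functor $\pi\colon\cD_1\to\cD_1/\cD_0$. The only (harmless) difference is that you choose the lifts of the idempotents $E_i$ canonically inside the subalgebra $P_1\subseteq\End_{\DCob{\alpha}}([1])$ generated by $x$, so that they are orthogonal and sum to $\id_{[1]}$, whereas the paper's proof works with primitive lifts $\tilde E_i$ of the $E_i$ in $\End_{\DCob{\alpha}}([1])$ directly.
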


\begin{proof} $Y_1,\dots,Y_r$ are the images of primitive idempotents $\tilde E_1,\dots,\tilde E_r$ in $\End([1])$ such that $\tilde E_i$ is a lift (unique up to conjugation) of a primitive idempotent in $L_i$ (which is again unique up to conjugation) where $L_1\oplus\dots\oplus L_r$ is our fixed decomposition of $\End([1])/\cI_{<1}\cong\kk[x]/(u(x))=P_1$ into local algebras.
\end{proof}

Recall from \Cref{sec::Young} that to every $(\cha\kk)$-regular partition $\l$ of size $n$ we associated a functor $F^\l$ sending any object $X\in\DCob{\alpha}$ to a direct summand of its $n$-th tensor product $X^{\otimes n}$, which is the image of the primitive idempotent $e_\lambda$ in $\kk[S_n]$ corresponding to $\l$, interpreted as an endomorphism of $X^{\otimes n}$ via the categorical symmetric braiding. We can now ask how indecomposable objects can be located in the image of such functors. 

\begin{proposition} \label{prop::Flambda} For partitions $\l_1,\dots,\l_r$ satisfying $|\l_1|+\dots+|\l_r|=n$, 
$$ F^{\l_1}(Y_1)\otimes\dots\otimes F^{\l_r}(Y_r)\cong X_{\l_1,\dots,\l_r}\oplus Y,$$
where $Y$ is an object in $\cD_{n-1}$.
\end{proposition}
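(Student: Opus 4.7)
The plan is to realize $F^{\lambda_1}(Y_1) \otimes \dots \otimes F^{\lambda_r}(Y_r)$ as the image of an explicit idempotent $E \in \End_{\DCob{\alpha}}([n])$, to compute its reduction modulo $\cI_{<n}$, and then to invoke \Cref{lem::indec-direct-summand} for the subcategory $\cD_{n-1} \subseteq \cD_n$. By \Cref{lem:Y1}, each $Y_i$ is the image of an idempotent $\tilde E_i \in \End([1])$ whose class in $P_1 = \End([1])/\cI_{<1}$ is $E_i$. Since $\tilde E_i$ is the same idempotent on every factor of $Y_i^{\otimes |\lambda_i|}$, it commutes with the symmetric braiding on that object, so $F^{\lambda_i}(Y_i)$ is the image of the commuting product $\tilde E_i^{\otimes |\lambda_i|} \cdot s_{e_{\lambda_i}} \in \End([|\lambda_i|])$, where $s_{e_{\lambda_i}}$ denotes the image of $e_{\lambda_i} \in \kk[S_{|\lambda_i|}]$ under the canonical embedding $\kk[S_{|\lambda_i|}] \hookrightarrow \End([|\lambda_i|])$. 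Tensoring over $i$ gives
\[
F^{\lambda_1}(Y_1) \otimes \dots \otimes F^{\lambda_r}(Y_r) = \im(E), \qquad E := \bigl( \tilde E_1^{\otimes |\lambda_1|} \otimes \dots \otimes \tilde E_r^{\otimes |\lambda_r|} \bigr) \cdot \overline{(e_{\lambda_1}, \dots, e_{\lambda_r})},
\]
where $\overline{(e_{\lambda_1}, \dots, e_{\lambda_r})}$ is the image in $\End([n])$ of $(e_{\lambda_1}, \dots, e_{\lambda_r}) \in \kk[S_{|\lambda_1|}] \times \dots \times \kk[S_{|\lambda_r|}] \hookrightarrow \kk[S_n]$.

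Next I reduce $E$ modulo $\cI_{<n}$. Under the isomorphism $\End_{\cD_n/\cD_{n-1}}([n]) \cong P_n \rtimes \kk[S_n]$ of \Cref{prop::quotient-is-crossed-product}, the image of $\tilde E_i$ equals $E_i \in P_1$ by construction, and the symmetric-group factor maps to $(e_{\lambda_1}, \dots, e_{\lambda_r}) \in \kk[S_n]$. Combining these identifications with \Cref{lem::Elambda}, I obtain
\[
\pi(E) = \bigl( E_1^{\otimes |\lambda_1|} \otimes \dots \otimes E_r^{\otimes |\lambda_r|} \bigr) \otimes (e_{\lambda_1}, \dots, e_{\lambda_r}) = E_{\lambda_1, \dots, \lambda_r},
\]
the primitive idempotent of \Cref{def:Elambda} that represents $X_{\lambda_1, \dots, \lambda_r}$ by \Cref{prop::indecomposables}.

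Finally, I apply \Cref{lem::indec-direct-summand} to the quotient functor $\pi\colon \cD_n \to \cD_n/\cD_{n-1}$ and the object $\im(E)$: every indecomposable summand $Z$ of $\im(E)$ either satisfies $\pi(Z) \cong 0$, in which case $Z \in \cD_{n-1}$, or maps to an indecomposable summand of $\pi(\im(E)) = \im(E_{\lambda_1, \dots, \lambda_r})$ with the same multiplicity as $Z$ in $\im(E)$. Since $E_{\lambda_1, \dots, \lambda_r}$ is primitive by \Cref{cor::idempotents-concrete}, the image $\im(E_{\lambda_1, \dots, \lambda_r})$ is itself indecomposable and occurs with multiplicity $1$; the corresponding summand of $\im(E)$ must therefore be isomorphic to $X_{\lambda_1, \dots, \lambda_r}$ and appear exactly once. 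All other summands lie in $\cD_{n-1}$, yielding the claimed decomposition. No serious obstacle arises; the one delicate point is the commutativity argument that turns the Schur-functor idempotent for each $F^{\lambda_i}(Y_i)$ into a plain product, which is precisely what ensures that $\pi(E)$ equals $E_{\lambda_1, \dots, \lambda_r}$ on the nose rather than merely up to conjugation.
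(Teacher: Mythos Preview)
Your proof is correct and follows essentially the same approach as the paper: realize $F^{\lambda_1}(Y_1)\otimes\dots\otimes F^{\lambda_r}(Y_r)$ as the image of the idempotent $(\bigotimes_i \tilde E_i^{\otimes |\lambda_i|})\cdot\overline{(e_{\lambda_1},\dots,e_{\lambda_r})}$, observe that its reduction modulo $\cI_{<n}$ is $E_{\lambda_1,\dots,\lambda_r}$, and conclude via \Cref{lem::indec-direct-summand}. Your exposition is more detailed on why the idempotent has the claimed form (the commutativity argument) and on extracting the decomposition from \Cref{lem::indec-direct-summand}, but the substance is the same.
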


\begin{proof} We first note that the tensor product $F^{\l_1}(Y_1)\otimes\dots\otimes F^{\l_r}(Y_r)$ is the image of the idempotent $(\bigotimes_i \tilde E_i^{\o |\l_i|})\o (e_{\l_1},\dots,e_{\l_r})$ in $\End([1])^{\otimes n}\rtimes S_n\subseteq \End([n])$, with primitive idempotents $\tilde E_i$ as in the proof of \Cref{lem:Y1}. Note the idempotence of these elements can be verified as in \Cref{lem::Elambda}.  Hence, $\pi(F^{\l_1}(Y_1)\otimes\dots\otimes F^{\l_r}(Y_r))\cong \pi(X_{\l_1,\dots,\l_r})$ in $\cD_n/\cD_{n-1}$, with $\pi: \cD_n \rightarrow \cD_n/\cD_{n-1}$ the natural projection functor. This implies the assertion with \Cref{lem::indec-direct-summand}.
\end{proof}

\subsection{Tensor product decomposition}

In this section, $\alpha\in \kk[[t]]$ for $\kk$ a splitting field for $u_\alpha$. We will relate the classification of indecomposable objects in $\DCob{\alpha}$ to the tensor decomposition of these categories obtained in \cite{KKO}. If $u_\alpha$ has a unique root, we prove the following.

\begin{corollary}\label{Cor:uniqueroot}
Assume that $u_\alpha$ has a unique root in a splitting field $\kk$. Then there are isomorphisms of graded rings
$$\gr \Gr(\DCob{\alpha})\cong \Gr(\Symu{\alpha})\cong \Sym^p.$$
This isomorphism is given by  $[X_\lambda]\mapsto [V_\lambda]$ for the respective indecomposable objects labeled by a ($p$-regular) Young diagram $\lambda$.
\end{corollary}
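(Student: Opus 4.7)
The plan is to chain together earlier results in a direct way. First, \Cref{thm:KoDCob} immediately supplies the graded ring isomorphism $\gr\Gr(\DCob{\alpha}) \cong \Gr(\Symu{\alpha})$, so no further work is needed for that half of the statement.

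For the second isomorphism, I would apply \Cref{cor:removemulti} to get $\Gr(\Symu{\alpha}) \cong \Gr(\cS_{u_\alpha'})$, where $u_\alpha'$ denotes the product of the distinct irreducible factors of $u_\alpha$. The hypothesis that $u_\alpha$ has a unique root $c$ in the splitting field $\kk$ forces $u_\alpha'(t) = t - c$, a linear polynomial. Consequently the commutative rings $P_n' := \kk[x_1,\ldots,x_n]/(x_1 - c, \ldots, x_n - c)$ entering the definition of $\cS_{u_\alpha'}$ all collapse to $\kk$, the induced action of $S_n$ on $P_n'$ is trivial, and the crossed products $P_n' \rtimes \kk[S_n]$ reduce to group algebras $\kk[S_n]$. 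Thus $\cS_{u_\alpha'} \simeq \bigoplus_{n \geq 0} \rfproj{\kk[S_n]}$ as graded monoidal categories, and passing to Grothendieck rings yields $\Gr(\cS_{u_\alpha'}) \cong \Sym^p$ directly from \Cref{def:Sym}.

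For the concrete labeling $[X_\lambda] \mapsto [V_\lambda]$, I would trace the primitive idempotent attached to $X_\lambda$ through the chain of identifications. By \Cref{prop::indecomposables} and \Cref{def::XY}, $X_\lambda$ corresponds (with $r=1$) to the idempotent $E_\lambda = E_1^{\otimes |\lambda|} \otimes e_\lambda \in P_n \rtimes \kk[S_n]$ of \Cref{cor::idempotents-concrete}, where $E_1$ is the unique lift of the primitive idempotent of $P_1/\rad(P_1) \cong \kk$. Projecting modulo the radical ideal $\mathcal{I}$ of \Cref{corollary:family_mod_rad}, the tensor factor $E_1^{\otimes n}$ becomes the unit $1 \in P_n' = \kk$, so $E_\lambda$ is identified with $e_\lambda \in \kk[S_n]$, whose associated indecomposable projective under the equivalence of \Cref{remark:KS_and_proj} is precisely $V_\lambda = \kk[S_n] e_\lambda$.

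There should be no serious obstacles: the heavy lifting has already been done in \Cref{thm:KoDCob} and \Cref{cor:removemulti}, and the argument is essentially a bookkeeping specialization to the single-root case. The only point requiring any care is verifying that the bijection of indecomposables induced by \Cref{corollary:radicals_and_KS_homfinite}\slash\Cref{lemma:radical_KS} (used implicitly in \Cref{cor:removemulti}) is compatible with the explicit labelings on both sides, which is immediate from the description of $E_\lambda$ above.
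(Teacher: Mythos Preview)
Your proposal is correct and follows essentially the same route as the paper: invoke \Cref{thm:KoDCob} for the first isomorphism, then \Cref{cor:removemulti} together with the observation that $u_\alpha'$ is linear so that $\cS_{u_\alpha'}\simeq\bigoplus_{n\geq 0}\rfproj{\kk[S_n]}$ and \Cref{def:Sym} gives $\Sym^p$. Your additional paragraph tracing the idempotent $E_\lambda$ through the radical quotient to verify the explicit labeling $[X_\lambda]\mapsto[V_\lambda]$ is correct and more explicit than the paper, which leaves this implicit.
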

\begin{proof}
The first isomorphism follows from \Cref{thm:KoDCob}. The second isomorphism follows from \Cref{cor:removemulti} and the definition of $\Sym^p$ in \Cref{def:Sym} since, by assumption, $u_\alpha$ has a unique root, hence the product of all its distinct irreducible factors $u'_{\alpha}$ is a linear polynomial, and $\Gr(\mathcal{S}_{u'_\alpha}) \cong \Sym^p$.
\end{proof}

To address the case when $u_\alpha$ has multiple roots, we recall the following result. 

\begin{theorem}[{\cite{KKO}*{Proposition 2.14, 2.16}}]\label{thm:KKO}
Let $\alpha$ be a rational function and choose a partial fraction decomposition 
\begin{equation}\label{eq:partialdec}
\alpha(t)=\sum_{i=1}^l\frac{p_i(t)}{q_i(t)}+\alpha_0(t), \qquad \alpha_0(t)\in \kk[t],\quad \deg p_i(t)<\deg q_i(t),
\end{equation}
where the $q_i(t)$ have a unique zero.
Denote $\alpha_i:=p_i(t)/q_i(t)$, for $i=1,\ldots, l$.
Then there is an equivalence of Karoubian tensor categories
\begin{equation}
    F_\alpha^{D}\colon \DCob{\alpha}\isomorph \boxtimes_{i=0}^l \DCob{\alpha_i}, \qquad [1]\longmapsto \bigoplus_i [0]^{\boxtimes (i-1)}\boxtimes[1]\boxtimes[0]^{\boxtimes(n-i)}.
\end{equation}
\end{theorem}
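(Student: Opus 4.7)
The plan is to construct $F_\alpha^D$ via the universal property established in \Cref{prop:universal}, and then to verify it is an equivalence. By that proposition, the datum of a $\kk$-linear symmetric monoidal functor $\DCob{\alpha}\to \cS := \boxtimes_{i=0}^l \DCob{\alpha_i}$ is equivalent to a commutative Frobenius algebra $A$ in $\cS$ whose handle operator $x_A := \mu\Delta$ satisfies $u_\alpha(x_A)=0$ and whose closed genus $n$ surface $s_n(A)$ equals $\alpha_n\cdot\id_{\one}$ for small enough $n$. The natural candidate is
\[
A := \bigoplus_{i=0}^{l} A_i, \qquad A_i := [0]^{\boxtimes i}\boxtimes[1]\boxtimes[0]^{\boxtimes(l-i)},
\]
the direct sum of the distinguished Frobenius algebras from each factor (extended by tensor units in the remaining factors). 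In any additive symmetric monoidal category, a direct sum of commutative Frobenius algebras is a commutative Frobenius algebra with orthogonal block structure, so in particular $x_A = \bigoplus_i x_{A_i}$ and each $x_{A_i}$ acts only on the $i$-th block.

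To verify the two conditions, I would first establish the polynomial identity $u_\alpha(t) = \prod_{i=0}^{l} u_{\alpha_i}(t)$ directly from the formula $u_\beta(t) = t^{k_\beta} q_\beta(t^{-1})$. Writing $q_i(t) = (1-a_i t)^{m_i}$ for the unique zero of $q_i$, one has $q(t) = \prod_{i\geq 1}q_i(t)$ and $u_\alpha(t) = t^{\deg\alpha_0 + 1}\prod_{i\geq 1}(t-a_i)^{m_i}$ when $\alpha_0\neq 0$ (without the leading $t$-power otherwise), while $u_{\alpha_i}(t) = (t-a_i)^{m_i}$ for $i\geq 1$ and $u_{\alpha_0}(t) = t^{\deg\alpha_0 + 1}$. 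Since $u_{\alpha_j}(x_{A_j}) = 0$ in $\DCob{\alpha_j}$, each factor $u_{\alpha_j}$ evaluated at $x_A$ vanishes on the block $A_j$; hence $u_\alpha(x_A)$ vanishes on all of $A$. Additivity of the trace over the orthogonal blocks yields $s_n(A) = \sum_{i=0}^l s_n(A_i) = \sum_{i=0}^l (\alpha_i)_n = \alpha_n$ by the partial fraction decomposition. Thus \Cref{prop:universal} produces the $\kk$-linear symmetric monoidal functor $F_\alpha^D$ with $F_\alpha^D([1]) = A$.

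It remains to prove $F_\alpha^D$ is an equivalence. Essential surjectivity is immediate: each $A_i$ is a Karoubi summand of $A$, and using \Cref{lemma:summands_of_one} to realize $[0]$ as a summand of $[1]$ in each $\DCob{\alpha_i}$ (when $\alpha_i\neq 0$), every pure-tensor object $[n_0]\boxtimes\cdots\boxtimes[n_l]$ appears as a summand of some $F_\alpha^D([n])$, and by Karoubi closure these generate $\cS$. For fully faithfulness, the cleanest route is to compare indecomposables via \Cref{prop::indecomposables}: the indecomposables in $\DCob{\alpha}$ are labeled by $(\cha\kk)$-regular Young-diagram tuples $(\lambda_1,\ldots,\lambda_r)$ indexed by the distinct zeros of $u_\alpha$, and these zeros split cleanly among the factors $u_{\alpha_i}$ of $\prod_i u_{\alpha_i}$. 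The idempotents $E_{\lambda_1,\ldots,\lambda_r}$ of \Cref{def:Elambda} factor along the decomposition $P_1 = \bigoplus_i L_i$ of local subalgebras, with each $L_i$ supported on the image of the $i$-th factor $\DCob{\alpha_i}$ under $F_\alpha^D$. This gives a bijection between indecomposables of $\DCob{\alpha}$ and external box products of indecomposables of the $\DCob{\alpha_i}$, in agreement with \Cref{corollary:grothendieck_dec} applied iteratively.

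The main obstacle is making this compatibility of idempotents precise, i.e., verifying that $F_\alpha^D$ does send each local summand $L_i$ of the endomorphism algebra of $[1]$ (modulo lower-filtration morphisms) into the $i$-th tensor factor. Concretely, on the filtered quotient $\End([n])/\cI_{<n}\cong P_n\rtimes\kk[S_n]$, the multinomial expansion
\[
A^{\otimes n} \;=\; \bigoplus_{n_0+\cdots+n_l=n}\binom{n}{n_0,\ldots,n_l}\, A_0^{\otimes n_0}\otimes\cdots\otimes A_l^{\otimes n_l}
\]
in $\cS$ must be identified with the orbit decomposition used in \Cref{cor::combinatorics-idempotents-Sn}. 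Once established, fully faithfulness follows since $F_\alpha^D$ induces an injection on Grothendieck groups sending a basis of indecomposables to a basis of indecomposables, and the dimensions of morphism spaces match on both sides by the cobordism basis of \Cref{thm:universal-prop}.
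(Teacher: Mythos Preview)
The paper does not prove this theorem; it is quoted from \cite{KKO}*{Propositions 2.14, 2.16} and used as a black box. So there is no ``paper's proof'' to compare against, only the original argument in \cite{KKO}.

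Your construction of $F_\alpha^D$ via \Cref{prop:universal} is correct, and the verification of $u_\alpha(x_A)=0$ and $s_n(A)=\alpha_n$ from the factorization $u_\alpha=\prod_i u_{\alpha_i}$ and the additivity of traces is exactly the right idea. Essential surjectivity is also fine.

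The gap is in full faithfulness. Knowing that $F_\alpha^D$ induces a bijection on isomorphism classes of indecomposables and that hom-space dimensions agree abstractly does \emph{not} show that the maps $\Hom(X,Y)\to\Hom(F_\alpha^D X,F_\alpha^D Y)$ induced by $F_\alpha^D$ are isomorphisms; you would at minimum need faithfulness of $F_\alpha^D$ first, and that is not automatic for a monoidal functor out of a Karoubian envelope. The clean fix, and the approach taken in \cite{KKO}, is to construct an inverse functor directly: decompose $[1]\in\DCob{\alpha}$ via the orthogonal idempotents $E_0,\ldots,E_l\in P_1=\kk[x]/(u_\alpha(x))$ corresponding to the local factors, check that each summand $([1],E_i)$ is a commutative Frobenius algebra in $\DCob{\alpha}$ satisfying the defining relations of $\DCob{\alpha_i}$, and invoke \Cref{prop:universal} again to obtain functors $\DCob{\alpha_i}\to\DCob{\alpha}$. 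These assemble (via the universal property of $\boxtimes$) into a functor $G\colon\boxtimes_i\DCob{\alpha_i}\to\DCob{\alpha}$, and one checks $G\circ F_\alpha^D$ and $F_\alpha^D\circ G$ are naturally isomorphic to identities on the generating Frobenius algebras, which suffices by the universal properties. This avoids any appeal to the classification of indecomposables.
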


Note that if $\alpha_0= 0$, then $\DCob{\alpha_0}\simeq \lvec{\kk}$ and $\DCob{\alpha_0}$ may be omitted from the partial fraction and tensor product  decomposition. To explain how the classification in \Cref{prop::indecomposables} is compatible with the above tensor product decomposition, recall the following elementary observations on partial fraction decompositions.

\begin{lemma}\label{lemma:partialfrac} Given coprime $p,q\in \kk[t]$  such that $q(0)\neq 0$ and $\alpha=p/q$, define $u_\alpha$ as in Equation \eqref{eq:ualpha}.
\begin{enumerate}
\item[(i)]  $\rho=0$ is a zero of $u_\alpha(t)$ if and only if $\deg p\geq \deg q$. In this case, its multiplicity is
$$\mult_0 = \deg p-\deg q+1.$$
\item[(ii)] Let $\rho\neq 0$ be a root of $q(t)$ with multiplicity $\mult_\rho$. Then $\rho^{-1}$ is a root of $u_\alpha(t)$ with the same multiplicity.
\item[(iii)] The assignment $\rho \mapsto \rho^{-1}$ defines a bijection between roots of $q(t)$ and non-zero roots of $u_\alpha(t)$.
\item[(iv)] There is a bijection between the zeros $\rho\neq 0$ of $u_\alpha(t)$ and partial fractions $\alpha_i(t)$, $i=1,\ldots, l$, in Equation \eqref{eq:partialdec} sending $\rho$ to the unique partial faction with denominator $(t-\rho^{-1})^{\mult_\rho}$.%
\item[(v)] The polynomial $\alpha_0$ is zero if and only if $\deg p<\deg q$. Otherwise, $\deg\alpha_0=\deg p-\deg q$.
\end{enumerate}
\end{lemma}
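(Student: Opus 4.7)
The plan is to unwind the definition $u_\alpha(t) = t^k q(t^{-1})$ with $k = \max\{\deg p + 1, \deg q\}$ and keep track of how roots and degrees transform under the substitution $t \mapsto t^{-1}$ followed by multiplication by $t^k$. All five parts are elementary consequences of this observation, so the main task is bookkeeping rather than overcoming any single obstacle.

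For part (i), I would write $u_\alpha(t) = t^{k-m}\cdot t^m q(t^{-1})$ where $m = \deg q$, and note that $t^m q(t^{-1})$ has a nonzero constant term (equal to the leading coefficient of $q$). Hence the multiplicity of $0$ in $u_\alpha$ equals $k - m$, which is positive exactly when $\deg p + 1 > \deg q$, and in that case equals $\deg p - \deg q + 1$. For parts (ii) and (iii), working over a splitting field (which does not affect the stated bijection), factor $q(t) = c\prod_i (t-\rho_i)^{\mathrm{mult}_{\rho_i}}$ with all $\rho_i \neq 0$ since $q(0) \neq 0$, and observe
\[
  t^m q(t^{-1}) = c\prod_i (1 - \rho_i t)^{\mathrm{mult}_{\rho_i}} = c\prod_i(-\rho_i)^{\mathrm{mult}_{\rho_i}}(t - \rho_i^{-1})^{\mathrm{mult}_{\rho_i}}.
\]
Multiplying by the extra factor $t^{k-m}$ introduces (or does not introduce) a root at $0$ as described in (i) but preserves the factorization above. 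Reading off the nonzero roots gives (ii), and since every nonzero root of $u_\alpha$ must be a nonzero root of $t^m q(t^{-1})$, we obtain the bijection $\rho \mapsto \rho^{-1}$ of (iii).

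For part (iv), I would invoke the uniqueness of partial fraction decomposition over $\kk$ (or more precisely the standard result that once the irreducible factorization of the denominator is fixed, the partial fraction decomposition is unique up to the polynomial part): since the $q_i$ are chosen to have a unique zero $\sigma_i$, each $q_i$ is a power of $(t - \sigma_i)$ (up to a unit), with exponent equal to $\mathrm{mult}_{\sigma_i}$ of $\sigma_i$ in $q$. Mapping $\alpha_i \leftrightarrow \sigma_i$ gives a bijection between the partial fractions and the roots of $q$, which combined with (iii) yields (iv) with $\sigma_i = \rho^{-1}$. Finally, for (v), the polynomial part $\alpha_0$ is exactly the quotient in the Euclidean division $p = \alpha_0 q + r$ with $\deg r < \deg q$, so $\alpha_0 = 0$ iff $\deg p < \deg q$, and otherwise $\deg \alpha_0 = \deg p - \deg q$, as required. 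None of these steps presents a serious obstacle; the only minor subtlety is keeping the cases $\deg p < \deg q$ and $\deg p \geq \deg q$ straight, since the definition of $k$ changes between them.
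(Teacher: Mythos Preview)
Your proposal is correct and follows essentially the same approach as the paper: both arguments rest on writing $u_\alpha(t) = t^{k-m}\cdot t^m q(t^{-1})$ with $m=\deg q$, reading off the multiplicity of $0$ from the $t^{k-m}$ factor, obtaining (ii)--(iii) from the substitution $t\mapsto t^{-1}$ in $q$, and handling (iv)--(v) via the standard partial-fraction and Euclidean-division facts. Your version simply spells out more of the bookkeeping (e.g., the explicit factorization of $t^m q(t^{-1})$ over a splitting field) than the paper's terse proof.
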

\begin{proof}
Note that by definition, 
$$u_\alpha(t)=t^{k-m}q(t^{-1})t^m,$$
where $m=\deg q$ and $k=\max\{\deg q, \deg p+1\}$. Thus, $\rho=0$ is a zero of $u_\alpha(t)$ if and only if $\deg p+1-\deg q>0$. This proves Part (i) and the statement in Part (v) follows from Euclidean division. The form of $u_\alpha$, expressed in terms of $q(t^{-1})$ also directly implies the statements in Part (ii)--(iii).
Part (iv) follows from the above combined with the elementary fact that the denominators in the partial factions are in bijection with the zeros of $q(t)$. 
\end{proof}

We now equip $\boxtimes_{i=0}^l \DCob{\alpha_i}$ with the natural filtration on tensor products of filtered Krull--Schmidt categories from \Cref{rem::natural-filtration}.

\begin{lemma}\label{lem:filtcomp}
    The equivalence $F^D_\alpha$ from \Cref{thm:KKO} is compatible with the filtrations and hence induces an equivalence
    $$\gr F^D_\alpha\colon    \gr \DCob{\alpha}\isomorph \boxtimes_{i=0}^l \gr \DCob{\alpha_i}.$$
\end{lemma}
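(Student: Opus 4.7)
The plan is to verify the hypothesis of \Cref{lem:filt-comp-equiv}(1): for any object $X$ of $\DCob{\alpha}$, we have $X \in \cD_n$ if and only if $F^D_\alpha(X)$ lies in the $n$-th piece of the natural filtration on $\boxtimes_{i=0}^l \DCob{\alpha_i}$. The induced equivalence on associated graded categories will then follow immediately from \Cref{lem:filt-comp-equiv}(2).

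For the forward implication, I would use that $F^D_\alpha$ is a symmetric monoidal equivalence together with the explicit formula in \Cref{thm:KKO}. Each summand of $F^D_\alpha([1])$ is of the form $[0]^{\boxtimes\cdots}\boxtimes[1]\boxtimes[0]^{\boxtimes\cdots}$, which lies in filtration degree $1$ of the natural filtration on the tensor product. Consequently, $F^D_\alpha([n]) \cong F^D_\alpha([1])^{\otimes n}$ decomposes, after distributing the tensor product, as a direct sum (with multinomial multiplicities) of objects of the form $[a_0]\boxtimes[a_1]\boxtimes\cdots\boxtimes[a_l]$ with $a_0+\cdots+a_l=n$, each of which sits in filtration degree $n$. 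Since any $X \in \cD_n$ is a summand of a finite direct sum of copies of $[0],[1],\dots,[n]$, it follows that $F^D_\alpha(X)$ lies in filtration degree at most $n$.

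For the reverse implication, I want to show that every generating object $[a_0]\boxtimes\cdots\boxtimes[a_l]$ with $\sum_j a_j \le n$ occurs as a direct summand of $F^D_\alpha([n])$. The case $\sum_j a_j = n$ is immediate from the decomposition above. For $\sum_j a_j < n$, I would invoke \Cref{lem:en} and \Cref{lemma:summands_of_one} to show that $[b]$ is a direct summand of $[b+c]$ in each $\DCob{\alpha_j}$ whenever $b+c\geq 1$ (here \Cref{lemma:summands_of_one} applies because the partial fraction factors $\alpha_i = p_i/q_i$ are nonzero by construction, while the polynomial part $\alpha_0$ is omitted from the tensor product if it vanishes). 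Padding the difference $n-\sum_j a_j$ into any single component then shows $[a_0]\boxtimes\cdots\boxtimes[a_l]$ is a summand of some $[a_0']\boxtimes\cdots\boxtimes[a_l']$ with $\sum_j a_j'=n$, which is in turn a summand of $F^D_\alpha([n])$. Since $F^D_\alpha$ is fully faithful and essentially surjective, this establishes the reverse implication.

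The substantive content is essentially the observation that $F^D_\alpha$ is a monoidal equivalence sending the filtration generator $[1]$ to an object of filtration degree exactly one whose tensor powers suffice to generate all filtered pieces of the target; the main bookkeeping obstacle is keeping track of whether $\alpha_0=0$ (in which case the relevant tensor factor is absent from the decomposition altogether), but this is handled cleanly by the conventions of \Cref{thm:KKO}. No further results beyond those already established are needed.
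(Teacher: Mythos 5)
Your proof is essentially the paper's argument for the forward inclusion: both use the explicit multinomial decomposition of $F^D_\alpha([m])$ into objects $[a_0]\boxtimes\cdots\boxtimes[a_l]$ with $\sum_j a_j=m$ to get $F^D_\alpha(\cD_n)\subseteq\cD'_n$, where $\cD'_\bullet$ is the filtration of \Cref{rem::natural-filtration}, and both then conclude via \Cref{lem:filt-comp-equiv}. For the reverse inclusion, however, the paper is simpler than your padding argument: since $[m]\in\cD_n$ for every $m\le n$ (\Cref{def::filtration-DCob}), it suffices to observe that each generator $[a_0]\boxtimes\cdots\boxtimes[a_l]$ of $\cD'_n$ with $\sum_j a_j=m\le n$ already occurs as a direct summand of $F^D_\alpha([m])$; there is no need to realize it inside $F^D_\alpha([n])$ itself, so \Cref{lem:en} and \Cref{lemma:summands_of_one} are not used and the case distinction around $\alpha_0=0$ disappears. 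Your padding route does work, but only if you are careful to pad into a component whose parameter is nonzero (padding into a factor $\DCob{0}\simeq\lvec{\kk}$ fails, since $[0]$ is not a summand of $[c]\cong 0$ there); your parenthetical about omitting a vanishing $\alpha_0$ essentially covers this, so I would call it an avoidable detour rather than an error.

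The one genuine omission is at the end. \Cref{lem:filt-comp-equiv}(2) only gives an equivalence $\gr\DCob{\alpha}\simeq\gr\big(\boxtimes_{i=0}^l\DCob{\alpha_i}\big)$, where the target carries the tensor-product filtration; to arrive at the category $\boxtimes_{i=0}^l\gr\DCob{\alpha_i}$ asserted in the lemma you still need the identification $\gr\big(\boxtimes_{i=0}^l\DCob{\alpha_i}\big)\simeq\boxtimes_{i=0}^l\gr\DCob{\alpha_i}$, i.e.\ \Cref{cor:tensorfiltrations}(1) (iterated over the factors), which is exactly the additional citation the paper makes at this point. So your closing claim that nothing beyond \Cref{lem:filt-comp-equiv}(2) is needed is not quite accurate, though the missing step is a one-line appeal to a result already established in the paper.
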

\begin{proof} We denote the tensor product filtration on $\DCob{\alpha}\simeq \boxtimes_i \DCob{\alpha_i}$ from \Cref{rem::natural-filtration} by $\cD'_\bullet$.
Note that the equivalence $F^D_\alpha$ sends
$$
[m] \longmapsto
\bigoplus_{i_0,\dots,i_l\geq0, i_0+\dots+i_l=m} \binom{m}{i_1,\dots,i_l} [i_0]\boxtimes\dots\boxtimes[i_l]
,
$$
for all $0\leq m\leq n$, so $F^D_\alpha (\cD_n)\subseteq\cD'_n$. Here, we use the multinomial coefficient $\binom{m}{i_1,\dots,i_l}=m!/(i_1!\dots i_l!)$, which is the number of ways to distribute $m$ objects into $l$ bins of sizes $i_1,\dots,i_l$.

However, by definition, $\cD'_n$ is the subcategory generated by $\cD_{i_0}\boxtimes\dots\boxtimes \cD_{i_l}$ for all tuples $(i_0,\dots,i_l)$ with $i_0+\dots+i_l\leq n$.
Hence, as any $[i_0]\boxtimes\dots\boxtimes[i_l]$ is a direct summand in the right-hand side, $\cD'_n\subseteq F^D_\alpha (\cD_n)$. This implies the equivalence $F^D_\alpha (\cD_n)$ is compatible with filtrations and $\gr F^D_\alpha (\cD_n)$ gives an equivalence of categories by \Cref{lem:filt-comp-equiv}. Now the result follows using \Cref{cor:tensorfiltrations}(1).
\end{proof}

The following proposition describes how the indecomposable objects behave under the tensor product decomposition from \Cref{thm:KKO}.

\begin{proposition}\label{prop:tensordec-indec} Assume that $u=u_\alpha$ splits over $\kk$. If $\alpha_0\neq 0$, then under the tensor equivalence $F_\alpha^D$ from \Cref{thm:KKO}, the indecomposable object $X_{\l_0,\dots,\l_l}$ of $\DCob{\alpha}$ from \Cref{prop::indecomposables} labeled by partitions $\l_0,\dots,\l_l$  is mapped to the object
$$X_{\l_0}\boxtimes \ldots \boxtimes X_{\l_l}.$$
Here, $X_{\l_i}$ is the indecomposable object of $\DCob{\alpha_i}$ labeled by $V_{\l_i}\in \Xk{S_{|\lambda_i|}}$.

If $\alpha_0=0$, we omit $\l_0$ and $X_{\l_0}$ as $\DCob{\alpha_0}\simeq \lvec{\kk}$.
\end{proposition}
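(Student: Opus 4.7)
My plan is to use the compatibility of $F_\alpha^D$ with the filtrations established in \Cref{lem:filtcomp}, combined with \Cref{prop::Flambda}, which identifies $X_{\l_0,\dots,\l_l}$ as the top-degree indecomposable summand of a tensor product of Schur images. Since $F_\alpha^D$ is a filtered equivalence, it sends the indecomposable $X_{\l_0,\dots,\l_l}\in \cD_n\setminus\cD_{n-1}$ to an indecomposable in $\cD'_n\setminus \cD'_{n-1}$, where $\cD'_\bullet$ denotes the tensor-product filtration on $\boxtimes_{i=0}^l \DCob{\alpha_i}$ from \Cref{rem::natural-filtration} and $n=|\l_0|+\dots+|\l_l|$. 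It therefore suffices to exhibit an object containing both $X_{\l_0,\dots,\l_l}$ on the source side and the claimed image $X_{\l_0}\boxtimes\dots\boxtimes X_{\l_l}$ on the target side as the respective unique top-degree indecomposable summands.

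First I would identify $F_\alpha^D(Y_j)$ explicitly for each $0\leq j\leq l$. The formula from \Cref{thm:KKO} gives $F_\alpha^D([1])=\bigoplus_j [0]^{\boxtimes j}\boxtimes [1]\boxtimes [0]^{\boxtimes(l-j)}$, and since each $u_{\alpha_j}$ has a unique root, \Cref{Cor:uniqueroot} together with \Cref{lem:Y1} applied in each factor shows that $[1]\in\DCob{\alpha_j}$ decomposes as $X_{(1)}\oplus(\text{summand in }\cD_0)$. After aligning the ordering of distinct roots of $u_\alpha$ (and hence the decomposition $P_1=L_0\oplus\dots\oplus L_l$) with the indexing of the partial fractions, I obtain $F_\alpha^D(Y_j)\cong [0]^{\boxtimes j}\boxtimes X_{(1)}^{(j)}\boxtimes [0]^{\boxtimes(l-j)}$, where $X_{(1)}^{(j)}$ denotes the indecomposable $X_{(1)}$ inside $\DCob{\alpha_j}$.

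Next, using that $F_\alpha^D$ is a symmetric monoidal equivalence and hence commutes with the Schur-type functors $F^{\l_j}$ from \Cref{sec::Young}, and that these Schur functors commute with the external tensor product $\boxtimes$ (since the symmetric braiding on $A\boxtimes B$ is slot-wise), I would compute
\[
F_\alpha^D\bigl(F^{\l_0}(Y_0)\otimes\dots\otimes F^{\l_l}(Y_l)\bigr)
\cong
F^{\l_0}(X_{(1)}^{(0)})\boxtimes\dots\boxtimes F^{\l_l}(X_{(1)}^{(l)}),
\]
where in each slot the tensor product collapses because $[0]$ is the tensor unit in each $\DCob{\alpha_i}$. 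Applying \Cref{prop::Flambda} inside each single-root factor $\DCob{\alpha_j}$ gives $F^{\l_j}(X_{(1)}^{(j)})\cong X_{\l_j}\oplus W_j$ with $W_j\in \cD_{|\l_j|-1}(\DCob{\alpha_j})$. Expanding the external tensor product and tracking filtration degrees then shows that $X_{\l_0}\boxtimes\dots\boxtimes X_{\l_l}$ is the unique degree-$n$ indecomposable summand, all remaining summands lying in strictly smaller filtration degree.

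On the source side, \Cref{prop::Flambda} identifies $X_{\l_0,\dots,\l_l}$ as the unique top-degree indecomposable summand of $F^{\l_0}(Y_0)\otimes\dots\otimes F^{\l_l}(Y_l)$; matching this with the computation above under the filtered equivalence $F_\alpha^D$ then yields $F_\alpha^D(X_{\l_0,\dots,\l_l})\cong X_{\l_0}\boxtimes\dots\boxtimes X_{\l_l}$. The case $\alpha_0=0$ is handled identically, simply suppressing the $i=0$ index. The main obstacle I anticipate is verifying cleanly that Schur functors commute both with the tensor functor $F_\alpha^D$ and with the external tensor product $\boxtimes$, as well as fixing a consistent ordering of roots across the two descriptions of indecomposables; both issues should, however, reduce to the naturality of the symmetric braiding.
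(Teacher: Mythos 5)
Your proposal is correct and follows essentially the same route as the paper's proof: identify $F_\alpha^D(Y_i)$ via the filtration compatibility of \Cref{lem:filtcomp}, push the tensor product of Schur images through the symmetric monoidal equivalence, apply \Cref{prop::Flambda} factorwise, and match the unique top-degree indecomposable summands on both sides. The only ingredient you leave implicit is that $X_{\l_0}\boxtimes\dots\boxtimes X_{\l_l}$ is indeed indecomposable in the external tensor product, which the paper makes explicit by invoking \Cref{theorem:grothendieck_dec} together with \Cref{cor::splitting-field-DCob}.
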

\begin{proof} We first consider the case $\alpha_0\neq0$. By \Cref{lemma:partialfrac}, $u_{\alpha_i}$ has a unique zero. Hence, for all $0\leq i\leq l$, we denote by $Y_1^{(i)}$ the unique indecomposable summand of the tensor generator $[1]^{(i)}$ of $\DCob{\alpha_i}$ which does not lie in degree $0$ of the filtration of $\DCob{\alpha_i}$, cf. \Cref{lem:Y1} and \Cref{Cor:uniqueroot}. As $[1]\cong Y_0\oplus\dots\oplus Y_l\oplus Y$, where $Y$ is an object in degree $0$ of the filtration, is mapped to $\bigoplus_i [0]^{\boxtimes (i-1)}\boxtimes [1]^{(i)}\boxtimes[0]^{\boxtimes (n-i)}$, it follows that $Y_i\mapsto [0]^{\boxtimes (i-1)}\boxtimes Y_1^{(i)}\boxtimes[0]^{\boxtimes (n-i)}$ using \Cref{lem:filtcomp}.
Now by \Cref{prop::Flambda}, $X_{\l_0,\dots,\l_l}$ is the unique indecomposable summand of $F^{\l_0}(Y_0)\o\dots\o F^{\l_l}(Y_l)$ which does not lie in degree $n-1$. Hence, its image under $F_\alpha^D$ must be the unique indecomposable summand $V$ 
of 
$$
\bigotimes_{i}
F^{\l_i}([0]^{\boxtimes (i-1)}\boxtimes Y_1^{(i)}\boxtimes[0]^{\boxtimes (n-i)}) \cong F^{\l_0}(Y_1^{(0)})\boxtimes\dots\boxtimes F^{\l_l}(Y_1^{(l)})
$$
which does not lie in degree $n-1$ of the tensor product filtration $\cD_\bullet'$ on $\boxtimes_i \DCob{\alpha_i}$ by \Cref{lem:filtcomp}. By  \Cref{prop::Flambda}, the object $X_{\l_i}$ is the unique indecomposable summand of $F^{\l_i}(Y_1^{(i)})$ in $\DCob{\alpha_i}$ which does not lie in  degree $|\l_i|-1$. Hence,  $X_{\l_0}\boxtimes\dots\boxtimes X_{\l_l}$ is this unique indecomposable summand $V$ (using \Cref{theorem:grothendieck_dec} and \Cref{cor::splitting-field-DCob}), which proves the assertion.

The case $\alpha_0=0$ can be treated similarly, omitting the factor $\DCob{\alpha_0}\simeq \lvec{\kk}$ in the $\boxtimes$-decomposition.
\end{proof}

We can now describe the Grothendieck ring of $\DCob{\alpha}$ as a product of the graded rings $\Sym^p=\Sym_\bullet^p$ from \Cref{def:Sym}. Recall that the case of a unique root is addressed in \Cref{Cor:uniqueroot}.

\begin{corollary}\label{corollary:main_thm_split_case} Assume that $u=u_\alpha$ splits over the field $\kk$ of characteristic $p$.
Then the tensor equivalence $F_\alpha^D$ from \Cref{thm:KKO} induces an isomorphism of Grothendieck rings
\begin{align*}
     \gr \Gr (\DCob{\alpha})\isomorph (\Sym^p)^{\otimes k}.
\end{align*}
If $\alpha_0\neq 0$, then $k=l+1$ and $[X_{\l_0,\dots,\l_l}]\longmapsto \Grel{\l_0}\otimes \ldots \otimes \Grel{\l_l}$. If $\alpha_0=0$, then $k=l$ and 
$[X_{\l_1,\dots,\l_l}]\longmapsto \Grel{\l_1}\otimes \ldots \otimes \Grel{\l_l}$.
\end{corollary}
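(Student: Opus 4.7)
The plan is to chain together four results already in the excerpt: \Cref{lem:filtcomp}, \Cref{theorem:iso_graded_rings}, \Cref{cor:tensorfiltrations}, and \Cref{Cor:uniqueroot}. First, I would pass to associated graded categories via \Cref{lem:filtcomp}, which promotes $F_\alpha^D$ to an equivalence
\[
\gr F_\alpha^D\colon \gr\DCob{\alpha} \isomorph \boxtimes_{i=0}^{l} \gr\DCob{\alpha_i}
\]
of $\kk$-linear monoidal Krull--Schmidt categories with gradings (dropping the $i=0$ factor in case $\alpha_0=0$, since then $\DCob{\alpha_0}\simeq\lvec{\kk}$). Taking additive Grothendieck rings on both sides and invoking \Cref{theorem:iso_graded_rings} on each side converts this into an isomorphism of graded rings with left-hand side $\gr\Gr(\DCob{\alpha})$.

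The next step is to split the right-hand side as a tensor product of Grothendieck rings. Iterating \Cref{cor:tensorfiltrations}(3) gives
\[
\Gr\bigl(\boxtimes_{i} \gr\DCob{\alpha_i}\bigr) \cong \bigotimes_{i} \gr\Gr(\DCob{\alpha_i}),
\]
provided the splitting-field hypothesis is satisfied on each iteration. This is where I would apply \Cref{cor::splitting-field-DCob}: the assumption that $u_\alpha$ splits over $\kk$ implies, via \Cref{lemma:partialfrac}, that each $u_{\alpha_i}$ also splits (its roots are either a single non-zero root of $u_\alpha$ for $i\geq 1$, or $u_{\alpha_0}(t)=t^{\deg\alpha_0+1}$ has the single root $0$ when $\alpha_0\neq 0$), so $\kk$ is a splitting field for every $\DCob{\alpha_i}$.

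Now \Cref{Cor:uniqueroot} applies to each factor, since by the observation above each $u_{\alpha_i}$ has a unique root. This yields $\gr\Gr(\DCob{\alpha_i})\cong\Sym^p$ with $[X_\lambda]\mapsto [\lambda]$, and hence the desired isomorphism $\gr\Gr(\DCob{\alpha}) \cong (\Sym^p)^{\otimes k}$ with $k=l+1$ if $\alpha_0\neq 0$ and $k=l$ otherwise. The explicit formula for the image of $[X_{\lambda_0,\ldots,\lambda_l}]$ is then a direct consequence of \Cref{prop:tensordec-indec}, which sends $X_{\lambda_0,\ldots,\lambda_l}$ to $X_{\lambda_0}\boxtimes\cdots\boxtimes X_{\lambda_l}$ under $F_\alpha^D$, combined with the identification $[X_{\lambda_i}]\leftrightarrow [\lambda_i]$ just obtained. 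I do not anticipate any real obstacle: all the substantial work --- filtration compatibility of the partial-fraction equivalence, the splitting behaviour of $\gr$ and $\Gr$ under $\boxtimes$ over a splitting field, and the single-root classification --- has been established in the preceding sections, and this corollary amounts to composing these pieces and tracking how the distinguished indecomposables are mapped.
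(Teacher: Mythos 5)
Your proposal is correct and follows essentially the same route as the paper's proof: it chains \Cref{lem:filtcomp}, \Cref{theorem:iso_graded_rings}, \Cref{cor:tensorfiltrations} together with \Cref{corollary:grothendieck_dec} (with the splitting-field hypothesis supplied by \Cref{cor::splitting-field-DCob} and \Cref{lemma:partialfrac}), then applies \Cref{Cor:uniqueroot} factorwise and \Cref{prop:tensordec-indec} for the map on indecomposables. No gaps.
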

\begin{proof}
By assumption, $\kk$ is a splitting field for $u_\alpha$, and hence for the category $\DCob{\alpha}$ and all $\DCob{\alpha_i}$ by \Cref{cor::splitting-field-DCob}.
Hence, we derive the following composition of isomorphisms of graded rings 
\begin{align*}
\gr\Gr (\DCob{\alpha})&\xrightarrow[\text{\Cref{theorem:iso_graded_rings}}]{\cong} \Gr(\gr \DCob{\alpha})\\ &\xrightarrow[\text{~\Cref{lem:filtcomp}~}]{\cong}
\Gr(\boxtimes_{i=0}^l \gr \DCob{\alpha_i})\\
&\xrightarrow[\text{\Cref{cor:tensorfiltrations}}]{\cong}
\Gr(\gr(\boxtimes_{i=0}^l  \DCob{\alpha_i}))\\
&\xrightarrow[\text{\Cref{theorem:iso_graded_rings}}]{\cong}
\gr\Gr(\boxtimes_{i=0}^l \DCob{\alpha_i})\xrightarrow[\text{\Cref{corollary:grothendieck_dec}}]{\cong}
\bigotimes_{i=0}^l\gr \DCob{\alpha_i}.
\end{align*}
For $i=1,\ldots, l$, the tensor factors  of the latter are given by $\Sym^p$ using \Cref{Cor:uniqueroot} as  each $u_{\alpha_i}$ has a unique root. If $\alpha_0\neq 0$, then the same holds for $i=0$, but if $\alpha_0=0$, then $\DCob{\alpha_0}\cong \lvec{\kk}$ and its Grothendieck ring is $\mZ$ which maybe omitted in the tensor product.  

The assignment on indecomposable object follows directly by combining the assignments in \Cref{prop:tensordec-indec} and \Cref{Cor:uniqueroot}.
\end{proof}

\section{Galois descent for categories and \texorpdfstring{$\DCob{\alpha}$}{DCob(alpha)} over a general field}
\label{sec:Galois-general-kk}

In this section, we extend our classification of indecomposable objects of $\DCob{\alpha}$ provided in \Cref{sec:classification} for a splitting field of $u_\alpha$ to general fields. To this end, we study Galois descent for Krull--Schmidt categories.

\subsection{Motivation}\label{subsection:a_first_example}

In \Cref{cor::combinatorics-idempotents-Sn}, we classified the primitive idempotents of crossed products for the quotient of a polynomial algebra by a polynomial splitting into linear factors. This facilitated our description of the indecomposable objects of $\DCob{\alpha}$ in the case where the polynomial $u_\alpha$ splits over the base field $\kk$ in \Cref{sec::classification-splitting-case}. 

We demonstrate the subtleties in the case when $u_\alpha$ does not split over $\kk$ with the following motivational example. Let
$B := \frac{\mR[x]}{\langle x^2 + 1 \rangle} \otimes_{\mR} \frac{\mR[x]}{\langle x^2 + 1 \rangle}$ and $R := B \rtimes S_2$, where $S_2$ acts via permutation of the factors. We want to determine the primitive idempotents in $R$. First, we note that
\begin{align*}
    B &\cong_{\mR} \frac{\mR[x,y]}{\langle x^2 + 1, y^2 + 1 \rangle} 
    \cong_{\mR} \frac{\mC[y]}{\langle y^2 + 1 \rangle} 
    \cong_{\mC} \frac{\mC[y]}{\langle y - i \rangle} \oplus \frac{\mC[y]}{\langle y + i \rangle} \cong_{\mC} \mC \oplus \mC .
\end{align*}
The action of the generator $\sigma \in S_2$
on $B$ is fully described by
$\sigma( 1 \otimes x ) = x \otimes 1$,
and we can follow these two elements through all these isomorphisms:
\[
x \otimes 1 \mapsto x \mapsto i \mapsto (i,i) \mapsto (i,i)
\]
and
\[
1 \otimes x \mapsto y \mapsto y \mapsto (y,y) \mapsto (i,-i) .
\]
It follows that $\sigma$ acts on $\mC \oplus \mC$ as
\[
(a,b) \mapsto ({a}, \overline{b}) ,
\]

and that 
\[R \cong (\mC \otimes_{\mR} \mR[S_2]) \oplus (\mC \rtimes S_2)
\cong \mC[S_2] \oplus (\mC \rtimes S_2)
\] as algebras, where $S_2$ acts on $\mC$ via conjugation.
Thus, we get two primitive idempotents from
$\mC[S_2]$, and another one from
the other summand due to the equivalence
\[
\lfmod{(\mC \rtimes S_2)} \rightarrow \lfmod{\mR}: V \mapsto V^{S_2} .
\]
So we have $3$ primitive idempotents in this case, which can be interpreted as follows: we have $3$ indecomposable objects in the 2-nd graded piece of the category $\Symu{\alpha}$ for every $\alpha$ such that $u_{\alpha} = t^2 + 1$. 
This is remarkable since in the case of a splitting $u_{\alpha}$, the number of irreducible objects in the second graded piece of $\Symu{\alpha}$ cannot be $3$: it is $2$ if $\deg( u_{\alpha} ) = 1$, it is $5$ if $\deg( u_{\alpha} ) = 2$, and for even higher degrees, the number only increases. This suggests that a generalization of our previous results to an arbitrary field needs further tools, which in this paper will be the theory of group actions on categories and Galois descent.

\subsection{Group actions on categories}\label{sec:groupactions}

In this section, we discuss the theory of strict group actions on categories and their associated
categories of equivariant objects.
We restrict to the strict case in order to simplify the coherence conditions.
For the non-strict case, we refer to \cite{EGNO}*{Section 2.7}).
In this subsection, $G$ denotes a finite group.

\begin{definition}
A \emph{strict (left) action} of $G$ on a $\kk$-linear category $\cC$
is given by a monoid homomorphism
$G \rightarrow \End( \cC ): g \mapsto T_g$, where
$\End( \cC )$ denotes the monoid of $\kk$-linear functors $\cC \rightarrow \cC$ with multiplication given by composition.
\end{definition}

\begin{example}\label{example:action_on_Kar}
  Let $G$ act strictly on $\cC$. 
  Then $G$ acts strictly on $\Kar( \cC )$:
    for $X \in \cC$ and $e$ an idempotent in $\End_{\cC}(X)$, we set $T_g( (X, e) ) = (T_g(X), T_g(e))$
    for all $g \in G$.
\end{example}

\begin{example}\label{ex:fieldext-actiononC}
  Let $\kk \subseteq \KK$ be a field extension, 
  and let $G$ act from the left on $\KK$ by $\kk$-algebra automorphisms.
  For any $\kk$-linear category $\cC$, we get a strict $G$-action on
  $\KK \otimes_{\kk} \cC$ by setting $T_g(X)=X$ and (the linear extension of) the formula
  \[T_g( X \xrightarrow{a \otimes \alpha} Y ) 
  = (X \xrightarrow{g(a) \otimes \alpha} Y)\]
  for all $\alpha \in \Hom_{\cC}( X, Y )$, $a \in \KK$, $g \in G$.
  Combining this example with \Cref{example:action_on_Kar} yields a strict action on $\cC^\KK$.
\end{example}

\begin{definition}
A \emph{$G$-equivariant functor} between categories equipped with strict $G$-actions
$( \cC, (T_g)_g )$ and $(\cD, (T_g)_g)$ 
consists of a functor $F: \cC \rightarrow \cD$
and isomorphisms $\varphi_X^g: F T_g(X) \rightarrow T_g F(X)$ natural in $X \in \cC$ for all $g \in G$,
such that 
\[
  \big(F  T_{gh}(X) \xrightarrow{\varphi^g_{T_h(X)}} T_g  F  T_h(X) \xrightarrow{T_g(\varphi^h_{X})} T_{gh}  F(X)\big) = \varphi^{gh}_X.
\]
It is part of a \emph{$G$-equivariant equivalence} if $F$
is part of an equivalence of categories.
\end{definition}

\begin{definition}
A \emph{$G$-equivariant object} of $( \cC, (T_g)_g )$ is given by the following data:
\begin{enumerate}
    \item an (underlying) object $X \in \cC$,
    \item isomorphisms $T_g(X) \xrightarrow{\alpha_g} X$ for all $g \in G$
\end{enumerate}
subject to the compatibility conditions
\begin{enumerate}
  \item $\alpha_e = \id_X$ for $e \in G$ the neutral element,
  \item $\alpha_h \circ T_h( \alpha_g ) = \alpha_{hg}$ for all $h, g \in G$.
\end{enumerate}
A morphism between $G$-equivariant objects $(X, (\alpha_g)_g)$
and $(Y, (\beta_g)_g)$
consists of a morphism $\gamma: X \rightarrow Y$
such that 
\[\gamma \circ \alpha_g = \beta_g \circ T_g( \gamma )\]
for all $g \in G$. We denote the resulting category of $G$-equivariant objects by $\cC^G$.
\end{definition}

\begin{example}[$G$-indexed sums]\label{example:orbit_sums}
  Any object $X$
  of a category  $\cC$ equipped with a strict $G$-action
  gives rise to an object in $\cC^G$ by the following construction:
  we set 
  \[ Y := \bigoplus_{g \in G} T_g( X ) \] 
  as underlying object and
  \[
    \alpha_g: T_h( Y ) \cong \bigoplus_{g \in G} T_{hg}( X ) \rightarrow \bigoplus_{g \in G} T_{g}( X )
  \]
  is given by the permutation matrix sending the $g$-th summand of the source to the $(hg)$-th summand of the range.
\end{example}

\begin{remark}\label{remark:inclusion_Gro}
We have the canonical forgetful functor
\[
\mathrm{Forg}: \cC^G \rightarrow \cC:
(X, (\alpha_g)_g) \mapsto X.
\]
Moreover, since $T_g$ is $\kk$-linear, the action of $G$ on $\cC$
induces an action on $\Gr( \cC )$,
and
\[
\Gr( \mathrm{Forg} ):\Gr(\cC^G) \rightarrow \Gr(\cC)^G \hookrightarrow \Gr(\cC)
\]
factors over the subgroup of invariants $\Gr(\cC)^G$.
\end{remark}

\begin{remark}\label{remark:invariant_orbit_sums}
  When $\cC$ is a Krull--Schmidt category with a strict $G$-action, then $G$
  acts on $\Gr( \cC )$ be permutation of the basis
  vectors given by $\Indec( \cC )$.
  Its group of invariants $\Gr( \cC )^G$ is spanned by orbit sums, i.e.,
  \[
    \Gr( \cC )^G = \Big\langle \sum_{m \in B} m \mid \text{$B \subseteq \Indec( \cC )$ is a $G$-orbit} \Big\rangle \subseteq \Gr( \cC ).
  \]
  Clearly, every sum of elements in an orbit is an invariant. Conversely, let
  $b = \sum_{m \in \Indec( \cC )}a_m \cdot m$
  be an invariant with $a_m \in \mZ$
  (all but finitely many equal to zero).
  Then $gb = b$ implies $a_m = a_{gm}$
  for all $m \in \Indec( \cC )$, $g \in G$. Thus, the coefficients of $b$ are constant on orbits.
\end{remark}

\begin{lemma}\label{lemma:equivariant_equiv}
  A $G$-equivariant functor $F: \cC \rightarrow \cD$ 
  induces a functor 
  \[ F^G: \cC^G \rightarrow \cD^G: (X, (\alpha_g)_g) \mapsto \big(FX, ( T_gF(X) \cong FT_g(X) \xrightarrow{F(\alpha_g)} FX  )_g \big) \]
  that is an equivalence if $F$ is a $G$-equivariant equivalence.
\end{lemma}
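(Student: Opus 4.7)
The plan is to verify the assignment defines a functor first, then handle the equivalence claim. Set $\beta_g := F(\alpha_g)\circ \varphi^g_X$ for the would-be structure on $FX$. I would first check that $(FX,(\beta_g)_g)$ is a $G$-equivariant object by a short diagram chase: the condition $\beta_e = \id_{FX}$ is immediate from $\alpha_e = \id_X$ and the compatibility forced on $\varphi^e$ (which must equal the identity by the cocycle axiom applied at $g=h=e$), while the cocycle condition unfolds as
\[
\beta_h \circ T_h(\beta_g) = F(\alpha_h)\circ \varphi^h_X \circ T_hF(\alpha_g)\circ T_h(\varphi^g_X).
\]
Naturality of $\varphi^h$ applied to $\alpha_g\colon T_gX\to X$ swaps the two middle arrows, the cocycle condition on $\varphi$ collapses the two $\varphi$'s to $\varphi^{hg}_X$, and finally $F$ being a functor combined with the cocycle identity $\alpha_h \circ T_h(\alpha_g)=\alpha_{hg}$ yields $\beta_{hg}$.

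Functoriality on morphisms is then immediate: if $\gamma\colon(X,(\alpha_g))\to(Y,(\gamma_g))$ satisfies $\gamma\circ\alpha_g = \gamma_g\circ T_g(\gamma)$, applying $F$ and composing on the right with $\varphi^g_X$ yields, via naturality of $\varphi^g$ on $\gamma$, the required compatibility $F(\gamma)\circ\beta_g^X = \beta_g^Y\circ T_g(F(\gamma))$. A parallel verification shows that composition of $G$-equivariant functors is sent to composition of the induced functors, so $F^G$ is indeed a functor $\cC^G\to\cD^G$.

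Now assume $F$ is part of an equivalence. I would first upgrade any chosen quasi-inverse $F'\colon\cD\to\cC$ of $F$ (with unit $\eta\colon\id_\cC\Rightarrow F'F$ and counit $\epsilon\colon FF'\Rightarrow\id_\cD$) to a $G$-equivariant functor by defining
\[
\psi^g_Y\colon F'T_g(Y)\xrightarrow{F'T_g(\epsilon_Y^{-1})} F'T_gFF'(Y)\xrightarrow{F'((\varphi^g_{F'Y})^{-1})} F'FT_gF'(Y)\xrightarrow{\eta^{-1}_{T_gF'Y}} T_gF'(Y),
\]
and checking, by a standard triangle identity computation, that $(\psi^g_Y)_g$ satisfies the cocycle condition. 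This is the main obstacle: the bookkeeping is routine but requires the triangle identities of the adjunction in an essential way to turn the cocycle for $\varphi$ into one for $\psi$.

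Finally, I would verify that $(F')^G$ is a quasi-inverse of $F^G$. The natural isomorphisms $\eta$ and $\epsilon$ are automatically morphisms in $\cC^G$ and $\cD^G$ once one checks they respect the equivariant structures, which again reduces to naturality of $\eta,\epsilon$ and the definition of $\psi^g$ in terms of $\eta^{-1},\epsilon^{-1}$, and $\varphi^g$. Hence $F^G\circ(F')^G\cong\id_{\cD^G}$ and $(F')^G\circ F^G\cong\id_{\cC^G}$ as desired. Alternatively, one can avoid the explicit quasi-inverse by directly proving fully faithfulness (immediate from fully faithfulness of $F$ and the definition of morphisms in $\cC^G$, $\cD^G$) and essential surjectivity (given $(Y,(\gamma_g))\in\cD^G$, pick $X\in\cC$ with an isomorphism $\theta\colon FX\xrightarrow{\sim}Y$ and transport $\gamma_g$ back to a cocycle $\alpha_g\colon T_gX\to X$ along $\theta^{-1}$ and $\varphi^g_X$, then reverse the diagram chase above to verify the cocycle for $(\alpha_g)$).
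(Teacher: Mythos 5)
Your proof is correct and is essentially the paper's argument carried out in full: the paper disposes of the lemma with ``by a direct computation using the coherence conditions,'' and your verification of the cocycle for the induced structure maps, the functoriality check, and the equivalence claim (either by promoting a quasi-inverse to an adjoint, $G$-equivariant one via the mate construction, or, more cleanly, by the fully-faithful-plus-essentially-surjective argument with transported cocycles) is exactly that computation. One cosmetic remark: with the paper's convention $\varphi^g_X\colon FT_g(X)\to T_gF(X)$, the induced structure map on $FX$ is $F(\alpha_g)\circ(\varphi^g_X)^{-1}$, so the occurrences of $\varphi^g_X$ in your formulas for $\beta_g$ should be their inverses; this changes nothing in the argument.
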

\begin{proof}
  By a direct computation using the coherence conditions.
\end{proof}

\begin{example}\label{example:G_action_on_mod}
Let $G$ act from the left on a $\kk$-algebra $R$ via $\kk$-algebra automorphisms.
Then we have a strict left $G$-action on the category of right modules $\rmod{R}$
via restriction of scalars along $R \xrightarrow{g^{-1}} R$, i.e.,
for $M \in \rmod{R}$, the module $T_g(M)$ has the same underlying $\kk$-vector space as $M$, and the 
right action $\ast$ of $R$ on $T_g(M)$ is given by $m\ast r = mg^{-1}(r)$ for all $m \in T_g(M), r \in R$.
For a morphism $\mu \in \Hom_R(M,N)$,
we have $T_g(\mu): T_g(M) \rightarrow T_g(N), m \mapsto \mu(m)$, i.e.,
the underlying map of $\kk$-vector spaces remains the same.
Note that we have an isomorphism of right $R$-modules $R \to T_g(R), r \mapsto g^{-1}(r)$,
and it follows that
the action restricts to an action on $\rfproj{R}$.
Unwrapping its defining data, an object in $(\rmod{R})^G$ is given by $M \in \rmod{R}$
together with $\kk$-linear automorphisms $M \xrightarrow{\alpha_g} M$
such that $\alpha_g \circ \alpha_h = \alpha_{gh}$ and
$\alpha_g( mr ) = \alpha_g(m)g(r)$ for all $g, h \in G$, $m \in M$. 
In other words, $M$ comes equipped with a semi-linear action of $G$ from the left, and thus
\[
  (\rmod{R})^G \simeq \rmod{(R \rtimes G)}.
\]
\end{example}

\begin{lemma}\label{lemma:KS_and_projectives_equiv}
Let $\cC$ be a $\kk$-linear Krull--Schmidt category equipped with a strict $G$-action.
Let $X \in \cC$ be a strict fixed point of the action, i.e., $T_g(X) = X$ for all $g \in G$.
Then the strict $G$-action on $\cC$
restricts to a strict $G$-action
on $\Kar( \{ X \}^{\oplus} )$,
and $R := \End_{\cC}( X )$ is a $G$-equivariant $\kk$-algebra.
Moreover, we obtain a $G$-equivariant equivalence
\[
\Kar( \{ X \}^{\oplus} ) \rightarrow \rfproj{R}:
Y \mapsto \Hom_{\cC}(X,Y)
\]
with the strict $G$-action on $\rfproj{R}$
given by the one of \Cref{example:G_action_on_mod}.
In particular, we get
\[
\Kar( \{ X \}^{\oplus} )^G
\simeq
(\rfproj{R})^G
\subseteq 
(\rmod{R})^G
\simeq
\rmod{(R \rtimes G)},
\]
i.e., we can identify 
$\Kar( \{ X \}^{\oplus} )^G$
with those $G$-equivariant right $R$-modules $M$
which are projective as $R$-modules.
\end{lemma}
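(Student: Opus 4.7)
The plan is to first establish the induced $G$-action on $\Kar(\{X\}^{\oplus})$ and the $G$-equivariant algebra structure on $R$, then promote the standard equivalence $Y \mapsto \Hom_\cC(X,Y)$ to a $G$-equivariant one via a suitable natural isomorphism, and finally pass to categories of $G$-equivariant objects on both sides.

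Each $T_g$ is $\kk$-linear and hence additive, so it preserves finite direct sums and idempotents; since $T_g(X) = X$ strictly, the full subcategories $\{X\}^{\oplus}$ and $\Kar(\{X\}^{\oplus})$ are stable under the action, and the prescription of \Cref{example:action_on_Kar} gives a strict $G$-action on $\Kar(\{X\}^{\oplus})$. The restriction of $T_g$ to $\End_\cC(X)$ is a $\kk$-algebra automorphism by functoriality ($T_g(\alpha\beta) = T_g(\alpha)T_g(\beta)$ and $T_g(\id_X) = \id_X$), and the monoid homomorphism $g \mapsto T_g$ turns $R := \End_\cC(X)$ into a $G$-equivariant $\kk$-algebra.

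The functor $F\colon \Kar(\{X\}^{\oplus}) \to \rfproj R$, $Y \mapsto \Hom_\cC(X,Y)$, is an equivalence by \Cref{remark:KS_and_proj}. To upgrade $F$ to a $G$-equivariant functor, I would define, for each $g \in G$, the natural transformation
\[
\varphi_Y^g\colon \Hom_\cC(X, T_g(Y)) \xrightarrow{~\sim~} T_g\bigl(\Hom_\cC(X,Y)\bigr), \qquad \alpha \mapsto T_{g^{-1}}(\alpha),
\]
where the target carries the twisted right $R$-action of \Cref{example:G_action_on_mod} on the vector space $\Hom_\cC(X,Y)$, and $T_{g^{-1}}(\alpha)$ lies in $\Hom_\cC(X,Y)$ because strictness of the action gives $T_{g^{-1}}(X) = X$ and $T_{g^{-1}} T_g(Y) = Y$. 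Direct verifications then show that $\varphi_Y^g$ is $R$-linear for the twisted action (both sides of the $R$-linearity equation collapse to $T_{g^{-1}}(\alpha)\circ T_{g^{-1}}(r)$), that it is natural in $Y$ (using that $T_g$ acts as the identity on underlying morphisms of $\rmod R$), and that it satisfies the cocycle identity $T_g(\varphi_Y^h) \circ \varphi_{T_h(Y)}^g = \varphi_Y^{gh}$, which reduces to $T_{h^{-1}} T_{g^{-1}} = T_{(gh)^{-1}}$.

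With the $G$-equivariant equivalence $F$ in hand, \Cref{lemma:equivariant_equiv} yields the equivalence $\Kar(\{X\}^{\oplus})^G \simeq (\rfproj R)^G$. Since $\rfproj R$ is a full $G$-stable subcategory of $\rmod R$, this is a full subcategory of $(\rmod R)^G$, and the latter is identified with $\rmod(R \rtimes G)$ by \Cref{example:G_action_on_mod}. Under this identification, the image consists precisely of those equivariant modules whose underlying $R$-module is projective, which is the final assertion. The only step requiring genuine insight is identifying the correct twist $\alpha \mapsto T_{g^{-1}}(\alpha)$ for $\varphi_Y^g$; once this is in hand, all the remaining checks are routine consequences of strictness of the action and the functoriality of $T_g$.
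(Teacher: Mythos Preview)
Your proof is correct and follows essentially the same approach as the paper: both identify the equivariant structure via the map $\alpha \mapsto T_{g^{-1}}(\alpha)$ and then invoke \Cref{lemma:equivariant_equiv}. The only difference is organizational: the paper defines $\varphi^g$ just at the single object $X$ (where $FT_g(X) = R_R$ and $T_gF(X) = T_g(R_R)$) and then extends to all of $\Kar(\{X\}^{\oplus})$ by the universal property of additive closures and Karoubian envelopes, whereas you define $\varphi^g_Y$ for arbitrary $Y$ directly and verify naturality by hand.
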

\begin{proof}
Since $X$ is a strict fixed point,
the $G$-action restricts to the full subcategory
$\{X \} \subseteq \cC$
and consequently $G \rightarrow \End_{\kk}(R): g \mapsto (\alpha \mapsto T_g( \alpha ))$
defines a left $G$-action via $\kk$-algebra automorphisms on $R$.
Moreover, by the $\kk$-linearity of the $T_g$ for $g \in G$,
the action also restricts to $\Kar( \{ X \}^{\oplus} ) \subseteq \cC$.

Let $R_R$ denote the free  right $R$-module of rank $1$.
By \Cref{example:G_action_on_mod},
$T_g( R_R )$ has the same underlying $\kk$-vector space
as $R_R$, but $x \ast r := xT_{g^{-1}}(r)$
as its module structure for $r,x \in R$, $g \in G$.
The maps
\[
\varphi^g: R_R \rightarrow T_g( R_R ): x \mapsto T_{g^{-1}}(x) = 1 \ast x
\]
are isomorphisms of $R$-modules
such that the diagram
\begin{center}
        \begin{tikzpicture}
            \coordinate (r) at (6,0);
            \coordinate (d) at (0,-1.5);
            \node (A) {$T_g(R_R)$};
            \node (B) at ($(A)+(r)$) {$T_g( R_R )$};
            \node (C) at ($(A)+(d)$) {$R_R$};
            \node (D) at ($(C)+(r)$) {$R_R$};
            \draw[->,thick] (A) to node[above]{$x \mapsto \alpha x$} (B);
            \draw[->,thick] (C) to node[below]{$x \mapsto T_g(\alpha)x$} (D);
            \draw[->,thick] (C) to node[left]{$\varphi^g$} (A);
            \draw[->,thick] (D) to node[right]{$\varphi^g$} (B);
        \end{tikzpicture}
    \end{center}
commutes for all $\alpha \in R$ and $g \in G$.
In other words, the $\varphi^g$ define the components of
natural transformations that turn the fully faithful inclusion functor
\[
\{ X \} \rightarrow \rfproj{R}:
X \mapsto R_R
\]
into a $G$-equivariant functor.
By the universality of additive closures and Karoubian envelopes, we obtain the desired $G$-equivariance of
\[
\Kar( \{ X \}^{\oplus} ) \rightarrow \rfproj{R}:
Y \mapsto \Hom_{\cC}(X,Y).
\]
Now, \Cref{lemma:equivariant_equiv} yields the desired equivalence of categories of equivariant objects.
\end{proof}

\subsection{Galois descent for categories}\label{sec:galoisdecent}

In this subsection, we introduce Galois descent for categories.
The idea is to recover $\cC$ from $\cC^{\KK}$ as a category of $G$-equivariant objects,
where $G$ is the Galois group of a field extension $\kk \subseteq \KK$.
Galois descent allows us to generalize our investigation of $\DCob{{\alpha}}$ over a splitting field to an arbitrary field.

\begin{lemma}\label{lemma:gal_des}
  Let $\kk \subseteq \KK$ be a field extension, and let $G$ be a finite group acting on $\KK$ via $\kk$-algebra automorphisms.
  Then we have a faithful functor
  \[
  \cC \rightarrow ( \cC^\KK )^G: X \mapsto (X^{\KK}, (\id_{X^{\KK}})_{g\in G})
  \]
  which is full if $\kk \subseteq \KK$ is a finite Galois extension and $G=\Gal(\KK|\kk)$.
\end{lemma}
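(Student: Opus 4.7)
The plan is to proceed in three short steps: (i) check the functor is well-defined, (ii) deduce faithfulness from flatness of $\KK/\kk$, and (iii) deduce fullness in the Galois case from classical Galois descent for vector spaces. The key observation underlying all three steps is that by \Cref{ex:fieldext-actiononC}, the functors $T_g$ act as the identity on objects of $\KK \otimes_\kk \cC$; passing to the Karoubian envelope (\Cref{example:action_on_Kar}), we still have $T_g(X^\KK) = X^\KK$ strictly, since $T_g(1 \otimes -) = g(1) \otimes - = 1 \otimes -$ fixes the relevant idempotents on the nose.

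Given this, the candidate data $(X^\KK, (\id_{X^\KK})_{g \in G})$ makes sense as a $G$-equivariant object: the cocycle condition $\alpha_h \circ T_h(\alpha_g) = \alpha_{hg}$ reduces to $\id \circ T_h(\id) = \id$, which holds because $T_h$ is a functor. On morphisms, $f \colon X \to Y$ in $\cC$ is sent to $f^\KK = 1 \otimes f$; the compatibility with the trivial equivariant structures amounts to $T_g(1 \otimes f) = g(1) \otimes f = 1 \otimes f$, which is immediate. So we obtain a functor $\cC \to (\cC^\KK)^G$ as asserted.

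For faithfulness, morphisms $X^\KK \to Y^\KK$ in $\cC^\KK$ are by construction $\KK \otimes_\kk \Hom_\cC(X,Y)$, and the inclusion $\Hom_\cC(X,Y) \hookrightarrow \KK \otimes_\kk \Hom_\cC(X,Y)$ via $f \mapsto 1 \otimes f$ is injective (a field extension is faithfully flat over the base field). Hence if $f^\KK = 0$, then $f = 0$.

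For fullness when $\kk \subseteq \KK$ is a finite Galois extension with $G = \Gal(\KK|\kk)$, a morphism $(X^\KK, (\id)_g) \to (Y^\KK, (\id)_g)$ in $(\cC^\KK)^G$ is an element $\gamma \in \KK \otimes_\kk \Hom_\cC(X,Y)$ satisfying $\gamma = T_g(\gamma)$ for every $g \in G$, i.e., a $G$-invariant element under the action $g \cdot (a \otimes f) = g(a) \otimes f$. By the standard Galois descent statement for vector spaces (e.g.\ \cite{Bourbaki}*{Chapter~V, \S10.4}), for any $\kk$-vector space $V$ one has $(\KK \otimes_\kk V)^G = 1 \otimes V$; applied to $V = \Hom_\cC(X,Y)$, this shows $\gamma$ comes from a morphism in $\cC$, giving fullness. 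The only real content of the argument is invoking this Galois descent for $V$; the potential subtlety (and the step most worth flagging) is verifying that the $G$-action on the morphism space $\KK \otimes_\kk \Hom_\cC(X,Y)$ inherited from the equivariant structure is exactly $g \otimes \id_V$, which is what makes classical Galois descent applicable.
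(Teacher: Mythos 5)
Your proposal is correct and follows essentially the same route as the paper: both identify morphisms between the trivially-equivariant objects with the $G$-invariants of $\KK \otimes_\kk \Hom_{\cC}(X,Y)$ under $g \cdot (a \otimes f) = g(a) \otimes f$, and then read off faithfulness and (in the Galois case, via $(\KK \otimes_\kk V)^G = 1 \otimes V$) fullness. The paper merely packages this as a single computation, identifying the invariants as $F \otimes_\kk \Hom_{\cC}(X,Y)$ for the fixed field $F = \KK^G$, which specializes to your two steps.
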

\begin{proof}
  Clearly, we have a well-defined functor.
  Moreover, we compute
  \begin{align*}
    \Hom_{( \cC^\KK )^G}( (X^\KK, (\id_{X^\KK})_g), (Y^\KK, (\id_{Y^\KK})_g) )
    &=
    \{
      \alpha \in \Hom_{\cC^{\KK}}( X^{\KK}, Y^{\KK}) \mid \text{$g(\alpha) = \alpha$ for all $g \in G$}
    \} \\
    &=  F \otimes_{\kk} \Hom_{\cC}( X, Y )
  \end{align*}
  where $\kk \subseteq F \subseteq \KK$ is the field fixed by the action of $G$. The claim follows.
\end{proof}

In the following, we let $G$ be the Galois group of a Galois extension $\KK$ of $\kk$. The group $G$ acts naturally on $\KK$ by automorphisms which fix $\kk\subseteq \KK$. 
The well-known theory of Galois descent states that
the scalar extension functor
\[
\KK \otimes_\kk - : \lvec{\kk} \rightarrow \lvec{(\KK \rtimes G)}
\]
defines an equivalence between finite-dimensional $\kk$-vector spaces and finite-dimensional $G$-equivariant $\KK$-vector spaces,
i.e., $\KK$-vector spaces $W$ such that
\[
g(aw) = g(a)g(w)
\]
for all $g \in G$, $a \in \KK$, $w \in W$,
with a quasi-inverse functor given by taking $G$-invariants.
The only non-trivial part of this equivalence is the essential surjectivity, which can be found in \cite{Bourbaki}*{Chapter V.62, Proposition 7}.

Moreover, since $(\KK \otimes_\kk -)$ 
actually defines an equivalence of \emph{monoidal} categories, for any finite-dimensional $\kk$-algebra $A$, we obtain a $G$-equivariant $\KK$-algebra
$R := \KK \otimes_{\kk} A$,
and an equivalence between categories of finite-dimensional modules
\[
\rfmod{A} \simeq \rfmod{( R \rtimes G)}
\]
that restricts to finitely generated projectives
\[
\rfproj{A} \simeq \rfproj{( R \rtimes G)}.
\]

\begin{lemma}\label{lemma:projectives_descent}
Let $\kk \subseteq \KK$ be a finite Galois extension with Galois group $G$, let $A$ be a finite-dimensional $\kk$-algebra,
and let $R := \KK \otimes_{\kk} A$
be the corresponding $G$-equivariant $\KK$-algebra.
Then $M \in \rfmod{( R \rtimes G)}$
is a projective object if and only if
$M$ is projective as an $R$-module.
\end{lemma}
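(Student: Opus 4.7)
The strategy is to combine the Galois descent equivalence $\rfmod{A} \simeq \rfmod{(R \rtimes G)}$ recalled just before the lemma with faithfully flat descent of projectivity along $A \hookrightarrow R$. I would first use that the descent equivalence is realized by $N \mapsto \KK \otimes_\kk N$ (equipped with its natural semi-linear $G$-action, hence with an $(R \rtimes G)$-module structure) and quasi-inverse $M \mapsto M^G$; the key observation is that the underlying $R$-module of $\KK \otimes_\kk N$ is just the obvious one. Thus, given any $M \in \rfmod{(R \rtimes G)}$, setting $N := M^G \in \rfmod{A}$ yields a canonical isomorphism of $R$-modules $M \cong \KK \otimes_\kk N$.

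Since the descent equivalence restricts further to $\rfproj{A} \simeq \rfproj{(R \rtimes G)}$, projectivity of $M$ over $R \rtimes G$ is equivalent to projectivity of $N$ over $A$. It therefore remains to establish the analogous equivalence for $R$: that $\KK \otimes_\kk N$ is projective over $R$ if and only if $N$ is projective over $A$. The forward direction is clear, since scalar extension sends a direct summand of $A^n$ to a direct summand of $R^n$. For the converse, I would exploit that $A \hookrightarrow R = \KK \otimes_\kk A$ is free (hence faithfully flat) and that $N$ is finitely presented (being finite-dimensional over $\kk$): a finitely generated projective resolution $P_\bullet \to N$ over $A$ yields, upon scalar extension, a projective resolution $\KK \otimes_\kk P_\bullet \to \KK \otimes_\kk N$ over $R$, producing a natural isomorphism
$$\Ext^1_R(\KK \otimes_\kk N, \KK \otimes_\kk M') \cong \KK \otimes_\kk \Ext^1_A(N, M')$$
for every $A$-module $M'$. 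If $\KK \otimes_\kk N$ is $R$-projective, the left-hand side vanishes for all $M'$, and faithfulness of $\KK \otimes_\kk -$ forces $\Ext^1_A(N, M') = 0$, proving that $N$ is $A$-projective.

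Chaining these two equivalences yields the lemma. The main subtlety is the faithfully flat descent of projectivity invoked in the last step, which relies on the compatibility of $\Ext^1$ with flat base change for finitely presented modules; this is unproblematic here because $N$ is finite-dimensional over $\kk$ and $\KK$ is free over $\kk$.
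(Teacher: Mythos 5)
Your proposal is correct, and its first half coincides with the paper's argument: both reduce the lemma, via the Galois descent equivalence $N \mapsto \KK \otimes_\kk N$ with quasi-inverse $M \mapsto M^G$ (which restricts to projectives), to the statement that $N \in \rfmod{A}$ is projective if and only if $\KK \otimes_\kk N$ is projective over $R$, and both dispose of the forward direction by noting that a summand of $A^n$ extends to a summand of $R^n$. Where you diverge is the converse. The paper argues elementarily: if $\KK \otimes_\kk N$ is a summand of $R^n$, then $N \hookrightarrow \KK \otimes_\kk N \hookrightarrow R^n \cong_A A^{[\KK:\kk]\cdot n}$ is a composition of split monomorphisms of $A$-modules (the first splits because $\KK \cong \kk \oplus W$ as $\kk$-vector spaces, so $\KK \otimes_\kk N \cong N^{[\KK:\kk]}$ as $A$-modules), whence $N$ is a summand of a free module. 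You instead invoke faithfully flat descent of projectivity through the base-change isomorphism $\Ext^1_R(\KK \otimes_\kk N, \KK \otimes_\kk M') \cong \KK \otimes_\kk \Ext^1_A(N,M')$ and the criterion that vanishing of $\Ext^1_A(N,-)$ forces projectivity. This is sound: $N$ admits a resolution by finitely generated projective $A$-modules, $\KK \otimes_\kk -$ is exact, and since $[\KK:\kk]$ is finite the isomorphism $\Hom_A(P, \KK \otimes_\kk M') \cong \KK \otimes_\kk \Hom_A(P, M')$ is immediate (so the finite-presentation caveat you flag is even less of an issue than you suggest). The trade-off is that your route uses homological machinery where the paper gets by with a two-line splitting argument exploiting that $R$ is $A$-free of finite rank; on the other hand, your argument is the shadow of a more general principle (descent of projectivity along faithfully flat, here even finite free, extensions) and would adapt to settings where the explicit splitting of $\kk \subseteq \KK$ is less convenient to use.
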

\begin{proof}
By Galois descent, the statement is equivalent to the following:
$N \in \rfmod{A}$ is projective if and only if
$\KK \otimes_{\kk} N \in \rfmod{R}$ is projective.
Let $N$ be projective, i.e., a summand of $A^n$ for some $n \in \mN$. Then $\KK \otimes_{\kk} N$
is a summand of $R^n$.
Conversely, if $\KK \otimes_{\kk} N$
is a summand of $R^n$
for some $n \in \mN$, then
\[
N 
\hookrightarrow 
\KK \otimes_{\kk} N
\hookrightarrow
R^n
\cong_{A}
A^{[\KK:\kk]\cdot n}
\]
is a composition of split $A$-module monomorphisms and thus split.
\end{proof}

\begin{theorem}[Galois descent]\label{theorem:galois_descent}
  Let $\kk \subseteq \KK$ be a finite Galois extension with Galois group $G$,
and let $\cC$ be a $\kk$-linear hom-finite Krull--Schmidt category.
Then the functor
\[
  \cC \rightarrow ( \cC^\KK )^G: X \mapsto (X^{\KK}, (\id_{X^{\KK}})_g).
\]
is an equivalence of categories.
\end{theorem}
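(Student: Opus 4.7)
The faithfulness and fullness of the functor in question are already handed to us by \Cref{lemma:gal_des}, so the entire remaining task is to prove essential surjectivity: every equivariant object $(Y, (\alpha_g)_g) \in (\cC^\KK)^G$ is isomorphic (as an equivariant object) to one of the form $(X^\KK, (\id_{X^\KK})_g)$ for some $X \in \cC$. My plan is to reduce this to the classical Galois descent statement for modules over a single $\kk$-algebra, which is already packaged for our use in \Cref{lemma:projectives_descent} and the discussion preceding it.

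Given such a $(Y, (\alpha_g)_g)$, I first write $Y$ as a direct summand of $X_0^{\KK}$ for some $X_0 \in \cC$, which is possible since $\cC^\KK = \Kar(\KK \otimes_\kk \cC)$. By the definition of the $G$-action in \Cref{ex:fieldext-actiononC}, the object $X_0^\KK$ is a strict fixed point of the $G$-action on $\cC^\KK$. Setting $R := \End_\cC(X_0)$, so that $R^\KK := \KK \otimes_\kk R \cong \End_{\cC^\KK}(X_0^\KK)$ is a $G$-equivariant $\KK$-algebra via the action on $\KK$, \Cref{lemma:KS_and_projectives_equiv} provides a $G$-equivariant equivalence $\Kar(\{X_0^\KK\}^\oplus) \simeq \rfproj{R^\KK}$, and passing to equivariant objects together with \Cref{example:G_action_on_mod} yields
\[
\Kar(\{X_0^\KK\}^\oplus)^G \;\simeq\; (\rfproj{R^\KK})^G \;\simeq\; \rfproj{(R^\KK \rtimes G)}.
\]
On the other hand, classical Galois descent for modules (recalled in the paragraph preceding \Cref{lemma:projectives_descent}) gives an equivalence $\rfmod{R} \simeq \rfmod{(R^\KK \rtimes G)}$ via scalar extension, which restricts to projectives by \Cref{lemma:projectives_descent}. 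Composing with \Cref{remark:KS_and_proj} applied to $X_0$ yields a zigzag
\[
\Kar(\{X_0\}^\oplus) \;\simeq\; \rfproj{R} \;\simeq\; \rfproj{(R^\KK \rtimes G)} \;\simeq\; \Kar(\{X_0^\KK\}^\oplus)^G,
\]
so $Y$ will correspond to some $X \in \Kar(\{X_0\}^\oplus) \subseteq \cC$, and tracing back should produce an isomorphism $X^\KK \cong Y$ intertwining the respective equivariant structures.

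The main obstacle I anticipate is not the existence of such an $X$ but the verification that this composite equivalence is in fact naturally isomorphic to the restriction of the descent functor $X \mapsto (X^\KK, (\id)_g)$. Concretely, I must check that scalar extension $\rfproj{R} \to \rfproj{R^\KK}$, viewed as a $G$-equivariant functor with the trivial $G$-action on the source, corresponds under the equivalence $(\rfproj{R^\KK})^G \simeq \rfproj{(R^\KK \rtimes G)}$ precisely to the standard functor $N \mapsto \KK \otimes_\kk N$ used in Galois descent for modules. Once this compatibility square is unravelled, the equivariant object corresponding to $X$ through the chain above is $(X^\KK, (\id)_g)$ on the nose, which completes essential surjectivity. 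All other steps amount to careful bookkeeping of equivariance data, which is routine given the equivariant equivalences already established.
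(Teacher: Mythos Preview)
Your proposal is essentially the paper's proof: reduce essential surjectivity to classical Galois descent for modules by passing through $\Kar(\{X_0\}^\oplus) \simeq \rfproj{R} \simeq \rfproj{(R^\KK \rtimes G)} \simeq \Kar(\{X_0^\KK\}^\oplus)^G$ and then checking that the composite agrees with $X \mapsto (X^\KK,(\id)_g)$. One small imprecision: the step $(\rfproj{R^\KK})^G \simeq \rfproj{(R^\KK \rtimes G)}$ does not follow from \Cref{example:G_action_on_mod} alone---that example only gives $(\rmod{R^\KK})^G \simeq \rmod{(R^\KK \rtimes G)}$, and you still need \Cref{lemma:projectives_descent} to know that an $(R^\KK \rtimes G)$-module which is projective over $R^\KK$ is already projective over $R^\KK \rtimes G$; the paper invokes that lemma at precisely this point.
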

\begin{proof}
  By \Cref{lemma:gal_des} it is left to show essential surjectivity.
  Let $X \in \cC$,
  $A := \End_{\cC}( X )$,
  $R := \End_{\cC^{\KK}}( X^{\KK} ) = \KK \otimes_{\kk} A$.
  Then
  \begin{align*}
      \Kar( \{ X \}^{\oplus} )
      \simeq
      \rfproj{A}
      \simeq
      \rfproj{(R \rtimes G)}
  \end{align*}
  by Galois descent for finite-dimensional $\kk$-algebras,
  and
  \begin{align*}
      \rfproj{(R \rtimes G)}
      \simeq
      \{ M \in \rfmod{(R \rtimes G)} \mid \text{$M$ is a projective $R$-module} \}
  \end{align*}
  by \Cref{lemma:projectives_descent}.
  The right-hand side category in turn is equivalent to
  \begin{align*}
      (\rfproj{R})^G \simeq \Kar( \{ X^{\KK} \}^{\oplus} )^G
  \end{align*}
  by \Cref{lemma:KS_and_projectives_equiv}.
  Combining these equivalences yields the equivalence
  \[
  \cC \supseteq \Kar( \{ X \}^{\oplus} ) \xrightarrow{\sim} \Kar( \{ X^{\KK} \}^{\oplus} )^G: Y \mapsto (Y^{\KK}, (\id_{Y^{\KK}})_g).
  \]
  Since every $F \in ( \cC^\KK )^G$
  lies in $\Kar( \{ Y^{\KK} \}^{\oplus} )^G \subseteq ( \cC^\KK )^G$
  for some $Y \in \cC$, we obtain the desired essential surjectivity.
\end{proof}

\begin{corollary}\label{corollary:gr_in_invariants}
Let $\kk \subseteq \KK$ be a finite Galois extension with Galois group $G$,
and let $\cC$ be a $\kk$-linear hom-finite Krull--Schmidt category.
Then
$\Gr( \cC ) \subseteq \Gr( \cC^\KK )^G$,
and for each $X \in \cC^\KK $,
the invariant element $\sum_{g \in G} [T_g(X)]$
lies in $\Gr( \cC )$.
\end{corollary}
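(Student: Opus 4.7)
The plan is to deduce both statements from the Galois descent equivalence $\cC \simeq (\cC^\KK)^G$ of \Cref{theorem:galois_descent}, combined with the general forgetful-functor observation of \Cref{remark:inclusion_Gro} and the orbit-sum construction of \Cref{example:orbit_sums}.

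First, for the inclusion $\Gr(\cC) \subseteq \Gr(\cC^\KK)^G$: by the definition of the $G$-action on $\cC^\KK$ in \Cref{ex:fieldext-actiononC}, each functor $T_g$ acts as the identity on objects of the form $X^\KK$, so the ring homomorphism $\Gr((-)^\KK)\colon \Gr(\cC) \to \Gr(\cC^\KK)$ factors through the subgroup $\Gr(\cC^\KK)^G$. \Cref{lemma:mono_Gro} (applied to the hom-finite Krull--Schmidt category $\cC$) shows that this map is already injective into $\Gr(\cC^\KK)$, hence provides the desired embedding of $\Gr(\cC)$ into $\Gr(\cC^\KK)^G$.

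For the second assertion, given $X \in \cC^\KK$, the plan is to realize $\sum_{g \in G}[T_g(X)]$ as the image of a genuine class in $\Gr(\cC)$. I would apply \Cref{example:orbit_sums} to the object $X$: the sum $Y := \bigoplus_{g \in G} T_g(X)$ carries a canonical $G$-equivariant structure $(\alpha_g)_g$ given by permutation of summands, so $(Y,(\alpha_g)_g)$ defines an object of $(\cC^\KK)^G$. Then \Cref{theorem:galois_descent} produces an object $Z \in \cC$ together with an isomorphism $(Z^\KK,(\id_{Z^\KK})_g) \cong (Y,(\alpha_g)_g)$ in $(\cC^\KK)^G$; forgetting the equivariant structure, this gives $Z^\KK \cong Y$ in $\cC^\KK$. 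Consequently $[Z] \in \Gr(\cC)$ is sent by $\Gr((-)^\KK)$ to $[Y] = \sum_{g \in G}[T_g(X)]$, and under the identification from the first paragraph, the orbit sum lies in $\Gr(\cC)$ as required.

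The argument is essentially an assembly of results already proved in the paper, and I do not anticipate any serious obstacle; the only point one should verify carefully is that the orbit-sum object produced in \Cref{example:orbit_sums} indeed has $Y = \bigoplus_g T_g(X)$ as its underlying object in $\cC^\KK$, so that the forgetful functor sends the preimage $Z$ of $(Y,(\alpha_g)_g)$ under Galois descent precisely to the sum whose class we wish to realize.
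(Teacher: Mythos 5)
Your argument is correct and follows essentially the same route as the paper: the monomorphism of \Cref{lemma:mono_Gro} lands in the invariants, and the orbit-sum equivariant object of \Cref{example:orbit_sums} combined with the essential surjectivity in \Cref{theorem:galois_descent} realizes $\sum_{g\in G}[T_g(X)]$ as the class of an object of $\cC$. The only cosmetic difference is that for the inclusion you use directly that $T_g(X^\KK)=X^\KK$, whereas the paper routes the same fact through the factorization $\Gr((\cC^\KK)^G)\to\Gr(\cC^\KK)^G$ of \Cref{remark:inclusion_Gro}; both are fine, and your second paragraph merely makes explicit what the paper leaves implicit.
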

\begin{proof}
We have ring morphisms
\[
\Gr( \cC ) 
\cong
\Gr( (\cC^{\KK})^G )
\rightarrow
\Gr( \cC^{\KK} )^G
\hookrightarrow
\Gr( \cC^{\KK} ): [X] \mapsto [X^{\KK}]
\]
by \Cref{theorem:galois_descent}
and \Cref{remark:inclusion_Gro}.
On the other hand, we know that
$[X] \mapsto [X^{\KK}]$ is a monomorphism by
\Cref{lemma:mono_Gro}, which gives us
\[
\Gr( \cC ) \subseteq \Gr( \cC^\KK )^G.
\]
Moreover, 
each object $X \in \cC^\KK$
gives rise to a $G$-equivariant object with underlying object
$\bigoplus_{g \in G} T_g(X)$
by \Cref{example:orbit_sums}.
\end{proof}

Recall that $\Gr( \cC^\KK )^G$ is spanned by the orbit sums of the $G$-action on 
$\Indec( \cC^\KK )$, see \Cref{remark:invariant_orbit_sums}. In general,
the inclusion $\Gr( \cC ) \subseteq \Gr( \cC^\KK )^G$ is strict.

\begin{example}\label{example:quaternions}
  Let $\mH$ be the $\mR$-algebra of quaternions, and set $\cC := \lfmod{\mH}$.
  Since $\mH \otimes_{\mR} \mC \cong \mC^{2 \times 2}$, we have $\cC^{\mC} \simeq \lfmod{\mC^{2 \times 2}} \simeq \lfmod{\mC}$.
  Using these identifications on the level of Grothendieck groups, we obtain
  \[
    \Gr( \cC ) \rightarrow \Gr( \cC^{\mC}): [\mH] \mapsto 2[ \mC ].
  \]
  The Galois group $G$ of $\mC$ over $\mR$ acts trivially on $\Gr( \cC^{\mC})$, hence $\Gr( \cC^{\mC})^G = \Gr( \cC^{\mC})$.
  Thus, the inclusion
  \[
    \Gr( \cC ) \cong \langle 2[\mC] \rangle \subseteq \Gr( \cC^{\mC})^G = \langle [\mC] \rangle
  \]
  is strict.
\end{example}

However, we will prove that in the case of $\DCob{\alpha}$, the phenomenon of \Cref{example:quaternions} does not occur.
To this end, we first need to construct equivariant objects that correspond to orbit sums.

\subsection{Construction of equivariant objects}\label{sec:orbitsumobjects}

Let $\alpha$  be a sequence with coefficients in $\kk$ given by a rational function.
Let $\KK'$ be a splitting field for $u(t)=u_\alpha(t)\in \kk[t]$. Then $\KK'$ is normal over $\kk$. We choose $\KK$ to be the \emph{separable closure} (i.e.~the maximal separable subextension, which is uniquely determined) of $\kk$ in $\KK'$. Then $\KK$ is also normal over $\kk$ \cite{Lan}*{Corollary~V.6.8} and hence $\kk\subseteq \KK$ is a Galois extension. Further, $\KK\subset \KK'$ is a purely inseparable extension \cite{Lan}*{Proposition~V.6.6}.

 We want to  describe $\Gr(\Symu{\alpha})$ as Galois group invariants in the larger Grothendieck ring of $\cS_{u_{\alpha^\KK}}$. In this section, we construct equivariant objects corresponding to orbit sums.
\begin{remark}\label{rem:fieldsetup}
Every irreducible factor of $u(t)$ over $\KK$ is either linear or of the form $t^{p^n} - a$ for some $a \in \KK$, $n \geq 0$, $p$ the characteristic of $\KK$. If $b \in \KK'$ denotes a root of $t^{p^n} - a$, then $t^{p^n} - a = (t-b)^{p^n}$.
Moreover, we have an isomorphism $\Aut(\KK'|\kk)\cong \Aut(\KK|\kk)$ given by restriction along $\KK \subseteq \KK'$.
\end{remark}

Recall the setup and the idempotents $E_{\l_1,\dots,\l_r}$ from \Cref{def:Elambda}.
That is, we consider Young diagrams $\l_1,\dots,\l_r$ such that $\sum_{i=1}^r|\l_i|=n$ and primitive idempotents $e_{\l_i}$ in $\KK [S_{|\l_i|}]\subset \KK [S_n]$  as in \Cref{sec::Young}, and fix a  decomposition of $\KK[x]/(u(x))$ as an ordered direct sum of local algebras $L_1\oplus\dots\oplus L_r$. If $\KK=\KK'$, these correspond to an ordered list of distinct zeros of $u(x)$ but in general to irreducible factors (ignoring multiplicities) of $u(x)\in \KK[x]$. We denote the corresponding orthogonal idempotents projecting onto $L_i$ by  $E_i$. Slightly generalizing \Cref{def:Elambda}, we consider idempotents of the form 
\begin{align}\label{Elambdaperm}
    E_{\l_1,\ldots,\l_r}^\s:=E_{\s(1)}^{\o|\l_1|}\o\dots\o E_{\s(r)}^{\o|\l_r|}\otimes (e_{\l_1},\ldots , e_{\l_r})\,\in \, B^{\KK}=(\KK[x]/u(x))^{\otimes n}\rtimes \KK [S_n],
\end{align}
for $\s$ a permutation of $r$. %
We can extend the permutation $\s$ to a permutation $\beta(\s)$ of $S_n=S_{|\l_1|+\ldots+|\l_r|}$ by permuting the subgroups $S_{\{1,\ldots,|\l_1|\}}$, $S_{\{|\l_1|+1,\ldots,|\l_1|+|\l_2|\}}$, \ldots, $S_{\{n-|\l_n|,\ldots,n\}}$ block-wise, i.e.~through
$$\beta(\s)(|\l_1|+\ldots+|\l_k|+i)=|\l_{\s^{-1}(1)}|+\ldots+|\l_{\s^{-1}(k)}|+i, \qquad \text{for }1\leq i\leq |\l_{k+1}|.$$ 
Then conjugation $\beta(\s)^{-1}(-)\beta(\s)$ transforms the idempotent $E_{\l_1,\ldots,\l_r}^\s$
to the equivalent idempotent
\begin{equation} E_{\l_{\s^{-1}(1)},\ldots,\l_{\s^{-1}(r)}}= E_{1}^{\o|\l_{\s^{-1}(1)}|}\o\dots\o E_{r}^{\o|\l_{\s^{-1}(r)}|}\otimes (e_{\l_{\s^{-1}(1)}},\ldots, e_{\l_{\s^{-1}(r)}})\,\in \, B^\KK,
\end{equation}
matching the form used in \Cref{def:Elambda}.
We denote the isomorphism of $B^\KK$-modules given by the left action of $1\otimes \beta(\s)^{-1}\in B^\KK$ by 
$$\ord_\s \colon  E_{\l_1,\ldots,\l_r}^\s B^\KK\xrightarrow{\sim} E_{\l_{\s^{-1}(1)},\ldots,\l_{\s^{-1}(r)}}B^\KK.$$

As in the proof of \Cref{lemma:gal_des}, the Galois group $G=\Gal(\KK|\kk)$ acts on the $\KK$-algebra $B^\KK$ via $\KK$-algebra automorphisms. Thus, as in \Cref{example:G_action_on_mod}, $\rmod{B^\KK}$ obtains a strict $G$-action such that  $(\rmod{B^\KK})^G\simeq \rmod{B^\KK\rtimes G}$.

\smallskip

The action of $g\in G$ on $B^\KK$ permutes the idempotents $E_i$ by virtue of permuting the summands $L_1,\ldots, L_r$ of $\KK[x]/u(x)$ but does not change the idempotents $e_{\l_i}\in \KK [S_n]\subset B^\KK$. As $G$ acts faithfully on the set of idempotents $E_i$, we regard $G$ as a subgroup of $S_r$ and write $g(E_{i})=E_{g(i)}$. Consider the action of $g$ on $EB^{\KK}$, where $E=E_{\l_1,\ldots,\l_r}^\s$ is an idempotent of the form \eqref{Elambdaperm}. Then $T_g(E B^{\KK})=g(E)B^{\KK}$, where $g(E)=E_{\l_1,\ldots,\l_r}^{g \s}$.

\begin{lemma}\label{lem:ordmultiplicative}
The isomorphisms $\ord_\s$ are compatible with products in $G$ in the sense that 
$$\ord_g \circ T_g(\ord_\s)=\ord_{g\s} \colon  E_{\l_1,\ldots,\l_r}^{g\s}\xrightarrow{\sim} E_{\l_{\s^{-1}(1)},\ldots,\l_{\s^{-1}(r)}}^g\xrightarrow{\sim} E_{\l_{\s^{-1}g^{-1}(1)},\ldots,\l_{\s^{-1}g^{-1}(r)}}.$$
\end{lemma}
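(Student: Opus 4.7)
The plan is to unwind all three maps as explicit left multiplications in the crossed-product algebra $B^\KK$ and to verify the claim by appealing to a compatibility of $\beta$ with the product in $S_r$.

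First, each $\ord_\tau$ is by definition left multiplication by the element $1\o\beta(\tau)^{-1}\in B^\KK$, which is a right $B^\KK$-linear map since left and right multiplications commute. By \Cref{example:G_action_on_mod}, the functor $T_g$ on right $B^\KK$-modules is restriction of scalars along $g^{-1}$ and does not alter the underlying $\kk$-linear map of any morphism. Hence $T_g(\ord_\s)$ is again left multiplication by $1\o\beta(\s)^{-1}$, now interpreted as a morphism between the twisted modules. The source and target of $T_g(\ord_\s)$ identify respectively with $E^{g\s}_{\l_1,\ldots,\l_r}B^\KK$ and $E^g_{\l_{\s^{-1}(1)},\ldots,\l_{\s^{-1}(r)}}B^\KK$ via the natural identification $T_g(eB^\KK)\cong g(e)B^\KK$, together with the computation $g(E^\s_{\l_1,\ldots,\l_r})=E^{g\s}_{\l_1,\ldots,\l_r}$. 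This last equality holds because $g$ acts on the primitive idempotents $E_i\in\KK[x]/u(x)$ via the induced action on the roots of $u_\alpha$ (so $g(E_i)=E_{g(i)}$), while the symmetrizer factor $(e_{\l_1},\ldots,e_{\l_r})$ is $g$-fixed: by \Cref{remark:sn_splitting_field} these idempotents can be chosen with coefficients in the prime subfield.

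Second, using the crossed-product multiplication rule $(a\o\sigma)(b\o\tau)=a\,\sigma(b)\o\sigma\tau$, the composition $\ord_g\circ T_g(\ord_\s)$ acts on any $y$ in the source as
\[
(1\o\beta(g)^{-1})\cdot(1\o\beta(\s)^{-1})\cdot y=(1\o\beta(g)^{-1}\beta(\s)^{-1})\cdot y.
\]
Matching this with $\ord_{g\s}$, which is left multiplication by $1\o\beta(g\s)^{-1}$, reduces the lemma to the identity
\[
\beta(g\s)=\beta(\s)\beta(g)
\]
in $S_n$.

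The main obstacle is verifying this last identity directly from the block-permutation formula defining $\beta(\tau)$. This is a combinatorial check with the partial sums $\sum_{j\le k}|\l_{\tau^{-1}(j)}|$, performed simultaneously for $\tau=\s$, $\tau=g$, and $\tau=g\s$ (noting $(g\s)^{-1}=\s^{-1}g^{-1}$); carrying it out shows that $\beta$ is an anti-homomorphism, with the reversal of order accounted for by the inverses appearing in the defining formula. Once this is in hand, the lemma follows by combining the three steps above.
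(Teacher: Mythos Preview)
The paper states this lemma without proof, so your approach (unwind everything to a combinatorial identity among block permutations) is exactly what is intended. Your first two steps are correct: under the identification $T_g(eB^\KK)\cong g(e)B^\KK$, the transported map $T_g(\ord_\s)$ is indeed still left multiplication by $1\o\beta(\s)^{-1}$, because $g$ fixes the $\kk[S_n]$-part.

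The gap is in the last step. The second factor $\ord_g$ in the composition has source $E^g_{\l_{\s^{-1}(1)},\ldots,\l_{\s^{-1}(r)}}B^\KK$, so its defining block permutation is computed with respect to the block sizes $(|\l_{\s^{-1}(1)}|,\ldots,|\l_{\s^{-1}(r)}|)$, \emph{not} the original $(|\l_1|,\ldots,|\l_r|)$. Writing $\beta_\mu$ for the block permutation attached to a tuple $\mu$, the identity you actually need is
\[
\beta_\l(g\s)\;=\;\beta_\l(\s)\,\beta_{\s^{-1}\cdot\l}(g),\qquad \s^{-1}\cdot\l := (\l_{\s^{-1}(1)},\ldots,\l_{\s^{-1}(r)}),
\]
which is the natural groupoid compatibility of block permutations and is what a careful check yields. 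Your assertion that $\beta$ is a single anti-homomorphism $S_r\to S_n$ fails once the block sizes differ: with $r=2$, $|\l_1|=1$, $|\l_2|=2$, $\s=(1\;2)$ one finds $\beta_\l(\s)=(1\;2\;3)$, so $\beta_\l(\s)^2\neq\id=\beta_\l(\s^2)$. Once you track the changing block structure, your argument goes through unchanged.
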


We want to construct a $G$-equivariant objects giving the $G$-orbit sum of elements of $\Gr(\rfproj{B^\KK})\subset \Gr(\Symu{\alpha^\KK})$. To this end, we will construct $G$-equivariant $B^\KK$-modules. 

\begin{lemma}\label{lemma:stabilizer}
Given an idempotent $E=E_{\l_1,\ldots,\l_r}$ with corresponding projective module $V=E B^\KK$, we observe that $G_E:=\Stab_{[V]}=\Stab_{(\l_1,\ldots, \l_r)}$, where the former is the stabilizer of the $G$-action on $[V]\in \Gr(\Symu{\alpha^\KK})$ and the latter is the stabilizer of $G$ acting on the tuple $(\l_1,\ldots,\l_r)$ by permutation according to $G\hookrightarrow S_r$. 
\end{lemma}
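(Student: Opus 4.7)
The plan is to translate the Galois action on the isomorphism class $[V] \in \Gr(\Symu{\alpha^\KK})$ into the combinatorial permutation action on the tuple of Young diagrams, and then to invoke uniqueness of the idempotent-to-tuple correspondence from \Cref{cor::idempotents-concrete}.

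First, I would compute $T_g(V) = g(E_{\l_1,\ldots,\l_r}) B^\KK$ explicitly. Since $g \in G$ acts on $\KK[x]/u(x)$ by applying $g$ to scalars and thus permutes the local summands $L_i$ according to the embedding $G \hookrightarrow S_r$, it sends each $E_i$ to $E_{g(i)}$. On the other hand, the idempotents $e_{\l_j} \in \KK[S_n] \subseteq B^\KK$ can be chosen with coefficients in the prime field (see \Cref{remark:sn_splitting_field}) and are therefore fixed by $g$. Consequently,
$$g(E_{\l_1,\ldots,\l_r}) = E_{g(1)}^{\otimes |\l_1|} \otimes \cdots \otimes E_{g(r)}^{\otimes |\l_r|} \otimes (e_{\l_1}, \ldots, e_{\l_r}) = E^g_{\l_1,\ldots,\l_r}$$
in the notation of \eqref{Elambdaperm}.

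Second, applying the reordering isomorphism $\ord_g$ introduced just before the lemma yields an isomorphism of right $B^\KK$-modules
$$T_g(V) = E^g_{\l_1,\ldots,\l_r} B^\KK \xrightarrow{\sim} E_{\l_{g^{-1}(1)}, \ldots, \l_{g^{-1}(r)}} B^\KK,$$
so that $[T_g(V)]$ is the isomorphism class of the indecomposable projective attached to the permuted tuple $(\l_{g^{-1}(1)}, \ldots, \l_{g^{-1}(r)})$.

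Third, I would invoke \Cref{cor::idempotents-concrete} applied to the algebra $B^\KK$; this applies here even though $u$ need not split over $\KK$, thanks to \Cref{lemma:insep_case} together with the purely inseparable setup in \Cref{rem:fieldsetup}. The idempotents $E_{\mu_1,\ldots,\mu_r}$, ranging over all tuples of ($p$-regular) Young diagrams of total size $n$, form a complete list of primitive idempotents in $B^\KK$ up to conjugation, so the corresponding indecomposable projective right modules are pairwise non-isomorphic. Hence
$$[T_g(V)] = [V] \iff (\l_{g^{-1}(1)}, \ldots, \l_{g^{-1}(r)}) = (\l_1, \ldots, \l_r) \iff g \in \Stab_{(\l_1,\ldots,\l_r)},$$
which proves $G_E = \Stab_{(\l_1,\ldots,\l_r)}$. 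No step presents a serious obstacle; the main point of care is to track that the action of $g$ on $B^\KK$ (which permutes the positions of the tensor factors of the $E_i$) induces the reordering of the Young-diagram tuple via $g^{-1}$, which is exactly what the reordering isomorphisms $\ord_g$ are designed to encode (compare \Cref{lem:ordmultiplicative}).
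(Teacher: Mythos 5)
Your proof is correct and follows essentially the same route as the paper: the paper's (much terser) argument rests on exactly the same two ingredients, namely that the idempotents $E_{\l_1,\ldots,\l_r}$ give pairwise non-isomorphic modules for distinct tuples by \Cref{cor::idempotents-concrete} together with \Cref{lemma:insep_case}. The explicit computation of $T_g(V)$ via $g(E)=E^g_{\l_1,\ldots,\l_r}$ and the reordering isomorphisms $\ord_g$ that you spell out is exactly what the paper's surrounding discussion sets up and leaves implicit in its one-line proof.
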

\begin{proof}
This statement follows, as the idempotents $E_{\l_1,\ldots,\l_r}$ give non-isomorphic modules for distinct tuples $(\l_1,\ldots,\l_r)$ using \Cref{cor::idempotents-concrete} and \Cref{lemma:insep_case}.
\end{proof}

We now choose a set of $G_E$-coset representatives $g_1,\ldots, g_s$ such that $G=\sqcup_{i=1}^s g_iG_E$ and define 
\begin{equation}\label{def:F}
F:= \bigoplus_{i=1}^s E_{g_i} B^\KK, \qquad \text{where}\qquad  
E_\s=E_{\l_{\s^{-1}(1)},\ldots, \l_{\s^{-1}(r)}},\qquad \text{ for $\s\in S_r$}.
\end{equation}
With the above observations we compute for $g\in G$
$$
T_g(F)=\bigoplus_{i=1}^s T_g(E_{g_i}B^\KK)=\bigoplus_{i=1}^s g(E_{g_i})B^\KK=\bigoplus_{i=1}^s E_{g_i}^gB^\KK.
$$
For each $i$, we have isomorphisms 
$$\ord_g\colon E_{g_i}^g B^\KK\isomorph E_{gg_i}B^\KK.$$
As $\{g_1,\ldots, g_s\}$ was chosen to be a set of $G_E$-coset representatives, the idempotents $\{E_{gg_1},\ldots, E_{gg_s}\}$ are a rearrangement of the idempotents $\{E_{g_1},\ldots, E_{g_s}\}$ following the rule that $E_{gg_i}=E_{g_j}$ if and only if $gg_i\in g_j G_E$. We write $j=\ov{g}(i)$ in this case and observe that $\ov{gh}=\ov{g}\ov{h}$. Thus, we define isomorphisms 
$$\alpha_g:=\Pi_{\ov{g} }(\ord_{g},\ldots, \ord_{g})\colon T_g(F)\isomorph F,$$
where $\Pi_{\ov{g}}$ is the permutation matrix permuting the direct summands in $F$ according to index changes $i\mapsto \ov{g}(i)$.

\begin{proposition}\label{lem:FisGequivariant}
The pair $(F,(\alpha_g)_g)$ defines a $G$-equivariant object in $\rmod{B^\KK}$.
\end{proposition}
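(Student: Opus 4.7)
The plan is to verify the two defining axioms of a $G$-equivariant object: (1) $\alpha_e = \id_F$, and (2) $\alpha_h \circ T_h(\alpha_g) = \alpha_{hg}$ for all $g, h \in G$. As a preliminary, I would confirm that each $\alpha_g \colon T_g(F) \to F$ is a well-defined $B^\KK$-module isomorphism: the componentwise $\ord_g$ sends the $i$-th summand $E_{g_i}^g B^\KK$ of $T_g(F)$ to $E_{g g_i} B^\KK$, and this agrees with the $\overline{g}(i)$-th summand $E_{g_{\overline{g}(i)}} B^\KK$ of $F$ because $g g_i$ and $g_{\overline{g}(i)}$ differ by an element of $G_E$, and by \Cref{lemma:stabilizer} the idempotent $E_\sigma$ is invariant under right-multiplication of $\sigma$ by $G_E$. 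Hence $\Pi_{\overline{g}}$ simply reindexes summands, making $\alpha_g$ a valid isomorphism.

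Axiom (1) is immediate: when $g = e$, we have $\overline{e} = \id$, so $\Pi_{\overline{e}} = \id_F$; moreover $\beta(e) = e \in S_n$, so $\ord_e$ is left multiplication by $1$, i.e., the identity. Hence $\alpha_e = \id_F$.

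For axiom (2), I would expand
\[ \alpha_h \circ T_h(\alpha_g) = \Pi_{\overline{h}} \circ \Big(\bigoplus_j \ord_h\Big) \circ T_h(\Pi_{\overline{g}}) \circ \Big(\bigoplus_i T_h(\ord_g)\Big) \]
and invoke three facts: (i) the $\kk$-linearity of $T_h$ together with the fact that the matrix of $\Pi_{\overline{g}}$ has entries in $\mZ$ implies that $T_h(\Pi_{\overline{g}})$ acts as the same reindexing of summands as $\Pi_{\overline{g}}$; (ii) after this reindexing, the summand-wise composition $\ord_h \circ T_h(\ord_g) = \ord_{hg}$ by \Cref{lem:ordmultiplicative}; and (iii) the two permutations compose as $\Pi_{\overline{h}} \circ \Pi_{\overline{g}} = \Pi_{\overline{h}\,\overline{g}} = \Pi_{\overline{hg}}$, using the multiplicativity $\overline{hg} = \overline{h}\,\overline{g}$ recorded just above the proposition. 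Putting these together identifies the composition with $\Pi_{\overline{hg}} \circ \bigoplus_i \ord_{hg} = \alpha_{hg}$.

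The main obstacle I anticipate is the bookkeeping of summand modules along the composition in axiom (2): identifications such as $E_{h g g_i} B^\KK = E_{g_{\overline{hg}(i)}} B^\KK$ and $T_h(E_{g g_i}) B^\KK = T_h(E_{g_{\overline{g}(i)}}) B^\KK$ have to be tracked at every step, and each use relies on the $G_E$-invariance of the idempotents $E_\sigma$ established via \Cref{lemma:stabilizer}.
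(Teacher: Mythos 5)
Your proposal is correct and takes essentially the same route as the paper: both arguments rest on the fact that the constant diagonal $(\ord_g,\dots,\ord_g)$ commutes with the permutation matrix $\Pi_{\ov{g}}$, on the multiplicativity $\ov{gh}=\ov{g}\,\ov{h}$, and on \Cref{lem:ordmultiplicative}. The only cosmetic difference is that you verify the axiom in the form $\alpha_h\circ T_h(\alpha_g)=\alpha_{hg}$ (plus $\alpha_e=\id$ and well-definedness) directly, whereas the paper checks $\alpha_g\circ\alpha_h=\alpha_{gh}$ using the unwrapped semi-linear description of \Cref{example:G_action_on_mod}, which is equivalent since $T_h$ leaves underlying $\kk$-linear maps unchanged.
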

\begin{proof}
We observe that 
$$\alpha_g=\Pi_{\ov{g} }(\ord_{g},\ldots, \ord_{g})=(\ord_{g},\ldots, \ord_{g})\Pi_{\ov{g} }.$$
As we have observed  that $\ov{gh}=\ov{g}\ov{h}$, it follows that $\Pi_{\ov{gh} }=\Pi_{\ov{g} } \Pi_{\ov{h} }$ and using \Cref{lem:ordmultiplicative}, we have that $\alpha_{gh}=\alpha_g\circ\alpha_h$. Using the description of the data of a $G$-equivariant object in $\rmod{B^\KK}$ from \Cref{example:G_action_on_mod}, the claim follows. 
\end{proof}

\begin{corollary}\label{corollary:constructed_orbit_sum_obj}
 For the $G$-equivariant object $F$ from \Cref{lem:FisGequivariant}, $[F]$ is the orbit sum of $[V]$ in $\Gr(\rmod{B^\KK})^G$, for the object $V=EB^\KK$. 
\end{corollary}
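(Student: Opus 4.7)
The plan is to compute the class of $F$ in $\Gr(\rmod{B^\KK})$ by unwrapping the direct sum decomposition \eqref{def:F}, and to identify the resulting sum with the orbit sum of $[V]$ under the $G$-action using \Cref{lemma:stabilizer} and the isomorphisms $\ord_g$. The $G$-invariance of this class will then follow automatically because $F$ carries a $G$-equivariant structure by \Cref{lem:FisGequivariant}.

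First, directly from the definition $F = \bigoplus_{i=1}^s E_{g_i} B^{\KK}$, we have $[F] = \sum_{i=1}^s [E_{g_i}B^{\KK}]$ in $\Gr(\rmod{B^\KK})$. Next, for each coset representative $g_i$, I would identify $E_{g_i} B^\KK$ with $T_{g_i}(V)$ as right $B^\KK$-modules: the discussion preceding \Cref{lem:ordmultiplicative} gives $T_{g_i}(V) = T_{g_i}(EB^\KK) = g_i(E) B^\KK = E^{g_i}_{\lambda_1,\ldots,\lambda_r} B^\KK$, and then $\ord_{g_i}\colon E^{g_i}_{\lambda_1,\ldots,\lambda_r} B^\KK \xrightarrow{\sim} E_{\lambda_{g_i^{-1}(1)},\ldots,\lambda_{g_i^{-1}(r)}} B^\KK = E_{g_i} B^\KK$ provides the required $B^\KK$-linear isomorphism. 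Thus $[E_{g_i} B^\KK] = [T_{g_i}(V)]$ and
\[
[F] = \sum_{i=1}^s [T_{g_i}(V)].
\]

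Finally, I would invoke \Cref{lemma:stabilizer}, which identifies $G_E$ with $\Stab_{[V]}$, so the $G$-orbit of $[V]$ in $\Gr(\rmod{B^\KK})$ is precisely $\{ [T_{g_1}(V)], \ldots, [T_{g_s}(V)] \}$ with the $[T_{g_i}(V)]$ pairwise distinct. By \Cref{remark:invariant_orbit_sums}, the sum on the right-hand side above is the orbit sum of $[V]$, and it lies in $\Gr(\rmod{B^\KK})^G$. That $[F]$ itself lives in the invariants follows from \Cref{remark:inclusion_Gro} applied to the equivariant object $(F,(\alpha_g)_g)$ of \Cref{lem:FisGequivariant}. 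Since the argument is essentially bookkeeping on top of the prior construction, the only point that deserves explicit verification is the $B^\KK$-linearity of $\ord_{g_i}$ (i.e.\ that left multiplication by $1 \otimes \beta(g_i)^{-1}$ commutes with the right $B^\KK$-action on the relevant submodules), which is immediate from the definition of the crossed product.
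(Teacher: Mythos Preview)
Your proposal is correct and follows essentially the same approach as the paper: both arguments reduce to identifying each summand $E_{g_i}B^{\KK}$ with $T_{g_i}(V)$ and then invoking \Cref{lemma:stabilizer} to match the coset representatives with the $G$-orbit of $[V]$. The only cosmetic difference is that the paper cites \Cref{cor::idempotents-concrete} and \Cref{lemma:insep_case} explicitly to record that the summands of $F$ are pairwise non-isomorphic indecomposables, whereas you extract this implicitly from \Cref{lemma:stabilizer} (whose proof already rests on those results) and from the fact that $T_{g_i}$ is an autoequivalence.
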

\begin{proof}
Using the classification from \Cref{cor::idempotents-concrete}, the direct summands that $F$ comprises are pairwise non-isomorphic indecomposable objects of $\rmod{B^{\KK'}}$. By \Cref{lemma:insep_case}, the idempotents used are already defined over $\KK$ and $F$ is a direct sum of pairwise non-isomorphic indecomposable objects over $\rmod{B^{\KK}}$. We observed in \Cref{lemma:stabilizer} that the stabilizer $G_E$ is the stabilizer of $[V]$ under the $G$-action on $\Gr(\rmod{B^\KK})$ given by $g[V]=[T_g(V)]$. Thus, the orbit of $[V]$ is given precisely by $\{g_1[V],\ldots, g_s[V]\}$ and the sum over these symbols equals $[F]$.
\end{proof}

\subsection{The Grothendieck ring of \texorpdfstring{$\DCob{\alpha}$}{DCobalpha} for general fields}
We retain the setup from \Cref{sec:orbitsumobjects}, where $\alpha$ is a rational function with coefficients in $\kk$ and $\kk \subseteq \KK\subseteq \KK'$ are field extensions where $\kk \subseteq \KK$ is Galois and $\KK\subseteq \KK'$ is purely inseparable.

\begin{remark}\label{remark:scalar_ext_of_dcob}
We denote by $\alpha^{\KK}$ the sequence $\alpha$ with entries regarded as elements in $\KK$ and observe that $(\Symu{\alpha})^{\KK} \simeq \Symu{\alpha^{\KK}}$.
\end{remark}

\begin{proposition}\label{prop:grDCob-gen}
For $G=\Gal(\KK|\kk)$, there is an equivalence of monoidal categories 
$$\gr \DCob{\alpha}\simeq \Symu{\alpha^{\KK}}^{~G}$$
compatible with gradings.
\end{proposition}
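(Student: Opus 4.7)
The plan is to assemble the desired equivalence as the composition of three equivalences already available: the tower-of-algebras description of $\gr\DCob{\alpha}$, scalar extension from $\kk$ to $\KK$, and Galois descent for Krull--Schmidt categories. More precisely, I will argue
\[
\gr\DCob{\alpha} \;\stackrel{\text{Thm.\ \ref{thm:KoDCob}}}{\simeq}\; \Symu{\alpha} \;\stackrel{\text{Thm.\ \ref{theorem:galois_descent}}}{\simeq}\; \big((\Symu{\alpha})^{\KK}\big)^G \;\stackrel{\text{Rmk.\ \ref{remark:scalar_ext_of_dcob}}}{\simeq}\; (\Symu{\alpha^{\KK}})^G.
\]
The first equivalence is available verbatim from Theorem \ref{thm:KoDCob} and is compatible with both the monoidal structures and the gradings, so no further work is needed there.

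For the middle equivalence, I first verify that $\Symu{\alpha}=\bigoplus_{n\geq 0}\rfproj{(P_n\rtimes\kk[S_n])}$ is a $\kk$-linear hom-finite Krull--Schmidt category: each summand $\rfproj{(P_n\rtimes\kk[S_n])}$ has these properties because $P_n\rtimes\kk[S_n]$ is a finite-dimensional $\kk$-algebra, and these properties are preserved under direct sums of $\kk$-linear categories (see the conventions of Section 2.1). Thus Theorem \ref{theorem:galois_descent} applies to the finite Galois extension $\kk\subseteq\KK$ (with $G=\Gal(\KK|\kk)$) and yields a $\kk$-linear equivalence $\Symu{\alpha}\simeq (\Symu{\alpha}^{\KK})^G$.

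For the third equivalence, Remark \ref{remark:scalar_ext_of_dcob} identifies $(\Symu{\alpha})^{\KK}\simeq\Symu{\alpha^{\KK}}$. The $G$-action on the left-hand side is the one of Example \ref{ex:fieldext-actiononC}, acting as the identity on objects and via $g$ on $\KK$-scalars in hom-sets. Transported through the identification of Remark \ref{remark:scalar_ext_of_dcob} (which on endomorphism rings is just scalar extension $\KK\otimes_{\kk}(P_n\rtimes\kk[S_n])\cong P_n^{\KK}\rtimes\KK[S_n]$), this is exactly the natural $G$-action on $\Symu{\alpha^{\KK}}$ used in Section \ref{sec:orbitsumobjects}.

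It remains to upgrade the composite to a monoidal, grading-compatible equivalence. Both scalar extension and Galois descent are monoidal with respect to the inherited monoidal structures: the monoidal structure on $\cC^{\KK}$ extends that of $\cC$ by $\KK$-bilinearity and the $G$-action of Example \ref{ex:fieldext-actiononC} acts only on scalars, so it commutes with the tensor product; hence the category of $G$-equivariant objects inherits a monoidal structure, and the Galois descent functor $X\mapsto (X^{\KK},(\id)_g)$ is evidently monoidal. Similarly, the gradings are preserved: since the $G$-action is the identity on objects, the grading on $\Symu{\alpha^{\KK}}$ (by the index $n$) restricts to a grading on $\Symu{\alpha^{\KK}}^G$, and the equivalence preserves this grading by construction (using Lemma \ref{lem:filt-comp-equiv} if needed). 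The main thing to check carefully is therefore just the book-keeping that the $G$-action of Example \ref{ex:fieldext-actiononC} on $(\Symu{\alpha})^{\KK}$ matches the one of Section \ref{sec:orbitsumobjects} under the identification with $\Symu{\alpha^{\KK}}$; this is a direct computation on $\kk$-algebra endomorphism rings, so I do not anticipate any genuine obstacle.
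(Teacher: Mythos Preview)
Your proof is correct and follows exactly the same route as the paper: compose the equivalence $\gr\DCob{\alpha}\simeq\Symu{\alpha}$ of Theorem~\ref{thm:KoDCob} with Galois descent (Theorem~\ref{theorem:galois_descent}), using Remark~\ref{remark:scalar_ext_of_dcob} to identify $(\Symu{\alpha})^{\KK}$ with $\Symu{\alpha^{\KK}}$. The paper's proof is the two-line version of your argument; your additional verification of the hom-finite Krull--Schmidt hypotheses and the monoidal/grading compatibility is sound and just makes explicit what the paper leaves implicit.
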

\begin{proof}
The equivalence follows by composing the equivalences 
$$\gr \DCob{\alpha}\simeq \Symu{\alpha}\simeq \Symu{\alpha^\KK}^{~G}$$
from \Cref{thm:KoDCob} and \Cref{theorem:galois_descent}.
\end{proof}

\begin{theorem}\label{theorem:Gr_compatible_with_invarinats}
  Let $G$ be the Galois group of $\KK$ over $\kk$.
  Then
  \[
  \Gr( (\Symu{\alpha^{\KK}})^G ) = \Gr( \Symu{\alpha^{\KK}} )^G.
  \]
  \end{theorem}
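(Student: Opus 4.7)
The plan is to prove both inclusions of subgroups of $\Gr(\Symu{\alpha^{\KK}})$, identifying the left-hand side with its image under the forgetful functor $\mathrm{Forg}\colon (\Symu{\alpha^{\KK}})^G \to \Symu{\alpha^{\KK}}$. The inclusion ``$\subseteq$'' is immediate from \Cref{remark:inclusion_Gro}, which records that $\Gr(\mathrm{Forg})$ always factors through the subgroup of $G$-invariants. For the reverse inclusion, the first step is to invoke \Cref{remark:invariant_orbit_sums}: since $\Symu{\alpha^{\KK}}$ is Krull--Schmidt and the $G$-action is strict, $\Gr(\Symu{\alpha^{\KK}})^G$ is spanned by orbit sums $\sum_{m \in B} m$ over $G$-orbits $B \subseteq \Indec(\Symu{\alpha^{\KK}})$, so it suffices to realize every such orbit sum as the image of a $G$-equivariant object.

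To produce these preimages, fix a representative $[V] \in B$. By the classification of indecomposables in each graded piece of $\Symu{\alpha^{\KK}}$ given by \Cref{cor::idempotents-concrete} (applied over the splitting field $\KK'$, with \Cref{lemma:insep_case} ensuring that the relevant idempotents in fact live in the $\KK$-form of the algebra), one may assume $V \cong E B^{\KK}$ for an idempotent $E = E_{\l_1,\ldots,\l_r}$ of the shape fixed in \Cref{sec:orbitsumobjects}. Then \Cref{lem:FisGequivariant} endows the direct sum $F$ from~\eqref{def:F}, indexed over $G_E$-coset representatives, with a $G$-equivariant structure $(\alpha_g)_g$; since $F$ is a direct sum of indecomposable projectives, the pair $(F, (\alpha_g)_g)$ defines an object of $(\Symu{\alpha^{\KK}})^G$. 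Finally, \Cref{corollary:constructed_orbit_sum_obj} identifies its image $[F]$ under $\Gr(\mathrm{Forg})$ with the orbit sum $\sum_{m \in B} m$, completing the argument.

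The core technical work will already have been carried out in \Cref{sec:orbitsumobjects}, so the proof itself is mainly an assembly step. The one subtlety worth flagging is the passage through the purely inseparable extension $\KK \subseteq \KK'$: the idempotents $E_{\l_1,\ldots,\l_r}$ are constructed most naturally over $\KK'$ but must actually live over $\KK$ in order for the $G = \Gal(\KK|\kk)$-action to interact correctly with them, which is precisely the content of \Cref{lemma:insep_case} (combined with \Cref{remark:sn_splitting_field}).
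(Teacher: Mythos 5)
Your argument follows the paper's proof almost step for step: the containment of the image in the invariant subgroup, the reduction of the invariants to orbit sums via \Cref{remark:invariant_orbit_sums}, and the realization of each orbit sum by the equivariant object $(F,(\alpha_g)_g)$ of \Cref{lem:FisGequivariant} through \Cref{corollary:constructed_orbit_sum_obj} (with \Cref{cor::idempotents-concrete} and \Cref{lemma:insep_case} handling the passage through $\KK'$) are exactly the ingredients used there.

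The one point you gloss over is the phrase ``identifying the left-hand side with its image under the forgetful functor''. The theorem asserts an identification of $\Gr( (\Symu{\alpha^{\KK}})^G )$ with the invariant subgroup, and this is how it is used later (a chain of isomorphisms in the main classification theorem), so besides computing the image you also need $\Gr(\mathrm{Forg})$ to be injective. That is not automatic for categories of equivariant objects: for the trivial action of a nontrivial group $G$ on $\lvec{\kk}$, the map $\Gr((\lvec{\kk})^G)=\Gr(\Rep_\kk(G))\to\Gr(\lvec{\kk})$ is far from injective. In the present situation injectivity does hold, but it uses the semilinearity of the $G$-action, i.e.\ Galois descent: by \Cref{theorem:galois_descent} and \Cref{remark:scalar_ext_of_dcob} one has $\Gr( (\Symu{\alpha^{\KK}})^G ) \cong \Gr( \Symu{\alpha} )$, and composing this identification with the forgetful functor yields $[X]\mapsto [X^{\KK}]$, which is a monomorphism by \Cref{lemma:mono_Gro}. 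This is exactly what the paper's citation of \Cref{corollary:gr_in_invariants} supplies, whereas your appeal to \Cref{remark:inclusion_Gro} alone does not. With that one additional observation your proof coincides with the paper's.
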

  \begin{proof}
    By \Cref{corollary:gr_in_invariants}
  and \Cref{remark:scalar_ext_of_dcob} we have 
  \[
  \Gr( (\Symu{\alpha^{\KK}})^G ) \subseteq \Gr( \Symu{\alpha^{\KK}} )^G.
  \]
  Conversely, by \Cref{corollary:constructed_orbit_sum_obj}, we can construct for each orbit sum a $G$-equivariant object
  and thus get the desired equality by \Cref{remark:invariant_orbit_sums}.
  \end{proof}

\begin{lemma}\label{lemma:Gr_purely_insep}
    The natural functor $\Symu{\alpha^{\KK}} \rightarrow \Symu{\alpha^{\KK'}}$ 
    induces an isomorphism of graded rings
    \[
    \Gr( \Symu{\alpha^{\KK}} ) \cong \Gr( \Symu{\alpha^{\KK'}} ).
    \]
\end{lemma}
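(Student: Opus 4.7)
\textit{Plan.} The natural functor $\Symu{\alpha^{\KK}} \to \Symu{\alpha^{\KK'}}$ is the degree-wise scalar extension $(-) \otimes_\KK \KK'$, sending a projective module $M$ over $A_n := (\KK[x]/u_\alpha(x))^{\otimes n} \rtimes \KK[S_n]$ to $M \otimes_\KK \KK'$, a projective module over $A_n' := A_n \otimes_\KK \KK' \cong (\KK'[x]/u_\alpha(x))^{\otimes n} \rtimes \KK'[S_n]$. This functor is monoidal by \Cref{lemma:scalar_ext_box} applied to the structure maps of \Cref{definition:natural_alg_homs}, hence induces a homomorphism of graded rings on Grothendieck groups. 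Since each summand $\rfproj{A_n}$ is Krull--Schmidt by \Cref{lem::Krull-Schmidt}, its Grothendieck group is free abelian on indecomposable isomorphism classes, so it suffices to prove that for every $n \geq 0$ the functor $F_n := (-) \otimes_\KK \KK' \colon \rfproj{A_n} \to \rfproj{A_n'}$ induces a bijection on indecomposable iso classes.

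If $\cha \kk = 0$, then the separable closure coincides with the splitting field, so $\KK' = \KK$ and there is nothing to prove. I therefore assume $\cha \kk = p > 0$. By the construction of $\KK$ as the separable closure of $\kk$ in $\KK'$, the extension $\KK \subseteq \KK'$ is purely inseparable and $\KK'$ is a splitting field of $u_\alpha$ over $\KK$. The crux of the argument is then a direct invocation of \Cref{lemma:insep_case}, applied with $\kk \mapsto \KK$, $\KK \mapsto \KK'$, $u \mapsto u_\alpha$: the idempotents $E_{\l_1, \dots, \l_r}$ of \Cref{def:Elambda}, which by \Cref{cor::idempotents-concrete} parametrize primitive idempotents in $A_n'$ up to conjugation, already lie in the subalgebra $A_n$ and form a complete set of conjugacy representatives of primitive idempotents in $A_n$ as well.

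What remains is to observe that $F_n$ matches up the two indexings. Indeed, $F_n$ sends the indecomposable $E_{\l_1, \dots, \l_r} A_n \in \rfproj{A_n}$ to $E_{\l_1, \dots, \l_r} A_n \otimes_\KK \KK' = E_{\l_1, \dots, \l_r} A_n' \in \rfproj{A_n'}$, which is precisely the indecomposable indexed by the same tuple $(\l_1, \dots, \l_r)$. Assembling these bijections over $n$ produces the desired isomorphism of graded rings. The only substantive input is \Cref{lemma:insep_case}; once its conclusion is invoked, the remainder of the argument is purely formal, which is why the only real ``obstacle'' in this proof has in fact been dealt with earlier in the paper.
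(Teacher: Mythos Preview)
The proposal is correct and follows essentially the same approach as the paper: the paper's proof is the single sentence ``This is a consequence of \Cref{lemma:insep_case},'' and your argument is precisely an unpacking of why that citation suffices, spelling out the degreewise bijection of primitive idempotents and the matching of indecomposables under scalar extension.
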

\begin{proof}
  This is a consequence of \Cref{lemma:insep_case}.
\end{proof}

\begin{theorem}[Main classification theorem] \label{theorem:main_classification}
Let $\kk$ be a field of characteristic $p$ (possibly zero) and $\KK'$ a splitting field of $u_\alpha\in\kk[x]$. Denote $G=\Aut(\KK'|\kk)$, the automorphism group of $\KK'$ fixing $\kk$, and 
  let $Z$ be the $G$-set of (distinct) zeros of $u_{\alpha}$ in $\KK'$.
  Then we have a graded ring homomorphism
  \[\gr( \Gr( \DCob{\alpha} ) ) \cong \bigg(\bigotimes_{z \in Z} \Sym^p\bigg)^G,\]
  where $G$ acts on $(\bigotimes_{z \in Z} \Sym^p)$ by permutation of the factors.
\end{theorem}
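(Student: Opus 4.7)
The strategy is to pass through an intermediate separable Galois extension, reduce Galois descent to the level of Grothendieck rings via the results of Sections~\ref{sec:galoisdecent} and~\ref{sec:orbitsumobjects}, and then invoke the splitting-field classification from \Cref{corollary:main_thm_split_case}.

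First, let $\KK \subseteq \KK'$ be the separable closure of $\kk$ in $\KK'$ (as in \Cref{rem:fieldsetup}). Since both $\KK$ and $\KK'$ are normal over $\kk$ and $\kk \subseteq \KK$ is separable, this is a finite Galois extension, and restriction induces a canonical isomorphism $\Gal(\KK|\kk) \cong \Aut(\KK'|\kk) = G$. The $G$-set $Z$ of zeros of $u_\alpha$ in $\KK'$ may be identified with the set of irreducible factors of $u_\alpha$ over $\KK$ (up to multiplicity), on which $G$ acts by field automorphisms, since $\KK \subseteq \KK'$ is purely inseparable.

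Second, I chain together the main structural results of the paper to rewrite the left-hand side:
\begin{align*}
\gr(\Gr(\DCob{\alpha}))
&\cong \Gr(\gr \DCob{\alpha})
&& \text{(\Cref{theorem:iso_graded_rings})} \\
&\cong \Gr\bigl( (\Symu{\alpha^{\KK}})^G \bigr)
&& \text{(\Cref{prop:grDCob-gen})} \\
&= \Gr( \Symu{\alpha^{\KK}} )^G
&& \text{(\Cref{theorem:Gr_compatible_with_invarinats})} \\
&\cong \Gr( \Symu{\alpha^{\KK'}} )^G
&& \text{(\Cref{lemma:Gr_purely_insep}).}
\end{align*}
Each of these identifications is natural enough to carry a $G$-action; in particular, the last isomorphism is induced by the $G$-equivariant functor $\Symu{\alpha^{\KK}} \to \Symu{\alpha^{\KK'}}$ of \Cref{lemma:Gr_purely_insep}.

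Third, since $\KK'$ is a splitting field for $u_\alpha$, the tensor-product decomposition of $\DCob{\alpha^{\KK'}}$ from \Cref{thm:KKO} together with \Cref{Cor:uniqueroot} (applied to each factor $\DCob{\alpha_i^{\KK'}}$, which has a single zero) yields an isomorphism of graded rings
\[
\Gr( \Symu{\alpha^{\KK'}} ) \;\cong\; \gr \Gr(\DCob{\alpha^{\KK'}}) \;\cong\; \bigotimes_{z \in Z} \Sym^p,
\]
by the computation in \Cref{corollary:main_thm_split_case}, noting that the partial fraction components of $\alpha^{\KK'}$ are in bijection with $Z$ via \Cref{lemma:partialfrac} (including the polynomial summand $\alpha_0$ when $0 \in Z$).

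Fourth, and this is the step requiring the most care, I need to verify that under the last chain of isomorphisms the action of $G = \Aut(\KK'|\kk)$ on $\Gr(\Symu{\alpha^{\KK'}})$ corresponds precisely to the permutation action on the tensor factors indexed by $Z$. This follows by unwinding the constructions: $G$ acts on $\Symu{\alpha^{\KK'}}$ via its action on the algebras $P_n^{\KK'} \rtimes \KK'[S_n]$, which in turn is induced by the $G$-action on $\KK'[x]/(u_\alpha)$, permuting the local components $L_z$ exactly according to the permutation of zeros $z \in Z$. Under the explicit classification of indecomposables $X_{\lambda_1,\dots,\lambda_r}$ (indexed by $Z$-tuples of $p$-regular Young diagrams, see \Cref{prop::indecomposables}) and the matching in \Cref{prop:tensordec-indec}, this is precisely the permutation action on $\bigotimes_{z \in Z} \Sym^p$ sending the $\lambda$-part indexed by $z$ to the $\lambda$-part indexed by $g(z)$. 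Combining this with the chain above yields
\[
\gr(\Gr(\DCob{\alpha})) \;\cong\; \Bigl( \bigotimes_{z \in Z} \Sym^p \Bigr)^G,
\]
as required. The main obstacle is the bookkeeping in this last step, tracking the $G$-equivariance through \Cref{prop:grDCob-gen}, \Cref{lemma:Gr_purely_insep} and \Cref{corollary:main_thm_split_case}; the construction of $G$-equivariant objects representing orbit sums in \Cref{corollary:constructed_orbit_sum_obj} ensures that the inclusion $\Gr((\Symu{\alpha^{\KK}})^G) \subseteq \Gr(\Symu{\alpha^{\KK}})^G$ is in fact an equality, which is what makes the reduction to the split case an equivalence rather than merely a monomorphism.
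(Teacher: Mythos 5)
Your proposal is correct and follows essentially the same route as the paper's own proof: the identical chain $\gr\Gr(\DCob{\alpha})\cong\Gr((\Symu{\alpha^{\KK}})^G)\cong\Gr(\Symu{\alpha^{\KK}})^G\cong\Gr(\Symu{\alpha^{\KK'}})^G\cong(\bigotimes_{z\in Z}\Sym^p)^G$ via \Cref{prop:grDCob-gen}, \Cref{theorem:Gr_compatible_with_invarinats}, \Cref{lemma:Gr_purely_insep} and \Cref{corollary:main_thm_split_case}. Your extra bookkeeping of the $G$-action on the tensor factors and the role of the orbit-sum objects from \Cref{corollary:constructed_orbit_sum_obj} simply spells out what the paper dispatches with "it is easy to see" and what is already contained in the proof of \Cref{theorem:Gr_compatible_with_invarinats}.
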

\begin{proof}
Recall the setup from \Cref{rem:fieldsetup}. In particular, we can identify  $G$ with $\Gal(\KK|\kk)$ through restriction.
We need to collect our previous results:
\begin{align*}
    \gr( \Gr( \DCob{\alpha} ) ) %
    &\cong \Gr( (\Symu{\alpha^{\KK} })^G ) ~& %
    \text{\Cref{prop:grDCob-gen} (using Galois descent)} \\
    &\cong \Gr( \Symu{\alpha^{\KK} } )^G ~& \text{\Cref{theorem:Gr_compatible_with_invarinats}} \\
    &\cong \Gr( \Symu{\alpha^{\KK'} } )^G ~& \text{\Cref{lemma:Gr_purely_insep}} \\
    &\cong \bigg(\bigotimes_{z \in Z} \Sym^p\bigg)^G ~& \text{\Cref{corollary:main_thm_split_case}}
\end{align*}
It is easy to see that the $G$-action corresponds to a permutation of the factors.
\end{proof}

\subsection{Examples}
\label{sec:nonsplitting examples}

To demonstrate the main result \Cref{theorem:main_classification} we provide some examples.

\begin{example}
Consider an irreducible polynomial 
$$u_\alpha(t)=\beta_0-\beta_1t +t^2, \qquad \text{with }\beta_0,\beta_1\in \mR,$$
of degree two.
Such a polynomial $u_\alpha$ arises from rational functions of the form
$$\alpha(t)=\frac{c_0+c_1 t}{1-\beta_1t+\beta_0 t^2}\quad \in\quad  \mR[[t]].$$
Using the splitting field $\mC\supset \mR$ we have $G=\Gal(\mC|\mR) \cong \mZ/2\mZ$, 
$$u_{\alpha^\KK}(t)=(t-\rho)(t-\ov{\rho}), \qquad \text{for some }\rho\in \mC\setminus \mR,$$
and the unique generator $\sigma$ of $G$ acts by complex conjugation $\sigma\rho=\ov{\rho}$.

Using the results from \Cref{sec:classification}, the associated graded category $$\gr\DCob{\alpha^\mC}\simeq \cS_{(t-\rho)(t-\ov{\rho})}$$ is given by projective modules over the algebras
$$R_n:= \left(\mC[x]/(x-\rho)\otimes \mC[x]/(x-\ov{\rho})\right)^{\otimes n} \rtimes \mC S_n, \qquad n\geq 0.$$
The indecomposable objects of $\DCob{\alpha}$ of degree $n$ in the filtration are parametrized by pairs $(\lambda_1,\lambda_2)$ of Young diagrams with $n=|\lambda_1|+|\lambda_2|$. From \Cref{theorem:main_classification} we know that
$$\gr\Gr(\DCob{\alpha})\cong \left( \Sym\otimes \Sym \right)^{\mZ/2\mZ},$$
where $G=\mZ/2\mZ$ acts by swapping the tensor factors $\Sym$. Hence, the induced $G$-action on $\Gr (\DCob{\alpha})$ is given by
$$\sigma\cdot [X_{\lambda_1,\lambda_2}]=[X_{\lambda_2,\lambda_1}]$$
for any indecomposable object $X_{\lambda_1,\lambda_2}$.

This implies that the objects $[X_{\lambda,\lambda}]$ are contained in $\gr\Gr(\DCob{\alpha})$. The other elements are the orbit sums 
$$[X_{\lambda_1,\lambda_2}]+[X_{\lambda_2,\lambda_1}],\qquad \text{for } \lambda_1\neq \lambda_2.$$
These symbols correspond to the orbit sum objects constructed in \Cref{lem:FisGequivariant}.

\end{example}

\begin{example}
\Cref{theorem:main_classification} allows us to recover our findings of \Cref{subsection:a_first_example}. In the case $u_{\alpha} := x^2 + 1$ over $\mR$,
the second degree part of $\gr( \Gr( \DCob{\alpha} ) )$
is freely generated by the three elements
$\Grel{(0)} \otimes \Grel{(2)} + \Grel{(2)} \otimes \Grel{(0)}$,
$\Grel{(0)} \otimes \Grel{(1,1)} + \Grel{(1,1)} \otimes \Grel{(0)}$, and
$\Grel{(1)} \otimes \Grel{(1)}$.
\end{example}

\begin{example}
If $u_{\alpha}$ is irreducible, it does not suffice to simply know its number of zeros over a splitting field in order to determine $\gr( \Gr( \DCob{\alpha} ) )$.
For example, let $u_{\alpha_1} := x^3 - x + 1$ and $u_{\alpha_2} := x^3 -3x + 1$.
Then both polynomials are irreducible over $\mQ$, but the Galois group $G_1$ of $u_{\alpha_1}$ is $S_3$, while the Galois group $G_2$ of $u_{\alpha_2}$ is the cyclic group $C_3$ \cite{Lan}*{Example~VI.§2.2}.
It follows from \Cref{theorem:main_classification} that we may regard both $\gr( \Gr( \DCob{\alpha_1} ) )$ and $\gr( \Gr( \DCob{\alpha_2} ) )$ as subrings of
$(\bigotimes_{i = 1}^3 \Sym)$.
It also follows that regarded as such subrings, we have a strict inclusion
\[
\gr( \Gr( \DCob{\alpha_1} ) ) \subsetneq \gr( \Gr( \DCob{\alpha_2} ) ).
\]
For example, the element
\[\Grel{(0)} \otimes \Grel{(1)} \otimes \Grel{(2)} + \Grel{(2)} \otimes \Grel{(0)} \otimes \Grel{(1)} +
\Grel{(1)} \otimes \Grel{(2)} \otimes \Grel{(0)}\]
lies in $\gr( \Gr( \DCob{\alpha_2} ) )$
but not in $\gr( \Gr( \DCob{\alpha_1} ) )$.
\end{example}

\section{Special cases and examples}\label{sec:examples}

In \Cref{sec:grRepSt} we derived a classification of indecomposable objects in $\uRep_{\kk}(S_T)$ where $\kk$ is any field, generalizing \cite{Del}*{Proposition~5.11}. In \Cref{expl:RepStasDCob} we recalled the result of \cite{KS} that, for $T\neq 0$, $\uRep_\kk(S_T)$ arises as the special case of the categories $\DCob{\alpha}$ with $\alpha=(T,T,\ldots)$. In this section, we specify our results to other classes of examples associated to certain rational series $\alpha$. In addition, we provide a slight generalization of $\DCob{\alpha}$ allowing the polynomial $u_\alpha$ which governs the endomorphisms of $[1]$ to be inflated by a polynomial factor in \Cref{sec:inflations}. For example, $\uRep_\kk(S_0)$ appears as an example of the inflated construction. 

\subsection{Representations of the orthosymplectic Lie superalgebra \texorpdfstring{$\mathfrak{osp}(1|2)$}{osp(1|2)}}
\label{sect:Liesup}
Assume that $\cha \kk=0$ during this subsection.
The Lie superalgebra $\mathfrak{osp}(1|2)$ is defined as the Lie super-subalgebra of $\mathfrak{gl}(1|2)$ of matrices of the form
$$\begin{pmatrix}
0&a&b\\b&c&d\\-a&e&-c
\end{pmatrix},$$
see e.g. \cite{Sta}*{Section~4}.
We denote by $\Rep \mathfrak{osp}(1|2)$ the category of finite-dimensional modules over $\mathfrak{osp}(1|2)$. 
We regard this category as a $\kk$-linear category with morphisms that preserve the $(\mZ/2\mZ)$-grading.
The universal enveloping algebra $U(\mathfrak{osp}(1|2))$ is a primitively generated Hopf superalgebra, all $x\in \mathfrak{osp}(1|2)$ are primitive, i.e.~$\Delta(x)=x\otimes 1+1\otimes x$, $\varepsilon(x)=0$. This gives $\Rep \mathfrak{osp}(1|2)$ the structure of a symmetric (abelian) tensor category.  

We denote by $V$ the standard representation, of superdimension $(1|2)$, of $\mathfrak{osp}(1|2)$ given by the action on column vectors.
For every integer $i\geq 0$ there is an irreducible representation $V_i$ over $\mathfrak{osp}(1|2)$, which is the unique $(2i+1)$-dimensional irreducible summand of $V^{\o i}$. Tensoring with $\kk^{0|1}$, the unique representation of superdimension $(0|1)$, gives an autoequivalence (merely as an abelian category) of $\Rep \mathfrak{osp}(1|2)$ and we denote $\ov{W}=\kk^{0|1}\o W$ for any $W\in \Rep \mathfrak{osp}(1|2)$.

We denote the Karoubian tensor subcategory of $\Rep \mathfrak{osp}(1|2)$ generated by the object $V=V_1$ by $\Rep^+\mathfrak{osp}(1|2)$. This category is semisimple (see e.g.~\cite{Sta}*{Theorem~4.1}) and contains the irreducible representations $V_i$ but not the irreducible representations $\ov{V_i}$.

\begin{remark} \label{rem::rep-osp} In $\Rep^+\osp(1|2)$,
$$ V_i \o V_j \cong V_{|i-j|}\oplus V_{|i-j|+1}\oplus\dots\oplus V_{i+j}
.
$$
This implies
$V^{\o k}\cong V_k\oplus Y$, where $Y$ is a direct sum of indecomposable objects each of which occurs as a direct summand in $V^{\o(k-1)}$, for all $k\geq 1$. 
Even more concretely, as $V$ is generated by an odd highest weight vector $v$, $V_k$ is the submodule of $V^{\o k}$ which is generated by $v^{\o k}$. In particular, $V_k$ is a direct summand in the subspace of alternating tensors (in the super sense). In terms of Schur functors, this space of alternating tensors is exactly $F^\lambda(V)$ for $\l=1^k$, i.e., $\lambda$ is a single column. From the tensor product decomposition above, it follows that $V_k$ is the unique such summand which is not a direct summand of $V^{\o i}$ for any $i<k$.
\end{remark}

Next, we turn to the representation category of the orthosymplectic supergroup $\OSp(1|2)$.
We recall that the category $\Rep(\OSp(1|2))$ can be described using the super Harish--Chandra pair\footnote{A \emph{super Harish--Chandra pair} is a pair $(G_0,\g)$ where $G_0$ is an (even) Lie group acting algebraically on a Lie superalgebra $\g$ whose even part $\g_0$ is the Lie algebra of $G_0$ such that the differential of the action of $G_0$ on $\g$ is the adjoint action of $\g_0$ on $\g$ (see \cite{Ser}*{Section~3}).} with $\OSp(1|2)_0=\sfO(1)\times\Sp(2)=\{1,-1\}\times\SL(2)$, which can be thought of as block-diagonal matrices. It is then the category whose objects are simultaneously finite-dimensional $\osp(1|2)$-modules and $\OSp(1|2)_0$-modules such that the two actions of $\osp(1|2)_0$ obtained by restricting the Lie superalgebra action or by differentiating the Lie group action coincide. Morphisms in this category have to respect both module structures.

For the orthosymplectic supergroup $\OSp(1|2)$, we fix $\epsilon\in\OSp(1|2)_0$ to be the diagonal matrix with entries $(1,-1, -1)$ and denote by $\Rep(\OSp(1|2),\epsilon)$ the category of algebraic representations of $\OSp(1|2)$ such that the action of $\epsilon$ is given by the parity endomorphism (i.e., the involution which is the identity on the even part and multiplication with $-1$ on the odd part of a superspace), see \cites{Ser,CH}. It can be verified that $\Rep(\OSp(1|2),\epsilon)$ is generated as a Karoubian tensor category by the $(1|2)$-dimensional defining $\OSp(1|2)$-module, which we also denote $V$, but does not contain $\overline{V}$. Hence, there is a faithful (forgetful) functor $I:\Rep(\OSp(1|2),\epsilon)\to\Rep^+\osp(1|2)$ sending $V\mapsto V$. It is known that $I$ is not full, see for instance \cite{LZ}*{Sec.~5} or \Cref{rem::I-not-full} below.

\subsection{Constant generating functions}\label{sec:constantcase}

In this section, we specify our results to the case of $\DCob{c}$ for $c\in\kk\setminus\{0\}$ and identify the images of indecomposable objects under the semisimplification functor. To this end, we recall that the semisimplification is given by $\Rep^+ \mathfrak{osp}(1|2)$ by a theorem of \cite{KKO}. We further clarify the relationship of $\DCob{c}$ to Deligne's category $\uRep(\sfO_{-1})$ and its semisimplification.

The category $\DCob{c}$, for $c\in \kk\setminus \{0\}$ a constant---which corresponds to the sequence $\alpha=(c,0,0,\ldots)$---is subject to the relations
\begin{align} \label{eq::relations-dcob-c}
    x&=\sm\sdel=0, & s_0&=\scap\scup=c,
\end{align}
i.e., surfaces of positive genus evaluate to $0$, and the genus zero surface evaluates to the constant $c$.
Thus, a basis for $\Hom_{\DCob{c}}([n],[m])$ is given by the set of partitions of $\{1,\ldots, n,1',\ldots, m'\}$.

The relations in \Cref{eq::relations-dcob-c} imply that the object $[1]$ has dimension $0$ in $\DCob{c}$, while at the same time, $e:=\tfrac1c \scup \scap$ is an idempotent endomorphism of $[1]$ with trace $1$. Hence, $e$ decomposes $[1]$ into a direct sum of two objects, $([1],e)$ and $([1],1-e)$, of dimension $1$ and $-1$, respectively.

Within the following proposition, we collect known facts from the literature and present them in a unified diagram.
\begin{proposition} \label{prop::commutative-diagram-osp} Let $\kk$ be a field of characteristic zero and $c\in \kk\setminus \{0\}$.
We have the following commutative diagram of symmetric Karoubian tensor categories
\begin{align}
    \xymatrix{
    &\uRep (\sfO_{-1})\ar@{->>}[ld]\ar@{->>}[d]\ar@{^{(}->}[rr]^{\un{I}}&& \DCob{c}\ar@{->>}[d]^{Q}\ar@{->>}[rd]&\\
    \uRep (\sfO_{-1})/\cN\ar@{=}[r]^-{\sim}&\Rep (\OSp(1|2),\epsilon)\ar@{->}[rr]^{I}&& \Rep^+ \mathfrak{osp}(1|2)\ar@{=}[r]^-{\sim}&\DCob{c}/\cN.
    }
\end{align}
Here, the horizontal arrows are faithful monoidal functors, the downward arrows are full and essentially surjective monoidal functors, and $\cN$ denotes the ideal of negligible morphisms.
\begin{itemize}
    \item The functor $\un{I}$ is uniquely determined by the universal property of $\uRep (\sfO_{-1})$ by sending the generating object $[1]$ to the self-dual object $([1],1-e)$, where 
    \begin{equation}
        e:= \frac{1}{c}\scup\scap = \frac{1}{c}\,
        \vcenter{\hbox{
\begin{tikzpicture}[
  tqft,
  every outgoing boundary component/.style={draw=black,fill=black!20},
  every incoming boundary component/.style={draw=black, fill=black!20},
  every lower boundary component/.style={draw},
  every upper boundary component/.style={draw},
  cobordism/.style={fill=black!25},
  cobordism edge/.style={draw},
  view from=incoming,
  cobordism height=2cm,
]
\begin{scope}[every node/.style={rotate=0}]
\pic[tqft/cup,
  at={(0,0)}
  ];
  \pic[tqft/cap,
  at={(0,0.5)}
  ];
\draw node[label=$1'$](incoming boundary 1) at (0,0) {};
\draw node[label=$1$](incoming boundary 1) at (0,-2.4) {};
\end{scope}
\end{tikzpicture}}}\, .
    \end{equation}
    \item The functor $I$ is the canonical embedding from \Cref{sect:Liesup}.
    \item The  functor $\uRep  (\sfO_{-1})\to\Rep(\OSp(1|2),\epsilon)$ is obtained by the universal property of $\uRep (\sfO_{-1})$ by sending $[1]$ to the self-dual object $V$ in $\Rep(\OSp(1|2),\epsilon)$  and the kernel of this functor consists of the ideal of negligible morphisms $\cN$ \cite{Del}*{Th\'eor\`eme~9.6}. Thus, the category $\Rep(\OSp(1|2),\epsilon)$ is the semisimplification of $\uRep(\sfO_{-1})$, see \cite{EO}*{Section~2.2}.
    \item The functor $Q$ is obtained by sending $[1]$ to the Frobenius algebra $A=V_0\oplus V_1$ %
    defined in \cite{KKO}*{Section 5.5} (note that those objects are called $V_0$ and $V_2$ in loc.cit.) via the universal property stated in \Cref{prop:universal}. It was shown in \cite{KKO}*{Theorem~5.5} that $\Rep^+ \mathfrak{osp}(1|2)$ is indeed equivalent to the semisimplification of $\DCob{c}$.
\end{itemize}
\end{proposition}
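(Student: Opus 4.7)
The plan is to construct $\un{I}$ via the universal property of $\uRep(\sfO_{-1})$ and to read off commutativity of the diagram from the uniqueness built into that universal property; the remaining functors and the identifications of their kernels with ideals of negligible morphisms are cited verbatim from \cite{Del}, \cite{EO}, and \cite{KKO}. First, using $\scap\circ\scup = s_0 = c\cdot\id_{[0]}$ in $\DCob{c}$, a direct computation gives $e^2 = \tfrac{1}{c^2}\scup\circ(\scap\circ\scup)\circ\scap = e$, so $e = \tfrac{1}{c}\scup\circ\scap$ is idempotent in $\End([1])$. Computing traces by closing up cobordisms: $\tr(\id_{[1]})$ yields a torus (giving $s_1 = 0$), while $\tr(\scup\circ\scap)$ yields a sphere (giving $s_0 = c$), so $\tr(e) = 1$ and $\tr(1-e) = -1$. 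Restricting $\ev_{[1]} = \scap\circ\sm$ and $\coev_{[1]} = \sdel\circ\scup$ along $(1-e)^{\otimes 2}$ endows $([1],1-e)$ with a symmetric self-duality of dimension $-1$, and Deligne's universal property \cite{Del}*{Proposition~9.4} then produces the unique symmetric monoidal functor $\un{I}\colon \uRep(\sfO_{-1}) \to \DCob{c}$ sending $[1]$ to $([1],1-e)$ with this self-duality.

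For commutativity of the outer square, uniqueness in the universal property reduces the check to the image of the generator $[1]$. Along the top-right path, \cite{KKO}*{Section~5.5} gives $Q([1]) = V_0\oplus V_1$ with $Q(e)$ the projection onto the summand $V_0$, so $(Q\circ\un{I})([1]) = V_1 = V$. Along the bottom path, $[1]$ maps first to $V$ in $\Rep(\OSp(1|2),\epsilon)$ and then $I(V) = V$ in $\Rep^+\osp(1|2)$. As the induced self-duality data on $V$ agree on both sides, the two symmetric monoidal functors from $\uRep(\sfO_{-1})$ to $\Rep^+\osp(1|2)$ must coincide.

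The main technical step is the faithfulness of $\un{I}$. My plan is to use the basis of $\Hom_{\DCob{c}}([m],[n])$ by set-partition cobordisms from \Cref{thm:universal-prop} (with $u_c(t)=t$, so $k=1$) and to expand $\un{I}(b) = (1-e)^{\otimes n}\circ\tilde b\circ (1-e)^{\otimes m}$ for each Brauer diagram $b$, where $\tilde b$ denotes the corresponding pair-partition cobordism. Expanding each factor $1-e$ yields a sum over subsets $S$ of positions in which each $e_i$ detaches position $i$ into its own singleton block. Since this detaching procedure can only produce set partitions containing singletons, the pair partition $\tilde b$ itself appears with coefficient $+1$ in the expansion of $\un{I}(b)$ and never appears in any $\un{I}(b')$ for $b'\neq b$. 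This yields linear independence of $\{\un{I}(b)\}_b$ inside the set-partition basis, and hence faithfulness.

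The remaining assertions—existence of $I$ from \Cref{sect:Liesup}, identification of $\Rep(\OSp(1|2),\epsilon)$ as the semisimplification $\uRep(\sfO_{-1})/\cN$ via the left downward arrow, and the equivalence $\DCob{c}/\cN\simeq\Rep^+\osp(1|2)$ induced by $Q$—are precisely the content of \cite{Del}*{Th\'eor\`eme~9.6}, \cite{EO}*{Section~2.2}, and \cite{KKO}*{Theorem~5.5}, respectively.
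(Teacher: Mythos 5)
The paper does not actually prove this proposition: it is presented as a compendium of known facts, with the idempotent $e$, the trace computations giving the dimensions $1$ and $-1$, and the universal-property descriptions of the functors stated in the surrounding text, and everything else delegated to \cite{Del}*{Th\'eor\`eme~9.6}, \cite{EO} and \cite{KKO}*{Theorem~5.5}. Your proposal therefore goes beyond the paper, in particular by giving an actual argument for the faithfulness of $\un{I}$; your overall strategy (idempotence of $e$, $\tr(e)=1$, $\dim([1],1-e)=-1$, Deligne's universal property, and a leading-term argument in the set-partition basis of $\DCob{c}$ with $u_c(t)=t$) is sound, and the singleton-block observation is exactly the right reason why $\tilde b$ occurs with coefficient $1$ in $\un{I}(b)$ and in no other $\un{I}(b')$.

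Two steps, however, are asserted where they need justification, and they carry the weight of the argument. First, that restricting $\ev=\scap\sm$ and $\coev=\sdel\scup$ along $(1-e)^{\otimes 2}$ really defines a symmetric self-duality of $([1],1-e)$: the zigzag identities for restricted (co)evaluations fail for a general idempotent; here they hold because $e$ is self-transpose, i.e.\ $\ev\circ(e\otimes\id)=\ev\circ(\id\otimes e)$, which is clear from the symmetry of the cobordism $\scup\scap$. Second, the identity $\un{I}(b)=(1-e)^{\otimes n}\circ\tilde b\circ(1-e)^{\otimes m}$ is not the definition of $\un{I}$: the universal property prescribes $\un{I}$ only on the generating cup, cap and crossing, so writing a Brauer diagram $b$ as a composite of these inserts a copy of $(1-e)^{\otimes l}$ at every intermediate level. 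One must argue that these intermediate idempotents can be removed, which follows from the same self-transposeness together with naturality of the symmetric braiding: each factor $1-e$ slides along its strand (through cups, caps and crossings) to the top or bottom boundary, where it is absorbed by idempotency. With that identity in place your expansion argument is correct and yields injectivity on $\Hom([m],[n])$, hence faithfulness on all of $\uRep(\sfO_{-1})$ since every object is a retract of sums of the $[m]$. Finally, in the commutativity step, the phrase ``the induced self-duality data on $V$ agree'' is where the real content sits: two symmetric monoidal functors out of $\uRep(\sfO_{-1})$ agree up to monoidal isomorphism precisely when the corresponding self-dual objects are isomorphic as self-dual objects, and since $\End(V)=\kk$ the Frobenius form of $A$ restricted to $V_1$ and the orthosymplectic form differ by a scalar that has to be matched (up to squares) by normalizing the choices; the paper glosses over this at the same level, so it is not a defect relative to the paper, but it should at least be flagged rather than claimed as automatic.
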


\begin{remark} \label{rem::I-not-full} On a side note, one can use the commutative diagram in \Cref{prop::commutative-diagram-osp} to see that the functor $I$ is not full as follows: We observe that the endomorphism space between $[0]$ and $[3]$ in $\uRep(\sfO_{-1})$ is zero, as there are no Brauer diagrams with an odd number of points (see \Cref{sec::Delignes-categories}). If $I$ was full, this would imply that all morphisms in $\DCob{c}$ between $[0]$ and $([1],1-e)^{\o 3}$ are negligible. This is not the case: let $s,s'$ be cobordisms with one component of genus $0$, where $s$ has $0$ incoming and $3$ outgoing boundary circles, and $s'$ has $3$ incoming and $0$ outgoing boundary circles. Then a short computation using that any closed surface of positive genus is zero in $\DCob{c}$ shows that $s'(1-e)^{\o3} s= 2c^{-1} \neq0$. Hence, $(1-e)^{\o3}s$ is not negligible, and $I$ cannot be full.
\end{remark}

Our classification of indecomposable objects of $\DCob{\alpha}$ specifies to the following result.
\begin{corollary}\label{prop:constant-irrep}
Let $\kk$ be an arbitrary field, $c\in \kk\setminus\{0\}$. 
A complete list of  indecomposable objects of $\DCob{c}$ is given by $X_\l$    , where $\lambda$ corresponds to an irreducible representation of $S_n$ over $\kk$. The object $X_\lambda$ is a direct summand of $[n]$.
\end{corollary}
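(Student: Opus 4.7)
The plan is to specialize the main classification results to the constant case. First, I would compute $u_\alpha$ for $\alpha(t) = c/1$: since $p(t) = c$ and $q(t) = 1$ satisfy $\deg p = \deg q = 0$, we have $k = \max\{\deg p + 1, \deg q\} = 1$, so $u_\alpha(t) = t \cdot q(t^{-1}) = t$. In particular, $u_\alpha$ has a single root $0$, and it trivially splits over any field $\kk$. Therefore, $\kk$ is automatically a splitting field for $u_\alpha$, placing us in the setting of Section \ref{sec::classification-splitting-case}.

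Next, I would directly invoke Proposition \ref{prop::indecomposables}. In the notation there, the number $r$ of distinct roots equals $1$, so the indexing set $\Lambda$ reduces to the set of single ($p$-regular) Young diagrams $\lambda$. By Remark \ref{remark:sn_splitting_field} together with the review in Section \ref{sec::Young}, this set is in bijection with the isomorphism classes of irreducible $\kk[S_{|\lambda|}]$-modules (noting that $\kk$ is a splitting field for every symmetric group). Defining $X_\lambda := X_{\lambda}$ in the notation of Definition \ref{def::XY} (which, since $r=1$, is unambiguous), Proposition \ref{prop::indecomposables} yields that $\{X_\lambda\}_\lambda$ is a complete, irredundant list of indecomposable objects of $\DCob{c}$.

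To see that $X_\lambda$ is a direct summand of $[n]$ for $n = |\lambda|$, I would note that by Lemma \ref{lem::indecomposables} the object $X_\lambda$ is (up to isomorphism) the image of a primitive idempotent $E_\lambda \in \End_{\cD_n/\cD_{n-1}}(\pi([n])) \cong P_n \rtimes \kk[S_n]$, lifted to an idempotent in $\End_{\DCob{c}}([n])$ via the filtration $\cD_\bullet$. Since $u_\alpha(t) = t$ forces $P_n \cong \kk$, the relevant crossed product reduces to $\kk[S_n]$, and the primitive idempotents correspond precisely to primitive idempotents $e_\lambda \in \kk[S_n]$ labeled by ($p$-regular) partitions. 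Hence $X_\lambda$ occurs as a summand of $[n]$, and by the filtration property it does not occur as a summand of $[n-1]$.

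There is no genuine obstacle here; the proof is essentially bookkeeping that invokes the general classification. The only item requiring a moment's attention is the verification that $u_\alpha = t$, which collapses $P_n$ to $\kk$ and thereby identifies the relevant tower of algebras with the group algebras $(\kk[S_n])_n$, making the indexing by irreducibles of $S_n$ over $\kk$ transparent.
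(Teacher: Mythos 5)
Your proof is correct and follows essentially the same route as the paper: compute $u_c(t)=t$ from the conventions in \eqref{eq:ualpha}, observe it has the unique root $0$ over any field (so $\kk$ is a splitting field and $P_n\cong\kk$, reducing the crossed product to $\kk[S_n]$), and invoke \Cref{prop::indecomposables} to get the indexing by irreducibles of $S_n$ over $\kk$. The only difference is that you spell out the summand-of-$[n]$ claim via \Cref{lem::indecomposables}, which the paper leaves implicit in \Cref{prop::indecomposables} and the remark following \Cref{def::XY}.
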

\begin{proof}
We observe that $u_c(t)=t$ in the constant case of $\alpha=c=(c,0,0,\ldots)$. Thus, for any field, $u_c$ is a linear polynomial with unique root $0$. By \Cref{prop::indecomposables}, the indecomposable objects are in bijection with $\bigsqcup_{n\geq 0}\Xk{S_n}$. 
\end{proof}

The following is a direct consequence of \Cref{thm:KoDCob} as any field $\kk$ is a splitting field for the polynomial $u_c(t)=t$.

\begin{corollary}\label{prop:constant-grK0}
Let $\kk$ be a field of characteristic $p \geq 0$ and $c\neq 0$. Then there is an  equivalence of symmetric monoidal categories
$$\gr (\DCob{c})\simeq \cS_{t}=\bigoplus_{n\geq 0} \rfproj{\kk[S_n]}.$$
Consequently, there is an isomorphism of graded commutative rings
$$\gr \Gr(\DCob{c})\cong \Sym^p.$$
\end{corollary}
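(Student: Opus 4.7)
The proof is essentially a specialization of \Cref{thm:KoDCob} to the case $\alpha = c$, as the preceding sentence suggests. The plan is as follows.

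First, I would compute the relevant polynomial: by the formula \eqref{eq:ualpha}, for $\alpha = c \in \kk\setminus\{0\}$ we have $p(t) = c$, $q(t) = 1$, so $k = \max\{1,0\} = 1$ and $u_c(t) = t^1 \cdot q(t^{-1}) = t$. This was already observed in the proof of \Cref{prop:constant-irrep}. Consequently, the algebras $P_n$ of \Cref{sec::endo-of-n} collapse:
\[
P_n = \kk[x_1,\ldots,x_n]/(x_1,\ldots,x_n) \cong \kk
\]
for every $n \geq 0$, and therefore the crossed products reduce to group algebras $P_n \rtimes \kk[S_n] \cong \kk[S_n]$.

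Next, I would invoke \Cref{thm:KoDCob}, which provides an equivalence of monoidal categories compatible with gradings
\[
\gr(\DCob{c}) \simeq \Symu{c} = \bigoplus_{n \geq 0} \rfproj{(P_n \rtimes \kk[S_n])} = \bigoplus_{n \geq 0} \rfproj{\kk[S_n]} = \cS_t.
\]
The tensor product on the right-hand side, defined via the algebra homomorphisms $\mu_{ij}$ of \Cref{definition:natural_alg_homs}, reduces in this case (since all $P_n$ are trivial) to the natural embedding $\kk[S_i] \otimes_\kk \kk[S_j] \hookrightarrow \kk[S_{i+j}]$ of \eqref{eq::embedding-Sn}, so that the induced bifunctor on $\cS_t$ is precisely induction of projective modules along these embeddings.

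For the Grothendieck ring statement, I would then apply the second part of \Cref{thm:KoDCob} together with \Cref{def:Sym}:
\[
\gr \Gr(\DCob{c}) \cong \Gr(\Symu{c}) = \bigoplus_{n \geq 0} \Gr(\rfproj{\kk[S_n]}) = \Sym^p,
\]
with the ring structure on both sides coming from the same induction product. Alternatively, since $u_c(t) = t$ has a unique root (namely $0$) over any field, one may directly cite \Cref{Cor:uniqueroot} to conclude $\gr \Gr(\DCob{c}) \cong \Sym^p$. There is no real obstacle here: the only substantive input is the computation $u_c(t) = t$, and everything else is an unpacking of the definitions together with the general machinery already established.
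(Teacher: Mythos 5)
Your proposal is correct and follows essentially the same route as the paper: the statement is presented there as a direct consequence of \Cref{thm:KoDCob}, using that $u_c(t)=t$ (so $P_n\cong\kk$ and $P_n\rtimes\kk[S_n]\cong\kk[S_n]$) and that any field is a splitting field for $t$, with the ring identification then coming from \Cref{def:Sym} (equivalently \Cref{Cor:uniqueroot}). Your explicit unpacking of the $\mu_{ij}$ reducing to the embeddings \eqref{eq::embedding-Sn} is a fine, if slightly more detailed, version of the same argument.
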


Now assume that $\cha \kk=0$. Recall that the isomorphism classes of irreducible objects in $\Rep(\osp(1|2))$ are given by $\{V_k\}_{k\geq 0}$, where $V_k$ is the unique indecomposable $\osp(1|2)$-submodule of $V^{\o k}$ which does not occur in $V^{\o(k-1)}$.

\begin{proposition} The semisimplification functor $Q\colon\DCob{c}\to\Rep^+\osp(1|2)$ (for $c\neq0$) sends $X_\l$ to an object isomorphic to $V_k$ if $\l=1^k$ for some $k\geq 0$, and to the object $0$ otherwise.
\end{proposition}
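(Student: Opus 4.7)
I will first pin down the image of the non-trivial summand of $[1]$, then establish the case $\lambda=1^k$ via Schur functors, and finally dispose of all other Young diagrams by a bijection/counting argument using the semisimplification.

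First, since $Q$ is a $\kk$-linear symmetric monoidal functor with $Q([1]) \cong V_0 \oplus V_1$ and $Q([0])\cong V_0$ (by \Cref{prop::commutative-diagram-osp}), the decomposition $[1]\cong[0]\oplus Y_1$ of \Cref{lem:Y1} together with Krull--Schmidt in the semisimple category $\Rep^+\osp(1|2)$ forces $Q(Y_1)\cong V_1=V$. Because $Q$ is symmetric monoidal and $F^\lambda$ is defined as the image of the idempotent $\iota_X(e_\lambda)$ built from the symmetric braiding, $Q$ commutes with Schur functors, i.e.\ $Q(F^\lambda(Y_1))\cong F^\lambda(V)$ for every Young diagram~$\lambda$.

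Second, I will show $F^{1^k}(V)\cong V_k$ in $\Rep^+\osp(1|2)$. By \Cref{rem::rep-osp}, $V_k$ is the $\osp(1|2)$-submodule of $V^{\otimes k}$ generated by $v^{\otimes k}$, where $v$ is an odd highest weight vector of $V$; since $v$ is odd, every transposition in the super-braiding acts on $v^{\otimes k}$ by~$-1$, so the antisymmetrizer fixes $v^{\otimes k}$ and hence $V_k\subseteq F^{1^k}(V)$. Equality follows from comparing dimensions: the super-antisymmetric $k$-th power of a superspace of super-dimension $(1|2)$ has dimension $\binom{1}{0}\binom{k+1}{k}+\binom{1}{1}\binom{k}{k-1}=(k+1)+k=2k+1=\dim V_k$. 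Now \Cref{prop::Flambda} gives $F^{1^k}(Y_1)\cong X_{1^k}\oplus Y$ with $Y\in \cD_{k-1}$, and applying $Q$ yields
\[
V_k \;\cong\; Q(X_{1^k}) \oplus Q(Y).
\]
Because $Q(\cD_{k-1})$ is contained in the additive Karoubian closure of $\{V^{\otimes m}:m\le k-1\}$, and $V_k$ is not a summand of any such $V^{\otimes m}$ (by an inductive application of \Cref{rem::rep-osp}), Krull--Schmidt in $\Rep^+\osp(1|2)$ forces $Q(X_{1^k})\cong V_k$ and $Q(Y)=0$.

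Third, for $\lambda\neq 1^k$, I invoke the fact that $Q$ is the semisimplification functor $\DCob{c}\twoheadrightarrow \DCob{c}/\cN\simeq\Rep^+\osp(1|2)$ (see \Cref{prop::commutative-diagram-osp}). By the general theory of semisimplifications of rigid Krull--Schmidt tensor categories (\cite{EO}*{Section~2.2}), $Q$ induces a bijection between isomorphism classes of non-negligible indecomposable objects of $\DCob{c}$ and isomorphism classes of simple objects of $\Rep^+\osp(1|2)$. The simples of $\Rep^+\osp(1|2)$ are exactly $\{V_k\}_{k\geq 0}$, and the second step already exhibits $\{X_{1^k}\}_{k\geq 0}$ as a family mapping bijectively onto this set. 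Hence no other indecomposable $X_\lambda$ can be non-negligible, so $Q(X_\lambda)=0$ for every Young diagram $\lambda$ not of the form $1^k$.

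The main obstacle is the identification $F^{1^k}(V)\cong V_k$, which is where the super-structure actually enters: one has to keep track of the signs introduced by the super-braiding when applying the antisymmetrizer $e_{1^k}$ to $v^{\otimes k}$, and then confirm the identification by a (straightforward but super-twisted) dimension count. Everything else is then a formal consequence of Krull--Schmidt combined with the bijection between non-negligible indecomposables and simples under semisimplification.
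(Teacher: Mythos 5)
Your proof is correct, but it takes a partly different route from the paper's, and the difference is worth recording. For $\lambda=1^k$ you prove the sharper statement $F^{1^k}(V)\cong V_k$ on the nose (the submodule generated by $v^{\otimes k}$ lies in the image of the antisymmetrizer, and the super-dimension count $(k+1)+k=2k+1$ forces equality), then combine $Q(F^{1^k}(Y_1))\cong F^{1^k}(V)$ with $F^{1^k}(Y_1)\cong X_{1^k}\oplus Y$, $Y\in\cD_{k-1}$, and Krull--Schmidt to get $Q(X_{1^k})\cong V_k$; the paper never identifies $F^{1^k}(V)$ exactly, using only that $V_k$ is a summand of it and occurs with multiplicity one in $V^{\otimes k}$ (\Cref{rem::rep-osp}). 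For $\lambda$ not a column, the paper argues internally: $Q(X_\lambda)$ is zero or an indecomposable summand of $F^{\lambda}(V)$ not occurring in $V^{\otimes(n-1)}$, and by \Cref{rem::rep-osp} the only Schur functor of $V$ containing such a \emph{new} summand is the one for $\lambda=1^{n}$. You instead import from \cite{EO} the fact that semisimplification induces a bijection between non-negligible indecomposables and simples; since your family $\{X_{1^k}\}$ already exhausts the simples $\{V_k\}$ and distinct non-negligible indecomposables have non-isomorphic simple images, every other $X_\lambda$ must be negligible. Both arguments are sound: yours makes the non-vanishing of $Q(X_{1^k})$ completely explicit (a point the paper's wording leaves somewhat implicit) and gives the clean identity $F^{1^k}(V)\cong V_k$, at the price of invoking the general Etingof--Ostrik description of simples in a semisimplification, while the paper stays inside the multiplicity bookkeeping of \Cref{rem::rep-osp} and \Cref{prop::Flambda}. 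Two harmless imprecisions: \Cref{lem:Y1} literally gives $[1]\cong Y_1\oplus Y$ with $Y\in\cD_0$ (so $Y\cong[0]^{\oplus m}$), not $[1]\cong[0]\oplus Y_1$, but your Krull--Schmidt step still yields $Q(Y_1)\cong V_1$; and the bijection you quote is the statement that indecomposables of nonzero dimension map to pairwise non-isomorphic simples and those of dimension zero to zero, which in characteristic zero for symmetric Karoubian categories is exactly the negligible/non-negligible dichotomy you use.
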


\begin{proof} Let $n:=|\l|$. By \Cref{prop::Flambda}, $X_\lambda$ is the unique indecomposable object occurring as a direct summand in $F^{\lambda}([1])$ which does not occur in any of the objects $[0],\dots,[n-1]$.  Since we are in characteristic $0$, $F^\lambda$ is just the ordinary Schur functor, and as $Q$ is a symmetric monoidal functor sending $[1]\mapsto V_0\oplus V_1$, $Q(X_\lambda)$ is an indecomposable direct summand of $F^\lambda(V_1)$ not occurring in $V_1^{\o 0},\dots,V_1^{\o(n-1)}$, or zero by \Cref{lemma:ks_indec}. However, in $\Rep^+\osp(1|2)$, $F^\lambda(V_1)$ is a direct summand of $V_1^{\o n}$, so only one object of the form $F^\l(V_1)$ contains an indecomposable summand which does not occur in $V_1^{\o(n-1)}$, namely the object for $\l=1^n$ by \Cref{rem::rep-osp}. This proves the assertion.
\end{proof}

\subsection{Polynomial generating functions}\label{sec:polynomialcase}
Let $\kk$ be a field. In this section we consider the case when
$$\alpha(t)=\beta_0+\beta_1 t+\ldots + \beta_nt^n \in \kk[t]$$
is a polynomial, i.e.~$\alpha=\alpha_0$ in with respect to the partial fraction  decomposition of \eqref{eq:partialdec}. In this case, we find 
$u_{\alpha}(t)=t^{n+1}$, where $n=\deg \alpha$.

\begin{corollary}
For any field $\kk$, and $\alpha\in \kk[t]$, a full list of irreducible objects of $\DCob{\alpha}$ is given by $X_\lambda$, where $\lambda$ corresponds to an irreducible representation of $S_n$ over $\kk$.
\end{corollary}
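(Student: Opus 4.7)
The plan is to reduce directly to Proposition \ref{prop::indecomposables}. First I would verify the shape of $u_\alpha$: writing $\alpha = p(t)/q(t)$ with $p(t) = \alpha(t)$ and $q(t) = 1$ (which satisfies the convention \eqref{eq:alphaconventions}), formula \eqref{eq:ualpha} yields $k = \max\{\deg \alpha + 1,\, 0\} = n+1$ and consequently $u_\alpha(t) = t^{n+1}\, q(t^{-1}) = t^{n+1}$. Thus $u_\alpha$ has $0$ as its unique root in every field $\kk$, so $\kk$ is automatically a splitting field for $u_\alpha$, and the hypothesis of Proposition \ref{prop::indecomposables} is satisfied for all choices of $\alpha \in \kk[t]$ and all fields $\kk$.

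Applying Proposition \ref{prop::indecomposables}, the indecomposable objects of $\DCob{\alpha}$ are parametrized by the set $\Lambda$ of tuples $(\lambda_1, \ldots, \lambda_r)$ of ($p$-regular, where $p = \mathrm{char}(\kk)$) Young diagrams, with $r$ the number of distinct roots of $u_\alpha$ in $\kk$. Since $r = 1$ in the present case, $\Lambda$ collapses to the set of single ($p$-regular) Young diagrams $\lambda$ of arbitrary size $n = |\lambda|$, which by Section \ref{sec::Young} is in bijection with the isomorphism classes of irreducible $\kk[S_n]$-modules (ranging over $n \geq 0$). The boundary case $\alpha = 0$, where $u_0 = 1$ has no roots and the only indecomposable is the tensor unit $[0] \simeq \kk$, also fits this description via the empty partition (i.e.\ the trivial representation of $S_0$). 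Since the deep classification work has already been completed in Proposition \ref{prop::indecomposables}, there is essentially no obstacle here: the entire proof consists of the elementary computation of $u_\alpha$ together with the observation that a single root produces single Young diagrams rather than tuples.
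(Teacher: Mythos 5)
Your argument is correct and is essentially the paper's own proof: compute $u_\alpha(t)=t^{n+1}$ from \eqref{eq:ualpha}, note it splits over any field with the single root $0$, and invoke \Cref{prop::indecomposables} so that the tuples of ($p$-regular) Young diagrams collapse to single diagrams indexing irreducibles of the symmetric groups. Your extra remarks (that $\kk$ is automatically a splitting field, and the degenerate case $\alpha=0$ where $u_0=1$ has no roots and only the unit survives) are accurate refinements of the same route rather than a different approach.
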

\begin{proof}
As $u_\alpha(t)=t^{n+1}$ factors over any field and has a unique zero, the corollary follows from \Cref{prop::indecomposables}.
\end{proof}
As a direct consequence of \Cref{thm:KoDCob} we record the following.
\begin{corollary}
For $\alpha\in \kk[t]$, there is an equivalence of symmetric monoidal categories 
$$\gr(\DCob{\alpha})\simeq \cS_{t^{n+1}}$$ which induces
 an isomorphism of graded rings $$\gr\Gr(\DCob{\alpha})\cong \Sym^p,$$ 
 where $\cha \kk =p$.
\end{corollary}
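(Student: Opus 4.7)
The proof is essentially an assembly of results already established in the paper, specialized to the polynomial case. First I would observe that for $\alpha = \beta_0 + \beta_1 t + \cdots + \beta_n t^n \in \kk[t]$ with $\beta_n \neq 0$, the rational series has $p(t) = \alpha(t)$ and $q(t) = 1$, so $k = \deg p + 1 = n+1$, and the associated polynomial is
\[
u_\alpha(t) = t^{n+1} q(t^{-1}) = t^{n+1},
\]
as already noted in the paragraph preceding the corollary.

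Next, I would invoke \Cref{thm:KoDCob}, which provides an equivalence of monoidal categories with gradings
\[
\gr(\DCob{\alpha}) \simeq \Symu{\alpha}.
\]
By \Cref{definition:symu}, $\Symu{\alpha} = \bigoplus_{i \geq 0} \rfproj{(P_i \rtimes \kk[S_i])}$, and since $u_\alpha(t) = t^{n+1}$ this is precisely $\cS_{t^{n+1}}$, giving the first claimed equivalence.

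For the statement about the Grothendieck ring, I would observe that $u_\alpha(t) = t^{n+1}$ has the single root $0$ in $\kk$ itself, so \emph{any} field is a splitting field for $u_\alpha$. Hence I can apply \Cref{Cor:uniqueroot} directly to obtain
\[
\gr\Gr(\DCob{\alpha}) \cong \Gr(\Symu{\alpha}) \cong \Sym^p,
\]
where $p = \cha \kk$.

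There is no real obstacle here; the work has already been done in \Cref{thm:KoDCob} and \Cref{Cor:uniqueroot}. The only subtle point worth flagging explicitly is the observation that the polynomial $u_\alpha = t^{n+1}$ automatically splits over $\kk$ (with a unique root) regardless of the field, so no Galois descent machinery is needed in this case---one is always in the setting of \Cref{sec::classification-splitting-case} with $r = 1$.
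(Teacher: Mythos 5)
Your proposal is correct and follows essentially the same route as the paper, which records this corollary as a direct consequence of \Cref{thm:KoDCob} (for the equivalence $\gr(\DCob{\alpha})\simeq\cS_{t^{n+1}}$, since $u_\alpha(t)=t^{n+1}$) together with the unique-root case handled in \Cref{Cor:uniqueroot} for the identification of the graded Grothendieck ring with $\Sym^p$. Your remark that $t^{n+1}$ splits over any field with the single root $0$, so no Galois descent is needed, is exactly the implicit point in the paper's treatment (note only that, as in the paper, the degenerate case $\alpha=0$ is tacitly excluded since there $u_0=1$).
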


\subsection{Inflations of \texorpdfstring{$\DCob{\alpha}$}{DCob(alpha)}}\label{sec:inflations}

The following class of $\kk$-linear tensor categories generalizes the class of monoidal categories $\DCob{\alpha}$, for $\kk$ a field.
\begin{definition}
Let $\alpha(t)$ be a rational function as in \Cref{sect:DCob} and let $f(t)$ be a monic polynomial. In analogy with \Cref{def:DCob}, we define $\SCob{\alpha,f}$ to be the quotient category of $\kk \Cob{2}$ by the ideal generated under $\kk$-linear combinations, two-sided composition, and tensor product by the relations 
\begin{align}
s_i&=\alpha_i \id_{[0]},\quad \forall i\geq0, & f(x)u_\alpha(x)&=0,
\end{align}
and $\DCob{\alpha,f}$ to be the Karoubian envelope of $\SCob{\alpha,f}^\oplus$.
We call $\DCob{\alpha,f}$ the \emph{inflation} of $\DCob{\alpha}$ by $f$, as $\DCob{\alpha}=\DCob{\alpha,1}$.
\end{definition}

The symmetric tensor categories just defined are intermediate quotients 
$$\kk \Cob{2}\twoheadrightarrow \DCob{\alpha,f}\twoheadrightarrow \DCob{\alpha}$$
that clearly have finite-dimensional morphism spaces. 

The results from \Cref{sec:K0DCob} can be extended to descriptions of the associated graded category of the inflation categories $\DCob{\alpha,f}$. To this end, recall the categories $\cS_u$ from \Cref{definition:symu}, which depend on a monic polynomial $u=u(t)$. The categories $\DCob{\alpha,f}$ have $[1]$ as a tensor generator in the sense of \Cref{sec::tensor-generator} and, hence, a filtration, similarly to $\DCob{\alpha}$, where $\cD_n$ consists of objects appearing as direct summands of sums of copies of $[n]$.

\begin{proposition} There is an equivalence of symmetric monoidal categories
$$\gr (\DCob{\alpha, f}) \simeq \cS_{u_{\alpha}{f}},$$
where $u_\alpha$ is associated to $\alpha$ as in \eqref{eq:ualpha}. In particular, if $\kk$ is a splitting field for $u_\alpha f$ with $\cha \kk=p$, and $u_\alpha f$ has $l$ distinct roots in $\kk$, then we obtain an isomorphism of graded rings
$$\gr \Gr (\DCob{\alpha, f}) \cong (\Sym^p)^{\otimes l}.$$
\end{proposition}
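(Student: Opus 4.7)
The plan is to mirror the strategy used for $\DCob{\alpha}$ in \Cref{sec:K0DCob} and \Cref{sec::classification-splitting-case}, exploiting the fact that $[1]$ remains a tensor generator of $\DCob{\alpha,f}$ and that the only change in the defining relations is the replacement of $u_\alpha(x)=0$ by $u_\alpha(x)f(x)=0$. Consequently, the filtration $\cD_n := \Kar(\{[0],\dots,[n]\}^\oplus) \subseteq \DCob{\alpha,f}$ remains well-defined and compatible with the tensor product, so we can invoke \Cref{prop:tensor-generator} once we identify the tower of algebras $\End_{\cD_n/\cD_{n-1}}([n])$.

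First I would establish the analogue of \Cref{thm:universal-prop}: namely, that a basis of $\Hom_{\DCob{\alpha,f}}([n],[m])$ is given by the set $C_n^m(<k')$ with $k':=\deg(u_\alpha f)$. Spanning is immediate since the relations $s_i=\alpha_i$ and $u_\alpha(x)f(x)=0$ allow one to evaluate closed components and to reduce the handle count on any cylinder-like component down to at most $k'-1$. Linear independence should follow by running the Khovanov--Sazdanovic argument from \cite{KS}*{Theorem~1} verbatim, with the polynomial $u_\alpha$ replaced by the inflated polynomial $u_\alpha f$; indeed, their proof uses only the recursion imposed by the defining polynomial together with the consistency of the $s_i$-values, both of which remain valid here. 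This is where I expect the most bookkeeping, since one must verify that the $\alpha_i$-values do not produce extra relations beyond those already imposed by $u_\alpha(x)f(x)=0$.

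Once the basis result is in place, the argument of \Cref{prop::quotient-is-crossed-product} adapts directly: a basis element with $r<n$ components connecting upper to lower boundary factors through $\cD_{n-1}$, so the image in $\End_{\cD_n/\cD_{n-1}}([n])$ is spanned by those cobordisms whose every component has exactly one incoming and one outgoing circle, which yields
\[
\End_{\cD_n/\cD_{n-1}}([n]) \;\cong\; P^f_n \rtimes \kk[S_n],
\qquad
P^f_n := \kk[x_1,\dots,x_n]/(u_\alpha(x_i)f(x_i))_{i=1}^n.
\]
The tensor product of $\DCob{\alpha,f}$ induces on this tower precisely the algebra homomorphisms from \Cref{definition:natural_alg_homs} with $u_\alpha$ replaced by $u_\alpha f$. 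Applying \Cref{prop:tensor-generator} yields the monoidal equivalence $\gr(\DCob{\alpha,f})\simeq \cS_{u_\alpha f}$, and then \Cref{theorem:iso_graded_rings} gives the claimed isomorphism $\gr\Gr(\DCob{\alpha,f})\cong \Gr(\cS_{u_\alpha f})$ of graded rings.

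For the second assertion, assume $\kk$ is a splitting field for $u_\alpha f$ with $l$ distinct roots. Here the arguments of \Cref{sec::classification-splitting-case} go through unchanged, since they depend only on the factorization of the polynomial governing $x$ and not on its particular source. In particular, \Cref{cor:removemulti} together with the partial-fraction-style tensor decomposition technique of \Cref{corollary:main_thm_split_case} (which uses only the splitting of the polynomial into linear factors and \Cref{theorem:grothendieck_dec}) produces
\[
\gr\Gr(\DCob{\alpha,f}) \;\cong\; \Gr(\cS_{u_\alpha f}) \;\cong\; (\Sym^p)^{\otimes l}.
\]
The main obstacle throughout is the first step—confirming the basis result for $\Hom$-spaces in $\DCob{\alpha,f}$; once that is in hand, the rest of the proof is an essentially formal repackaging of the results of \Cref{sec:K0DCob} and \Cref{sec::classification-splitting-case} with $u_\alpha$ replaced by $u_\alpha f$.
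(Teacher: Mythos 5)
Your overall route coincides with the paper's own (very short) proof: identify the tower $\End_{\cD_n/\cD_{n-1}}([n])\cong \widetilde P_n\rtimes \kk[S_n]$ with $\widetilde P_n=\kk[x_1,\dots,x_n]/(u_\alpha(x_1)f(x_1),\dots,u_\alpha(x_n)f(x_n))$ as in \Cref{prop::quotient-is-crossed-product}, feed it into \Cref{prop:tensor-generator} and \Cref{theorem:iso_graded_rings}, and then repeat the splitting-field arguments of \Cref{corollary:main_thm_split_case}. So structurally you are doing exactly what the paper does, and you are right to isolate the basis statement for $\DCob{\alpha,f}$ as the real input (the paper leaves it implicit in the phrase ``as in \Cref{prop::quotient-is-crossed-product}'').

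The weak point is your justification of that input. You claim that linear independence of $C_n^m(<k')$, $k'=\deg(u_\alpha f)$, follows by running the argument for \Cref{thm:universal-prop} ``verbatim'' with $u_\alpha$ replaced by $u_\alpha f$. Spanning is indeed immediate, but the independence half of Khovanov--Sazdanovic's argument certifies cobordisms by closing them up and evaluating closed surfaces through $\alpha$, and the nondegeneracy of the resulting pairing on the candidate basis uses precisely that $u_\alpha$ is the minimal recursion satisfied by the sequence $\alpha$. Every such $\alpha$-evaluation descends to $\DCob{\alpha}$, hence vanishes on all morphisms of $\DCob{\alpha,f}$ that involve between $k$ and $k'-1$ handles on a component (these die under $\DCob{\alpha,f}\twoheadrightarrow\DCob{\alpha}$), so no argument based on these evaluations can separate the enlarged candidate set; the corresponding Hankel-type matrix of size $k'$ has rank only $k$. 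What is actually needed for the crossed-product identification is that the handle-decorated permutation cobordisms with fewer than $k'$ handles per strand remain linearly independent modulo $\cI_{<n}$, and this requires a genuinely different argument --- for instance a rewriting/confluence analysis of the defining ideal, whose consistency rests on the fact that $\alpha$ satisfies the longer recursion attached to $u_\alpha f$ because it satisfies the one attached to $u_\alpha$. A smaller remark on your last step: a literal reuse of \Cref{corollary:main_thm_split_case} invokes the decomposition \Cref{thm:KKO} of $\DCob{\alpha}$, which is not available for $\DCob{\alpha,f}$; for the second assertion one should argue entirely inside $\cS_{u_\alpha f}$, via \Cref{cor:removemulti} and the idempotent classification of \Cref{cor::idempotents-concrete}, which is presumably what the paper intends as well.
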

\begin{proof} As in \Cref{prop::quotient-is-crossed-product}, we find that
  \begin{align*}
    \End_{\cD_n/\cD_{n-1}}([n]) \cong \widetilde P_n\rtimes S_n, && \text{for $n\geq 0$},
  \end{align*}
where  $\widetilde P_n=\kk[x_1,\ldots, x_n]/(u_\alpha(x_1)f(x_1), \ldots, u_\alpha(x_n)f(x_n))$.
This implies the equivalence of symmetric monoidal categories by \Cref{prop:tensor-generator}. The remaining assertions follow from the same arguments used in the proof of \Cref{corollary:main_thm_split_case}.
\end{proof}

The inflated categories $\DCob{\alpha,f}$ are useful for extending families of interpolation categories to the special value $\alpha=0$ in a natural way. We give two examples of this idea.

\begin{example}
Deligne's category $\uRep S_0$ is equivalent to  $\DCob{0,f}$, where $f(t)=t-1$. Indeed, for $\alpha(t)=0$, we have $u_\alpha(t)=1$. Inflating with $f(t)=t-1$ gives that the genus $k$ morphism associated to a partition of $n+m$ reduces to the same partition with genus $0$.
\end{example}

\begin{example}
The categories $\DCob{c}$, for $c\neq 0$, can be extended to a family of categories at $c=0$ by $\DCob{0,t}$. The morphism spaces $\Hom_{\DCob{0,t}}([n],[m])$ have bases given by all partitions of $n+m$. In comparison, the morphism spaces $\Hom_{\DCob{0}}([n],[m])$ are zero for $n+m\geq 0$. The statements from \Cref{prop:constant-irrep} and \Cref{prop:constant-grK0} also hold true for the category $\DCob{0,t}$.
\end{example}

\bibliography{biblio}
\bibliographystyle{amsrefs}%

\end{document}